\newlength{\margins}
\numberwithin{equation}{section}
\newtheorem{Thm}{Theorem}[section]
\newtheorem{Lem}[Thm]{Lemma}
\theoremstyle{definition}
\newtheorem{Def}[Thm]{Definition}
\newtheorem{Rmk}[Thm]{Remark}
\begin{document}

\title[Group schemes and local densities of ramified hermitian lattices when $p=2$ Part I]
{Group schemes and local densities of ramified hermitian lattices in residue characteristic 2 Part I}

\author[Sungmun Cho]{Sungmun Cho}
\address{Department of
Mathematics, University of Toronto, CANADA}
\email{sungmuncho12@gmail.com}

\maketitle

\begin{abstract}
The obstruction to the local-global principle for a  hermitian lattice $(L, H)$ can be quantified by computing the mass of $(L, H)$.
The mass formula expresses the mass of $(L, H)$ as a product of local factors, called the local densities of $(L, H)$.
The local density formula  is known except in the case of a ramified hermitian lattice of residue characteristic 2.

Let $F$ be a finite unramified field extension of $\mathbb{Q}_2$.
Ramified quadratic extensions $E/F$ fall into two cases that we call \textit{Case 1} and \textit{Case 2}.
In this paper, we  obtain the local density formula for a ramified hermitian lattice in \textit{Case 1},
by constructing a smooth integral group scheme model for an appropriate unitary group.
Consequently, this paper, combined with the paper \cite{GY} of W. T. Gan and J.-K. Yu, allows the computation of the mass formula for a  hermitian lattice $(L, H)$  in \textit{Case 1}.

\end{abstract}
\let\thefootnote\relax\footnote{Primary MSC 11E41, MSC 11E95, MSC 14L15, MSC 20G25; Secondary MSC 11E39, MSC 11E57}

\tableofcontents

\section{Introduction}

\subsection{Introduction}
The subject of this paper is old and has intrigued many  mathematicians.
If $(V, H)$ and $(V^{\prime}, H^{\prime})$ are two hermitian $k^{\prime}$-spaces (or quadratic $k$-spaces),
where $k$ is a number field and $k^{\prime}$ is a quadratic field extension of $k$, then it is well known that
they are isometric if and only if for all places $v$, the localizations $(V_v, H_v)$ and $(V^{\prime}_v, H^{\prime}_v)$ are isometric.
That is, the local-global principle holds for hermitian spaces and quadratic spaces.
It is  natural to ask whether or not the local-global principle holds for a hermitian $R^{\prime}$-lattice (or a quadratic $R$-lattice) $(L,H)$,
where $R^{\prime}$ and $R$  are the rings of integers of $k^{\prime}$ and $k$ respectively.
In general, the answer to this question is no.
However, there is a way, namely, the mass of $(L,H)$,  to quantify the obstruction to the local-global principle.
 An essential tool for computing the mass of a quadratic or hermitian lattice is the mass formula.
The mass formula expresses the  mass of  $(L, H)$ as a product of local factors,
called the local densities of $(L, H)$.

Therefore, it suffices to find the explicit local density formula  in order to obtain the mass formula
and accordingly to quantify the obstruction to the local-global principle.


For a quadratic lattice, the local density formula was first computed by G. Pall \cite{P} (for $p\neq 2$) and G. L. Watson \cite{Wa} (for $p=2$).
For an expository sketch of their approach, see \cite{K}.
There is another proof of Y. Hironaka and F. Sato \cite{HS} computing the local density when $p\neq 2$.
They treat an arbitrary pair of lattices, not just a single lattice, over $\mathbb{Z}_p$ (for $p\neq 2$).
J. H. Conway  and J. A. Sloane \cite{CS} further developed the formula for any $p$ and gave a heuristic explanation for it.
Later, W. T. Gan and J.-K. Yu \cite{GY} (for $p\neq 2$) and S. Cho \cite{C1} (for $p=2$) provided a simple and conceptual proof of Conway-Sloane's formula
by explicitly constructing  a smooth affine group scheme $\underline{G}$ over $\mathbb{Z}_2$ with  generic fiber $\mathrm{Aut}_{\mathbb{Q}_2} (L, H)$,
 which satisfies $\underline{G}(\mathbb{Z}_2)=\mathrm{Aut}_{\mathbb{Z}_2} (L, H)$.

There has not been as much work done in computing local density formulas for hermitian
lattices as in the case of quadratic lattices.
Although the local density formula for a quadratic lattice with $p=2$ was first proved in the author's paper \cite{C1},
the formula was proposed in Conway-Sloane's paper \cite{CS}.
However, the local density formula for a ramified hermitian lattice with $p=2$ has not  been proposed yet
and therefore, the mass formula, when the ideal (2) is ramified in $k^{\prime}/k$, has not been known.

 Hironaka  obtained the local density formula for an unramified hermitian lattice in the papers  \cite{H1} and \cite{H2}.
 In addition, M. Mischler computed the formula for a ramified hermitian lattice ($p\neq 2$) under restricted conditions in \cite{Mis}.
Later, in the paper \cite{GY} mentioned above, Gan and Yu  found a conceptual and elegant proof of the local density formula for an unramified hermitian lattice without any restriction on $p$,
and for a ramified hermitian lattice with the restriction $p\neq 2$, by explicitly constructing  certain smooth affine group schemes
 (called smooth integral models) of a unitary group.
Ramified quadratic extensions $E/F$ fall into two cases that we call \textit{Case 1} and \textit{Case 2} (cf. Section 2.1),
 where $F$ is an unramified finite extension of $\mathbb{Q}_2$, depending on lower ramification groups $G_i$'s of the Galois group $\mathrm{Gal}(E/F)$  as follows:
\[
\left\{
  \begin{array}{l }
 \textit{Case 1}: G_{-1}=G_{0}=G_{1}, G_{2}=0;\\
 \textit{Case 2}: G_{-1}=G_{0}=G_{1}=G_{2}, G_{3}=0.
    \end{array} \right.
\]
 These two cases should be handled independently because of technical difficulty and complexity.
 The methodologies of the two cases are basically the same.
 But \textit{Case 2} is much more difficult than \textit{Case 1}.

The main contribution of this paper is to get an explicit formula for the local density of a hermitian $B$-lattice $(L, h)$ in \textit{Case 1},
by explicitly constructing a certain smooth group scheme associated to it that serves as an integral model
for the unitary group associated to $(L\otimes_AF, h\otimes_AF)$ and
  by investigating its special fiber,
where $B$ is a  ramified quadratic extension of $A$ and $A$ is an unramified finite extension of $\mathbb{Z}_2$ with $F$ as the quotient field of $A$.
The local density formula in \textit{Case 2} is handled in \cite{C2}.


In  conclusion, this paper, combined with \cite{GY} and \cite{C1},
allows the computation of the mass formula for a hermitian $R^{\prime}$-lattice $(L, H)$
 when $k_v/\mathbb{Q}_2$ is unramified, and  $k'_{v'}/k_v$  satisfies \textit{Case 1} or is unramified.
 Here, $k'_{v'}$ (resp. $k_v$) is the completion of $k'$ (resp. $k$) at the place $v'$ (resp. $v$), where
 $v'$ lies over $v$ and $v$ lies over the ideal $(2)$.
 As the simplest case, we can compute the mass formula for an arbitrary hermitian lattice explicitly
  when  $k$ is $\mathbb{Q}$ and $k^{\prime}$ is  any quadratic field extension of $\mathbb{Q}$
  such that the completion of $k'$ at any place lying over the ideal $(2)$ satisfies \textit{Case 1}
   or is unramified over  $\mathbb{Q}_2$.

Let us briefly comment on the proofs. A key input into the local density formula is
\begin{equation}
\lim_{N\rightarrow \infty} f^{-N~dim G}\#\underline{G}'(A/\pi^N A),
\end{equation}
where $f$ is the cardinality of the residue field of $A$, $\pi$ is a uniformizer in $A$, and
$\underline{G}'$ is the naive integral model for the unitary group $G$ associated to $(L\otimes_AF, h\otimes_AF)$,
which represents the functor $R\mapsto \mathrm{Aut}_{B\otimes_AR}(L\otimes_AR, h\otimes_AR)$.

Now if we are lucky enough that $\underline{G}'$ is smooth, then the limit in (1.1) would stabilize at $N=1$,
which would reduce us to simply finding $\underline{G}'(\kappa)$, $\kappa$ denoting the residue field of $A$.
A key observation of Gan and Yu is that, even when $\underline{G}'$ is not smooth, one can employ a certain smooth group scheme
$\underline{G}$ lurking in the background, which is a smooth integral model of $G$ satisfying that
$\underline{G}(R)=\underline{G}'(R)$ for every \'etale $A$-algebra $R$.
The existence and uniqueness of such a $\underline{G}$ is guaranteed by the general
theory of group smoothening. Then the problem essentially reduces to constructing $\underline{G}$ explicitly, so
that one can compute the cardinality of the group $\underline{G}(\kappa)$ of $\kappa$-points of its special fiber.
This tells us
what the analog of (1.1) for $\underline{G}$ is, and further, it so turns out that one can deduce the expression (1.1)
from its analog for $\underline{G}$.
For a detailed explanation about this, see Section 3 of \cite{GY}.

Let us now describe, therefore, how we construct $\underline{G}$ and study its special fiber. As $\underline{G}'$ fails to
be smooth one must impose more equations than merely the ones related to the preservation of $(L\otimes_AR, h\otimes_AR)$.
Towards this, note that there exist several sublattices $L'$ of $L$ such that any
element of $\mathrm{Aut}_B(L, h)$ automatically also preserves $L'$ (and such that, for any \'etale $A$-algebra $R$,
any element of $\mathrm{Aut}_{B\otimes_AR}(L\otimes_AR, h\otimes_AR)$  automatically also preserves $L'\otimes_AR$).
For instance, the sublattice $L'$ of elements $x\in L$ such that $h(x, L)$ belongs to a given ideal of $B$ necessarily satisfies
this property. This gives us additional equations to impose - these equations leave the group of $R$-
points for any \'etale $A$-algebra $R$ untouched, while taking us closer to smoothness. It so happens that
taking sufficiently many sublattices $L'$ into consideration, and imposing further restrictions arising
from the behavior of an element of $\mathrm{Aut}_{B\otimes_AR}(L\otimes_AR, h\otimes_AR)$ on some of their quotients, do leave
us with enough equations to ensure that the group scheme $\underline{G}$ defined by them is smooth. This step
already turns out to be much harder for $p = 2$ than for odd $p$, since in this case there are many
more isomorphism classes of hermitian lattices. Another source of complication is the fact that the
equations involve quadratic forms over the residue field $\kappa$ of $A$ that arise as quotients of some of the
lattices $L'$ mentioned above (the theory of quadratic forms over finite fields is more complicated in
characteristic 2 than in other characteristics).

Now let us describe some of the ideas involved in the computation of the special fiber $\tilde{G}$ of $\underline{G}$. Since the
quotients of some pairs of lattices of the form $L'$ alluded to in the above paragraph naturally support
symplectic or quadratic forms, it is not hard to construct a map $\varphi$ from $\tilde{G}$ to a suitable product
of symplectic and orthogonal groups. This step occurs in \cite{GY} too. However, $p$ being even for
us poses at least two new difficulties. Firstly, although this product of symplectic and orthogonal
groups contains the identity component of the maximal reductive quotient of $\tilde{G}$, this fact seems
to be difficult to prove directly. Rather, we prove this fact indirectly, by explicitly computing the
dimension of the kernel of $\varphi$. Secondly, $\varphi$ does not quite define the maximal reductive quotient of $\tilde{G}$
- this maximal reductive quotient is built up from $\varphi$ together with a few additional homomorphisms
$\tilde{G}\rightarrow \mathbb{Z}/2\mathbb{Z}$.

Our construction of these homomorphisms $\tilde{G}\rightarrow \mathbb{Z}/2\mathbb{Z}$ is quite indirect. A typical such homomorphism
is constructed in the following manner. We define a certain new hermitian lattice, say $(L'', h'')$,
starting from $(L, h)$. This lattice naturally gives us a homomorphism $\tilde{G}\rightarrow \tilde{G}''$,
where $\tilde{G}''$ is the special
fiber of the smooth integral model obtained by applying our construction to $(L'', h'')$ in place
of $(L, h)$. The analog $\varphi''$ of $\varphi$ defines a map from $\tilde{G}''$ to (a product of symplectic and orthogonal
groups, and in particular) an orthogonal group, and, composing with the Dickson invariant, one
gets a homomorphism $\tilde{G}''\rightarrow \mathbb{Z}/2\mathbb{Z}$. Precomposing this with the homomorphism $\tilde{G}\rightarrow \tilde{G}''$
 yields a homomorphism $\tilde{G}\rightarrow \mathbb{Z}/2\mathbb{Z}$.
  This is the manner in which all our homomorphisms $\tilde{G}\rightarrow \mathbb{Z}/2\mathbb{Z}$ are
constructed.

To show that the candidate for the maximal reductive quotient of $\tilde{G}$ obtained from $\varphi$ and the morphisms
$\tilde{G}\rightarrow \mathbb{Z}/2\mathbb{Z}$ is indeed the maximal reductive quotient, one shows that its kernel is isomorphic,
as an affine variety, to an affine space over $\kappa$ - this implies by a theorem of Lazard that the kernel
of our candidate for maximal reductive quotient is indeed a connected unipotent group scheme, as
desired.

Our main results are Theorem 3.8, Theorem 4.12 and Theorem 5.2. Theorem 3.8 shows that the
group scheme $\underline{G}$ we construct is indeed the sought after smooth group scheme over $A$, Theorem 4.12
gives the maximal reductive quotient of $\tilde{G}$, and Theorem 5.2 (supplemented by Remark 5.3) gives
us the final local density formulas as follows.
The local density of ($L,h$) is
$$\beta_L=f^N \cdot f^{-\mathrm{dim~} G} \#\tilde{G}(\kappa).$$
Here, $N$ is a certain integer which can be found in Theorem 5.2 and $\#\tilde{G}(\kappa)$ can be computed explicitly based on Remark 5.3.(1)  and Theorem 4.12.

Appendix B is devoted to illustrating our method with a simple example - that
of the case where $L = B\cdot e$ is of rank one and $h$ is defined by $h(le, l'e)=\sigma(l)l'$, $\sigma$ being the unique
nontrivial element of $\mathrm{Gal}(E/F)$. Appendix B.1 describes how the usual approach that works when
$p\neq 2$ (and yields the obvious integral model for the `norm one' torus associated to $B/A$) fails when
$p = 2$, and how one may fix this from `first principles', without using any of our techniques. We hope
this helps clarify some of the issues involved. Appendix B.2 illustrates how our construction specializes
to this case; we hope that the simplicity of this case may better motivate our general construction.
Some readers may therefore prefer to look at Appendix B before perusing the general constructions of
Sections 3 and 4 and Appendix A.

This paper is organized as follows.
 We first state a structure theorem for integral hermitian forms in Section 2.
  We then give an explicit construction of   $\underline{G}$
 (in Section 3) and study its special fiber (in Section 4) in \textit{Case 1}.
 Finally,
 we obtain an explicit formula for the local density in Section 5 in \textit{Case 1}.
In Appendix B, we provide an example to describe the smooth integral model and its special fiber
and to compute the local density for a unimodular lattice of rank 1.

The reader might want to skip to Appendix B and at least go to Appendix B.1 to get a first glimpse into why the case of $p=2$ is really different.
Some of the ideas behind our construction can be seen in the simple example illustrated in Appendix B.2.

The construction of smooth integral models and the investigation of their special fibers in this paper basically follow the arguments in \cite{GY} and \cite{C1}.

  As in \cite{GY}, the smooth group schemes constructed in this paper should be of independent interest.

\subsection{Acknowledgements}
The author greatly thanks the referee  for putting incredible time and effort into reading  this paper, and for providing a lot of valuable feedback.
The author owes a special debt to Professor Brian Conrad for his immense patience and copious suggestions, which helped make this paper substantially more pleasant to read.
This paper originated from the paper \cite{GY} and the author's Ph.D. dissertation, and the author would like to express
his deep appreciation to Professor Wee Teck Gan and Professor Jiu-Kang Yu.
The author would like to thank his Ph.D. thesis advisor Jiu-Kang Yu for many valuable comments.
In addition, the author would like to thank Professor Wai Kiu Chan, Professor Benedict H. Gross, and Professor Gopal Prasad for their interest in this project and their encouragement.
The author would like to thank Radhika Ganapathy,  Bogume Jang, Manish Mishra, Marco Rainho and Sandeep Varma
 for carefully reading a draft of this paper to help reduce the typographical errors and improve the presentation of this paper.

\section{Structure theorem for hermitian lattices and notations}

\subsection{Notations}
Notations and definitions in this section are taken from \cite{C1}, \cite{GY} and \cite{J}.
\begin{itemize}
\item Let $F$ be an unramified finite extension of $\mathbb{Q}_2$ with $A$  its ring of integers and $\kappa$  its residue field.
\item  Let $E$ be a ramified quadratic field extension of $F$ with $B$ its ring of integers. 
\item Let $\sigma$ be the non-trivial element of the Galois group $\mathrm{Gal}(E/F)$.
\item The lower ramification groups $G_i$'s of the Galois group $\mathrm{Gal}(E/F)$ satisfy one of the following:
\[
\left\{
  \begin{array}{l }
 \textit{Case 1}: G_{-1}=G_{0}=G_{1}, G_{2}=0;\\
 \textit{Case 2}: G_{-1}=G_{0}=G_{1}=G_{2}, G_{3}=0.
    \end{array} \right.
\]
We explain  the above briefly.
Based on Section 6 and Section 9 of \cite{J}, there is a suitable choice of a uniformizer $\pi$ of $B$ as follows.
In \textit{Case 1},  $E=F(\sqrt{1+2u})$ for some unit $u$ of $A$ and $\pi=1+\sqrt{1+2u}$.
Then $\sigma (\pi) =\epsilon \pi$, where $\epsilon\equiv 1$ mod $\pi$ and $\frac{\epsilon-1}{\pi}$ is a unit in $B$.
So we have that $\sigma(\pi) +\pi, \sigma(\pi)\cdot\pi \in (2)\backslash (4)$.\\
 In \textit{Case 2},  $E=F(\pi)$.
 Here, $\pi=\sqrt{2\delta}$, where $\delta\in A $ and $\delta\equiv 1 \mathrm{~mod~}2$.
 Then  $\sigma(\pi)=-\pi$.\\
 From now on, a uniformizing element $\pi$ of $B$, $u$, and $\delta$ are fixed as explained above throughout this paper.

The constructions of smooth integral models associated to these two cases are different  and  we will treat them independently.
\item We consider a $B$-lattice $L$ with a hermitian form $$h : L \times L \rightarrow B,$$
 where $h(a\cdot v, b \cdot w)=\sigma(a)b\cdot h(v,w)$ and $h(w,v)=\sigma(h(v,w))$. Here, $a, b \in B$ and $v, w \in L$.
 We denote by a pair $(L, h)$ a hermitian lattice.
We assume that $V=L\otimes_AF$ is nondegenerate with respect to $h$.
\item We denote by $(\epsilon)$ the $B$-lattice of rank 1 equipped with the hermitian form having Gram matrix $(\epsilon)$.
We use the symbol $A(a, b, c)$ to denote the $B$-lattice $B\cdot e_1+B\cdot e_2$
with the hermitian form having Gram matrix $\begin{pmatrix} a & c \\ \sigma (c) & b \end{pmatrix}$.
For each integer $i$,
the lattice of rank 2 having  Gram matrix $\begin{pmatrix} 0 & \pi^i \\ \sigma(\pi^i)  & 0 \end{pmatrix}$ is
called the $\pi^i$-modular hyperbolic plane and denoted by $H(i)$.
\item A  hermitian lattice $L$ is the orthogonal sum of sublattices $L_1$ and $L_2$, written $L=L_1\oplus L_2$, if $L_1\cap L_2=0$, $L_1$ is orthogonal to $L_2$ with respect to the hermitian form $h$, and $L_1$ and $L_2$ together span $L$.
\item The ideal in $B$ generated by $h(x,x)$ as $x$ runs through $L$ will be called the norm of $L$ and written $n(L)$.
\item By the scale $s(L)$ of $L$, we mean the ideal generated by the subset $h(L,L)$ of $B$.
\item We define the dual lattice of $L$, denoted by $L^{\perp}$, as
 $$L^{\perp}=\{x \in L\otimes_A F : h(x, L) \subset B \}.$$

\end{itemize}

\begin{Def}
For a given hermitian lattice $L$,
\begin{enumerate}
\item[a)]
For any non-zero scalar $a$, define $aL=\{ ax|x\in L \}$.
It is also a lattice in the space $L\otimes_AF$. Call a vector $x$ of $L$ maximal in $L$
if $x$ does not lie in $\pi L$.
\item[b)]
$L$ will be called $\pi^i$-modular if the ideal generated by the subset $h(x, L)$ of $E$ is $\pi^iB$ for every maximal vector $x$ in $L$.
Note that $L$ is $\pi^i$-modular if and only if $L^{\perp}=\pi^{-i}L$.
We can also see that $H(i)$ is $\pi^i$-modular.
\item[c)] Assume that $i$ is even. A $\pi^i$-modular lattice $L$ is \textit{of parity type I} if $n(L)=s(L)$; otherwise \textit{of parity type II}.
The zero lattice is considered to be  \textit{of parity type II}.
We caution that we do not assign \textit{parity type} to a $\pi^i$-modular lattice $L$ with $i$ odd.
\end{enumerate}
\end{Def}

\subsection{A Structure Theorem for Integral hermitian Forms}

We  state a structure theorem for $\pi^i$-modular lattices in this subsection.
Note that if $L$ is $\pi^{2i}$-modular (resp. $\pi^{2i+1}$-modular), then $\pi^{-i}L\subset L\otimes_AF$ is $\pi^{0}$-modular (resp. $\pi^{1}$-modular).
We will emphasize this in Remark 2.3.(a) again.
Thus it is enough to provide a structure theorem for $\pi^0$-modular or $\pi^1$-modular  lattices.

 \begin{Thm}
 Let $i=0$ or $1$.
\begin{enumerate}
\item[a)] Let $L$ be a $\pi^i$-modular lattice of rank at least $3$.
 Then $L=\bigoplus_{\lambda}H_{\lambda}\oplus K$, where $K$ is  $\pi^i$-modular of rank 1 or 2, and each $H_{\lambda}= H(i)$.
\item[b)]  
We denote by $(1)$ or $(2)$  the ideal of $B$ generated by the element $1$ or $2$, respectively.
Assume that $K$ is $\pi^i$-modular of rank 1 or 2. Then, depending on $i$, the rank of $K$, the case that $E/F$ falls into,
the parity type of $L$ (when applicable), and $n(L)$ which is the norm of $L$, we may take $K$ to be of the following form :


\begin{center}
    \begin{tabular}{| l | l | l | l | l |l |}
    \hline
    Rank of $K$ & $i$ & $E/F$ & Parity type of $L$ & n(L)  &   Form for $K$  \\ \hline
    1 & 0 & Case 1 & (necessarily) $I$ & (necessarily) (1)  &  $(a)$, $a\in A$, $a \equiv 1$ mod 2   \\ \hline
 1 & 0 & Case 2 & (necessarily) $I$ & (necessarily) (1) &  $(a)$, $a\in A$, $a \equiv 1$ mod 2   \\ \hline
   2 & 0 & Case 1 &  $I$ & (necessarily) (1) &  $A(1, 2b, 1)$, $b\in A$   \\ \hline
    2 & 0 & Case 2 &  $I$ & (necessarily) (1) &  $A(1, 2b, 1)$, $b\in A$   \\ \hline
    2 & 0 & Case 1 &  $II$ & (necessarily) (2) &  $H(0)$   \\ \hline
    2 & 0 & Case 2 &  $II$ & (necessarily) (2) &  $A(2\delta, 2b', 1)$, $b'\in A$   \\ \hline
    2 & 1 & Case 1 &   & (necessarily) (2) &  $A(2, 2a, \pi)$, $a\in A$   \\ \hline
    2 & 1 & Case 2 &   & (2) &  $A(4a, 2\delta, \pi)$,  $a\in A$  \\ \hline
    2 & 1 & Case 2 &   & (4) &  $H(1)$ \\ \hline
    \end{tabular}
\end{center}
\textit{ }


 \end{enumerate}
 \end{Thm}

\begin{proof}
Part $a)$ is proved in Proposition 10.3 of \cite{J}.

For Part $b)$, when the rank of $K$ is $1$, it is clear that $K\cong (a')$ for a certain unit $a'\in A$ with a basis $e$.
Since the residue field $\kappa$ is perfect, there is a unit element $a''$ in $A$ such that $a'\equiv a''^2$ mod $2$.
The reader can check that replacing $e$ by $(1/a^{''})e$ realizes $K$ in the manner dictated by the theorem.

From now on, we assume that the rank of $K$ is $2$.
Suppose that $i=0$. Then $n(K)=(1)$ or $n(K)=(2)$ since $n(K)\supseteq n(H(0))=(2)$
(Proposition 9.1.a) and (9.1) with $k=0$ in \cite{J}).
If $n(K)=(1)$, then we can use Proposition 10.2 of \cite{J} so that $K\cong A(1, a, 1)$ with respect to a basis $(e_1, e_2)$.
Furthermore, the determinant $a-1$ is a unit in $A$.
To show this,  we observe that  $K$ has an orthogonal basis, since $n(K)=s(K)=(1)$ (Proposition 4.4 in \cite{J}), and so the determinant should be a unit in order for $K$ to be $\pi^0$-modular.
 Since the residue field $\kappa$ is perfect, there is a unit element $\beta$ in $A$ such that $a-1\equiv \frac{1}{\beta^2}$ mod $2$.
 We now choose an another basis $(e_1, (1-\beta)e_1+\beta e_2)$. With this basis, it is easy to see that $K\cong  A(1, 2b, 1)$ for a certain $b\in A$.

 Now assume that $n(K)=(2)$ so that we cannot use Proposition 10.2 of \cite{J}.
 We choose a basis of K so that $K \cong A(x, y, 1)$ for some $x, y \in A$. Since $n(K)=(2)$, $x, y$ should be contained in the ideal $(2)$.
 Thus $K \cong A(2a, 2b, 1)$ for some $a, b \in A$.
 Furthermore, in \textit{Case 1}, if $n(K)=(2)$ then $K \cong H(0)$ by Proposition 9.2 a), b) of \cite{J}.

 The remaining which we need to prove when $i=0$ is then  that $K=A(2\delta, 2b', 1)$,  for certain $b'\in A$, in \textit{Case 2} if $n(K)=(2)$.
 By Proposition 9.2 a) of \cite{J}, if $K$ is isotropic then $K\cong H(0)$ so that we can choose $b'=0$.
 Furthermore, the lattice $K$ with $n(K)=(2)$ is determined by its determinant up to isomorphism (Proposition 10.4 in \cite{J}).
 Since the determinant of $K$, denoted by $d(K)$, is a unit and is well-defined modular $N_{B/A}B^{\times}$,
 there are  at most two cases of $d(K)$ from the fact that $|A^{\times}/N_{B/A}B^{\times}|=2$.
 Here,  $B^{\times}$ or $A^{\times}$ is the unit group of $B$ or $A$ and $N_{B/A}B^{\times}$ is the norm of $B^{\times}$.
 We observe that $d(A(2\delta, 0, 1))$ and $d(A(2\delta, 2d/\delta, 1))$, which are clearly $\pi^0$-modular,  give different classes in $A^{\times}/N_{B/A}B^{\times}$,
 where $d$ is as defined in Lemma 2.4.
 Thus, a lattice $K$ with $n(K)=(2)$ in \textit{Case 2} should be isomorphic to one of these two,
 in other words, such $K$ is isomorphic to $K=A(2\delta, 2b', 1)$ with $b'=0$ or $b'=d/\delta$.

We next suppose that $i=1$.
In \textit{Case 1}, $n(H(1))=(2)$ and so $n(K)=(2)$ since $s(K)\supseteq n(K) \supseteq n(H(1))$.
Thus $K$ is also determined by  its determinant up to isomorphism (Proposition 10.4 in \cite{J}).
This fact implies that there are at most two cases for $K$
since the determinant of $K$ divided by $2$ is  a unit in $A$ and the cardinality of $A^{\times}/N_{B/A}B^{\times}$ is $2$.
By Lemma 2.5, $d(A(2, 0, \pi))$ and $d(A(2, 2ud, \pi))$, which are clearly $\pi^1$-modular, give different classes in $A^{\times}/N_{B/A}B^{\times}$,
 where $u$ and $d$ are as defined in Lemma 2.5.
 Thus, a lattice $K$ with $n(K)=(2)$  in \textit{Case 1} should be isomorphic to one of these two,
in other words, such $K$ is isomorphic to $d(A(2, 0, \pi))$ or $d(A(2, 2ud, \pi))$.

In \textit{Case 2}, $n(H(1))=(4)$ and so $n(K)=(2)$ or $n(K)=(4)$.
If $n(K)=(2)$, then we can use Proposition 10.2. b) of \cite{J} (take $m=1$) so that $K\cong A(2\delta, 4a, \pi)$ with basis $(e_1, e_2)$.
If we use a basis $(e_2, -e_1)$, then  $K\cong A(4a, 2\delta, \pi)$.
If $n(K)=(4)$, then by Proposition 9.2 b), a), $K\cong H(1) \cong A(0, 4\delta, \pi)$.

These complete the proof.
\end{proof}

 \begin{Rmk}
 \begin{enumerate}
 \item[a)]
  If $L$ is $\pi^i$-modular, then $\pi^j L$ is $\pi^{i+2j}$-modular for any integer $j$.
   Thus, the above theorem implies its obvious generalization to the case where $i$ is allowed to be any element of $\mathbb{Z}$.
\item[b)] (Section 4 in \cite{J}) For a general lattice $L$, we have a Jordan splitting, namely $L=\bigoplus_i L_i$ such that $L_i$ is $\pi^{n(i)}$-modular  and the sequence $\{n(i)\}_i$ increases.
Jordan splittings $L=\bigoplus_{1\leqq i \leqq t} L_i$ and $K=\bigoplus_{1\leqq i \leqq T} K_i$ will be said to be of the same type
if $t=T$, and  $s(L_i)=s(K_i)$, rank $L_i$ = rank $K_i$, and $n(L_i)=s(L_i)$ if and only if $n(K_i)=s(K_i)$ for each $i$.
Jordan splitting is not unique but partially canonical in the sense that two Jordan splittings of isometric lattices are always of the same type.
\item[c)]
If we allow some of the $L_i$'s to be zero, then we may assume that $n(i) = i$ for all $i$.
In other words, for all $i\in \mathbb{N}\cup \{0\}$  we have $s(L_i)=(\pi^i)$, and, more precisely, $L_i$ is $\pi^i$-modular.
Then we can rephrase the above remark b) as follows: Let $L=\bigoplus_i L_i$ be a Jordan splitting with $s(L_i)=(\pi^i)$ for all $i\geq 0$.
Then the scale, rank and parity type of $L_i$ depend only on $L$.
We will deal exclusively with a Jordan splitting satisfying $s(L_i)=(\pi^i)$ from now on.
\end{enumerate}
 \end{Rmk}

\begin{Lem}Assume that $B/A$ satisfies \textit{Case 2}.
There is an element $d\in A^{\times}$ such that $1-4d$ and $1$ give different classes in $A^{\times}/N_{B/A}B^{\times}$.
\end{Lem}

\begin{proof}
Using  our knowledge of the lower ramification groups $G_i$ for $\mathrm{Gal}(E/F)$,
we can compute the higher ramification groups $G^i$ for the same extension:
\[G^{-1}=G^{0}=G^{1}=G^{2}, G^{3}=0.\]
Let $U^i=1+(2)^i$ be the  $i$-th higher unit group in $F$ with $i\geq 1$.
Then by \textit{local class field theory},
the image of $G^i$ under the isomorphism
$\mathrm{Gal}(E/F) \cong F^{\ast}/N_{E/F}E^{\ast}$
is $U^i/(U^i\cap N_{E/F}E^{\ast})$.
 We apply this when $i$ is $2$.
 Then we can easily verify the existence of a $d$  as  stated in the lemma.
\end{proof}

\begin{Lem} Assume that $B/A$ satisfies \textit{Case 1}.
There is an element $d\in A^{\times}$ such that $1+2d$ and $1$ give different classes in $A^{\times}/N_{B/A}B^{\times}$.
\end{Lem}

\begin{proof}
The proof of this lemma is similar to that of the above lemma.
In this case the higher ramification groups are as follows:
\[G^{-1}=G^{0}=G^{1}, G^{2}=0.\]
Again we use \textit{local class field theory} as explained in the proof of the above lemma but with   $i=1$.
 Then we can easily verify the existence of a $d$ as stated in the lemma.
\end{proof}

 \subsection{Lattices}

In this subsection, we will define several lattices and associated notation. A hermitian lattice $(L, h)$ is given.
We denote by $(\pi^l)$ the scale $s(L)$ of $L$.

\begin{itemize}
\item[(1)]  $A_i$, $\{x\in L \mid h(x,L) \in \pi^iB\}.$
\item[(2)] $X(L)$, the sublattice of $L$ such that
 $X(L)/\pi L$ is the radical of the symmetric bilinear form $\frac{1}{\pi^l}h$ mod $\pi$ on $L/\pi L$.
\end{itemize}

Let $l=2m$ or $l=2m-1$.
We consider the function defined over $L$
$$\frac{1}{2^m}q : L\longrightarrow A, x\mapsto \frac{1}{2^m}h(x,x).$$
Then $\frac{1}{2^m}q$  mod 2 defines a quadratic form $L/\pi L \longrightarrow \kappa$.
It can be easily checked that $\frac{1}{2^m}q$ mod 2 on $L/\pi L$ is an additive polynomial if $l=2m$, or if $l=2m-1$ and $E/F$ satisfies
\textit{Case 2}.
Otherwise, that is, if $l=2m-1$ and $E/F$ satisfies \textit{Case 1}, it is not  additive.
We define a lattice $B(L)$ as follows.
\begin{itemize}
\item[(3)] If $\frac{1}{2^m}q$ mod 2 on $L/\pi L$ is an additive polynomial, then
$B(L)$ is defined to be the sublattice of $L$ such that
 $B(L)/\pi L$ is the kernel of the additive polynomial  $\frac{1}{2^m}q$ mod 2 on $L/\pi L$.
 If $\frac{1}{2^m}q$ mod 2 on $L/\pi L$ is not an additive polynomial, then
 $B(L)=L$.
\end{itemize}

To define a few more lattices,  we need some preparation as follows.

Assume $B(L)\varsubsetneq L$ and $l$ is even. Then the bilinear form $\frac{1}{(\pi\cdot\sigma(\pi))^{\frac{l}{2}}}h$ mod $\pi$
on the $\kappa$-vector space $L/X(L)$ is nonsingular symmetric and non-alternating.
It is well known that there is a unique vector $e \in L/X(L)$ such that $(\frac{1}{(\pi\cdot\sigma(\pi))^{\frac{l}{2}}}h(v,e))^2=\frac{1}{(\pi\cdot\sigma(\pi))^{\frac{l}{2}}}h(v,v)$ mod $\pi$ for every vector $v \in L/X(L)$.
Let $\langle e\rangle$ denote  the 1-dimensional vector space spanned by the vector $e$ and denote by $e^{\perp}$ the 1-codimensional subspace of $L/X(L)$ which is orthogonal to the vector $e$ with respect to  $\frac{1}{(\pi\cdot\sigma(\pi))^{\frac{l}{2}}}h$ mod $\pi$.
Then $$B(L)/X(L)=e^{\perp}.$$
If $B(L)= L$, then the bilinear form $\frac{1}{(\pi\cdot\sigma(\pi))^{\frac{l}{2}}}h$ mod $\pi$ on the $\kappa$-vector space $L/X(L)$
is nonsingular symmetric and alternating. In this case, we put $e=0\in L/X(L)$
and note that it is characterized by the same identity.\\
The remaining lattices we need for our definition are:
\begin{itemize}
\item[(4)] $W(L)$,
the sublattice of $L$ such that \[
\left\{
  \begin{array}{l l}
 \textit{$W(L)/X(L)=\langle e\rangle$} & \quad \textit{if $l$ is even};\\
 \textit{$W(L)=X(L)$} & \quad \textit{if $l$ is odd}.
    \end{array} \right.
\]
\item[(5)] $Y(L)$, the sublattice of $L$ such that $Y(L)/\pi L$ is the radical of
\[
\left\{
  \begin{array}{l l}
 \textit{the alternating bilinear form $\frac{1}{2^{m}}h$
  mod $\pi$ on $B(L)/\pi L$} & \quad \textit{if $l=2m$};\\ 
 \textit{the alternating bilinear form $\frac{1}{\pi}\cdot\frac{1}{2^{m-1}}h$ mod $\pi$ on $B(L)/\pi L$} & \quad \textit{if $l=2m-1$ in \textit{Case 2}}.
    \end{array} \right.
\]
\end{itemize}

The last lattice we need to define in this subsection is
\begin{itemize}

 \item[(6)] $Z(L)$, the sublattice of $L$ such that $Z(L)/\pi L$ in \textit{Case 1} or $Z(L)/\pi B(L)$ in  \textit{Case 2} is the radical of
 \[
\left\{
  \begin{array}{l l}
 \textit{the quadratic form $\frac{1}{2^{m}}q$ mod $2$ on $L/\pi L$}  & \quad \textit{if $l=2m-1$ in \textit{Case 1}};\\
 \textit{the quadratic form $\frac{1}{2^{m+1}}q$ mod $2$ on $B(L)/\pi B(L)$} & \quad  \textit{if $l=2m$ in \textit{Case 2}}.
    \end{array} \right.
\]
\end{itemize}
(see, e.g.,  page 813 of \cite{Sa} for the notion of the radical of a quadratic form on a vector space over a field of characteristic 2.)

\begin{Rmk}
 \begin{enumerate}
 \item[a)]  We can associate
the 5 lattices above $(B(L), W(L), X(L), Y(L), Z(L))$
with $(A_i, h)$ in place of $L$.
 Denote the resulting lattices by $B_i,W_i,X_i,Y_i,Z_i.$
 \item[b)] As $\kappa$-vector spaces, the dimensions of $A_i/B_i$ and  $W_i/X_i$ are at most 1.
 \end{enumerate}
\end{Rmk}

Let $L=\bigoplus_i L_i$ be a Jordan splitting. When $i$ is even, we assign a type to each $L_i$ as follows:
\[\left \{
  \begin{array}{l l}
   I & \quad  \textit{if $L_i$ is of parity type I};\\
   I^o & \quad \textit{if $L_i$ is of parity type I and the rank of $L_i$ is odd};\\
  I^e & \quad \textit{if $L_i$ is of parity type I and the rank of $L_i$ is even};\\
\textit{II} & \quad  \textit{if $L_i$ is of parity type II}.\\
    \end{array} \right.\]
    When $i$ is odd,  we say that $L_i$ is
\[
\left\{
  \begin{array}{l l}
  \textit{of type II}    & \quad  \textit{if $E/F$ satisfies \textit{Case 1}};\\
  \textit{of type I}    &   \quad  \textit{if $E/F$ satisfies \textit{Case 2} and $A_i\varsupsetneq B_i$};\\
  \textit{of type II}    &   \quad  \textit{if $E/F$ satisfies \textit{Case 2} and $A_i=B_i$}.
    \end{array} \right.
\]

    In addition, when $i$ is even, we say that $L_i$ is
\[\left \{
  \begin{array}{l l}
  \textit{bound of type I} & \quad  \textit{if $L_i$ is \textit{of type I} and either $L_{i-2}$ or $L_{i+2}$ is \textit{of type I}};\\
  \textit{bound of type II} & \quad  \textit{if $L_i$ is \textit{of type II} and either $L_{i-1}$ or $L_{i+1}$ is \textit{of type I}};\\
 \textit{free} & \quad \textit{otherwise}.\\
    \end{array} \right.\]

When $i$ is odd, we say that $L_i$ is
\[\left \{
  \begin{array}{l l}
  \textit{bound} & \quad  \textit{if either $L_{i-1}$ or $L_{i+1}$ is \textit{of type I}};\\
 \textit{free} & \quad \textit{otherwise}.\\
    \end{array} \right.\]

Notice that the type of each $L_i$ is determined canonically regardless of the choice of a Jordan splitting.

\subsection{Sharpened Structure Theorem for Integral hermitian Forms}

While Theorem 2.2 lets us work with a restricted set of candidates for each $L_i$, further pruning is facilitated by the type of each $L_i$.
For this, we need a series of lemmas.
\begin{Lem} (Proposition 9.2 in \cite{J})
Let $L$ be a $\pi^i$-modular lattice of rank 2 with $n(L)=n(H(i))$.
Then $L\cong H(i)$
in \textit{Case 2} with $i$ odd and in \textit{Case 1} with $i$ even. \qed
\end{Lem}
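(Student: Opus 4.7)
The plan is to combine the classification of rank-$2$ modular lattices (Theorem 2.2(b)) with a discriminant/norm-group argument. By Remark 2.3(a), scaling $L$ by a power of $\pi$ shifts the modularity index by an even integer while preserving the isometry statement, so it is enough to treat $i=0$ in \textit{Case 1} and $i=1$ in \textit{Case 2}. Evaluating the hermitian form directly on the Gram matrix of $H(i)$, and using $\mathrm{Tr}_{E/F}(B)=(2)$ in \textit{Case 1} and $\sigma(\pi)=-\pi$, $\pi^2=2\delta$ in \textit{Case 2}, yields $n(H(0))=(2)$ and $n(H(1))=(4)$.

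By Theorem 2.2(b), $L$ is isometric to either $A(1,2b,1)$ or $A(2a,2b,1)$ in \textit{Case 1}, and to either $A(2\delta,4a,\pi)$ or $A(4\delta,4a,\pi)$ in \textit{Case 2}. A direct computation of $h(xe_1+ye_2,xe_1+ye_2)$ on each candidate gives $n(A(1,2b,1))=(1)$, $n(A(2a,2b,1))=(2)$, $n(A(2\delta,4a,\pi))=(2)$, and $n(A(4\delta,4a,\pi))=(4)$. Therefore the hypothesis $n(L)=n(H(i))$ singles out $L\cong A(2a,2b,1)$ in \textit{Case 1} and $L\cong A(4\delta,4a,\pi)$ in \textit{Case 2}.

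To identify the surviving candidate with $H(i)$, I invoke the fact that a nondegenerate $2$-dimensional hermitian space over $E$ is hyperbolic iff $-\det$ lies in $\mathrm{Nm}_{E/F}(E^*)$. In \textit{Case 2}, $\det A(4\delta,4a,\pi)=16a\delta+\pi^2=2\delta(8a+1)$ and $-2\delta=\mathrm{Nm}(\pi)$; Hensel's lemma applied to $x^2-(8a+1)$ makes $8a+1$ a square in $F$, so $-\det$ is a norm. In \textit{Case 1}, $\det A(2a,2b,1)=4ab-1$, and $1-4ab\in\mathrm{Nm}(E^*)$ follows from an analogous Hensel-type argument on the two-variable norm polynomial $x^2-(1+2u)y^2$, using the explicit shape of $\pi=1+\sqrt{1+2u}$. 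Having shown the ambient space to be hyperbolic, I pick an isotropic vector and rescale to a maximal $v\in L$; completing to a basis $(v,w)$ of $L$ and unit-scaling $w$ to arrange $h(v,w)=\pi^i$, I replace $w$ by $w+\alpha v$ with $\alpha\in B$ chosen so that $\mathrm{Tr}(\sigma(\alpha)\pi^i)=-h(w,w)$. The norm hypothesis $n(L)=n(H(i))$ places $h(w,w)$ in $\mathrm{Tr}(\pi^iB)$, guaranteeing such $\alpha$ exists, and the new basis realizes $L\cong H(i)$. The main obstacle is the norm-group verification in \textit{Case 1}, where the naive Hensel criterion on $x^2=1-4ab$ is borderline in residue characteristic $2$ for generic $a,b$, and one must deploy the full two-variable norm polynomial together with the explicit uniformizer to conclude; a secondary subtlety is that the final modification of $w$ must be realizable with $\alpha\in B$ rather than merely $\alpha\in E$, which is exactly the content of the parity hypothesis.
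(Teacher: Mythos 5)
The paper offers no proof at all for this lemma: it is stated and attributed directly to Proposition 9.2 of Jacobowitz, so there is no "paper's proof" to compare against line by line. Your write-up is therefore a genuinely new argument, and it is essentially correct. The scaffolding is sound: scaling by $\pi^j$ reduces to $i=0$ (\textit{Case 1}) and $i=1$ (\textit{Case 2}); the norm computations $n(H(0))=\mathrm{Tr}(B)=(2)$ and $n(H(1))=\mathrm{Tr}(\pi B)=(4\delta)=(4)$ are right; the candidate list from Theorem 2.2(b) and the elimination by norm are right; the final lattice step (rescale a maximal isotropic $v$ so $h(v,w)=\pi^i$, then solve $\mathrm{Tr}(\sigma(\alpha)\pi^i)=-h(w,w)$ with $\alpha\in B$) is correct, and you identify exactly the right reason it works, namely that $h(w,w)\in n(L)=\mathrm{Tr}(\pi^iB)$ and $\alpha\mapsto\mathrm{Tr}(\sigma(\alpha)\pi^i)$ is a surjective $A$-linear map $B\to\mathrm{Tr}(\pi^iB)$.

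The one place you flag uncertainty, the verification that $1-4ab\in\mathrm{Nm}_{E/F}(E^\times)$ in \textit{Case 1}, can be closed much more cleanly than by a "two-variable Hensel" argument. The ramification filtration in \textit{Case 1} is $G_0=G_1\neq 0$, $G_2=0$, so the conductor exponent is $f(E/F)=(|G_0|-1)+(|G_1|-1)=2$. By local class field theory the norm group therefore contains $1+\mathfrak{p}_F^2=1+4A$, so every element of $1+4A$, in particular $1-4ab$, is a norm without any case analysis on $a,b$. The same observation actually streamlines \textit{Case 2} as well: there $G_2\neq 0$, $G_3=0$, the conductor exponent is $3$, $1+8A\subseteq\mathrm{Nm}(E^\times)$, and $8a+1$ is a norm immediately (your Hensel argument for it being a square also works, but is not needed). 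So the approach is valid; I would just replace the hand-wavy norm-group step with the conductor computation, which is both shorter and uniform across the two cases. Compared with Jacobowitz's original treatment (direct Gram-matrix manipulation), your route via the discriminant-in-$F^\times/\mathrm{Nm}(E^\times)$ criterion for a rank-two hermitian space to be hyperbolic is more conceptual and separates the "space" question (isotropy) from the "lattice" question (integral basis adjustment) cleanly; the cost is that you need local class field theory as input, whereas the matrix manipulation is self-contained.
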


\begin{Lem} (Proposition 4.4 in \cite{J})
A $\pi^i$-modular lattice $L$ has an orthogonal basis if $n(L)=s(L)$. \qed
\end{Lem}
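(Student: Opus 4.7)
The plan is to prove the lemma by induction on $\mathrm{rank}\,L$. The rank-one case is trivial: any nonzero vector is an orthogonal basis. For rank two, Theorem 2.2(b) classifies the $\pi^i$-modular lattices with $n(L)=s(L)$ up to isometry ($A(1,2b,1)$ when $i=0$, and $A(2,2a,\pi)$ or $A(2\delta,4a,\pi)$ when $i=1$, depending on whether we are in Case 1 or Case 2), and in each case an orthogonal basis is produced by a direct change of coordinates; e.g.\ $(e_1,\,e_2-e_1)$ diagonalises $A(1,2b,1)$ to $(1,\,2b-1)$.

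For the inductive step with $\mathrm{rank}\,L\ge 3$, the strategy is to split off a rank-one orthogonal summand. Choose $x_1\in L$ so that $h(x_1,x_1)$ generates the ideal $n(L)$; the hypothesis $n(L)=s(L)$ then forces $h(x_1,x_1)$ to simultaneously generate $s(L)=(\pi^i)$ as a $B$-ideal. I would first verify $x_1$ is maximal in $L$: if $x_1=\pi y$ for some $y\in L$, then $h(x_1,x_1)=\pi\sigma(\pi)\,h(y,y)\in 2\cdot n(L)\subsetneq n(L)$, contradicting the choice. Since $L$ is $\pi^i$-modular and $x_1$ is maximal, $h(x_1,L)=(\pi^i)=h(x_1,x_1)\cdot B$, so for every $y\in L$ the scalar $c:=h(y,x_1)/h(x_1,x_1)$ lies in $B$. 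Setting $z:=y-cx_1$ then gives the orthogonal decomposition
\[ L = Bx_1 \oplus x_1^{\perp}, \]
and a short check shows that $x_1^{\perp}$ inherits $\pi^i$-modularity from $L$ (any maximal $z\in x_1^{\perp}$ remains maximal in $L$, and $h(z,L)=h(z,x_1^{\perp})$ because $h(z,x_1)=0$).

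The main obstacle is arranging that $x_1^{\perp}$ also satisfies $n(x_1^{\perp})=s(x_1^{\perp})$, which is needed to apply the inductive hypothesis. This is not automatic for a naively chosen $x_1$, since the generator of $n(L)$ might happen to be concentrated in $Bx_1$ and leave the complement of parity type II. To force the inheritance, I would use the structural decomposition of Theorem 2.2(a) to write $L=(\bigoplus_\lambda H(i))\oplus K$ with $K$ of rank at most $2$ and $n(K)=s(K)$, orthogonalise $K$ by the rank-$\le 2$ case, and pick $x_1$ from that orthogonal basis of $K$. If $K$ has rank $2$, the remaining basis vector of $K$ lies in $x_1^{\perp}$ and supplies the required generator of $n(x_1^{\perp})$. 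If $K$ has rank $1$, a fresh type-I rank-one vector in the complement can be manufactured by twisting a hyperbolic pair from one of the $H(i)$'s together with $x_1$ before performing the split. With $n(x_1^{\perp})=s(x_1^{\perp})$ secured, the inductive hypothesis applies to $x_1^{\perp}$ and $L$ decomposes into rank-one orthogonal pieces.
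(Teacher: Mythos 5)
The paper does not prove this lemma; it is quoted verbatim from Jacobowitz (Proposition 4.4 of \cite{J}), so there is no in-paper argument to compare against. Your proposal is a reconstruction of the standard induction-on-rank argument, which is essentially what one finds in \cite{J}, and the overall skeleton — trivial rank $1$, explicit diagonalisation in rank $2$ via Theorem 2.2(b), then splitting off a unit-norm line and descending — is the right one. Two points need attention.

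First, a small misreading: the hypothesis $n(L)=s(L)$ already forces $i$ to be even. Every $h(x,x)$ lies in $A$, and since $E/F$ is ramified, $v_B$ of any element of $A$ is even; hence $n(L)$ is always of the form $(\pi^{2k})$ as a $B$-ideal. If $i$ is odd then $s(L)=(\pi^i)$ can never equal $n(L)$. In particular the rank-$2$ types $A(2,2a,\pi)$ and $A(2\delta,4a,\pi)$ that you list as ``the $n=s$ cases when $i=1$'' in fact satisfy $n=(2)\neq(\pi)=s$; they are irrelevant here. The only rank-$2$ case you need (after scaling by Remark 2.3(a), which preserves the $n=s$ condition) is $A(1,2b,1)$, which your change of basis $(e_1,e_2-e_1)$ handles correctly.

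Second, and more substantively, the rank-one-$K$ case is the crux and you leave it as ``a fresh type-I rank-one vector \dots can be manufactured by twisting a hyperbolic pair from one of the $H(i)$'s together with $x_1$.'' This is the right idea but it needs to be carried out, because the naive choice $x_1=e$ (the generator of $K=(\epsilon)$) gives $x_1^{\perp}=\bigoplus_\lambda H(i)$, whose norm is $(2\pi^i)$, not $(\pi^i)$: the inductive hypothesis does not apply. The fix is to replace $e$ by $x_1'=e+f_1$, where $(f_1,f_2)$ is a hyperbolic basis of one $H(i)$ summand. Then $h(x_1',x_1')=\epsilon$ is unchanged, $x_1'$ is still maximal and splits off, and a direct computation shows $(x_1')^{\perp}$ is spanned (modulo the remaining $H_\lambda$'s, which split off orthogonally) by $f_1$ and $e-\epsilon f_2$, whose Gram matrix is $\begin{pmatrix}0&-\epsilon\\-\epsilon&\epsilon\end{pmatrix}$ after scaling — in particular $n=(\epsilon)=(\pi^i)=s$. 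With that computation supplied, the induction closes. Also note that with the paper's convention $h(av,bw)=\sigma(a)b\,h(v,w)$ the Gram--Schmidt coefficient should be $c=h(x_1,y)/h(x_1,x_1)$, not $h(y,x_1)/h(x_1,x_1)$; this is cosmetic but worth fixing.
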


\begin{Lem}
Assume that $E/F$ satisfies \textit{Case 2}.
\begin{enumerate}
\item Let $L=A(4a, 2\delta, \pi)\oplus (2c)$ with respect to a basis $(e_1, e_2, e_3)$, where $c\equiv 1 \mathrm{~mod~}2$. 
Then $L\cong H(1)\oplus (2c^{\prime})$ where $c^{\prime}\equiv 1 \mathrm{~mod~}2$.
\item Let $L=A(4a, 2\delta, \pi)\oplus (c)$ with respect to a basis $(e_1, e_2, e_3)$, where $c\equiv 1 \mathrm{~mod~}2$.
Then $L\cong H(1)\oplus (c^{\prime})$ where $c^{\prime}\equiv 1 \mathrm{~mod~}2$.
\end{enumerate}
\end{Lem}

\begin{proof}
For $(1)$, We work with the basis $(e_1-\frac{2a\pi}{\delta}e_2, e_2+e_3, \frac{c\pi}{\delta}e_1+e_3)$ of $L$.
With respect to this basis, $L\cong A(-4a-16a^2, 2(\delta+c), \pi(1+4a))\oplus (2c(1-\frac{4ac}{\delta}))$.
Moreover, $n(A(-4a-16a^2, 2(\delta+c), \pi(1+4a)))=n(H(1))=(4)$. This, by combining with the above lemma, completes the proof.

For $(2)$,
the sublattice of $L$ spanned by $(e_1, e_2, \pi e_3)$ is isomorphic to $A(4a, 2\delta, \pi)\oplus (2c')$ where $c'\equiv 1 \mathrm{~mod~}2$.
If we apply $(1)$ to this sublattice
by choosing a basis $(e_1-\frac{2a\pi}{\delta}e_2, e_2+\pi e_3, \frac{c'\pi}{\delta}e_1+\pi e_3)$,
then $A(4a, 2\delta, \pi)\oplus (2c')$  is isomorphic to  $H(1)\oplus (2c^{\prime\prime})$ where $c^{\prime\prime}\equiv 1 \mathrm{~mod~}2$.
Now the sublattice spanned by $(e_1-\frac{2a\pi}{\delta}e_2, e_2+\pi e_3, \frac{1}{\pi}(\frac{c'\pi}{\delta}e_1+\pi e_3))$ of $L$,
 which is the same as $L$, is isomorphic to $H(1)\oplus (-c^{\prime\prime}/\delta)$.
\end{proof}

The above lemmas will contribute to the proof of Theorem 2.10 below in the following manner.
For a given Jordan splitting $L=\bigoplus_i L_i$ in \textit{Case 2}, assume that $L_1$ is \textit{bound of type I}.
Theorem 2.2 tells us that
there are two different possibilities  for $L_1$ as a hermitian lattice and if $L_1=\oplus H(1)$
then the conclusion of the as yet unstated Theorem 2.10, for $i=1$, will follow.
If $L_1=\oplus H(1)\oplus A(4a, 2\delta, \pi)$
and either $L_0$ or $L_2$ is \textit{of type $I^o$}, then by Lemma 2.9 and the above paragraph, $L_0\oplus L_1 \oplus L_2=L_0'\oplus L_1' \oplus L_2'$ such that
$L_1'=\oplus H(1)$ and the type of $L_0$ or $L_2$ is the same as that of $L_0'$ or $L_2'$ respectively.
In case either $L_0$ or $L_2$ is \textit{of type $I^e$}, say $L_2$ is \textit{of type $I^e$},
 $L_2=(\oplus H(2))\oplus (2a)\oplus (2b)$ where $a, b\equiv 1 \mathrm{~mod~}2$ by Lemma 2.8.
Then we use Lemma 2.9 to $L_1 \oplus (2b)$ so that $L_1 \oplus (2b)=(\oplus H(1))\oplus (2b')$ with $b'\equiv 1 \mathrm{~mod~}2$.
Thus $L_1\oplus L_2=L_1'\oplus L_2'$ where $L_1'=\oplus H(1)$ and the type of $L_2'=(\oplus H(2))\oplus (2a)\oplus (2b')$ is the same as that of $L_2$.
In conclusion, $L=L_0'\oplus L_1' \oplus L_2'\oplus (\bigoplus_i L_i)$ is another Jordan splitting of $L$ and in this case, $L_1'=\oplus H(1)$.
Therefore, if $L_1$ is \textit{bound of type I} in \textit{Case 2},
then $L_1$ can always be replaced by $\oplus H(1)$.
Based on this combined with Theorem 2.2, we have the following structure theorem:

\begin{Thm}
There is a suitable choice of a Jordan splitting of the given lattice $L=\bigoplus_i L_i$ such that $L_i=\bigoplus_{\lambda}H_{\lambda}\oplus K$,
where each $H_{\lambda}= H(i)$ and $K$ is  $\pi^i$-modular of rank 1 or 2 with the following descriptions.
Let $i=0$ or $i=1$. Then

 \begin{enumerate}
 \item[a)] In \textit{Case 1},
\[
K=\left\{
  \begin{array}{l l}
  \textit{$(a)$ where $a \equiv 1$ mod 2}    & \quad  \textit{if $i=0$ and $L_0$ is \textit{of type $I^o$}};\\
  \textit{$A(1, 2b, 1)$}    & \quad  \textit{if $i=0$ and $L_0$ is \textit{of type $I^e$}};\\
  \textit{$H(0)$}    & \quad  \textit{if $i=0$ and $L_0$ is \textit{of type II}};\\
  \textit{$A(2, 2b, \pi)$}    & \quad  \textit{if $i=1$}.
      \end{array} \right.
\]
\item[b)] In \textit{Case 2},
\[
K=\left\{
  \begin{array}{l l}
  \textit{$(a)$ where $a \equiv 1$ mod 2}    & \quad  \textit{if $i=0$ and $L_0$ is \textit{of type $I^o$}};\\
  \textit{$A(1, 2b, 1)$}    & \quad  \textit{if $i=0$ and $L_0$ is \textit{of type $I^e$}};\\
  \textit{$A(2\delta, 2b, 1)$}    & \quad  \textit{if $i=0$ and $L_0$ is \textit{of type II}};\\
  \textit{$A(4a, 2\delta, \pi)$}    & \quad  \textit{if $i=1$ and $L_1$ is \textit{free of type I}};\\
  \textit{$H(1)$}    & \quad  \textit{if $i=1$, and $L_1$ is \textit{bound of type I} or \textit{of type II}}.
      \end{array} \right.
\]
Here, $a, b\in A$ and $\delta, \pi$ are explained in Section 2.1. \qed
\end{enumerate}
\end{Thm}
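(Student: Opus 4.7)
My plan is to refine Theorem 2.2 branch by branch, reading the type designation of $L_i$ directly off the invariants of its residual block $K$, and applying the preceding lemmas wherever Theorem 2.2 leaves more than one candidate.

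For $i=0$ in both cases, Theorem 2.2 produces three mutually exclusive options for $K$, namely $(a)$ with $a$ a unit, $A(1,2b,1)$, and $A(2a,2b,1)$. These are distinguished by the rank of $K$ together with whether $n(K)=s(K)$: rank $1$ corresponds to $L_0$ being of type $I^o$; rank $2$ with $n(K)=s(K)=(1)$ corresponds to $I^e$; and rank $2$ with $n(K)\subsetneq s(K)$ corresponds to type II. In Case 2 the last block is already normalized to $A(2\delta,2b,1)$ by Theorem 2.2. In Case 1 it has norm $(2)=n(H(0))$, so the first cited lemma (Proposition 9.2 of \cite{J}, specialized to even $i$ in Case 1) upgrades it to $H(0)$. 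The $i=1$, Case 1 row requires no refinement, since $L_1$ is automatically of type II and Theorem 2.2 uniquely produces $A(2,2b,\pi)$.

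The substantive work is concentrated in $i=1$, Case 2. Here Theorem 2.2 leaves two options, $A(2\delta,4a,\pi)$ and $A(4\delta,4a,\pi)$. A direct evaluation of the additive polynomial $\frac{1}{2}q$ modulo $2$ shows that the former satisfies $A_1\supsetneq B_1$ (type I, because $\delta\equiv 1\bmod 2$) while the latter satisfies $A_1=B_1$ (type II). The type II form has norm $(4)=n(H(1))$, so the first lemma again applies and yields $H(1)$, which handles the \emph{of type II} half of the relevant row. For type I, a basis swap followed by a sign change on the second basis vector (using $\sigma(\pi)=-\pi$ in Case 2) rewrites $A(2\delta,4a,\pi)$ as $A(4a,2\delta,\pi)$, giving the free branch exactly as stated.

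What remains is the bound-of-type-I case, and this is the step I expect to be the main obstacle. The hypothesis provides a neighbor $L_0$ or $L_2$ that is of type I, which by Lemma 2.6 admits an orthogonal basis and hence contains a rank-$1$ direct summand either of the form $(\epsilon)$ (from $L_0$, with $\epsilon$ a unit) or of the form $(2c)$ (from $L_2$, with $c\equiv 1\bmod 2$). In the $L_2$-adjacent subcase the merging lemma stated immediately above the theorem applies directly: $A(4a,2\delta,\pi)\oplus(2c)\cong H(1)\oplus(2c')$, and reinterpreting the right side rearranges the Jordan splitting so that $H(1)$ sits inside $L_1$ while $(2c')$ preserves the type-I status of $L_2$. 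The $L_0$-adjacent subcase needs an analogous but not literally identical argument, since the relevant rank-$1$ summand there lives at scale $(1)$ rather than $(\pi^2)$; the bookkeeping involves a mirror version of the same basis change, together with checking that the rearranged decomposition remains a Jordan splitting with $s(L_i)=(\pi^i)$ and does not alter the types of the unaffected blocks. Once this is done, every line of the theorem is accounted for.
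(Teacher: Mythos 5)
Your outline handles the $i=0$ rows, the Case~1 row for $i=1$, and the free/type-II rows for $i=1$ in Case~2 correctly, and your use of Lemma~2.7 for the $L_2$-adjacent bound subcase is exactly what the paper intends. The gap is the $L_0$-adjacent bound subcase, and the difficulty there is more than ``bookkeeping.'' If you literally mirror the basis change of Lemma~2.7 with $(\epsilon)$, $\epsilon\in A^\times$, in place of $(2c)$, the vector $e_2+e_3$ acquires norm $2\delta+\epsilon$, which is a unit; so the candidate rank-$2$ block becomes $\pi^0$-modular, not $\pi^1$-modular, and the construction collapses. There is no literal mirror of Lemma~2.7 at scale $(1)$.

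The correct way to reduce the $L_0$-adjacent subcase to Lemma~2.7 is to pass to the sublattice $A_1$ rather than to mirror the lemma. Write $L^\ast=(\epsilon)\oplus K$ with $K\cong A(4a,2\delta,\pi)$ and basis $(e_0,g_1,g_2)$. Then $A_1=\pi(\epsilon)\oplus K$, and since $h(\pi e_0,\pi e_0)=\pi\sigma(\pi)\epsilon=-2\delta\epsilon$, the rank-$1$ factor $\pi(\epsilon)$ is precisely a lattice of the shape $(2c)$ with $c=-\delta\epsilon\equiv 1\bmod 2$. Thus Lemma~2.7 applies to $A_1$, and its explicit new basis shows that the resulting $(2c')$-generator is $g_3'=\tfrac{c\pi}{\delta}g_1+\pi e_0=\pi\bigl(\tfrac{c}{\delta}g_1+e_0\bigr)$, which lies in $\pi L^\ast$. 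Hence $\tfrac{1}{\pi}g_3'\in L^\ast$ is a unit vector orthogonal to the hyperbolic pair $(g_1',g_2')$, and an index count over $A_1$ gives $L^\ast=H(1)\oplus(-c'/\delta)$. So the statement is recoverable, but only by opening up the proof of Lemma~2.7 and using the divisibility of $g_3'$ by $\pi$ in $L^\ast$; citing the lemma as a black box, as you do, does not close the $L_0$-adjacent subcase.
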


From now on, the pair $(L,h)$ is fixed throughout this paper.

\begin{Rmk}
Working with a basis furnished by Theorem 2.10, we can describe our lattices $A_i$ through $Z_i$ more explicitly.
We use the following conventions.
$\mathcal{L}_i$ denotes $\bigoplus_{j\neq i} \pi^{max\{0, i-j\}}L_j$.
Further, the $\oplus_{\lambda}H_{\lambda}$ will be denoted $\mathcal{H}_i$.
Theorem 2.10 involves a basis for a lattice $K$, which we will write as $\{e_1^{(i)}, e_2^{(i)}\}$ according to the ordering contained therein.
For all cases, we have $A_i=\mathcal{L}_i\oplus L_i$ and $X_i=\mathcal{L}_i\oplus \pi L_i$.

In order to write $W_i$, we should first find the vector $e\in A_i/X_i$ explained in the paragraph right after the definition of $B(L)$ in Section 2.3.
In order to simplify notations, let us work with one example.
Assume that $(e_1, e_2, e_3, e_4)$ is a $B$-basis of $L$ with respect to which $h$ is represented by the matrix
\[\begin{pmatrix} 0&1&0&0\\1&0&0&0\\0&0&1&1\\0&0&1&2 \end{pmatrix}.\]
So $L (=L_0=A_0)$ is of type $I^e$ and our basis is as explained in Theorem 2.10 of the paper.
Now, in order to find $W_0$, we should find the vector $e \in L/\pi L$  explained in section 2.3 (after the definition of $B(L)$).
If $v=(x, y, z, w)$ is a vector in $L/\pi L$, $h(v, v)$ mod $\pi$ $=z^2$.
On the other hand, if $e=(0, 0, 0, 1) \in L/\pi L$, then $(h(v, e))^2$ mod $\pi$ $=z^2$.
Therefore, by uniqueness of the vector $e$, we have that $(0, 0, 0, 1) \in L/\pi L$ is the vector $e$ we are looking for.

Since $W_0$ is the sublattice of $L$ such that $W_0/X_0=W_0/\pi L$ is the subspace of $L/ \pi L$ spanned by the vector $e$,
$W_0$ is spanned by  $(\pi e_1, \pi e_2, \pi e_3, e_4)$.
And it is easy to see that $B_0$ is spanned by $( e_1,  e_2, \pi e_3, e_4)$.

To describe all lattices, it is good to start with the matrix of our fixed hermitian form $h$ with respect to a basis furnished by Theorem 2.10.

\textbf{Case 1, $i$ even :}
For type \textit{I},  $e=(0, \cdots, 0, 1) \in A_i/X_i$.
\begin{center}
    \begin{tabular}{| l | l | l | l |}
    \hline
    Type & $B_i$ & $W_i$ & $Y_i$ \\ \hline
    $I^o$ & $\mathcal{L}_i\oplus \mathcal{H}_i \oplus (\pi) e_1^{(i)}$ & $\mathcal{L}_i\oplus \pi\cdot  \mathcal{H}_i \oplus Be_1^{(i)}$ & $X_i$ \\ \hline
  $I^e$ & $\mathcal{L}_i\oplus \mathcal{H}_i \oplus (\pi) e_1^{(i)}\oplus B e_2^{(i)}$ &
  $\mathcal{L}_i\oplus \pi \cdot \mathcal{H}_i \oplus (\pi)e_1^{(i)} \oplus B e_2^{(i)}$& $W_i$ \\ \hline
$II$ & $A_i$ & $X_i$ & $X_i$\\ \hline
    \end{tabular}
\end{center}
\textit{  }\\

\textbf{Case 1, $i$ odd :}  $B_i=A_i, W_i=X_i$, and $Y_i$ is not defined.
 $Z_i$ is a sublattice of $A_i$ and so we should have congruence conditions for $L_j$.
Namely,
\[Z_i=\bigoplus_{j\notin \{i\}\cup \mathcal{E}}\pi^{max\{0, i-j\}}L_j\oplus \pi L_i\oplus \bigoplus_{j\in \mathcal{E}}\pi^{max\{0, i-j\}}\left(\mathcal{H}_j\oplus Be_2^{(j)}\right)\]
\[\oplus \left\{\sum_{j\in \mathcal{E}}\pi^{max\{0, i-j\}}\cdot a_je_1^{(j)}|\textit{each }a_j\in B, \sum_{j\in \mathcal{E}}a_j\in (\pi)\right\}.\]
Here, $\mathcal{E}=\{j\in \{i-1,  i+1\}| \textit{$L_j$ is of type I}\}$
and the $e_2^{(j)}$ factor should be ignored for those $j \in \mathcal{E}$ such that $L_j$ is of type $I^o$.\\

The following example would be helpful to have a better understanding of the notions of  `bound' and `free'
 and of the notion of type when $i$ is odd.
Let $L=L_1\oplus L_2=A(0, 0, \pi)\oplus (2)$, so that $L_1$ is bound of type $I$ (since $A_1\neq B_1$) and $L_2$ is free of type $I$.\\

\textbf{Case 2, $i$ even :}
 $B_i, W_i,$ and $Y_i$ are exactly as in the table given for Case 1.
The lattice $Z_i$ is a little complicated.
Note that when $L_i$ is of type $I$ or bound of type $II$, the dimension of $Y_i/Z_i$ as a $\kappa$-vector space is 1.
We describe it case by case below.

$\bullet$ Let $\mathcal{E}'=\{j\in \{i-2,  i+2\}| \textit{$L_j$ is of type I}\}$.
If $L_i$ is of type $I$ so that $L_{i-1}$ and $L_{i+1}$ are bound,
\[Z_i=\bigoplus_{j\notin \{i, i\pm 2\}}\pi^{max\{0, i-j\}}L_j\oplus \bigoplus_{j\in \{i\pm 2\}}\pi^{max\{0, i-j\}}\left(\mathcal{H}_j\oplus Be_2^{(j)}\right)
\oplus \pi \mathcal{H}_i\]
\[\oplus \left\{\left(\sum_{j\in \mathcal{E}'}\pi^{max\{0, i-j\}}\cdot a_je_1^{(j)}\right)+\left(\pi\cdot a_ie_1^{(i)}+b\cdot b_ie_2^{(i)}\right)
|\textit{each }a_j\in B, \left(\sum_{j\in \mathcal{E}'}a_j\right)+a_i+b\cdot b_i\in (\pi)\right\},\]
where the $e_2^{(j)}$ (resp. $e_2^{(i)}$) factor should be ignored for those $j \in\{i\pm 2\}$ (resp. $i$) such that $L_j$ (resp. $ L_i$) is not of type $I^e$,
and $b\in B$ is such that $L_i=\pi^{i/2}\left(\mathcal{H}_i\oplus A(1, 2b, 1)\right)$ when $L_i$ is of type $I^e$.

$\bullet$ If $L_i$ is free of type $II$ (so that all of $L_{i\pm 2}$ and $L_{i\pm 1}$ are of type $II$), $Z_i=X_i$.

$\bullet$ If $L_i$ is bound of type $II$, then
\[Z_i=\bigoplus_{j\notin \{i, i\pm 1, i\pm 2\}}\pi^{max\{0, i-j\}}L_j\oplus \pi L_i
\oplus \bigoplus_{j\in \{i\pm 1\}}\pi^{max\{0, i-j\}}\left(\mathcal{H}_j\oplus Be_1^{(j)}\right)
\oplus \bigoplus_{j\in \{i\pm 2\}}\pi^{max\{0, i-j\}}\left(\mathcal{H}_j\oplus Be_2^{(j)}\right)
\]
\[\oplus \left\{\left(\sum_{j\in \mathcal{E}_1}\pi^{max\{0, i-j\}}\cdot a_je_2^{(j)}\right)+
\left(\sum_{j\in \mathcal{E}_2}\pi^{max\{0, i-j\}}\cdot a_je_1^{(j)}\right)
|\textit{each }a_j\in B, \left(\sum_{j\in \mathcal{E}_1\cup \mathcal{E}_2}a_j\right)\in (\pi)\right\},\]
where
$\mathcal{E}_1=\{j\in \{i-1, i+1\}| \textit{$L_j$ is free of type I}\}$ and
$\mathcal{E}_2=\{j\in \{i-2, i+2\}| \textit{$L_j$ is of type I}\}$.
For example, if $i+1\in \mathcal{E}_1$, then $i+2\notin \mathcal{E}_2$.
And if $i+2\in \mathcal{E}_2$, then $i+1\notin \mathcal{E}_1$.
\\

\textbf{Case 2, $i$ odd :} In this case, $W_i=X_i$ and $Z_i$ is not defined.
\begin{center}
    \begin{tabular}{| l | l | l  |}
    \hline
    Type & $B_i$ &  $Y_i$ \\ \hline
    $\textit{free of type I}$ & $\mathcal{L}_i\oplus \mathcal{H}_i \oplus B e_1^{(i)}\oplus (\pi) e_2^{(i)}$
    & $\mathcal{L}_i\oplus \pi \mathcal{H}_i \oplus B e_1^{(i)}\oplus (\pi) e_2^{(i)}$ \\ \hline
  $\textit{bound of type I}$ & \textit{see below} &
  \textit{see below} \\ \hline
$\textit{type II}$ & $A_i$ & $X_i$\\ \hline
    \end{tabular}
\end{center}
\textit{  }\\
When $L_i$ is bound of type $I$, the dimension of $A_i/B_i$ as $\kappa$-spaces is 1.
\[B_i=\bigoplus_{j\notin \{i\}\cup \mathcal{E}}\pi^{max\{0, i-j\}}L_j\oplus  L_i\oplus \bigoplus_{j\in \mathcal{E}}\pi^{max\{0, i-j\}}\left(\mathcal{H}_j\oplus Be_2^{(j)}\right)\]
\[\oplus \left\{\sum_{j\in \mathcal{E}}\pi^{max\{0, i-j\}}\cdot a_je_1^{(j)}|\textit{each }a_j\in B, \sum_{j\in \mathcal{E}}a_j\in (\pi)\right\},\]
\[Y_i=\bigoplus_{j\notin \{i\}\cup \mathcal{E}}\pi^{max\{0, i-j\}}L_j\oplus  \pi L_i\oplus \bigoplus_{j\in \mathcal{E}}\pi^{max\{0, i-j\}}\left(\mathcal{H}_j\oplus Be_2^{(j)}\right)\]
\[\oplus \left\{\sum_{j\in \mathcal{E}}\pi^{max\{0, i-j\}}\cdot a_je_1^{(j)}|\textit{each }a_j\in B, \sum_{j\in \mathcal{E}}a_j\in (\pi)\right\}.\]
Here, $\mathcal{E}=\{j\in \{i-1, i+1\}| \textit{$L_j$ is of type I}\}$.
\end{Rmk}

\section{The construction of the smooth model}

Let $\underline{G}^{\prime}$ be the naive integral model of the unitary group $\mathrm{U}(V, h)$, where $V=L\otimes_AF$, such that
for any commutative $A$-algebra $R$,
$$\underline{G}^{\prime}(R)=\mathrm{Aut}_{B\otimes_AR}(L\otimes_AR, h\otimes_AR).$$
The scheme $\underline{G}^{\prime}$ is then an (possibly non-smooth) affine group scheme over $A$ with the smooth generic fiber $\mathrm{U}(V, h)$.
Then by Proposition 3.7 in \cite{GY}, there exists a unique smooth integral model, denoted by  $\underline{G}$, with the generic fiber $\mathrm{U}(V, h)$,
characterized by
$$\underline{G}(R)=\underline{G}^{\prime}(R)$$
for any \'etale $A$-algebra $R$.
Note that every \'etale $A$-algebra is a finite product of finite unramified extensions of $A$.
This section, Section 4 and Appendix A  are devoted to gaining an explicit knowledge of the smooth integral model
$\underline{G}$ in \textit{Case 1},
 which will be used in Section 5 to compute the local density of $(L, h)$ (again, in \textit{Case 1}).
For a detailed exposition of the relation between the local density of $(L, h)$ and $\underline{G}$, see Section 3 of \cite{GY}.

In this section, we give an explicit construction of the smooth integral model $\underline{G}$
when $E/F$ satisfies \textit{Case 1}.
The construction of $\underline{G}$ is based on that of Section 5 in \cite{GY} and Section 3 in \cite{C1}.
Since the functor $R \mapsto \underline{G}(R)$ restricted to \'etale $A$-algebras $R$ determines $\underline{G}$,
we first list out some properties that are satisfied by each element of $\underline{G}(R)=\underline{G}^{\prime}(R)$.

We choose an element $g\in \underline{G}(R)$ for an  \'etale $A$-algebra $R$.
Then $g$ is an element of $\mathrm{Aut}_{B\otimes_AR}(L\otimes_AR, h\otimes_AR)$.
Here we consider $\mathrm{Aut}_{B\otimes_AR}(L\otimes_AR, h\otimes_AR)$ as a subgroup of $ \mathrm{Res}_{E/F}\mathrm{GL}_E(V)(F\otimes_AR)$.
To ease the notation, we say $g\in \mathrm{Aut}_{B\otimes_AR}(L\otimes_AR, h\otimes_AR)$ stabilizes
a lattice $M\subseteq V$ if $g(M\otimes_AR)=M\otimes_AR$.


\subsection{Main construction}
Let $R$ be an \'etale $A$-algebra.
    In this subsection, as mentioned above, we observe properties of elements of $\mathrm{Aut}_{B\otimes_AR}(L\otimes_AR, h\otimes_AR)$
     and their matrix interpretation.
    We choose a Jordan splitting $L=\bigoplus_iL_i$ and a basis of $L$ as explained in Theorem 2.10 and Remark 2.3.a).
     Let $n_i=\mathrm{rank}_{B}L_i$, and $n=\mathrm{rank}_{B}L=\sum n_i$.
 Assume that $n_i=0$ unless $0\leq i < N$. 
    Let $g$ be an element of $\mathrm{Aut}_{B\otimes_AR}(L\otimes_AR, h\otimes_AR)$.
 We always divide a matrix $g$ of size $n \times n$ into $N^2$ blocks such that the block in position $(i, j)$ is of size $n_i\times n_j$.
 For simplicity, the row and column numbering starts at $0$ rather than $1$.

   \begin{enumerate}
    \item[(1)] First of all, $g$ stabilizes $A_i$ for every integer $i$.
In terms of matrices, this fact means that the $(i,j)$-block
 has entries in $\pi^{max\{0,j-i\}}B\otimes_AR$.
 From now on, we write
\[g= \begin{pmatrix} \pi^{max\{0,j-i\}}g_{i,j} \end{pmatrix}.\]

\item[(2)]
   $g$ stabilizes $A_i, B_i, W_i, X_i$ and induces the identity on $A_i/B_i$ and $W_i/X_i$.
     We also interpret these facts in terms of matrices as described below:
\begin{itemize}
\item[a)]      If $i$ is odd or $L_i$ is \textit{of type II},
then $A_i=B_i$ and  $W_i=X_i$ and so there is no contribution. 
\item[b)]     If $L_i$ is \textit{of type} $\textit{I}^o$, the diagonal $(i,i)$-block $g_{i,i}$ is of the form
     \[\begin{pmatrix} s_i&\pi y_i\\ \pi v_i&1+\pi z_i \end{pmatrix}\in \mathrm{GL}_{n_i}(B\otimes_AR),\]
     where $s_i$ is an $(n_i-1) \times (n_i-1)-$matrix, etc.
\item[c)]
     If $L_i$ is \textit{of type} $\textit{I}^e$, the diagonal $(i,i)$-block $g_{i,i}$ is of the form
     \[\begin{pmatrix} s_i&r_i&\pi t_i\\ \pi y_i&1+\pi x_i&\pi z_i\\ v_i&u_i&1+\pi w_i \end{pmatrix}\in \mathrm{GL}_{n_i}(B\otimes_AR),\]
     where $s_i$ is an $(n_i-2) \times (n_i-2)-$matrix, etc.
\end{itemize}
\end{enumerate}

 \subsection{Construction of \textit{\underline{M}}}



    We define a functor from the category of commutative flat $A$-algebras to the category of monoids as follows. For any commutative flat $A$-algebra $R$, define
    $$\underline{M}(R) \subset \{m \in \mathrm{End}_{B\otimes_AR}(L \otimes_A R)\}$$
    to be the set of $m \in \mathrm{End}_{B\otimes_AR}(L \otimes_A R)$ satisfying the following conditions:
    \begin{itemize}
     \item[(1)]  $m$ stabilizes $A_i\otimes_A R,B_i\otimes_A R,W_i\otimes_A R,X_i\otimes_A R$ for all $i$.
     \item[(2)]  $m$ induces the identity on $A_i\otimes_A R/ B_i\otimes_A R, W_i\otimes_A R/X_i\otimes_A R$ for all $i$.
\end{itemize}

\begin{Rmk}
We give another description for the functor $\underline{M}$ and using this, we show that it is represented by a polynomial ring.
Let us define a functor from the category of commutative flat $A$-algebras to the category of rings as follows:

For any commutative flat $A$-algebra $R$, define
$$\underline{M}^{\prime}(R) \subset \{m \in \mathrm{End}_{B\otimes_AR}(L \otimes_A R) \}$$
to be the set of $m \in \mathrm{End}_{B\otimes_AR}(L \otimes_A R)$ satisfying the following conditions:
\begin{itemize}
 \item[(1)] $m$ stabilizes $A_i\otimes_A R,B_i\otimes_A R,W_i\otimes_A R,X_i\otimes_A R$ for all $i$.
 \item[(2)] $m $ maps $A_i\otimes_A R, W_i\otimes_A R$ into $B_i\otimes_A R, X_i\otimes_A R$, respectively.
\end{itemize}

Then, by  Lemma 3.1 of \cite{C1}, $\underline{M}^{\prime}$ is  represented by a unique flat $A$-algebra $A(\underline{M'})$ which is a polynomial ring over $A$ of $2n^2$ variables.
    Moreover, it is easy to see that $\underline{M}'$ has the structure of a scheme of rings since
    $\underline{M}'(R)$ is  closed under addition and  multiplication.

We consider a scheme $\mathrm{Res}_{B/A}\mathrm{End}_B(L)$ such that the associated set to a commutative flat $A$-algebra $R$
    is $\mathrm{End}_{B\otimes_AR}(L \otimes_A R)$.
    Indeed, $\mathrm{Res}_{B/A}\mathrm{End}_B(L)$ is a group scheme under addition.
    But at this moment, we consider it as a scheme of sets
    so as to embed $\underline{M}$ into this.
    Let us consider both  $\underline{M}$ and $\underline{M}'$
    as functors from the category of commutative flat $A$-algebras to the category of sets.
    Then  they are subfunctors of $ \mathrm{Res}_{B/A}\mathrm{End}_B(L)$. 
Furthermore the functor $\underline{M}$ (viewed as valued in sets) is the same as the functor $1+\underline{M}^{\prime}$,
    where $(1+\underline{M}^{\prime})(R)=\{1+m : m \in \underline{M}^{\prime}(R) \}$.
    Here, the set $\mathrm{End}_{B\otimes_AR}(L \otimes_A R)$ has an obvious additive structure
    and the addition in the description of $(1+\underline{M}^{\prime})(R)$ comes from this.

    Therefore, $\underline{M}$ and $\underline{M}'$ are equivalent, as subfunctors of $ \mathrm{Res}_{B/A}\mathrm{End}_B(L)$.
    This fact induces that
   the functor $\underline{M}$ is also represented by a unique flat $A$-algebra $A[\underline{M}]$ which is a polynomial ring over $A$ of $2n^2$ variables.
    Moreover, it is easy to see that $\underline{M}$ has the structure of a scheme of monoids since $\underline{M}(R)$ is  closed under multiplication.
\end{Rmk}

   We can therefore now talk of $\underline{M}(R)$ for any (not necessarily flat) $A$-algebra $R$.
  However, for a general $R$, the above description for  $\underline{M}(R)$ will  no longer be true.
For such $R$, we use our chosen basis of $L$ to write each element of $\underline{M}(R)$ formally.
 We describe each element of $\underline{M}(R)$  as a formal matrix
    $\begin{pmatrix} \pi^{max\{0,j-i\}}m_{i,j} \end{pmatrix}$.
    Here,  $m_{i,j}$, when $i\neq j$,  is an  $(n_i \times n_j)$-matrix with entries in $B\otimes_AR$ and
    \[
m_{i,i}=\left\{
  \begin{array}{l l}
  \begin{pmatrix} s_i&\pi y_i\\ \pi v_i&1+\pi z_i \end{pmatrix}    & \quad  \textit{if $i$ is even and $L_i$ is \textit{of type $I^o$}};\\
  \begin{pmatrix} s_i&r_i&\pi t_i\\ \pi y_i&1+\pi x_i&\pi z_i\\ v_i&u_i&1+\pi w_i \end{pmatrix}    & \quad  \textit{if $i$ is even and $L_i$ is \textit{of type $I^e$}};\\
  m_{i,i}   & \quad  \textit{otherwise, i.e. if $L_i$ is of type $II$}.
      \end{array} \right.
\]
Here, $s_i$  is an  $(n_i-1 \times n_i-1)$-matrix (resp. $(n_i-2 \times n_i-2)$-matrix) with entries in $B\otimes_AR$
if $L_i$ \textit{of type $I^o$} (resp. \textit{of type $I^e$}) and $y_i, v_i, z_i, r_i, t_i, y_i, x_i, u_i, w_i$ are matrices of suitable sizes with entries in $B\otimes_AR$.
Similarly, if  $L_i$ is \textit{of type II}, then
$m_{i,i}$ is an  $(n_i \times n_j)$-matrix with entries in $B\otimes_AR$.
To simplify notation, each element
$$((m_{i,j})_{i\neq j}, (m_{i,i})_{\textit{$L_i$ of type II}}, (s_i, y_i, v_i, z_i)_{\textit{$L_i$ of type $I^o$}},
(s_i, v_i, z_i, r_i, t_i, y_i, x_i, u_i, w_i)_{\textit{$L_i$ of type $I^e$}})$$ of  $\underline{M}(R)$
is denoted by $(m_{i,j}, s_i \cdots w_i)$.\\

    In the next section, we need a description of an element of $\underline{M}(R)$ and its multiplication for a $\kappa$-algebra $R$.
    In order to  prepare for this, we describe the multiplication explicitly only for a $\kappa$-algebra $R$.
    To multiply $(m_{i,j}, s_i\cdots w_i)$ and $(m_{i,j}', s_i'\cdots w_i')$,
    we form the matrices $m=\begin{pmatrix} \pi^{max\{0,j-i\}}m_{i,j} \end{pmatrix}$ and $m'=\begin{pmatrix} \pi^{max\{0,j-i\}}m_{i,j}' \end{pmatrix}$
    with $s_i\cdots w_i$ and $s_i'\cdots w_i'$
     and write the formal matrix product  $\begin{pmatrix} \pi^{max\{0,j-i\}}m_{i,j} \end{pmatrix}\cdot \begin{pmatrix} \pi^{max\{0,j-i\}}m_{i,j}' \end{pmatrix}=\begin{pmatrix} \pi^{max\{0,j-i\}}\tilde{m}_{i,j}'' \end{pmatrix}$
     with
    \[
\tilde{m}_{i,i}''=\left\{
  \begin{array}{l l}
  \begin{pmatrix} \tilde{s}_i''&\pi \tilde{y}_i''\\ \pi \tilde{v}_i''&1+\pi \tilde{z}_i'' \end{pmatrix}    & \quad  \textit{if $i$ is even and $L_i$ is \textit{of type $I^o$}};\\
  \begin{pmatrix} \tilde{s}_i''&\tilde{r}_i''&\pi \tilde{t}_i''\\ \pi \tilde{y}_i''&1+\pi \tilde{x}_i''&\pi \tilde{z}_i''\\ \tilde{v}_i''&\tilde{u}_i''&1+\pi \tilde{w}_i'' \end{pmatrix}    & \quad  \textit{if $i$ is even and $L_i$ is \textit{of type $I^e$}}.
      \end{array} \right.
\]
Let $(m_{i,j}'', s_i''\cdots w_i'')$ be formed by letting $\pi^2$ be zero
in each entry of $(\tilde{m}_{i,j}'', \tilde{s}_i''\cdots \tilde{w}_i'')$.
Then each matrix of $(m_{i,j}'', s_i''\cdots w_i'')$ has entries in $B\otimes_AR$ and so $(m_{i,j}'', s_i''\cdots w_i'')$ is an element of $\underline{M}(R)$ and is the product of $(m_{i,j}, s_i\cdots w_i)$ and $(m_{i,j}', s_i'\cdots w_i')$.
More precisely,
\begin{enumerate}
\item If $i\neq j$
    or if $i= j$ and $L_i$ is \textit{of type $II$},
    $$m_{i,j}''=\sum_{k=1}^{N}\pi^{(max\{0, k-i\}+max\{0, j-k\}-max\{0, j-i\})}m_{i, k}m_{k, j}';$$

\item When $L_i$ is \textit{of type $I^o$}, we write $m_{i, i-1}m_{i-1, i}'+m_{i, i+1}m_{i+1, i}'=\begin{pmatrix} a_i''&b_i''\\ c_i''&d_i'' \end{pmatrix}$
and $m_{i, i-2}m_{i-2, i}'+m_{i, i+2}m_{i+2, i}'=\begin{pmatrix} \tilde{a}_i''&\tilde{b}_i''\\ \tilde{c}_i''&\tilde{d}_i'' \end{pmatrix}$
where
$a_i''$ and $\tilde{a}_i''$ are  $(n_i-1) \times (n_i-1)$-matrices, etc.
Then
\[\left\{
  \begin{array}{l}
  s_i''=s_is_i'+\pi a_i'';\\
  y_i''=s_iy_i'+y_i+b_i''+\pi (y_iz_i'+ \tilde{b}_i'');\\
  v_i''=v_is_i'+v_i'+c_i''+\pi (z_iv_i'+ \tilde{c}_i'');\\
  z_i''=z_i+z_i'+d_i''+\pi (z_iz_i'+ v_iy_i'+ \tilde{d}_i'').
      \end{array} \right.
\]
\item When $L_i$ is \textit{of type $I^e$}, we write $m_{i, i-1}m_{i-1, i}'+m_{i, i+1}m_{i+1, i}'=
\begin{pmatrix} a_i''&b_i''&c_i''\\ d_i''&e_i''&f_i''\\ g_i''&h_i''&k_i'' \end{pmatrix}$
and $m_{i, i-2}m_{i-2, i}'+m_{i, i+2}m_{i+2, i}'=
\begin{pmatrix} \tilde{a}_i''&\tilde{b}_i''&\tilde{c}_i''\\ \tilde{d}_i''&\tilde{e}_i''&\tilde{f}_i''\\ \tilde{g}_i''&\tilde{h}_i''&\tilde{k}_i'' \end{pmatrix}$
where
$a_i''$ and $\tilde{a}_i''$ are  $(n_i-2) \times (n_i-2)$-matrices, etc.
Then
\[\left\{
  \begin{array}{l}
  s_i''=s_is_i'+\pi (r_iy_i'+t_iv_i'+a_i'');\\
  r_i''=s_ir_i'+r_i+\pi (r_ix_i'+t_iu_i'+b_i'')   ;\\
  t_i''=s_it_i'+r_iz_i'+t_i+c_i''+\pi (t_iw_i'+\tilde{c}_i'');\\
y_i''=y_is_i'+y_i'+z_iv_i'+d_i''+\pi (x_iy_i'+\tilde{d}_i'');\\
x_i''=x_i+x_i'+z_iu_i'+y_ir_i'+e_i''+\pi (x_ix_i'+\tilde{e}_i'');\\
z_i''=z_i+z_i'+f_i''+\pi (y_it_i'+x_iz_i'+z_iw_i'+\tilde{f}_i'');\\
v_i''=v_is_i'+v_i'+\pi (u_iy_i'+w_iv_i'+g_i'');\\
u_i''=u_i+u_i'+v_ir_i'+\pi(u_ix_i'+w_iu_i'+h_i'');\\
w_i''=w_i+w_i'+v_it_i'+u_iz_i'+k_i''+\pi (w_iw_i'+\tilde{k}_i'').
      \end{array} \right.
\]
\end{enumerate}

\textit{  }\\



\begin{Rmk}
Let $d$ be the determinant homomorphism on the algebraic monoid $\mathrm{Res}_{B/A}\mathrm{End}_B(L)$.
We consider the inclusion
$$\iota : \underline{M} \longrightarrow \mathrm{Res}_{B/A}\mathrm{End}_B(L)$$
between functors  of sets on the category of commutative flat $A$-algebras.
Note that this inclusion is a morphism of schemes by \textit{Yoneda's lemma} since $\underline{M}$ is flat over $A$.
 It is not an immersion as schemes since the special fiber of $\underline{M}$ is no longer embedded into
that of $\mathrm{Res}_{B/A}\mathrm{End}_B(L)$.
For a commutative flat $A$-algebra $R$, the multiplication on $\underline{M}(R)$ is induced from that on
$\mathrm{Res}_{B/A}\mathrm{End}_B(L)(R)$ under $\iota$.
Thus the morphism $\iota$ is a morphism of monoid schemes.

We consider $d$ as the restriction of the determinant homomorphism under  $\iota$. 
 Then $\mathrm{Spec} (A[\underline{M}]_d)$ is an open subscheme of $\underline{M}$, where
 $A[\underline{M}]_d$ is the localization of the ring $A[\underline{M}]$ at $d$.
Note that   $\mathrm{Spec} (A[\underline{M}]_d)(R)$, the set of $R$-points of $\mathrm{Spec} (A[\underline{M}]_d)$
for a commutative $A$-algebra $R$, is characterized by
 $$\{m\in\underline{M}(R):
   \textit{there exists $\widetilde{m}' \in \mathrm{End}_{B\otimes_AR}(L\otimes_AR)$ such that $\iota_R(m)\cdot \widetilde{m}'=\widetilde{m}'\cdot \iota_R(m)=1$} \}.$$
Here, $\iota_R : \underline{M}(R) \rightarrow \mathrm{Res}_{B/A}\mathrm{End}_B(L)(R)$ is a morphism of monoids  induced from $\iota$.
It is easy to see that the above set $\mathrm{Spec} (A[\underline{M}]_d)(R)$ is a monoid so that $\mathrm{Spec} (A[\underline{M}]_d)$
is a scheme of monoids.

 We define a functor $\underline{M}^{\ast}$ from the category of commutative $A$-algebras to the category of groups as follows.
For a commutative $A$-algebra $R$,
 set
 $$\underline{M}^{\ast}(R)=\{ m \in \underline{M}(R) :  \textit{there exists $m'\in \underline{M}(R)$ such that $m\cdot m'=m'\cdot m=1$}\}.$$
 We claim that $\underline{M}^{\ast}$ is representable by $\mathrm{Spec} (A[\underline{M}]_d)$.
 It is obvious that $\underline{M}^{\ast}(R) \subseteq \mathrm{Spec} (A[\underline{M}]_d)(R)$ for any commutative $A$-algebra $R$.

In order to show  $\mathrm{Spec} (A[\underline{M}]_d)(R)  \subseteq \underline{M}^{\ast}(R)$,
we first prove that  $\widetilde{m}' ~ (\in \mathrm{End}_{B\otimes_AR}(L\otimes_AR))$ associated to $m\in \underline{M}(R)$ is an element of $\underline{M}(R)$ for every flat $A$-algebra $R$.
 To verify this statement, it suffices to show that $\widetilde{m}'$ satisfies conditions (1) and (2) defining $\underline{M}$.
 This follows from the following fact:
If $L'$ is a sublattice of $L$ and $m$ is an element of $\mathrm{Spec} (A[\underline{M}]_d)(R)$ for a flat $A$-algebra $R$
which stabilizes $L'\otimes_AR$, then $L'\otimes_AR$ is stabilized by $\widetilde{m}'$ as well.
This can be easily proved as in Lemma 3.2 of \cite{C1} and so we skip the proof.
 Thus $\underline{M}^{\ast}(R)$ is the same as $\mathrm{Spec} (A[\underline{M}]_d)(R)$ for a flat $A$-algebra $R$.
 In order to show   $\underline{M}^{\ast}(R)=\mathrm{Spec} (A[\underline{M}]_d)(R)$ for any commutative $A$-algebra $R$,
 we consider the following  well-defined map,  for any flat $A$-algebra $R$:
$$\mathrm{Spec} (A[\underline{M}]_d)(R) \longrightarrow \mathrm{Spec} (A[\underline{M}]_d)(R)\times \mathrm{Spec} (A[\underline{M}]_d)(R),
 m \mapsto (m, \widetilde{m}').$$
Since $\mathrm{Spec} (A[\underline{M}]_d)$ is flat, this map is represented by a morphism of schemes by \textit{Yoneda's lemma}.
On the other hand, since $\mathrm{Spec} (A[\underline{M}]_d)$ is a scheme of monoids,
the following map
\[\mathrm{Spec} (A[\underline{M}]_d)(R)\times \mathrm{Spec} (A[\underline{M}]_d)(R) \longrightarrow \mathrm{Spec} (A[\underline{M}]_d)(R),
(m, m')\mapsto mm'\]
is represented by a morphism of schemes.
We consider the composite of these two morphisms. It is the constant map (at the identity) at least at the level of $R$-points, for a flat $A$-algebra $R$.
To show that the composite is the constant morphism of schemes (at the identity),
it suffices to show that it is uniquely determined at the level of  $R$-points, for a flat $A$-algebra $R$.
 Note that $\mathrm{Spec} (A[\underline{M}]_d)$ is an irreducible smooth affine scheme.
We consider the open subscheme of  $\mathrm{Spec} (A[\underline{M}]_d)$
which is the complement of the closed subscheme of $\mathrm{Spec} (A[\underline{M}]_d)$ determined by the prime ideal $(2)$.
This open subscheme of $\mathrm{Spec} (A[\underline{M}]_d)$ is then non-empty and dense
since $\mathrm{Spec} (A[\underline{M}]_d)$ is reduced and irreducible.
Furthermore, all $R$-points of $\mathrm{Spec} (A[\underline{M}]_d)$, for a flat $A$-algebra $R$, factor through  this open subscheme.
Since a morphism of schemes is continuous, the above composite is uniquely determined
at the level of $R$-points, for a flat $A$-algebra $R$.

Thus, the inverse of $m\in \mathrm{Spec} (A[\underline{M}]_d)(R)$, for any commutative $A$-algebra $R$,
 is also contained in $\mathrm{Spec} (A[\underline{M}]_d)(R) \subseteq \underline{M}(R)$.
  This fact implies $\underline{M}^{\ast}(R) \supseteq \mathrm{Spec} (A[\underline{M}]_d)(R)$.
 Consequently, for any commutative $A$-algebra $R$, we have
 \[\underline{M}^{\ast}(R)=\mathrm{Spec} (A[\underline{M}]_d)(R).\]


Therefore, we conclude that $\underline{M}^{\ast}$ is an open subscheme of $\underline{M}$
(since $\underline{M}^{\ast}=\mathrm{Spec} (A[\underline{M}]_d)$, which is an open subscheme of $\underline{M}$),
with generic fiber $M^{\ast}=\mathrm{Res}_{E/F}\mathrm{GL}_E(V)$,
and that $\underline{M}^{\ast}$ is smooth over $A$. Moreover, $\underline{M}^{\ast}$ is a group scheme since $\underline{M}$ is a scheme in monoids.
\end{Rmk}

\subsection{Construction of \textit{\underline{H}}}

 Recall that the pair $(L, h)$ is fixed throughout this paper
  and the lattices $A_i$, $B_i$, $W_i$, $X_i$ only depend on the hermitian pair $(L, h)$.
  For any flat $A$-algebra $R$, let $\underline{H}(R)$ be the set of hermitian forms $f$ on $L\otimes_{A}R$ (with values in $B\otimes_AR$)
  such that $f$ satisfies the following conditions:
  \begin{enumerate}
 \item[a)]  $f(L\otimes_{A}R,A_i\otimes_{A}R) \subset \pi^iB\otimes_AR$ for all $i$.
 \item[b)] Let $i=2m$.  $\frac{1}{(\pi\cdot\sigma(\pi))^m}f(a_i,a_i)$ mod 2 = $\frac{1}{(\pi\cdot\sigma(\pi))^m} h(a_i, a_i)$ mod 2, where $a_i \in A_i \otimes_{A}R$.
 \item[c)] Let $i=2m$. $\frac{1}{\pi^i}f(a_i,w_i) = \frac{1}{\pi^i}h(a_i, w_i)$ mod $\pi$, where $a_i \in A_i\otimes_{A}R$ and $w_i \in W_i\otimes_{A}R$.
\end{enumerate}
\textit{ }

We interpret the above conditions in terms of matrices.
The matrix forms are taken with respect to the basis of $L$ fixed in Theorem 2.10 and Remark 2.3.a).
A matrix form of the given hermitian form $h$ is described in Remark 3.3.(1) below.
We use $\sigma$ to mean the automorphism of $B\otimes_AR$ given by $b\otimes r \mapsto \sigma(b)\otimes r$.
For a flat $A$-algebra $R$, $\underline{H}(R)$ is
the set of hermitian matrices $$\begin{pmatrix}\pi^{max\{i,j\}}f_{i,j}\end{pmatrix}$$ of size $n\times n$ satisfying the following:
 \begin{enumerate}
\item[(1)]  $f_{i,j}$ is an $(n_i\times n_j)$-matrix with entries in $B\otimes_AR$.
\item[(2)] If $i$ is even and $L_i$ is \textit{of type} $\textit{I}^o$,
then $\pi^i f_{i,i}$ is of the form $$(\pi\cdot \sigma(\pi))^{i/2}\begin{pmatrix} a_i&\pi b_i\\ \sigma(\pi\cdot {}^t b_i) &1 +2c_i \end{pmatrix}.$$
 Here, the diagonal entries of $a_i$ are divisible by $2$, where $a_i$ is an $(n_i-1) \times (n_i-1)$-matrix with entries in $B\otimes_AR$, etc.
\item[(3)]  If $i$ is even and $L_i$ is \textit{of type} $\textit{I}^e$, then $\pi^i f_{i,i}$ is of the form
  $$(\pi\cdot \sigma(\pi))^{i/2}\begin{pmatrix} a_i&b_i&\pi e_i\\ \sigma({}^tb_i) &1+2f_i&1+\pi d_i \\ \sigma(\pi \cdot {}^te_i) &\sigma(1+\pi d_i) &2c_i \end{pmatrix}.$$
 Here,  the diagonal entries of $a_i$ are  divisible by $2$, where $a_i$ is an $(n_i-2) \times (n_i-2)$-matrix with entries in $B\otimes_AR$, etc.
\item[(4)]  Assume that $L_i$ is \textit{of type} $\textit{II}$.
The diagonal entries of $f_{i,i}$ (resp. $\pi f_{i,i}$) are  divisible by $2$ if $i$ is even (resp. odd).
\item[(5)] Since $\begin{pmatrix}\pi^{max\{i,j\}}f_{i,j}\end{pmatrix}$ is a hermitian matrix, its diagonal entries are fixed by the nontrivial Galois action over $E/F$ and hence belong to $R$.
\end{enumerate}

Let us consider the \textit{hermitian functor}  from the category of commutative flat $A$-algebras to the category of sets
such that the associated set to $R$  is the set of hermitian forms $f$ on $L\otimes_{A}R$ (with values in $B\otimes_AR$).
Indeed, this functor is represented by a commutative group scheme since it is closed under addition.
Then $\underline{H}$ is a subfunctor of the \textit{hermitian functor}.
We consider another functor $\underline{H}'$ such that $\underline{H}'(R)=\{f-h : f \in \underline{H}(R) \}$.
Note that $h$ is the fixed hermitian form and the notion of $f-h$ follows from the additive structure of the \textit{hermitian functor}.
For a matrix interpretation of $h$, we refer to Remark 3.3.(1) below.

  Then by Lemma 3.1 of \cite{C1},  $\underline{H}'$ is represented by a flat $A$-scheme
  which is isomorphic to an affine space.
 Since  $\underline{H}$ and $\underline{H}'$ are equivalent as subfunctors of the \textit{hermitian functor},
   the functor $\underline{H}$ is also represented by a flat $A$-scheme
  which is isomorphic to an affine space.

  To compute the dimension of $\underline{H}$,
  we  see that each entry of the upper triangular matrix of an element of  $\underline{H}(R)$, for a flat $A$-algebra $R$,
 gives two variables and each diagonal entry gives one variable.
  Furthermore, each lower triangular entry of the matrix representing an
element of $\underline{H}(R)$ is completely determined by the corresponding upper triangular entry.
  Thus the dimension of $\underline{H}$ is $2\cdot n(n-1)/2+n=n^2$.
  This is also the same as   $2n^2-\mathrm{dim~} \mathrm{U}(V, h)=n^2$.\\

    Now suppose that $R$ is any (not necessarily flat) $A$-algebra. Recall that $\epsilon$ is a unit in $B$
    such that $\sigma(\pi)=\epsilon\pi$ and    $(\epsilon-1)/\pi $ is a unit in $B$.
    We also use $\epsilon$ to mean $\epsilon\otimes 1$ in $B\otimes_AR$.
We again use $\sigma$ to mean the automorphism of $B\otimes_AR$ given by $b\otimes r \mapsto \sigma(b)\otimes r$.
    By choosing a $B$-basis of  $L$ as explained in Theorem 2.10 and Remark 2.3.a), we describe each element of $\underline{H}(R)$ formally as a  matrix
    $\begin{pmatrix}\pi^{max\{i,j\}}f_{i,j}\end{pmatrix}$ with the following:
    \begin{enumerate}
\item When $i\neq j$, $f_{i,j}$  is an  $(n_i \times n_j)$-matrix with entries in $B\otimes_AR$ and
 $\epsilon^{max\{i,j\}}\sigma({}^tf_{i,j})=f_{j,i}$.
\item   Assume that $i=j$ is even. Then
 \[
\pi^if_{i,i}=\left\{
  \begin{array}{l l}
  (\pi\cdot \sigma(\pi))^{i/2}\begin{pmatrix} a_i&\pi b_i\\ \sigma(\pi\cdot {}^t b_i) &1 +2c_i \end{pmatrix}    & \quad  \textit{if  $L_i$ is \textit{of type $I^o$}};\\
  (\pi\cdot \sigma(\pi))^{i/2}\begin{pmatrix} a_i&b_i&\pi e_i\\ \sigma({}^tb_i) &1+2f_i&1+\pi d_i \\ \sigma(\pi \cdot {}^te_i) &\sigma(1+\pi d_i) &2c_i \end{pmatrix}    & \quad  \textit{if  $L_i$ is \textit{of type $I^e$}};\\
  (\pi\cdot \sigma(\pi))^{i/2}a_i  & \quad  \textit{if  $L_i$ is \textit{of type $II$}}.
      \end{array} \right.
\]
Here, $a_i$  is a formal  $(n_i-1 \times n_i-1)$-matrix (resp. $(n_i-2 \times n_i-2)$-matrix or $(n_i \times n_i)$-matrix)
when $L_i$ is \textit{of type $I^o$} (resp. \textit{of type $I^e$} or \textit{of type $II$}).
Non-diagonal entries of $a_i$ are  in $B\otimes_AR$ and
the $j$-th diagonal entry of $a_i$ is of the form $2x_i^j$ with $x_i^j \in R$. In addition, for non-digonal entries of $a_i$, we have the relation  $\sigma({}^ta_i)=a_i$.
And $b_i, d_i, e_i$ are matrices of suitable sizes with entries in $B\otimes_AR$ and $c_i, f_i$ are elements in $R$.
\item   Assume that $i=j$ is odd.
Then  $$\pi^i f_{i,i}=(\pi\cdot \sigma(\pi))^{(i-1)/2}\pi a_i,$$ where    $a_i$ is a formal   $(n_i \times n_i)$-matrix.
Here,  non-diagonal entries of $a_i$ are in $B\otimes_AR$
and the $j$-th diagonal entry of $a_i$ is of the form $\epsilon\pi x_i^j$ with $x_i^j \in R$.
In addition, for non-digonal entries of $a_i$, we have the relation  $\epsilon\cdot\sigma({}^ta_i)=a_i$.
\end{enumerate}
\textit{   }

To simplify notation, each element
$$((f_{i,j})_{i< j}, (a_i, x_i^j)_{\textit{$L_i$ of type II}}, (a_i, x_i^j, b_i, c_i)_{\textit{$L_i$ of type $I^o$}},
(a_i, x_i^j, b_i, c_i, d_i, e_i, f_i)_{\textit{$L_i$ of type $I^e$}})$$ of  $\underline{H}(R)$
is denoted by $(f_{i,j}, a_i \cdots f_i)$.

\begin{Rmk}
\begin{enumerate}\item   Note that the given hermitian form $h$ is an element of $\underline{H}(A)$.
We represent the given hermitian form $h$ by a hermitian matrix $\begin{pmatrix} \pi^{i}\cdot h_i\end{pmatrix}$
whose $(i,i)$-block is $\pi^i\cdot h_i$ for all $i$, and all of whose remaining blocks are $0$.
Then
\begin{enumerate}
\item If $i$ is even and $L_i$ is \textit{of type} $\textit{I}^o$,
then $\pi^i \cdot h_i$ is of the form
$$(\pi\cdot \sigma(\pi))^{i/2}\begin{pmatrix} \begin{pmatrix} 0&1\\1&0\end{pmatrix}& & & \\ &\ddots & & \\ & &\begin{pmatrix} 0&1\\1&0\end{pmatrix}& \\ & & & 1+2\gamma_i \end{pmatrix}.$$
Here, $\gamma_i\in A$.
\item  If $i$ is even and $L_i$ is \textit{of type} $\textit{I}^e$,
then $\pi^i \cdot h_i$ is of the form
$$(\pi\cdot \sigma(\pi))^{i/2}\begin{pmatrix} \begin{pmatrix} 0&1\\1&0\end{pmatrix}& & & \\ &\ddots & & \\ & &\begin{pmatrix} 0&1\\1&0\end{pmatrix}& \\ & & & \begin{pmatrix} 1&1\\1&2\gamma_i\end{pmatrix} \end{pmatrix}.$$
Here, $\gamma_i\in A$.
\item If $i$ is even and $L_i$ is \textit{of type} $\textit{II}$,
then $\pi^i \cdot h_i$ is of the form
$$(\pi\cdot \sigma(\pi))^{i/2}\begin{pmatrix} \begin{pmatrix} 0&1\\1&0\end{pmatrix}& &  \\ &\ddots &  \\ & &\begin{pmatrix} 0&1\\1&0\end{pmatrix} \end{pmatrix}.$$
\item If $i$ is odd, then $\pi^i \cdot h_i$ is of the form
$$(\pi\cdot \sigma(\pi))^{(i-1)/2}\begin{pmatrix} \begin{pmatrix} 0&\pi\\ \sigma(\pi)&0\end{pmatrix}& & & \\ &\ddots & & \\ & &\begin{pmatrix} 0&\pi\\ \sigma(\pi)&0\end{pmatrix}& \\ & & & \begin{pmatrix} 2&\pi\\ \sigma(\pi)&2\gamma_i\end{pmatrix} \end{pmatrix}.$$
Here, $\gamma_i\in A$.\\
\end{enumerate}

\item Let $R$ be a $\kappa$-algebra.
   We also denote by $h$ the element of $\underline{H}(R)$ which is the image of $h\in \underline{H}(A)$
   under the natural map from $\underline{H}(A)$ to $\underline{H}(R)$.
   Recall that we denote each element of $\underline{H}(R)$ by  $(f_{i, j}, a_i\cdots f_i)$.
 Then the tuple $(f_{i, j}, a_i\cdots f_i)$ denoting $h\in \underline{H}(R)$ is defined by the conditions:
  \begin{enumerate}
\item  If $i\neq j$, then  $f_{i,j}=0$.
\item If $i$ is even, then $$a_i=\begin{pmatrix} \begin{pmatrix} 0&1\\1&0\end{pmatrix}& &  \\ &\ddots &  \\ & & \begin{pmatrix} 0&1\\1&0\end{pmatrix}\end{pmatrix}, \textit{thus $x_i^j=0$},$$
    $$b_i=0, d_i=0, e_i=0, f_i=0, c_i=\bar{\gamma}_i.$$
Here, $\bar{\gamma}_i\in \kappa$ is the reduction of $\gamma_i$ mod $2$.
\item If $i$ is odd, then
$$a_i=\begin{pmatrix} \begin{pmatrix} 0&1\\ \bar{\epsilon}&0\end{pmatrix}& & & \\ &\ddots & & \\ & &\begin{pmatrix} 0&1\\ \bar{\epsilon}&0\end{pmatrix}& \\ & & & \begin{pmatrix} \pi\cdot \bar{\epsilon}\bar{\zeta}&1\\ \bar{\epsilon}&\pi\cdot\bar{\epsilon}\bar{\zeta}\bar{\gamma}_i\end{pmatrix} \end{pmatrix}.$$
Here, $\bar{\epsilon}$ is the reduction of $\epsilon$ mod $2$, not mod $\pi$, so that $\bar{\epsilon}$ is an element of $B\otimes_AR$.
In addition, $\bar{\zeta}\in \kappa$ is the reduction of $\zeta$ mod $2$, where  $\zeta\in A$ is the unit satisfying $2=\pi\cdot \sigma(\pi)\cdot \zeta$.
Thus, $x_i^j=0$ for all $1 \leq j \leq n_i-2$ and $x_i^{n_i-1}=\bar{\zeta}$ and $x_i^{n_i}=\bar{\zeta}\bar{\gamma}_i$.
  \end{enumerate}
\end{enumerate}
\end{Rmk}

\subsection{The smooth affine group scheme \textit{\underline{G}}}

\begin{Thm}
    For any flat $A$-algebra $R$, the group $\underline{M}^{\ast}(R)$ acts on  $\underline{H}(R)$ on the right
    by $f\circ m = \sigma({}^tm)\cdot f\cdot m$.
    This action is represented by an action morphism
     \[\underline{H} \times \underline{M}^{\ast} \longrightarrow \underline{H} .\]
\end{Thm}

\begin{proof}
 We start with any $m\in \underline{M}^{\ast}(R)$ and $f\in \underline{H}(R)$.
 In order to show that  $\underline{M}^{\ast}(R)$ acts on the right of $\underline{H}(R)$ by $f\circ m = \sigma({}^tm)\cdot f\cdot m$,
it suffices to show that $f\circ m$ satisfies  conditions a) to c) given in Section 3.3.\\
Since elements of $\underline{M}(R)$ preserve $L\otimes_AR$ and $A_i\otimes_AR$, $f\circ m$ satisfies  condition a).
That $f\circ m$ satisfies condition b) follows from the fact that $m$  stabilizes $A_i$ and $B_i$ and induces the identity on $A_i/B_i$.\\
For  condition c), it suffices to show that $\frac{1}{\pi^i}f(ma_i, mw_i) \equiv \frac{1}{\pi^i}f(a_i, w_i)$ mod $\pi$.
We denote $ma_i=a_i+b_i$ and $mw_i=w_i+x_i$, where $b_i\in B_i\otimes_A R, x_i \in X_i\otimes_A R$.
Hence it suffices to show $\frac{1}{\pi^i}f(a_i+b_i, x_i) + \frac{1}{\pi^i}f(b_i, w_i) \mathrm{~mod~}\pi \equiv 0$.
 Firstly, $\frac{1}{\pi^i}f(a_i+b_i, x_i) \mathrm{~mod~}\pi \equiv 0$ due to the definition of the lattice $X_i$.
Secondly, if $B_i\varsubsetneq A_i$, then $\frac{1}{\pi^i}f(b_i, w_i) \mathrm{~mod~}\pi \equiv 0$ because
$\frac{1}{\pi^i}f(b_i, w_i)=\frac{1}{\pi^i}h(b_i, w_i)$ $\mathrm{~mod~}\pi$ and
$(\frac{1}{(\pi\cdot\sigma(\pi))^m}h( b_i, e))^2  \equiv \frac{1}{(\pi\cdot\sigma(\pi))^m}h( b_i, b_i) \equiv 0 \mathrm{~mod~}\pi$,
where $e$ is the unique vector chosen earlier.
If $B_i=A_i$, then  $W_i=X_i$ and thus $\frac{1}{\pi^i}f(b_i, w_i) \mathrm{~mod~}\pi \equiv 0$.

We now show that this action of the group $\underline{M}^{\ast}(R)$  on the right of $\underline{H}(R)$ is represented by an action morphism of schemes.
We observe that the action map $\underline{H}(R) \times \underline{M}^{\ast}(R) \longrightarrow \underline{H}(R)$,
$(f, m)\mapsto \sigma({}^tm)\cdot f\cdot m$ is given by polynomials over $A$.
Thus it  induces a ring homomorphism over $A$ from the coordinate ring of $\underline{H}$
to the coordinate ring of $\underline{H} \times \underline{M}^{\ast}$, which accordingly induces a morphism from $\underline{H} \times \underline{M}^{\ast}$ to $\underline{H}$  such that
the action map induced by this morphism at the level of $R$-points, for a flat $A$-algebra $R$, is the same as the action  given in the theorem.
\end{proof}

\begin{Rmk}
Let $R$ be a $\kappa$-algebra.
We explain the above action morphism in terms of $R$-points.
Choose an element  $(m_{i,j}, s_i\cdots w_i)$ in $ \underline{M}^{\ast}(R) $ as explained in Section 3.2
and we express this element formally as a matrix $m=\begin{pmatrix}\pi^{max\{0,j-i\}}m_{i,j}\end{pmatrix}$.
We also choose an element $(f_{i,j}, a_i \cdots f_i)$ of $\underline{H}(R)$ and express this element formally as a matrix
 $f=\begin{pmatrix}\pi^{max\{i,j\}}f_{i,j}\end{pmatrix}$ as explained in Section 3.3.

We then compute the formal matrix product $\sigma({}^tm)\cdot f\cdot m$  and denote it by the formal matrix $\begin{pmatrix}\pi^{max\{i,j\}}\tilde{f}_{i,j}'\end{pmatrix}$ with  $(\tilde{f}_{i,j}', \tilde{a}_i' \cdots \tilde{f}_i')$.
 Here, the description of the formal matrix $\begin{pmatrix}\pi^{max\{i,j\}}\tilde{f}_{i,j}'\end{pmatrix}$ with
 $(\tilde{f}_{i,j}', \tilde{a}_i' \cdots \tilde{f}_i')$ is as explained in Section 3.3.

 We now let $\pi^2$ be zero in each entry of formal matrices
 $(\tilde{f}_{i,j}')_{i< j},  (\tilde{b}_i')_{\textit{$L_i$ of type $I^o$}},  (\tilde{b}_i', \tilde{d}_i', \tilde{e}_i')_{\textit{$L_i$ of type $I^e$}}$
   and  in each non-diagonal entry of a formal matrix  $(\tilde{a}_i')$.
 Then these entries are elements in $B\otimes_AR$.
 We also let $\pi^2$ be zero in $(\tilde{x}_i^j)', (\tilde{c}_i')_{\textit{$L_i$ of type $I^o$}},
 (\tilde{f}_i', \tilde{c}_i')_{\textit{$L_i$ of type $I^e$}}$. Note that $(\tilde{x}_i^j)'$ is a  diagonal entry of a formal matrix $\tilde{a}_i'$.
 Then these entries are elements in $R$.

 Let $(f_{i,j}', a_i' \cdots f_i')$ be the reduction of $(\tilde{f}_{i,j}', \tilde{a}_i' \cdots \tilde{f}_i')$ as explained above,
 i.e. by letting $\pi^2$ be zero in the entries of formal matrices as described above.
 Then $(f_{i,j}', a_i' \cdots f_i')$ is an element of $\underline{H}(R)$
and the composition $(f_{i,j}, a_i \cdots f_i)\circ (m_{i,j}, s_i\cdots w_i)$ is $(f_{i,j}', a_i' \cdots f_i')$.

We can also write $(f_{i,j}', a_i' \cdots f_i')$ explicitly in terms of $(f_{i,j}, a_i \cdots f_i)$ and $(m_{i,j}, s_i\cdots w_i)$
like  the product of $(m_{i,j}, s_i\cdots w_i)$ and $(m_{i,j}', s_i'\cdots w_i')$ explained in Section 3.2.
However, this is complicated and we do not use it in this generality. 
On the other hand, we explicitly calculate $(f_{i,j}, a_i \cdots f_i)\circ (m_{i,j}, s_i\cdots w_i)$ when
$(f_{i,j}, a_i \cdots f_i)$ is the given hermitian form $h$ and $(m_{i,j}, s_i\cdots w_i)$ satisfies certain  conditions on each block.
This explicit calculation  will be done in Appendix A.
\end{Rmk}

 \begin{Thm}
  Let $\rho$ be the morphism $\underline{M}^{\ast} \rightarrow \underline{H}$ defined by $\rho(m)=h \circ m$,
  which is induced by the  action morphism  of Theorem 3.4.
  Then $\rho$ is smooth of relative dimension dim $\mathrm{U}(V, h)$.
  \end{Thm}

  \begin{proof}
  The theorem follows from Lemma 5.5.1 of \cite{GY} and the following lemma.
  \end{proof}

      \begin{Lem}
      The morphism $\rho \otimes \kappa : \underline{M}^{\ast}\otimes \kappa \rightarrow \underline{H}\otimes \kappa$
      is smooth of relative dimension $\mathrm{dim~} \mathrm{U}(V, h)$.
      \end{Lem}

\begin{proof}
 The proof is based on Lemma 5.5.2 in \cite{GY}.
    It is enough to check the statement over the algebraic closure $\bar{\kappa}$ of $\kappa$.
    By \cite{H}, III.10.4, it suffices to show that, for any $m \in \underline{M}^{\ast}(\bar{\kappa})$,
    the induced map on the Zariski tangent space $\rho_{\ast, m}:T_m \rightarrow T_{\rho(m)}$ is surjective.

   We define the two functors from the category of commutative flat $A$-algebras to the category of abelian groups as follows:
 \[T_1(R)=\{m-1 : m\in\underline{M}(R)\},\]
     \[T_2(R)=\{f-h : f\in\underline{H}(R)\}.\]

   The functor $T_1$ (resp. $T_2$) is representable by a flat $A$-algebra which is a polynomial ring over $A$ of $2n^2$ (resp. $n^2$) variables
   by Lemma 3.1 of \cite{C1}.
    Moreover, each of them is represented by a commutative group scheme since they are closed under addition.
        In fact, $T_1$ is the same as the functor $\underline{M}^{\prime}$ in Remark 3.1
        and $T_2$ is the same as the functor $\underline{H}'$ in Section 3.3.

   We still need to introduce another functor on flat $A$-algebras.
   Define $T_3(R)$ to be the set of all $(n \times n)$-matrices $y$ over $B\otimes_AR$ satisfying the following conditions:
   \begin{enumerate}
  \item[a)] The $(i,j)$-block  of $y$ has entries in $\pi^{max\{i,j\}}B\otimes_AR$ so that
   $$y=\begin{pmatrix} \pi^{max\{i,j\}}y_{i,j}\end{pmatrix}.$$
      Here, the size of $y_{i,j}$ is $n_i\times n_j$.
\item[b)] If $i$ is even and $L_i$ is \textit{of type} $\textit{I}^o$, $y_{i,i}$ is of the form
   \[\begin{pmatrix} s_i&\pi y_i\\ \pi v_i&\pi z_i \end{pmatrix}\in \mathrm{M}_{n_i}(B\otimes_AR)\]
     where $s_i$ is an $(n_i-1) \times (n_i-1)$ matrix, etc.
\item[c)] If $i$ is even and $L_i$ is \textit{of type} $\textit{I}^e$, $y_{i,i}$ is of the form
   \[\begin{pmatrix} s_i&r_i&\pi t_i\\ y_i&x_i&\pi z_i\\ \pi v_i&\pi u_i&\pi w_i \end{pmatrix}\in \mathrm{M}_{n_i}(B\otimes_AR)\]
     where $s_i$ is an $(n_i-2) \times (n_i-2)$-matrix, etc.
     \end{enumerate}
     The functor $T_3$ is represented by a flat $A$-scheme which is isomorphic to an affine space by Lemma 3.1 of \cite{C1}.
    Moreover   it is represented by a commutative group scheme since it is closed under addition.
So far, we have defined three functors $T_1, T_2, T_3$ and these are represented by schemes. Therefore, we can talk about their $\bar{\kappa}$-points.

We now compute the map $\rho_{\ast, m}$ explicitly.
We first describe an element of the tangent space $T_m$.
Since $\underline{M}^{\ast}$ is an open subscheme of $\underline{M}$,
the tangent space $T_m$ may and shall be identified with the set of elements of $\underline{M}(\bar{\kappa}[\epsilon]/(\epsilon^2))$ whose reduction to
$\underline{M}(\bar{\kappa})$ induced by the obvious map $\bar{\kappa}[\epsilon]/(\epsilon^2)\rightarrow \bar{\kappa}$ is $m$,
by considering $m$ as an element of $\underline{M}(\bar{\kappa})$.
Recall from Remark 3.1 that we  defined the functor $\underline{M}^{\prime}$ such that $(1+\underline{M}^{\prime})(R)=\underline{M}(R)$
inside $\mathrm{End}_{B\otimes_AR}(L \otimes_A R)$ for a flat $A$-algebra $R$.
Thus there is an isomorphism of schemes (as set valued functors)
$$1+ :  \underline{M}^{\prime} \longrightarrow \underline{M}.$$
Let $m'$ be an element of $\underline{M}'(\bar{\kappa})$ which maps to $m$ under the morphism $1+$ at the level of $\bar{\kappa}$-points.
Then each element of the tangent space of $\underline{M}'$ at $m'$ is of the form $m'+\epsilon X\in \underline{M}'(\bar{\kappa}[\epsilon]/(\epsilon^2))$ for $X\in \underline{M}'(\bar{\kappa})$.
We denote by $m+\epsilon X$  the image of $m'+\epsilon X$ under the morphism $1+$ at the level of $\bar{\kappa}[\epsilon]/(\epsilon^2)$-points.
Thus we can express an element of $T_m$  formally as $m+\epsilon X$ where $X\in \underline{M}'(\bar{\kappa})$.
Similarly, an element of $T_{\rho(m)}$ can be expressed formally as $\rho(m)+\epsilon Y$ where $Y\in \underline{H}'(\bar{\kappa})$,
by using  an isomorphism of schemes (as set valued functors)
\[h+ : \underline{H}^{\prime} \longrightarrow \underline{H}.\]
Here, $\underline{H}'$ is defined in Section 3.3.

Before observing the image of $m+\epsilon X$ under the morphism $\rho$ at the level of $\bar{\kappa}[\epsilon]/(\epsilon^2)$-points,
 we lift $m+\epsilon X$ to an element of $\underline{M}(R[\epsilon]/(\epsilon^2))$ as follows, where $R$ is a local $A$-algebra whose residue field is $\bar{\kappa}$.
Let $\tilde{m}'\in \underline{M}'(R)$ (resp. $\tilde{X}\in \underline{M}'(R)$) be a  lift of $m'$ (resp. $X$)
so that $\tilde{m}'+\epsilon \tilde{X} \in \underline{M}'(R[\epsilon]/(\epsilon^2))$ is a lift of $m'+\epsilon X\in \underline{M}'(\bar{\kappa}[\epsilon]/(\epsilon^2))$.
Let $\tilde{m}\in \underline{M}(R)$ be the image of $\tilde{m}'$ under the morphism $1+$.
Then $\tilde{m}+\epsilon \tilde{X}$ is an element of $\underline{M}(R[\epsilon]/(\epsilon^2))$ whose reduction
to $\underline{M}(\bar{\kappa}[\epsilon]/(\epsilon^2))$ induced by the  map $R[\epsilon]/(\epsilon^2) \rightarrow \bar{\kappa}[\epsilon]/(\epsilon^2)$
is $m+\epsilon X$.
Here, the addition in $\tilde{m}+\epsilon \tilde{X}$ is the addition inside $\mathrm{End}_{B\otimes_AR[\epsilon]/(\epsilon^2)}(L \otimes_A R[\epsilon]/(\epsilon^2))$
since $R[\epsilon]/(\epsilon^2)$ is flat over $A$ (cf. Remark 3.1).
This is illustrated in the following commutative diagram:
\[\xymatrixcolsep{5pc}\xymatrix{
\underline{M}'(R[\epsilon]/(\epsilon^2)) \ar[d] \ar[r]^{1+} &\underline{M}(R[\epsilon]/(\epsilon^2))\ar[d]\\
\underline{M}'(\bar{\kappa}[\epsilon]/(\epsilon^2)) \ar[r]^{1+} &\underline{M}(\bar{\kappa}[\epsilon]/(\epsilon^2)),}
\xymatrix{
\tilde{m}'+\epsilon \tilde{X} \ar@{|->}[d] \ar@{|->}[r] &\tilde{m}+\epsilon \tilde{X}\ar@{|->}[d]\\
m'+\epsilon X \ar@{|->}[r] &m+\epsilon X.}
\]

Note that the proof of Theorem 3.4 also tells the existence of the  morphism $\underline{H}\times \underline{M}\rightarrow \underline{H}$,
$(f, m)\mapsto f\circ m = \sigma({}^tm)\cdot f\cdot m$,
where $f\in \underline{H}(R)$ and $m\in \underline{M}(R)$ for a flat $A$-algebra $R$.
This morphism  induces the morphism
$\underline{M}\rightarrow \underline{H}$ with $m\mapsto h\circ m$ whose reduction to $\underline{M}^{\ast}$ is the same as $\rho$.
Thus the above morphism  $\underline{M}\rightarrow \underline{H}$ can also be denoted by $\rho$.
We can now talk about the image of $\tilde{m}+\epsilon \tilde{X}$ under the morphism $\rho$ at the level of $R[\epsilon]/(\epsilon^2)$-points.
Since $R[\epsilon]/(\epsilon^2)$ is a flat $A$-algebra, the image of $\tilde{m}+\epsilon \tilde{X}$ comes from a usual matrix product
\begin{equation}
\sigma(\tilde{m}+\epsilon \tilde{X})^t\cdot h \cdot (\tilde{m}+\epsilon \tilde{X})=\sigma(\tilde{m})^t\cdot h \cdot \tilde{m}
+\epsilon(\sigma(\tilde{m})^t\cdot h \cdot \tilde{X}+\sigma(\tilde{X})^t\cdot h \cdot \tilde{m}).
\end{equation}
Thus the image of $m+\epsilon X$ under the morphism $\rho$ at the level of $\bar{\kappa}[\epsilon]/(\epsilon^2)$-points is the reduction of
$\sigma(\tilde{m})^t\cdot h \cdot \tilde{m} 
+\epsilon(\sigma(\tilde{m})^t\cdot h \cdot \tilde{X}+\sigma(\tilde{X})^t\cdot h \cdot \tilde{m})$
to $\underline{H}(\bar{\kappa}[\epsilon]/(\epsilon^2))$.
It is obvious that $\rho(m) ~(\in \underline{H}(\bar{\kappa}))$ is the reduction of $\sigma(\tilde{m})^t\cdot h \cdot \tilde{m} ~(\in \underline{H}(R))$ since $\tilde{m}$ is a lift of $m$ and $\rho$ is a morphism of schemes.
To observe the reduction of
$\sigma(\tilde{m})^t\cdot h \cdot \tilde{X}+\sigma(\tilde{X})^t\cdot h \cdot \tilde{m} ~(\in \underline{H}'(R))$
to $\underline{H}'(\bar{\kappa})$,
we consider a morphism  $\underline{M}\times \underline{H}'\rightarrow \underline{H}'$ such that
$(\tilde{m}, \tilde{X})$ maps to $\sigma(\tilde{m})^t\cdot h \cdot \tilde{X}+\sigma(\tilde{X})^t\cdot h \cdot \tilde{m}$,
where  $(\tilde{m}, \tilde{X})\in \underline{M}(R)\times \underline{H}'(R)$ for a flat $A$-algebra $R$.
To show that this map is well-defined, we need to show that
$\sigma(\tilde{m})^t\cdot h \cdot \tilde{X}+\sigma(\tilde{X})^t\cdot h \cdot \tilde{m}$ is an element of $\underline{H}'(R)$.
This can be easily shown by considering the morphism of tangent spaces induced from $\rho$ at $\tilde{m}\in \underline{M}(R)$
(cf. Equation (3.1)).
Since this morphism is representable, we can denote by
$\sigma(m)^t\cdot h \cdot X+\sigma(X)^t\cdot h \cdot m ~(\in \underline{H}'(\bar{\kappa}))$  the reduction of
$\sigma(\tilde{m})^t\cdot h \cdot \tilde{X}+\sigma(\tilde{X})^t\cdot h \cdot \tilde{m} ~(\in \underline{H}'(R))$
to $\underline{H}'(\bar{\kappa})$.
Then the image of $m+\epsilon X$  is a formal sum $\rho(m)+\epsilon(\sigma(m)^t\cdot h \cdot X+\sigma(X)^t\cdot h \cdot m)
~(\in \underline{H}(\bar{\kappa}[\epsilon]/(\epsilon^2)))$.\\

Thus if we identify $T_m$ with $T_1(\bar{\kappa})$ and $T_{\rho(m)}$ with $T_2(\bar{\kappa})$, then
 $$\rho_{\ast, m} : T_m \longrightarrow T_{\rho(m)}, X\mapsto \sigma(m)^t\cdot h \cdot X+\sigma(X)^t\cdot h \cdot m.$$

We explain how to compute  $X\mapsto  \sigma(m)^t\cdot h \cdot X+\sigma(X)^t\cdot h \cdot m$ explicitly.
Recall that for a $\kappa$-algebra $R$, we denote an element $m$  of $\underline{M}(R)$ by $(m_{i,j}, s_i\cdots w_i)$ with a formal matrix interpretation
$m=(\pi^{max\{0, j-i\}}m_{i,j})$ (cf.  Section 3.2)
and  we denote an element $f$ of $\underline{H}(R)$  by $(f_{i,j}, a_i\cdots f_i)$   with a formal matrix interpretation
$f=(\pi^{max\{i,j\}}f_{i,j})$ (cf. Section 3.3).
Similarly, we can also denote an element $X$ of $T_1(\bar{\kappa})$ by $(m_{i,j}', s_i'\cdots w_i')$ with a formal matrix interpretation $X=(\pi^{max\{0, j-i\}}m_{i,j}')$ and an element $Z$ of $T_2(\bar{\kappa})$ by $(f_{i,j}', a_i'\cdots f_i')$ with a formal matrix interpretation  $Z=(\pi^{max\{i,j\}}f_{i,j}')$.
Then we formally compute $X \mapsto \sigma(m^t)\cdot h\cdot X + \sigma(X^t)\cdot h\cdot m$
and consider the reduction of the formal matrix $ \sigma(m^t)\cdot h\cdot X + \sigma(X^t)\cdot h\cdot m$
 in a manner similar to that of the reduction   explained in Remark 3.5.
We denote  this reduction by  $(f_{i,j}'', a_i''\cdots f_i'')$ with a formal matrix interpretation $(\pi^{max\{i,j\}}f_{i,j}'')$.
This $(f_{i,j}'', a_i''\cdots f_i'')$ may and shall be identifed with an element of $T_2(\bar{\kappa})$ in the manner just described.
Then $\rho_{\ast, m}(X)$ is the element $Z=(f_{i,j}'', a_i''\cdots f_i'')$ of $T_2(\bar{\kappa})$.\\

   To prove the surjectivity of $\rho_{\ast, m}:T_1(\bar{\kappa}) \rightarrow T_2(\bar{\kappa})$, it suffices to show the following three statements:
       \begin{itemize}
   \item[(1)] $X \mapsto h\cdot X $ defines a bijection $T_1(\bar{\kappa}) \rightarrow T_3(\bar{\kappa})$;
   \item[(2)] for any $m \in \underline{M}^{\ast}(\bar{\kappa})$, $Y \mapsto \sigma({}^t m) \cdot Y$ defines a bijection from $T_3(\bar{\kappa})$ to itself;
   \item[(3)] $Y \mapsto \sigma({}^t Y) + Y$ defines a surjection $T_3(\bar{\kappa}) \rightarrow T_2(\bar{\kappa})$.
\end{itemize}
Here, all the above maps are  interpreted as in Remark 3.5 (if they are well-defined).
Then $\rho_{\ast, m}$ is the composite of these three.
   (3) is direct from the construction of $T_3(\bar{\kappa})$. Hence we provide the proof of (1) and (2).\\

  For (1), suppose that the two functors $T_1(R)\longrightarrow T_3(R), X\mapsto h\cdot X (\in \mathrm{M}_{n\times n}(B\otimes_AR))$ and
  $T_3(R) \longrightarrow T_1(R), Y \mapsto h^{-1}\cdot Y (\in \mathrm{M}_{n\times n}(B\otimes_AR))$
   are  well-defined for all flat $A$-algebras $R$.
  In other words, suppose that $h\cdot X \in T_3(R)$ and $h^{-1}\cdot Y\in T_1(R)$.
  These functors are then represented by  morphisms of schemes by an argument similar to
that used in the proof of Theorem 3.4, so we skip it.
Thus they give maps at the level of $\kappa$-algebra points.
Furthermore, the composition of these two maps at the level of $\kappa$-algebra points is the identity.
To show this, it suffices to prove that the composition of  two morphisms given by the actions of $h$ and $h^{-1}$ is uniquely determined
at the level of $R$-points, for a flat $A$-algebra $R$.
This is proved in Remark 3.2.


We now show  that these two functors are well-defined for a flat $A$-algebra $R$.
We represent $h$ by a hermitian block matrix $\begin{pmatrix} \pi^{i}\cdot h_i\end{pmatrix}$
   with a matrix $(\pi^{i}\cdot h_i)$ for the $(i,i)$-block and $0$ for the remaining blocks as in Remark 3.3.(1).

   For the first functor, it suffices to show that  $h\cdot X$ satisfies the three conditions defining the functor $T_3$.
    Here, $X\in T_1(R)$ for a flat $A$-algebra $R$.
  We express $$X=\begin{pmatrix} \pi^{max\{0,j-i\}}x_{i,j} \end{pmatrix}.$$
  Then
   $$h\cdot X = \begin{pmatrix} \pi^{max(i,j)}y_{i,j}\end{pmatrix}.$$
  Here, $y_{i,i}=h_i\cdot x_{i,i}$.
  Therefore, it suffices to show that $y_{i,i}=h_i\cdot x_{i,i}$ satisfies conditions b) and c) in the description of $T_3(R)$ when $L_i$ is \textit{of type I}.

  If $L_i$ is \textit{of type $I^o$}, then we express $x_{i,i}$ as a matrix $\begin{pmatrix} s_i&\pi y_i\\ \pi v_i&\pi z_i \end{pmatrix}$.
 The matrix form of $h_i$ is $\epsilon^{i/2}\begin{pmatrix} \begin{pmatrix} 0&1\\1&0\end{pmatrix}& & & \\ &\ddots & & \\ & &\begin{pmatrix} 0&1\\1&0\end{pmatrix}& \\ & & & 1+2\gamma_i \end{pmatrix}$ as in Remark 3.3.(1).  Here, $\epsilon$ is a unit in $B$ such that $\sigma(\pi)=\epsilon\pi$, as explained in Section 2.1.
 To simplify our notation, write $h_i=\epsilon^{i/2}\begin{pmatrix} I_i&0\\ 0&1+2\gamma_i \end{pmatrix}$.
  Then we can see that
  $$h_i\cdot x_{i,i}=\epsilon^{i/2}\begin{pmatrix} I_i&0\\ 0&1+2\gamma_i \end{pmatrix}\cdot\begin{pmatrix} s_i&\pi y_i\\ \pi v_i&\pi z_i \end{pmatrix}
  =\epsilon^{i/2}\begin{pmatrix} I_is_i&\pi I_iy_i\\ \pi (1+2\gamma_i)v_i&\pi (1+2\gamma_i)z_i \end{pmatrix}.$$
  Thus, $h_i\cdot x_{i,i}$ satisfies the congruence condition given in b) of the description of $T_3(R)$.

  If $L_i$ is \textit{of type $I^e$}, then we express $x_{i,i}$ as a matrix $\begin{pmatrix} s_i&r_i&\pi t_i\\\pi y_i&\pi x_i&\pi z_i\\  v_i& u_i&\pi w_i \end{pmatrix}$.
 The matrix form of $h_i$ is given as in Remark 3.3.(1) and again,
 in order to simplify our notation, write $h_i=\epsilon^{i/2}\begin{pmatrix} I_i&0&0\\ 0&1&1\\0&1&2\gamma_i \end{pmatrix}$.
  Then we can see that
  $$h_i\cdot x_{i,i}=\epsilon^{i/2}\begin{pmatrix} I_i&0&0\\ 0&1&1\\0&1&2\gamma_i \end{pmatrix}\cdot\begin{pmatrix} s_i&r_i&\pi t_i\\\pi y_i&\pi x_i&\pi z_i\\  v_i& u_i&\pi w_i \end{pmatrix}
  =\epsilon^{i/2}\begin{pmatrix} I_is_i&I_ir_i&\pi I_it_i\\\pi y_i+v_i&\pi x_i+u_i&\pi (z_i+w_i)\\  \pi y_i+2\gamma_iv_i& \pi x_i+2\gamma_iu_i&\pi z_i+2\gamma_i\pi w_i \end{pmatrix}.$$
  Thus, $h_i\cdot x_{i,i}$ satisfies the congruence condition given in c) of the description of $T_3(R)$
  and our functor is well-defined.

   For the second functor, we express $Y=\begin{pmatrix} \pi^{max(i,j)}y_{i,j}\end{pmatrix}$ and
   $h^{-1}=\begin{pmatrix} \pi^{-i}\cdot h_i^{-1}\end{pmatrix}$.
   Then we have the following:
   $$h^{-1}\cdot Y = \begin{pmatrix} \pi^{max\{0,j-i\}}x_{i,j} \end{pmatrix}.$$
  Here, $x_{i,i}=h_i^{-1}\cdot y_{i,i}$.

Then it suffices to show that $h^{-1}\cdot Y = \begin{pmatrix} \pi^{max\{0,j-i\}}x_{i,j} \end{pmatrix}$ satisfies the conditions
defining $T_1(R)$ for a flat $A$-algebra $R$.
Indeed, we do not describe the conditions defining $T_1(R)$ explicitly in this paper.
However, these conditions can be read off from the conditions defining $\underline{M}(R)$ because of the definition of the functor $T_1$.
The matrix form of an element of $\underline{M}(R)$ is described in Section 3.2 and based on this,
it suffices to observe the diagonal blocks $x_{i,i}=h_i^{-1}\cdot y_{i,i}$ when $L_i$ is \textit{of type I}.

  If $L_i$ is \textit{of type $I^o$}, then we express $y_{i,i}$ as a matrix $\begin{pmatrix} s_i&\pi y_i\\ \pi v_i&\pi z_i \end{pmatrix}$.
 The matrix form of $h_i^{-1}$ is  $h_i^{-1}=\epsilon^{-i/2}\begin{pmatrix} I_i&0\\ 0&1+2\gamma_i' \end{pmatrix}$ for a certain $\gamma_i'\in A$.
  Then we can see that
  $$h_i^{-1}\cdot y_{i,i}=\epsilon^{-i/2}\begin{pmatrix} I_i&0\\ 0&1+2\gamma_i' \end{pmatrix}\cdot\begin{pmatrix} s_i&\pi y_i\\ \pi v_i&\pi z_i \end{pmatrix}
  =\epsilon^{i/2}\begin{pmatrix} I_is_i&\pi I_iy_i\\ \pi (1+2\gamma_i')v_i&\pi (1+2\gamma_i')z_i \end{pmatrix}.$$
  Thus, $h_i^{-1}\cdot y_{i,i}$ satisfies the relevant congruence condition in the definition of $T_1(R)$.

  If $L_i$ is \textit{of type $I^e$}, then we express $y_{i,i}$ as a matrix $\begin{pmatrix} s_i&r_i&\pi t_i\\y_i&x_i&\pi z_i\\ \pi  v_i&\pi  u_i&\pi w_i \end{pmatrix}$.
 The matrix form of $h_i^{-1}$ is $h_i^{-1}=\epsilon^{-i/2}\begin{pmatrix} I_i&0&0\\ 0&2\epsilon'\gamma_i&-\epsilon'\\0&-\epsilon'&\epsilon' \end{pmatrix}$.
 Here, $\epsilon'=(2\gamma_i-1)^{-1}$ is a unit in $A$.
  Then we can see that $h_i^{-1}\cdot y_{i,i}$ is
  $$\epsilon^{-i/2}\begin{pmatrix} I_i&0&0\\ 0&2\epsilon'\gamma_i&-\epsilon'\\0&-\epsilon'&\epsilon' \end{pmatrix}\cdot
  \begin{pmatrix} s_i&r_i&\pi t_i\\y_i&x_i&\pi z_i\\ \pi  v_i&\pi  u_i&\pi w_i \end{pmatrix}
  =\epsilon^{-i/2}\begin{pmatrix} I_is_i&I_ir_i&\pi I_it_i\\ \epsilon'(2\gamma_iy_i-\pi v_i)&\epsilon'(2\gamma_ix_i-\pi u_i)&\pi \epsilon'(2\gamma_iz_i-w_i)\\
  \epsilon'(-y_i+\pi v_i)&\epsilon'(-x_i+\pi u_i)&\pi \epsilon'(-z_i+w_i) \end{pmatrix}.$$
  Thus, $h_i^{-1}\cdot y_{i,i}$ satisfies the relevant congruence condition in the definition of $T_1(R)$
  and our functor is well-defined.\\

For (2), suppose that the functor $$\underline{M}^{\ast}(R)\times T_3(R)\longrightarrow T_3(R), ~~~~~ (m, Y)\mapsto \sigma({}^tm)\cdot Y, $$
 for a flat $A$-algebra $R$, is well-defined. In other words, we suppose that $\sigma({}^tm)\cdot Y\in T_3(R)$.
  This functor is then represented by  a morphism of schemes,
   a fact whose proof is similar to the argument used in the proof of Theorem 3.4, so we skip it.
Thus it gives the map at the level of $\bar{\kappa}$-points
  $$\underline{M}^{\ast}(\bar{\kappa})\times T_3(\bar{\kappa})\longrightarrow T_3(\bar{\kappa}), ~~~~~ (m, Y)\mapsto \sigma({}^tm)\cdot Y. $$
  This map implies that our map in (2) is well-defined.
On the other hand, the inverse  of our map in (2) is $Y \mapsto \sigma({}^t m)^{-1} \cdot Y$ and
this map is well-defined as well since $m^{-1}$ is also an element of $\underline{M}^{\ast}(\bar{\kappa})$.
Therefore, the map in (2) is a bijection.

We now show that the above functor is well-defined.
For a flat $A$-algebra, we choose an element $m\in \underline{M}^{\ast}(R)$ and $Y\in T_3(R)$
and    we again express $m=\begin{pmatrix} \pi^{max\{0,j-i\}}m_{i,j} \end{pmatrix}$ and $Y=\begin{pmatrix} \pi^{max(i,j)}y_{i,j}\end{pmatrix}$.
Then $\sigma({}^t m) \cdot Y $ obviously satisfies the condition a) in the definition of $T_3(R)$ and
it suffices to show that $\sigma({}^t m_{i,i}) \cdot y_{i,i}$ satisfies  conditions b) and c) when $L_i$ is \textit{of type I}.

  If $L_i$ is \textit{of type $I^o$}, then we express $m_{i,i}$ as a matrix $\begin{pmatrix} s_i&\pi y_i\\ \pi v_i&1+\pi z_i \end{pmatrix}$
  and    $y_{i,i}$ as a matrix $\begin{pmatrix} a_i&\pi b_i\\ \pi c_i&\pi d_i \end{pmatrix}$.
Then
\[\sigma({}^t m_{i,i}) \cdot y_{i,i}=\begin{pmatrix} \sigma({}^ts_i)&\sigma(\pi\cdot{}^t v_i)\\ \sigma(\pi\cdot{}^t y_i)&1+\sigma(\pi z_i) \end{pmatrix}\cdot
\begin{pmatrix} a_i&\pi b_i\\ \pi c_i&\pi d_i \end{pmatrix}.\]
Then we can easily see that this matrix satisfies the congruence condition b) in the definition of $T_3(R)$.

  If $L_i$ is \textit{of type $I^e$}, then we express $m_{i,i}$ as a matrix
  $\begin{pmatrix} s_i&r_i&\pi t_i\\ \pi y_i&1+\pi x_i&\pi z_i\\ v_i&u_i&1+\pi w_i \end{pmatrix} $
  and    $y_{i,i}$ as a matrix $\begin{pmatrix} a_i&b_i&\pi c_i\\ d_i&e_i&\pi f_i\\ \pi g_i&\pi h_i&\pi k_i \end{pmatrix}$.
Then
\[\sigma({}^t m_{i,i}) \cdot y_{i,i}=\begin{pmatrix} \sigma({}^ts_i)&\sigma(\pi\cdot{}^ty_i)&\sigma({}^tv_i)\\
\sigma({}^tr_i)&1+\sigma(\pi x_i)&\sigma(u_i)\\ \sigma(\pi\cdot {}^tt_i)&\sigma(\pi z_i)&1+\sigma(\pi w_i) \end{pmatrix}\cdot
\begin{pmatrix} a_i&b_i&\pi c_i\\ d_i&e_i&\pi f_i\\ \pi g_i&\pi h_i&\pi k_i \end{pmatrix}.\]
Then we can easily see that this matrix satisfies the congruence condition c) in the definition of $T_3(R)$.



\end{proof}

 Let $\underline{G}$ be the stabilizer of $h$ in $\underline{M}^{\ast}$. It is an affine group subscheme of $\underline{M}^{\ast}$, defined over $A$.
 Thus we have the following theorem.
 \begin{Thm}
 The group scheme $\underline{G}$ is smooth, and $\underline{G}(R)=\mathrm{Aut}_{B\otimes_AR}(L\otimes_A R,h\otimes_A R)$ for any \'{e}tale $A$-algebra $R$.
 \end{Thm}
\begin{proof}
Since $\underline{G}$ is the fiber of $h$ along the smooth morphism
$\rho : \underline{M}^{\ast} \rightarrow \underline{H}$,  $\rho(m)=h \circ m$, $\underline{G}$ is smooth.
Here, we use the fact that smoothness is stable under base change.

For the identity, we recall that each element of $\mathrm{Aut}_{B\otimes_AR}(L\otimes_A R,h\otimes_A R)$, for an \'{e}tale $A$-algebra $R$,
 satisfies all congruence conditions defining $\underline{M}$,
which is explained in Section 3.1.
Since $\underline{G}(R)$ is the group of $R$-points of $\underline{M}^{\ast}$ stabilizing the given hermitian form $h$,
we have the identity $\underline{G}(R)=\mathrm{Aut}_{B\otimes_AR}(L\otimes_A R,h\otimes_A R)$ for any \'{e}tale $A$-algebra $R$.
\end{proof}

Note that in the theorem, the equality holds only for  an \'{e}tale $A$-algebra $R$
since we obtain conditions defining $\underline{M}$ by observing properties of elements of
$\mathrm{Aut}_{B\otimes_AR}(L\otimes_A R,h\otimes_A R)$ for an \'{e}tale $A$-algebra $R$ (cf. Section 3.1).
For example, let $(L, h)$ be the hermitian lattice of rank 1 as given in Appendix B.
For simplicity, let $\pi+\sigma(\pi)=\pi^2=2$.
As a set, $\mathrm{Aut}_{B\otimes_AR}(L\otimes_A R,h\otimes_A R)$ is the same as
$\{(a, b):a,b\in R \textit{ and } a^2+2ab+2b^2=1\}$ for a flat $A$-algebra $R$.
Thus we cannot guarantee that $a-1$ is contained in the ideal $(2)$, which should be necessary
in order that $(a, b)$ is an element of $\underline{G}(R)$.

\section{The special fiber of the smooth integral model}

In this section, we will determine the structure of the special fiber $\tilde{G}$ of $\underline{G}$
by determining the maximal reductive quotient and the component group when $E/F$ satisfies \textit{Case 1},
 by adapting the approach of Section 4 of \cite{C1}.
From this section to the end, the identity matrix is denoted by id.

    \subsection{The reductive quotient of the special fiber}
Recall that $Y_i$ is the sublattice of $B_i$ such that $Y_i/\pi A_i$ is the radical of the alternating bilinear form
$\frac{1}{(\pi\cdot\sigma(\pi))^{\frac{i}{2}}}h$ mod $\pi$ on $B_i/\pi A_i$ (when $i$ is even) and
that $Z_i$ is the sublattice of $A_i$ such that $Z_i/\pi A_i$ is the radical of the quadratic form $\frac{1}{2^m}q$ mod 2 on $A_i/\pi A_i$,
where $\frac{1}{2^m}q(x)=\frac{1}{2^m}h(x,x)$ (when $i=2m-1$ is odd).
 \begin{Lem}
 Let $i$ be odd. Consider the lattice $\pi A_{i-1}+A_{i+1}=\{x+y:x\in \pi A_{i-1}, y\in A_{i+1}\}$.
 Then $\pi A_{i-1}+A_{i+1}=X_i$.

 \end{Lem}

\begin{proof}
Let $L=\bigoplus_i L_i$ be a Jordan splitting. We describe $\pi A_{i-1}, A_{i+1}, X_i$ below:
\[\pi A_{i-1}=\pi^iL_0 \oplus \pi^{i-1}L_1 \oplus \cdots  \oplus \pi L_{i-1} \oplus \pi L_i  \oplus \pi L_{i+1} \oplus \cdots ;\]
\[A_{i+1}=    \pi^{i+1}L_0 \oplus \pi^{i}L_1 \oplus \cdots  \oplus \pi^2 L_{i-1} \oplus \pi L_{i}  \oplus  L_{i+1} \oplus \cdots ;\]
\[X_i=\pi^iL_0 \oplus \pi^{i-1}L_1 \oplus \cdots  \oplus \pi L_{i-1} \oplus \pi L_i  \oplus L_{i+1} \oplus \cdots.\]
Our claim follows directly from the above descriptions.
\end{proof}

\begin{Lem}
Each element of $\underline{M}(R)$, for a flat $A$-algebra $R$, preserves $Y_i\otimes_A R$ (for $i$ even) and $Z_i\otimes_A R$ (for $i$ odd).
\end{Lem}
\begin{proof}
The claim for $Y_i$ follows from the fact that $Y_i=X_i$ or $Y_i=W_i$ according to the type of $L_i$ as described in Remark 2.11.

To prove the claim for $Z_i$,  use Lemma 4.1 to express a given arbitrary element of $Z_i\otimes_AR$ as $x + y$, where
$x\in \pi A_{i-1}\otimes_AR$ and  $y\in A_{i+1}\otimes_AR$.
Let $g\in \underline{M}(R)$. Then $g(x+y)=g(x)+g(y)=(x+x')+(y+y')$, where $x'\in \pi B_{i-1}\otimes_AR, y'\in B_{i+1}\otimes_AR$ since $g$ induces the identity on
$A_{i-1}\otimes_AR/B_{i-1}\otimes_AR, A_{i+1}\otimes_AR/B_{i+1}\otimes_AR$.
Since $\pi A_{i-1}\otimes_AR, A_{i+1}\otimes_AR \subset W_i\otimes_AR$ and so
$\pi B_{i-1}\otimes_AR, B_{i+1}\otimes_AR \subset Z_i\otimes_AR$, we have that $g(x+y)=(x+y)+x'+y'\in Z_i\otimes_AR$.
\end{proof}

\begin{Thm}
 Assume that $i$ is even.
Let $h_i$ denote the nonsingular alternating bilinear form $\frac{1}{(\pi\cdot\sigma(\pi))^{\frac{i}{2}}}h$ mod $\pi$ on $B_i/Y_i$.
Then there exists a unique morphism of algebraic groups
$$\varphi_i:\tilde{G}\longrightarrow \mathrm{Sp}(B_i/Y_i, h_i)$$
defined over $\kappa$ such that for all \'{e}tale local $A$-algebras $R$ with residue field $\kappa_R$ and
every $\tilde{m} \in \underline{G}(R)$ with reduction $m\in \tilde{G}(\kappa_R)$,
$\varphi_i(m)\in \mathrm{GL}(B_i\otimes_AR/Y_i\otimes_AR)$ is induced by the action of $\tilde{m}$ on $L\otimes_AR$
(which preserves $B_i\otimes_AR$ and $Y_i\otimes_AR$ by Lemma 4.2).
Note that the dimension of $B_i/Y_i$, as a $\kappa$-vector space, is as follows:
\[
\left\{
  \begin{array}{l l}
  n_i & \quad  \textit{if $L_i$ is \textit{of type II}};\\
  n_i-1 & \quad \textit{if $L_i$ is \textit{of type} $\textit{I}^o$};\\
  n_i-2 & \quad \textit{if $L_i$ is \textit{of type} $\textit{I}^e$}.
    \end{array} \right.
\]
\end{Thm}

\begin{proof}
Let $R$ be an \'{e}tale local $A$-algebra with $\kappa_R$ as its residue field.
Note that such an $R$ is finite over $A$ since
  any \'etale local algebra $R$ over a henselian local ring is finite   by Proposition 4 of Section 2.3 in \cite{BLR}
  and since $A$ is henselian.
For such a finite field extension $\kappa_R$ of $\kappa$, $R$ is uniquely determined up to isomorphism.
Since $\underline{G}$ is smooth over $A$, the map $\underline{G}(R)\rightarrow \tilde{G}(\kappa_R)$ is surjective by \textit{Hensel's lemma}.

 Now, we choose an element $m\in \tilde{G}(\kappa_R)$ and a lift $\tilde{m} \in \underline{G}(R)$.
Since the action of $\tilde{m}$ on $L\otimes_AR$ preserves  $B_i\otimes_AR$ and $Y_i\otimes_AR$,
$\tilde{m}$ determines an element of $\mathrm{GL}(B_i\otimes_AR/Y_i\otimes_AR)$.
Furthermore, it is easy to show that this element determined by $\tilde{m}$ fixes $h_i\otimes \kappa_R$
on $B_i/Y_i\otimes_{\kappa}\kappa_R ~(=B_i\otimes_AR/Y_i\otimes_AR)$.
Thus $\tilde{m}$ determines an element of $\mathrm{Sp}(B_i/Y_i, h_i)(\kappa_R)$ and so
 we have a map  from $\tilde{G}(\kappa_R)$ to $\mathrm{Sp}(B_i/Y_i, h_i)(\kappa_R)$.
Indeed,  this map is well-defined, i.e. independent of a lift  $\tilde{m}$ of $m$
as will be explained later after describing a matrix interpretation  of this map.
 In order to show that this map is well-defined and representable, we interpret it in terms of matrices.
 Recall that $\underline{G}$ is a closed subgroup scheme of $\underline{M}^{\ast}$ and $\tilde{G}$ is a closed subgroup scheme of $\tilde{M}$,
 where $\tilde{M}$ is the special fiber of $\underline{M}^{\ast}$.
 Thus we may consider an element of  $\tilde{G}(\kappa_R)$ as an element of $\tilde{M}(\kappa_R)$.
 Based on Section 3.2, an element $m$ of $\tilde{G}(\kappa_R)$ may be written as, say,
  $(m_{i,j}, s_i \cdots w_i)$ and it has the following formal matrix description:
$$m= \begin{pmatrix} \pi^{max\{0,j-i\}}m_{i,j} \end{pmatrix}.$$
Here, if $i$ is even and $L_i$ is \textit{of type} $\textit{I}^o$ or \textit{of type} $\textit{I}^e$, then
 $$m_{i,i}=\begin{pmatrix} s_i&\pi y_i\\ \pi v_i&1+\pi z_i \end{pmatrix} \textit{or}
\begin{pmatrix} s_i&r_i&\pi t_i\\ \pi y_i&1+\pi x_i&\pi z_i\\ v_i&u_i&1+\pi w_i \end{pmatrix},$$
 respectively,
where
 $s_i\in M_{(n_i-1)\times (n_i-1)}(B\otimes_A\kappa_R)$ (resp.  $s_i\in M_{(n_i-2)\times (n_i-2)}(B\otimes_A\kappa_R)$), etc., and $s_i$ is invertible.
For the remaining $m_{i,j}$'s except for the cases explained above, $m_{i,j}\in M_{n_i\times n_j}(B\otimes_A\kappa_R)$ and $m_{i,i}$ is invertible.
Note that the description of the multiplication in $\tilde{M}(\kappa_R)$ given in Section 3.2 forces  $s_i$ and $m_{i,i}$ to be invertible.

We can write  $m_{i, i}=m_{i, i}^1+\pi\cdot m_{i, i}^2$ when $L_i$ is \textit{of type II} and
for each block of $m_{i,i}$ when $L_i$ is \textit{of type I},  $s_i=s_i^1+\pi\cdot s_i^2$ and so on.
Here,
$m_{i, i}^1, m_{i, i}^2\in M_{n_i\times n_i}(\kappa_R) \subset M_{n_i\times n_i}(B\otimes_A\kappa_R)$ when $L_i$ is \textit{of type II}
 and so on, and $\pi$ stands for $\pi\otimes 1\in B\otimes_A\kappa_R$. 
Then $m$ maps to $ m_{i, i}^1$ if $L_i$ is \textit{of type II} and
$ s_i^1$ if $L_i$ is \textit{of type I}.
Since this map is independent of the choice of $ m_{i, i}^2, s_i^2$ and so on, it is independent of the choice of $\tilde{m}$, i.e. this map is well-defined.

We note that this map is given by polynomials over $A$ of degree at most 1 as well as a group homomorphism. Thus
the above matrix interpretation induces a Hopf algebra homomorphism over $A$ 
from the coordinate ring of $\mathrm{Sp}(B_i/Y_i, h_i)$ to the coordinate ring of $\tilde{G}$,
which accordingly induces an algebraic group homomorphism $\varphi_i : \tilde{G} \rightarrow \mathrm{Sp}(B_i/Y_i, h_i)$ such that
the group homomorphism induced by $\varphi_i$ at the level of $\kappa_R$-points is the same as the map explained above.

Since $\tilde{G}$ is smooth over $\kappa$ and $\kappa$ is perfect, the set of $\kappa_R$-points of $\tilde{G}$ for all finite field extensions $\kappa_R/\kappa$
is dense in $\tilde{G}$ by Corollary 13 of Section 2.2 in \cite{BLR}.
Therefore, $\varphi_i$ is uniquely determined by the map constructed above at the level of $\kappa_R$-points.
\end{proof}

\begin{Thm}
We next assume that $i=2m-1$ is odd.
Let $\bar{q}_i$ denote the nonsingular quadratic form $\frac{1}{2^m}q$ mod 2 on $A_i/Z_i$.
Then there exists a unique morphism of algebraic groups
$$\varphi_i:\tilde{G}\longrightarrow \mathrm{O}(A_i/Z_i, \bar{q}_i)_{\mathrm{red}}$$
defined over $\kappa$, where $\mathrm{O}(A_i/Z_i, \bar{q}_i)_{\mathrm{red}}$ is the reduced subgroup scheme of $\mathrm{O}(A_i/Z_i, \bar{q}_i)$,
such that for all \'{e}tale local $A$-algebras $R$ with residue field $\kappa_R$ and
every $\tilde{m} \in \underline{G}(R)$ with reduction $m\in \tilde{G}(\kappa_R)$,
$\varphi_i(m)\in \mathrm{GL}(A_i\otimes_AR/Z_i\otimes_AR)$ is induced by the action of $\tilde{m}$ on $L\otimes_AR$
(which preserves $A_i\otimes_AR$ and $Z_i\otimes_AR$ by Lemma 4.2).
\end{Thm}
\begin{proof}
The proof of this theorem is similar to that of the above theorem which deals with the case where $i$ is  even.
Thus we only provide the image of an element $m$ of $\tilde{G}(\kappa_R)$ in
$\mathrm{O}(A_i/Z_i, \bar{q}_i)_{\mathrm{red}}(\kappa_R)$, where $R$ is an \'{e}tale local $A$-algebra with $\kappa_R$ as its residue field.
In this case, an element $m$ of $\tilde{G}(\kappa_R)$ maps to $ m_{i, i}^1$ (if $L_i$ is \textit{free}) or
$ \begin{pmatrix} m_{i, i}^1&0\\ \delta_{i-1}e_{i-1}\cdot m_{i-1, i}^1+\delta_{i+1}e_{i+1}\cdot m_{i+1, i}^1&1 \end{pmatrix}$ (if $L_i$ is \textit{bound}).
Here, $
\delta_{j} = \left\{
  \begin{array}{l l}
  1    & \quad  \textit{if $L_j$ is \textit{of type I}};\\
  0    &   \quad  \textit{if $L_j$ is \textit{of type II}},
    \end{array} \right.$
    and $e_{j}=(0,\cdots, 0, 1)$ (resp. $e_j=(0,\cdots, 0, 1, 0)$) of size $1\times n_{j}$
if $L_{j}$ is \textit{of type} $\textit{I}^o$ (resp. \textit{of type} $\textit{I}^e$).
\end{proof}

 Notice that  if the dimension of $A_i/Z_i$ is even and positive,
 then $\mathrm{O}(A_i/Z_i, \bar{q}_i)_{\mathrm{red}} (= \mathrm{O}(A_i/Z_i, \bar{q}_i))$  is disconnected.
  If the dimension of $A_i/Z_i$ is odd, then
    $\mathrm{O}(A_i/Z_i, \bar{q}_i)_{\mathrm{red}} (= \mathrm{SO}(A_i/Z_i, \bar{q}_i))$  is connected.
The dimension of $A_i/Z_i$, as a $\kappa$-vector space, is as follows:
\[
\left\{
  \begin{array}{l l}
  n_i & \quad  \textit{if $L_i$ is \textit{free}};\\
  n_i+1 & \quad \textit{if $L_i$ is \textit{bound}}.
    \end{array} \right.
\]
Note that the integer $n_i$, with $i$ odd, is always even.

\begin{Thm}
The morphism $\varphi$ defined by
$$\varphi=\prod_i \varphi_i : \tilde{G} ~ \longrightarrow  ~\prod_{i:even} \mathrm{Sp}(B_i/Y_i, h_i)\times \prod_{i:odd} \mathrm{O}(A_i/Z_i, \bar{q}_i)_{\mathrm{red}}$$
is surjective.
\end{Thm}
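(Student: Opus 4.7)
The plan is to verify surjectivity on $\bar{\kappa}$-points, working factor-by-factor. Since each target factor is generated by elementary transformations---symplectic transvections in $\mathrm{Sp}(B_i/Y_i, h_i)$ for even $i$, and reflections (or Siegel-type transformations in characteristic 2) in $\mathrm{O}(A_i/Z_i, \bar{q}_i)^{\mathrm{red}}$ for odd $i$---it suffices to exhibit a preimage in $\tilde{G}(\bar{\kappa})$ for each such generator, chosen so that its image under $\varphi_j$ is trivial for all $j\neq i$. Composing these lifts then produces a preimage of an arbitrary tuple in the product.

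For a fixed $i$ and a fixed generator $g_0$ of the $i$-th target factor, I would write down an explicit lift $\tilde{g}$ as a block matrix which is the identity outside the $(i,i)$-block, and whose $(i,i)$-block has the shape prescribed in Section 3.1 by the type of $L_i$: either the $I^o$ block-form, the $I^e$ block-form, or the unrestricted form if $L_i$ is of type II. The entries are chosen so that the reduction modulo $\pi$ on $B_i/Y_i$ (for even $i$) or on $A_i/Z_i$ (for odd $i$) equals $g_0$, while the reductions on $A_j/B_j$ and $W_j/X_j$ remain the identity for every $j$. This places $\tilde{g}$ in $\underline{M}^{\ast}(\bar{\kappa})$ automatically, using the matrix description of $\underline{M}^{\ast}$ recorded in Section 3.1.

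In general $\tilde{g}$ will not preserve $h$ exactly, but the difference $\sigma({}^t\tilde{g})\cdot h\cdot \tilde{g}-h$ will lie in the image of the differential of $\rho$ at $\tilde{g}$ (by the construction, the discrepancy lives in the ``correction directions'' that the definition of $\underline{H}$ leaves free, which are precisely the image of $d\rho$). Invoking the smoothness of $\rho:\underline{M}^{\ast}\to\underline{H}$ established in Theorem 3.3, I can modify $\tilde{g}$ by multiplication by an element close to the identity in $\underline{G}(\bar{\kappa})$---an element in the kernel of $d\rho$---to produce an honest stabilizer of $h$ whose reduction on each $B_j/Y_j$ and $A_j/Z_j$ is unchanged. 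The correction is block-diagonal modulo off-diagonal terms lying in $\pi B^{sh}$, so it contributes trivially to every other $\varphi_j$. Taking products of such lifts across $i$ yields the claimed surjectivity.

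The main technical obstacle will be the explicit construction of the block-wise lifts in the residue-characteristic-two mixed cases, particularly the $I^e$ blocks at even $i$ and the bound blocks at odd $i$, where the filtration $X_i\subsetneq W_i\subsetneq B_i\subsetneq A_i$ is strict and where the additive polynomial $\frac{1}{2^m}q$ (together with the non-alternating reduction of $h$ on $L_i/X_i$) imposes extra constraints on the shape of an admissible lift. In each of these cases a naive lift of a symplectic transvection can fail to induce the identity on $A_i/B_i$ or $W_i/X_i$, and one must adjust the lift within the $(i,i)$-block---using the explicit bases furnished by Theorem 2.8---to enforce these identities without disturbing the prescribed image on $B_i/Y_i$. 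Verifying in every case of Theorem 2.8 that the constructed $\tilde{g}$ simultaneously satisfies all of the matrix-shape conditions of Section 3.1 and reduces to $g_0$ on the appropriate subquotient will be the main bookkeeping burden.
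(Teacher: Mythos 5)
Your approach is genuinely different from the paper's, but it has a real gap. The paper proves surjectivity in two steps: first it invokes the dimension count of Lemma~4.2 (proved in Appendix~A) to get
$$\dim \tilde{G} = \dim \mathrm{Ker}\,\varphi + \sum_{i\,\mathrm{even}}\dim\mathrm{Sp}(B_i/Y_i,h_i) + \sum_{i\,\mathrm{odd}}\dim\mathrm{O}(A_i/Z_i,\bar{q}_i)^{\mathrm{red}},$$
which immediately forces $\mathrm{Im}\,\varphi$ to contain the identity component of the target; second, for the finitely many extra components (coming only from the disconnected factors $\mathrm{O}(A_i/Z_i,\bar{q}_i)^{\mathrm{red}}$ at odd $i$ with $L_i$ \textit{free}), it exhibits an explicit closed subgroup $H_i\subset\tilde{G}$ supported on the $(i,i)$-block, identifies $H_i$ with the special fiber of the smooth model for the single Jordan block $L_i$, and observes that $\varphi_i|_{H_i}$ is onto while $\varphi_j|_{H_i}$ is trivial for $j\neq i$. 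You bypass the dimension count entirely and try to lift each generator of each factor by hand.

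The gap is the step that upgrades your block-diagonal lift $\tilde{g}\in\underline{M}^{*}(\bar{\kappa})$ to an element of $\tilde{G}(\bar{\kappa})$. What you actually need is some $m\in\mathrm{Ker}\,\tilde{\varphi}(\bar{\kappa})$ with $\rho(\tilde{g}m)=h$, i.e.\ you need $h$ to lie in the $\mathrm{Ker}\,\tilde{\varphi}$-orbit of $\rho(\tilde{g})$ under the right action $f\circ m=\sigma({}^tm)fm$ on $\underline{H}(\bar{\kappa})$. Smoothness of $\rho$ does not give this: over the residue field there is no topology, so no Hensel-type lifting, and surjectivity of $d\rho$ is a statement about tangent spaces that says nothing about whether two $\bar{\kappa}$-points of $\underline{H}$ lie in the same orbit of the unipotent group $\mathrm{Ker}\,\tilde{\varphi}$. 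Moreover, your stated correction mechanism --- ``multiplication by an element close to the identity in $\underline{G}(\bar{\kappa})$'' --- cannot work: any $u\in\underline{G}(\bar{\kappa})$ satisfies $\rho(\tilde{g}u)=\rho(\tilde{g})\circ u$ and $h\circ u=h$, so right-multiplication by $u$ moves $\rho(\tilde{g})$ only within the $\underline{G}$-orbit of $\rho(\tilde{g})$, never towards $h$ unless they were already in the same orbit. You would need a separate transitivity-of-orbits argument for the $\mathrm{Ker}\,\tilde{\varphi}$-action on the image of your lifts inside $\underline{H}$, which you neither state nor prove. The dimension count of Lemma~4.2 is precisely what the paper uses to avoid such an orbit analysis; your proposal does not replace it.
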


 \begin{proof}
 Let us first prove the theorem under the assumption that
\begin{equation}
\textit{dim $\tilde{G}$ = dim $\mathrm{Ker~}\varphi$ + $\sum_{i:\mathrm{even}}$ (dim $\mathrm{Sp}(B_i/Y_i, h_i)$)
  + $\sum_{i:\mathrm{odd}}$ (dim $\mathrm{O}(A_i/Z_i, \bar{q}_i)_{\mathrm{red}}$).}
  \end{equation}
  This equation will be proved in Appendix A.
Thus $\mathrm{Im~}\varphi$ contains the identity component of
$\prod_{i:even} \mathrm{Sp}(B_i/Y_i, h_i)\times \prod_{i:odd} \mathrm{O}(A_i/Z_i, \bar{q}_i)_{\mathrm{red}}$.
Here $\mathrm{Ker~}\varphi$ denotes the kernel of $\varphi$ and $\mathrm{Im~}\varphi$ denotes the image of $\varphi.$
Note that it is well known that the image of a homomorphism of algebraic groups is a closed subgroup.

Recall from Section 3.2 that a matrix form of an element of $\tilde{G}(R)$ for a $\kappa$-algebra $R$ is written $(m_{i,j}, s_i \cdots w_i)$
with the formal matrix interpretation $$m= \begin{pmatrix} \pi^{max\{0,j-i\}}m_{i,j} \end{pmatrix}.$$
We represent the given hermitian form  $h$ by a hermitian matrix $\begin{pmatrix} \pi^{i}\cdot h_i\end{pmatrix}$
   with $\pi^{i}\cdot h_i$ for the $(i,i)$-block and $0$ for the remaining blocks, as in Remark 3.3.(1).

 Let $\mathcal{H}$ be the set of odd integers $i$ such that $\mathrm{O}(A_i/Z_i, \bar{q}_i)_{\mathrm{red}}$ is disconnected.
Notice that $\mathrm{O}(A_i/Z_i, \bar{q}_i)_{\mathrm{red}}$ is disconnected exactly when $L_i$ with $i$ odd is \textit{free}.
We  first prove that $ \varphi_i$, for such an odd integer $i$, is surjective.
We prove this by a series of reductions, after which we will be able to assume that $L$ is of rank two.

For such an odd integer $i$  with a $free$ lattice $L_i$, we define the closed  scheme $H_i$ of $\tilde{G}$ by the equations $m_{j,k}=0$ if $ j\neq k$, and $m_{j,j}=\mathrm{id}$ if $j \neq i$.
An element of $H_i(R)$ for a $\kappa$-algebra $R$ can be represented by a matrix of the form
$$\begin{pmatrix} id&0& & \ldots& & &0\\ 0&\ddots&& & & &\\ & &id& & & & \\  \vdots & & &m_{i,i} & & &\vdots
\\ & & & & id & & \\  & & & & &\ddots &0 \\ 0& & &\ldots & &0 &id \end{pmatrix}.$$
Obviously, $H_i$ has a group scheme structure.
We claim that $\varphi_i$ is surjective from  $H_i$ to
$\mathrm{O}(A_i/Z_i, \bar{q}_i)_{\mathrm{red}}$ (recall that $Z_i=X_i$ since $L_i$ is \textit{free}). 
Note that equations defining $H_i$ are induced by the formal matrix equation $$\sigma({}^tm_{i,i})(\pi^{i}\cdot h_i)m_{i,i}=\pi^{i}\cdot h_i$$
 which is interpreted as in Remark 3.5. We emphasize that, in this formal matrix equation, we work with $m_{i,i}$, not $m$, because of the description of $H_i$.
Note that none of the congruence conditions mentioned in Section 3.1 involves any entry from $m_{i,i}$.

On the other hand, let us consider the hermitian lattice $L_i$ independently as a $\pi^i$-modular lattice.
Since there is only one non-trivial Jordan component for this lattice and $i$ is odd, the smooth integral model associated to $L_i$ is determined by  the following formal matrix equation which is interpreted as in Remark 3.5:
$$\sigma({}^tm)(\pi^{i}\cdot h_i)m=\pi^{i}\cdot h_i,$$
where $m$ is an $(n_i\times n_i)$-matrix and is not subject to any congruence condition.

We consider the map from $H_i$ to the special fiber  of the smooth integral model
associated to the hermitian lattice $L_i$ such that $m_{i,i}$ maps to $m$.
Since $m_{i,i}$ and $m$ are subject to the same set of equations,
this map is an isomorphism as algebraic groups.
In addition, this map induces compatibility between
the morphism $\varphi_i$ from  $H_i$ to $\mathrm{O}(A_i/Z_i, \bar{q}_i)_{\mathrm{red}}$
and the morphism from the special fiber of the smooth integral model associated to  $L_i$ to $\mathrm{O}(A_i/Z_i, \bar{q}_i)_{\mathrm{red}}$.
Thus, in order to show that $\varphi_i$ is surjective from $H_i$ to
$\mathrm{O}(A_i/Z_i, \bar{q}_i)_{\mathrm{red}}$, we may and do assume that $L=L_i$ and in this case $Z_i=X_i=\pi L_i$.
For simplicity, we can also assume that $i=1$.

Because of Equation (4.1) made at the beginning of the proof, the dimension of the image of $\varphi_i$, as a $\kappa$-algebraic group, is the same as that of
$\mathrm{O}(A_i/Z_i, \bar{q}_i)_{\mathrm{red}} (=\mathrm{O}(L_i/\pi L_i, \bar{q}_i))$.
Thus, the image of $\varphi_i$ contains the identity component of $\mathrm{O}(L_i/\pi L_i, \bar{q}_i)$, namely $\mathrm{SO}(L_i/\pi L_i, \bar{q}_i)$.
Since $\mathrm{O}(L_i/\pi L_i, \bar{q}_i)$ has  two connected components, we only need to show the surjectivity of $\varphi_i$
at the level of $\kappa$-points and it suffices to show that the image of $\varphi_i(\kappa)$ contains at least one element which is not contained in $\mathrm{SO}(L_i/\pi L_i, \bar{q}_i)(\kappa)$, where $\mathrm{SO}(L_i/\pi L_i, \bar{q}_i)(\kappa)$ is the group of $\kappa$-points of the algebraic group $\mathrm{SO}(L_i/\pi L_i, \bar{q}_i)$.

Recall that $L_i=\bigoplus_{\lambda}H_{\lambda}\oplus A(2, 2b, \pi)$ for a certain $b\in A$, cf. Theorem 2.10.
We  consider the orthogonal group associated to the quadratic $\kappa$-space $A(2, 2b, \pi)/\pi A(2, 2b, \pi)$ of dimension $2$.
Then this group is embedded into $\mathrm{O}(L_i/\pi L_i, \bar{q}_i)(\kappa)$ as a closed subgroup
and we denote the embedded group by $\mathrm{O}(A(2, 2b, \pi)/\pi A(2, 2b, \pi), \bar{q}_i)(\kappa)$.

We express an element $m_{i,i}\in H_i(R)$, for a $\kappa$-algebra $R$, as $\begin{pmatrix} x&y\\ z&w \end{pmatrix}$
such that $x=x^1+\pi x^2$ and so on, where 
$x^1, x^2 \in M_{(n_i-2)\times(n_i-2)}(R)\subset M_{(n_i-2)\times(n_i-2)}(R\otimes_AB)$ and $\pi$ stands for $1\otimes \pi\in R\otimes_AB$.
Consider the closed subscheme of $H_i$ defined by the equations $x=id, y=0, z=0$.
An argument similar to one used above to reduce to the case where $L = L_i$ shows that this subscheme is isomorphic to the special fiber
of the smooth integral model associated to the hermitian lattice $A(2, 2b, \pi)$ of rank $2$.
Then under the map $\varphi_i(\kappa)$, an element of this subgroup maps to an element of
$\mathrm{O}(A(2, 2b, \pi)/\pi A(2, 2b, \pi), \bar{q}_i)(\kappa)$ of the form $\begin{pmatrix} id&0\\ 0&w^1 \end{pmatrix}$.
Note that $\mathrm{O}(A(2, 2b, \pi)/\pi A(2, 2b, \pi), \bar{q}_i)(\kappa)$ is not contained in $\mathrm{SO}(L_i/\pi L_i, \bar{q}_i)(\kappa)$.
Thus it suffices to show that the restriction of $\varphi_i(\kappa)$ to the above subgroup of $H_i(\kappa)$,
which is given by letting  $x=id, y=0, z=0$, is surjective onto
$\mathrm{O}(A(2, 2b, \pi)/\pi A(2, 2b, \pi), \bar{q}_i)(\kappa)$ and we may and do assume that $L=L_i=A(2, 2b, \pi)$ is of rank $2$.

Let $m_{i,i}=\begin{pmatrix} r&s\\ t&v \end{pmatrix}$ be an element of $H_i(\kappa)$ such that $r=r_1+\pi r_2$ and so on,
where $r_1, r_2 \in R\subset R\otimes_AB$ and $\pi$ stands for $1\otimes \pi\in R\otimes_AB$.
Recall that $\pi=1+\sqrt{1+2u}$ for a certain unit $u\in A$ so that
$\pi+\sigma(\pi)=2$, $\sigma(\pi)=\epsilon \pi$ with $\epsilon\equiv 1$ mod $\pi$,  and $\pi^2\equiv \left(\sigma(\pi)\right)^2 \equiv \pi\cdot \sigma(\pi) \equiv 2u$ mod $2\pi$ as mentioned in Section 2.1.
Let $\bar{u}\in \kappa$ be the reduction of $u$ modulo $\pi$.
 Then the equations defining  $H_i(\kappa)$  are
$$r_1^2+r_1t_1+bt_1^2=1, r_1v_1+t_1s_1=1, r_1s_1+bt_1v_1+\bar{u}(r_2v_1+r_1v_2+t_2s_1+t_1s_2)=0,   s_1^2+s_1v_1+bv_1^2=b.$$
Under the map $\varphi_i(\kappa)$, $m_{i,i}$ maps to  $\begin{pmatrix} r_1&s_1\\ t_1&v_1 \end{pmatrix}$.
Note that the quadratic form $\bar{q}_i$ restricted to $A(2, 2b, \pi)/\pi A(2, 2b, \pi)$ is given by the matrix
$\begin{pmatrix} 1&1\\ 0&b \end{pmatrix}$.

We now choose an element of $H_i(\kappa)$ by setting
\[r_1=s_1=v_1=1, t_1=0, 1+\bar{u}(r_2+v_2+t_2)=0.  \]
Then under the morphism $\varphi_i(\kappa)$,
this element maps to  $\begin{pmatrix} 1&1\\ 0&1 \end{pmatrix} \in \mathrm{O}(A(2, 2b, \pi)/\pi A(2, 2b, \pi), \bar{q}_i)(\kappa)$
whose Dickson invariant is nontrivial so that it is not contained   in $\mathrm{SO}(A(2, 2b, \pi)/\pi A(2, 2b, \pi), \bar{q}_i)(\kappa)$.

Therefore, $\varphi_i(\kappa)$ induces a surjection from $H_i(\kappa)$ to $\mathrm{O}(A(2, 2b, \pi)/\pi A(2, 2b, \pi), \bar{q}_i)(\kappa)$
for $i\in \mathcal{H}$.\\



We now prove that $\varphi=\prod_i \varphi_i$ is surjective.
We consider the morphism
\[\prod_{i\in\mathcal{H}} H_i \longrightarrow \tilde{G},\]
$(h_i)_{i\in\mathcal{H}}\mapsto\prod_{i\in\mathcal{H}}h_i$.
By observing a formal matrix form of an element of $H_i(R)$ for a $\kappa$-algebra $R$ as given above,
it is easy to see the following two facts:
Firstly,   $H_i$ and $H_j$ commute with each other in the sense that $h_i\cdot h_j=h_j\cdot h_i$ for all $i \neq j$, where $h_i\in H_i(R)$ and $ h_j\in H_j(R)$
for a $\kappa$-algebra $R$.
Based on this, the above morphism becomes a group homomorphism.
Secondly,
$H_i\cap H_j=0$ for all $i\neq j$.
This fact implies that the morphism $H_i\times H_j \longrightarrow \tilde{G},
(h_i, h_j)\mapsto h_i\cdot h_j$ is injective and so $H_i\times H_j$ is a closed subgroup scheme of $\tilde{G}$.
A matrix form of an element of $H_i(R)$ also implies that  $(H_i\times H_j)\cap H_k=0$
 for all pairwise different three integers $i, j, k$  
and so the morphism $(H_i\times H_j)\times H_k \longrightarrow \tilde{G},
(h_i, h_j, h_k)\mapsto h_i\cdot h_j\cdot h_k$ is injective. Thus $H_i\times H_j\times H_k$ is a closed subgroup scheme of $\tilde{G}$.
Therefore, by repeating this argument,
 the product $\prod_{i\in\mathcal{H}} H_i$ is embedded into $\tilde{G}$ as a closed subgroup scheme.
Since $\varphi_i|_{H_j}$ is trivial for  $i\neq j$ with $i,j\in \mathcal{H}$, the morphism
$$\prod_{i\in\mathcal{H}}\varphi_i : \prod_{i\in\mathcal{H}}H_i \rightarrow \prod_{i\in\mathcal{H}} \mathrm{O}(A_i/Z_i, \bar{q}_i)_{\mathrm{red}}$$
is surjective.
Therefore, $\varphi$ is surjective.
Now it suffices to prove Equation (4.1) made at the beginning of the proof, which is the next lemma.
\end{proof}

\begin{Lem}

$\mathrm{Ker~}\varphi $ is  smooth and unipotent  of dimension $l$.
In addition, the number of connected components of $\mathrm{Ker~}\varphi $ is $2^\beta$.
 Here,
 \begin{itemize}
\item   $l$ is such that
\[\textit{$l$ +
  $\sum_{i:\mathrm{even}}$ (dim $~\mathrm{Sp}(B_i/Y_i, h_i)$)
  + $\sum_{i:\mathrm{odd}}$ (dim $~\mathrm{O}(A_i/Z_i, \bar{q}_i)_{\mathrm{red}}$) = dim $\tilde{G}$.}\]
 \item  $\beta$ is the number of even integers $j$ such that $L_j$ is \textit{of type I} and $L_{j+2}$ is \textit{of type II}.
\end{itemize}
\qed
\end{Lem}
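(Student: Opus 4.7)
The plan is to realize $\mathrm{Ker~}\varphi$ as an iterated extension of affine spaces by a finite elementary abelian $2$-group, by filtering according to the block structure inherited from the decomposition $L=\bigoplus_i L_i$. Every condition cutting out $\tilde{G}$ inside $\underline{M}^{\ast}\otimes\kappa$ is prescribed by the form of the matrix blocks $m_{i,j}$ in the coordinates of Section~3.1, so $\mathrm{Ker~}\varphi$ is obtained from $\tilde{G}$ by imposing the further requirement that the induced action on the quotients $B_i/Y_i$ (for $i$ even) and $A_i/Z_i$ (for $i$ odd) be trivial. The first step is to write these conditions out explicitly in the blockwise coordinates, giving $\mathrm{Ker~}\varphi$ as a concrete closed subscheme of $\tilde{G}$.

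I would then introduce the normal subscheme $\mathcal{U}\subset\mathrm{Ker~}\varphi$ consisting of those elements whose off-diagonal blocks $m_{i,j}$ ($i\ne j$) are arbitrary subject only to the constraints of Section~3.1, and whose diagonal blocks are the identity. A direct coordinate argument parallel to the proof of Lemma~3.6 in \cite{C1} shows that $\mathcal{U}$ is an affine space over $\kappa$; in particular it is smooth, connected, and unipotent. Its dimension is a sum of terms $n_i n_j$ with explicit multiplicities coming from the $\pi^{\max\{0,j-i\}}$ factors and from the constraints on $B_i$, $W_i$, $X_i$.

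The quotient $\mathrm{Ker~}\varphi/\mathcal{U}$ then embeds into $\prod_i K_i$, where $K_i$ is the kernel of the restriction of $\varphi_i$ to the diagonal $(i,i)$-block. For $i$ even, $K_i$ is a connected unipotent group obtained as the preimage of the identity under the surjection from the parabolic-type subgroup of $\mathrm{GL}_{n_i}$ stabilizing $Y_i\subset B_i\subset A_i$ onto $\mathrm{Sp}(B_i/Y_i,h_i)$; its dimension can be read off directly from the sizes of these quotients. For $i$ odd, the analogous description using the filtration $Z_i\subset A_i$ yields a connected unipotent group fitting over $\mathrm{O}(A_i/Z_i,\bar{q}_i)^{\mathrm{red}}$. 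Summing the dimensional contributions from $\mathcal{U}$ and from each $K_i$ and matching the total against $\dim\tilde{G}$ as forced by the smoothness of $\rho$ then recovers the quantity $l$ of the statement.

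The subtlest step is the component count, and this is where I expect the main obstacle. The pieces $\mathcal{U}$ and the identity components of the $K_i$ are all connected, so the non-identity components of $\mathrm{Ker~}\varphi$ must come from discrete automorphisms that act trivially on $B_i/Y_i$ and on $A_i/Z_i$ yet are not continuously deformable to the identity. These arise from sign involutions $\pm 1$ acting on the distinguished one-dimensional piece (generated by the vector $e$ of Section~2.3, or its orthogonal complement) inside the type-I diagonal $(i,i)$-block, and such a sign lifts to an honest element of $\tilde{G}$ precisely when it can be compatibly absorbed in the off-diagonal blocks coupling $L_i$ to its neighbors $L_{i\pm 1}$ and $L_{i\pm 2}$. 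The heart of the matter is to track this compatibility and show that the surviving signs are indexed exactly by those even $j$ for which $L_j$ is of type I and $L_{j+2}$ is of type II, each contributing a factor of $2$ so that the total component count matches that in the statement.
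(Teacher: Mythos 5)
Your high-level plan (filter $\mathrm{Ker}\,\varphi$ through a connected piece, then count components at the top) mirrors the paper's strategy, but the filtration you propose does not work, and the component count---which you rightly call the heart of the matter---is exactly the step you leave open.

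First, the subscheme $\mathcal{U}$ as you describe it (identity diagonal blocks, arbitrary off-diagonal blocks subject only to Section~3.1) is not a subgroup. For $m=1+X$ and $m'=1+Y$ with $X,Y$ strictly off-diagonal, the product $(1+X)(1+Y)=1+X+Y+XY$ acquires nontrivial diagonal blocks from $XY$, so $\mathcal{U}$ is not closed under multiplication, nor is it normal. Worse, the elements of $\mathrm{Ker}\,\varphi$ with identity diagonal blocks are cut out by the hermitian condition $\sigma({}^t m)h\,m=h$, and this condition couples the diagonal and off-diagonal blocks quadratically, so you cannot freely prescribe the off-diagonal blocks and keep the diagonal identity. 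The paper avoids this by filtering not by block position but by divisibility: the connected normal subscheme $\tilde{G}^1$ is carved out by the congruence $m\equiv 1\ (\mathrm{mod}\ \pi)$ (i.e., the kernel of the reduction $\tilde{G}\to\tilde{M}/\tilde{M}^1$, where $\tilde{M}^1=1+\pi\underline{M}'$). Since $\pi^2\in(2)$, the functor $\tilde{M}^1$ is a genuine (indeed abelian) unipotent subgroup, $\tilde{G}^1$ is a closed normal subgroup of $\mathrm{Ker}\,\varphi$, and one can write explicit equations showing $\tilde{G}^1$ is an affine space. Your $\mathcal{U}$ does not have any of these good properties.

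Second, the $(\mathbb{Z}/2\mathbb{Z})^\beta$ does not arise from lifting ``sign involutions $\pm1$'' on the distinguished one-dimensional piece. Over the residue field of characteristic $2$ there is no sign; what actually happens is this. The hermitian equation, restricted to the relevant coordinate of the diagonal $(i,i)$-block (the parameter $z_i$ in the form $1+\pi z_i$), yields a relation $\mathcal{F}_i$ that is quadratic in $z_i$ and linear-plus-quadratic in the off-diagonal coupling terms $m_{i\pm1,i},m_{i\pm2,i}$. When one sums $\mathcal{F}_{j-2l}$ over a maximal chain of consecutive type-I even indices ending at a $j$ with $L_{j+2}$ of type II, the off-diagonal contributions cancel in pairs and what remains is an Artin--Schreier equation
\[
\sum_{l=0}^{m_j}\bigl(z_{j-2l}+z_{j-2l}^2\bigr)=0,
\]
whose solution scheme over $\kappa$ is $\mathbb{A}^{m_j}\times(\mathbb{Z}/2\mathbb{Z})$. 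This is what produces exactly one factor of $\mathbb{Z}/2\mathbb{Z}$ per such maximal chain, i.e.\ one per even $j$ with $L_j$ of type I and $L_{j+2}$ of type II. Your description correctly intuits that the coupling between $L_i$ and its neighbors is what governs whether the discrete piece survives, but the actual mechanism---telescoping the $\mathcal{F}_i$ to an Artin--Schreier equation---is the missing idea, and without it the count $\beta$ is not recovered. Finally, to go from an identification of a scheme with $\mathbb{A}^{l'}\times(\mathbb{Z}/2\mathbb{Z})^\beta$ to the claim about $\mathrm{Ker}\,\varphi$ itself, the paper compares $\mathrm{Ker}\,\varphi/\tilde{G}^1$ with an auxiliary scheme $G^{\ddag}$ defined by the equations, checks equal dimensions, and exhibits a smooth subgroup $F=\prod_j F_j$ of $\mathrm{Ker}\,\varphi$ hitting every component of $G^{\ddag}$; one also needs Lemma 9.1 to guarantee exactness of the quotient on $R$-points. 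None of these verifications appear in your sketch.
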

Recall that the zero lattice is \textit{of type II}.
The proof is postponed to Appendix A.

\begin{Rmk}
We summarize the description of Im $\varphi_i$ as follows.
     \[
      \begin{array}{c|c}
      \mathrm{Type~of~lattice~}  L_i ~\mathrm{and}~ i & \mathrm{Im~}  \varphi_i \\
      \hline
      \textit{II},\ \  \textit{i : even} & \mathrm{Sp}(n_i, h_i)\\
      \textit{I}^o,\ \  \textit{i : even} & \mathrm{Sp}(n_i-1, h_i)\\
      \textit{I}^e,\ \  \textit{i : even} & \mathrm{Sp}(n_i-2, h_i)\\
      \textit{free},\ \  \textit{i : odd} & \mathrm{O}(n_i, \bar{q}_i)\\
      \textit{bound},\ \  \textit{i : odd} & \mathrm{SO}(n_i+1, \bar{q}_i)
      \end{array}
    \]
    Let $i$ be odd and $L_i$ be \textit{free}.
Then $A_i/Z_i=L_i/\pi L_i$ is a $\kappa$-vector space with even dimension. Thus there are two different orthogonal groups
$\mathrm{O}(A_i/Z_i, \bar{q}_i) ~(=\mathrm{O}(n_i, \bar{q}_i))$,
according to split or non-split.

By Theorem 2.10, we have that $L_i=\bigoplus_{\lambda}H_{\lambda}\oplus A(2, 2b_i, \pi)$ for certain $b_i\in A$.
Thus the orthogonal group $\mathrm{O}(A_i/Z_i, \bar{q}_i) ~(=\mathrm{O}(n_i, \bar{q}_i))$ is split
if and only if the quadratic space  $A(2, 2b_i, \pi)/\pi A(2, 2b_i, \pi)$ is isotropic.
Recall that $\pi+\sigma(\pi)=2$ and $\pi=1+\sqrt{1+2u}$ for a certain unit $u\in A$.
Using this, the quadratic form on $A(2, 2b_i, \pi)/\pi A(2, 2b_i, \pi)$  is $q(x, y)=x^2+xy+\bar{b}_iy^2$,
where $\bar{b}_i$ is the reduction of $b_i$ in $\kappa$.

We consider the identity $q(x, y)=x^2+xy+\bar{b}_iy^2=0$.
If $y=0$, then $x=0$. Assume that $y\neq 0$.
Then we have that $\bar{b}_i=(x/y)^2+x/y$.

Thus we can see that there exists a solution of the equation $z^2+z=\bar{b}_i$ over $\kappa$ if and only if $q(x, y)$ is isotropic if and only if
$\mathrm{O}(A_i/Z_i, \bar{q}_i) ~(=\mathrm{O}(n_i, \bar{q}_i))$ is split.
\end{Rmk}

\subsection{The construction of component groups}

The purpose of this subsection is to define a surjective morphism from $\tilde{G}$ to $(\mathbb{Z}/2\mathbb{Z})^{\beta}$,
where $\beta$ is the number of even integers $j$ such that $L_j$ is \textit{of type I} and $L_{j+2}$ is \textit{of type II} as defined in Lemma 4.6.
   \begin{Def}
    We set $L^0=L$ and inductively define, for positive integers $i$,
    \[L^i:=\{x\in L^{i-1} | h(x, L^{i-1})\subset (\pi^i)\}.\]
    When $i=2m$ is even,
      $$L^{2m}=\pi^m(L_0\oplus L_1)\oplus\pi^{m-1}(L_2\oplus L_3)\oplus \cdots \oplus \pi(L_{2m-2}\oplus L_{2m-1})\oplus \bigoplus_{i\geq 2m}L_i.$$
    \end{Def}
     We choose a Jordan splitting for the hermitian lattice $(L^{2m}, \frac{1}{(\pi\cdot\sigma(\pi))^{m}}h)$ as follows:
     $$L^{2m}=\bigoplus_{i \geq 0} M_i,$$ where
     $$M_0=\pi^mL_0\oplus\pi^{m-1}L_2\oplus \cdots \oplus \pi L_{2m-2}\oplus L_{2m},$$
     $$M_1=\pi^mL_1\oplus\pi^{m-1}L_3\oplus \cdots \oplus \pi L_{2m-1}\oplus L_{2m+1}$$
     $$\mathrm{and}~ M_k=L_{2m+k} \mathrm{~if~} k\geq 2.$$
Here, $M_i$ is $\pi^i$-modular.
We caution that the hermitian form we use on $L^{2m}$ is not $h$, but its rescaled version $\frac{1}{(\pi\cdot\sigma(\pi))^{m}}h$.
Thus   $M_i$ is $\pi^i$-modular, not $\pi^{2m+i}$-modular.
\begin{Def}
We define $C(L)$ to be the sublattice of $L$ such that $$C(L)=\{x\in L \mid h(x,y) \in (\pi) \ \ \mathrm{for}\ \ \mathrm{all}\ \ y \in B(L)\}.$$
\end{Def}

    We  choose any even integer $j$ such that $L_{j}$ is \textit{of type} $\textit{I}$ and $L_{j+2}$ is \textit{of type} $\textit{II}$
    (possibly zero, by our convention), and consider the Jordan splitting $\bigoplus_{i \geq 0} M_i$ of $L^j$ defined above.
    We stress that $M_0$ is nonzero and \textit{of type} $\textit{I}$, since it contains $L_{j}$ as a direct summand
    so that $n(M_0)=s(M_0)$ (cf. Definition 2.1.c)), and $M_2=L_{j+2}$ is \textit{of type} $\textit{II}$.
     Choose a basis $(\langle e_i\rangle, e)$ (resp. $(\langle e_i\rangle, a, e)$) for $M_0$
     so that $M_0=\bigoplus_{\lambda}H_{\lambda}\oplus K$ 
      when the rank of $M_0$ is odd (resp. even).
      Here, we follow the notation from Theorem 2.10.
     Then $B(L^{j})$ is spanned by $$(\langle e_i\rangle, \pi e) ~(resp.~ (\langle e_i\rangle, \pi a, e)) \textit{~and~}  M_1 \oplus (\bigoplus_{i\geq 2} M_i)$$
     and  $C(L^{j})$ is spanned by
     $$(\langle \pi e_i\rangle, e) ~(resp.~ (\langle \pi e_i\rangle, \pi a, e)) \textit{~and~}  M_1 \oplus (\bigoplus_{i\geq 2} M_i).$$\\

We now construct a morphism $\psi_j : \tilde{G} \rightarrow \mathbb{Z}/2\mathbb{Z}$ as follows
(There are 2 cases depending on whether $M_0$ is \textit{of type} $\textit{I}^e$ or \textit{of type} $\textit{I}^o$.):

1. Firstly, we assume that $M_0$ is \textit{of type} $\textit{I}^e$.
We choose a Jordan splitting for the hermitian lattice $(C(L^j), \frac{1}{(\pi\cdot\sigma(\pi))^{m}}h)$ as follows:
$$C(L^j)=\bigoplus_{i \geq 1} M_i^{\prime},$$
where $$M_1^{\prime}=(\pi)a\oplus Be\oplus M_1, ~~~ M_2^{\prime}=(\oplus_i(\pi)e_i)\oplus M_2, ~~~ \mathrm{and}~ M_k^{\prime}=M_k \mathrm{~if~} k\geq 3.$$
Here, $M_i^{\prime}$ is $\pi^i$-modular and $(\pi)$ is the ideal of $B$ generated by a uniformizer $\pi$.
Notice that $M_2^{\prime}$ is \textit{of type II}, since both $\oplus_i(\pi)e_i$ and $M_2$ are \textit{of type II},
so that $M_1^{\prime}$ is   \textit{free}.

If $m$ is an element of the group of $R$-points of the naive integral model
associated to the hermitian lattice $L$, for a flat $A$-algebra $R$, then $m$ stabilizes the hermitian lattice $(C(L^j)\otimes_AR, \frac{1}{(\pi\cdot\sigma(\pi))^{m}}h\otimes 1)$ as well.
If we use this fact in the case of  an $F$-alegbra $R$, where $F$ is the quotient field of $A$, then
we obtain a morphism of algebraic groups
from the unitary group associated to the hermitian space $L\otimes_AF$ to the unitary group associated to
the hermitian space $(C(L^j)\otimes_AF, \frac{1}{(\pi\cdot\sigma(\pi))^{m}}h)$ by \textit{Yoneda's lemma}.
Furthermore, if we use the above fact in the case of an \'{e}tale $A$-algebra $R$, then the  morphism between unitary groups is extended
to give a map from the group of $R$-points of the naive integral model associated to  the hermitian lattice $L$
to  that of the hermitian lattice $(C(L^j), \frac{1}{(\pi\cdot\sigma(\pi))^{m}}h)$.
Note that the naive integral model and the associated smooth integral model have the same generic fiber
and are the same at the level of \'etale $A$-points.
Thus by Proposition 2.3 of \cite{Yu}, the  morphism between unitary groups is uniquely extended to a morphism of group schemes
from the smooth integral model associated to $L$
to the smooth integral model associated to $(C(L^j), \frac{1}{(\pi\cdot\sigma(\pi))^{m}}h)$ such that
the map induced from it at the level of \'etale $A$-points is the same as that described above.
Let $G_j$ denote the special fiber of the latter smooth  integral model.
We now have a morphism from $\tilde{G}$ to $G_j$. 
Moreover, since $M_1^{\prime}$ is \textit{free} and nonzero, we have a morphism from $G_j$ to the even orthogonal group associated to $M_1^{\prime}$
 as explained in Section 4.1.
Thus, the Dickson invariant of this orthogonal group induces the morphism
$$\psi_j : \tilde{G} \longrightarrow \mathbb{Z}/2\mathbb{Z}.$$
\textit{ }

2. We next assume that $M_0$ is \textit{of type} $\textit{I}^o$.
We choose a Jordan splitting for the hermitian lattice $(C(L^j), \frac{1}{(\pi\cdot\sigma(\pi))^{m}}h)$ as follows:
$$C(L^j)=\bigoplus_{i \geq 0} M_i^{\prime},$$
where $$M_0^{\prime}= Be, ~~~M_1^{\prime}=M_1, ~~~ M_2^{\prime}=(\oplus_i(\pi)e_i)\oplus M_2, ~~~ \mathrm{and}~ M_k^{\prime}=M_k \mathrm{~if~} k\geq 3.$$
Here, $M_i^{\prime}$ is $\pi^i$-modular and $(\pi)$ is the ideal of $B$ generated by a uniformizer $\pi$.
Notice that the rank of the $\pi^0$-modular lattice $M_0^{\prime}$ is 1 and the lattice $M_2^{\prime}$ is \textit{of type II}.
If $G_j$ denotes the special fiber of the smooth integral model associated to the hermitian lattice $(C(L^j), \frac{1}{(\pi\cdot\sigma(\pi))^{m}}h)$,
then we  have a morphism from $\tilde{G}$ to $G_j$ as in the above case $1$.

We now consider the new hermitian lattice $M_0^{\prime}\oplus C(L^j)$.
Then for a flat $A$-algebra $R$, there is a natural  embedding from the group of $R$-points of the naive integral model
associated to the hermitian lattice $(C(L^j), \frac{1}{(\pi\cdot\sigma(\pi))^{m}}h)$
to that of  the hermitian lattice $M_0^{\prime}\oplus C(L^j)$ such that
$m$ maps to $\begin{pmatrix} 1&0 \\ 0&m \end{pmatrix}$,
where $m$ is an element of the former group.
As in the previous case 1, the above fact induces a closed immersion of algebraic groups from the unitary group associated to
the hermitian space $(C(L^j)\otimes_AF, \frac{1}{(\pi\cdot\sigma(\pi))^{m}}h)$
to the unitary group associated to the hermitian space $(M_0^{\prime}\oplus C(L^j))\otimes_AF$
and its extension at the level of \'{e}tale $A$-algebra points between the associated naive integral models.
Thus by Proposition 2.3 of \cite{Yu}, the  morphism between unitary groups is uniquely extended to a
  morphism of group schemes from the smooth integral model associated to the hermitian lattice $(C(L^j), \frac{1}{(\pi\cdot\sigma(\pi))^{m}}h)$
to the smooth integral model associated to the hermitian lattice $M_0^{\prime}\oplus C(L^j)$ such that
the map induced from it at the level of \'etale $A$-points is the same as that described above.
In Remark 4.10, we describe this morphism explicitly in terms of matrices.


Thus we have a morphism from the special fiber $G_j$ of the smooth integral model associated to $C(L^j)$  to the special fiber
$G_j'$ of the smooth integral model associated to  $M_0^{\prime}\oplus C(L^j)$.
Note that $(M_0^{\prime}\oplus M_0^{\prime})\oplus \bigoplus_{i \geq 1} M_i^{\prime}$ is a Jordan splitting of the hermitian lattice $M_0^{\prime}\oplus C(L^j)$.
Let $G_j''$ be the special fiber of the smooth integral model associated to
$C((M_0^{\prime}\oplus M_0^{\prime})\oplus \bigoplus_{i \geq 1} M_i^{\prime})$.
Since the $\pi^0$-modular lattice $M_0^{\prime}\oplus M_0^{\prime}$ is \textit{of type} $\textit{I}^e$,
we have a morphism $G_j'\rightarrow \mathbb{Z}/2\mathbb{Z}$ obtained by factoring through $G_j''$
and the corresponding even orthogonal group with the Dickson invariant as constructed in the  case 1.
$\psi_j$ is defined to be the composite
$$\psi_j : \tilde{G} \rightarrow G_j \rightarrow G_j' \rightarrow  \mathbb{Z}/2\mathbb{Z}.$$


\begin{Rmk}
In this remark, we describe the morphism from the smooth integral model $\underline{G}_j$ associated to the hermitian lattice
$(C(L^j), \frac{1}{(\pi\cdot\sigma(\pi))^{m}}h)$
to the smooth integral model $\underline{G}_j'$
associated to the hermitian lattice $M_0^{\prime}\oplus C(L^j)$ as given in case 2 above, in terms of matrices.
Let $R$ be a flat $A$-algebra.
We choose an element in $\underline{G}_j(R)$ 
and express it as a  matrix $m= \begin{pmatrix} \pi^{max\{0,j-i\}}m_{i,j} \end{pmatrix}$.
Then $m_{0,0}=\begin{pmatrix} 1+\pi z_0 \end{pmatrix}$ since $M_0'$ is \textit{of type I} with rank 1 so that we may and do write $m$ as
$m= \begin{pmatrix} 1+\pi z_0 &m_1\\m_2&m_3 \end{pmatrix}$.
We consider a morphism from $\underline{G}_j$ to $\mathrm{Aut}_{B}(M_0^{\prime}\oplus C(L^j))$
such that $m$ maps to $\begin{pmatrix} 1&0 \\ 0&m \end{pmatrix}=\begin{pmatrix} 1&0 &0\\ 0&1+\pi z_0 &m_1\\0& m_2&m_3\end{pmatrix}$,
where the set of $R$-points of the group scheme $\mathrm{Aut}_{B}(M_0^{\prime}\oplus C(L^j))$ is
the automorphism group of $(M_0^{\prime}\oplus C(L^j)) \otimes_A R$ by ignoring the hermitian form.
Then the image of this morphism is represented by an affine group scheme which is isomorphic to $\underline{G}_j$.
Note that  $\begin{pmatrix} 1&0 \\ 0&m \end{pmatrix}=\begin{pmatrix} 1&0 &0\\ 0&1+\pi z_0 &m_1\\0& m_2&m_3\end{pmatrix}$
preserves the hermitian form attached to the lattice $M_0^{\prime}\oplus C(L^j)$.

We claim that $\begin{pmatrix} 1&0 \\ 0&m \end{pmatrix}$ is contained in $\underline{G}_j'(R)$.
If this is true, then the above matrix description defines a morphism from $\underline{G}_j$ to $\underline{G}_j'$ by \textit{Yoneda's lemma}
since $\underline{G}_j$ is flat.
Furthermore, this matrix description is the same as that of naive integral models explained in the above case 2
when $R$ is an $F$-algebra or an \'etale $A$-algebra,
since the naive integral model and the associated smooth integral model have the same generic fiber
and are the same at the level of \'etale $A$-points.
Since the desired morphism is completely determined at the level of $F$-algebra points and  \'etale $A$-algebra points
by Proposition 2.3 of \cite{Yu},
the morphism from  $\underline{G}_j$ to $\underline{G}_j'$ obtained by the above matrix description is
the morphism we want to describe.

We rewrite the hermitian lattice $M_0^{\prime}\oplus C(L^j)$ as $(M_0^{\prime}\oplus M_0^{\prime})\oplus (\bigoplus_{i \geq 1} M_i^{\prime})$.
Let $(e_1, e_2)$ be a basis for $(M_0^{\prime}\oplus M_0^{\prime})$
so that the corresponding Gram matrix of $(M_0^{\prime}\oplus M_0^{\prime})$  is   $\begin{pmatrix} a&0 \\ 0&a \end{pmatrix}$, where $a \equiv 1$ mod 2.
Then the hermitian lattice $(M_0^{\prime}\oplus M_0^{\prime})$ has Gram matrix $\begin{pmatrix} a&a \\ a&2a \end{pmatrix}$
with respect to the basis $(e_1, e_1+e_2)$.
$(M_0^{\prime}\oplus M_0^{\prime})$ is \textit{unimodular of type $I^e$} with rank 2.
With this basis, $\begin{pmatrix} 1&0 &0\\ 0&1+\pi z_0 &m_1\\0& m_2&m_3\end{pmatrix}$ becomes
$\begin{pmatrix} 1&-\pi z_0 &-m_1\\ 0&1+\pi z_0 &m_1\\0& m_2&m_3\end{pmatrix}$.

On the other hand, an element of $\underline{G}_j'(R)$,
with respect to a basis for $M_0^{\prime}\oplus C(L^j)$ obtained by putting together
the basis $(e_1, e_1+e_2)$ for $(M_0^{\prime}\oplus M_0^{\prime})$ and a basis for $C(L^j)$, is given by an expression
$\begin{pmatrix} 1+\pi x_0'&-\pi z_0' & m_1'  \\ u_0'&1+\pi w_0' &m_1'' \\ m_2' &m_2''  & m_3''\end{pmatrix}$, cf. Section 3.1.
Then we can easily see that the congruence conditions on $m_1, m_2, m_3$ are the same as those of $m_1'', m_2'', m_3''$, respectively,
and that the congruence conditions on $m_1'$ is the same as those of $m_1''$.
Thus $\begin{pmatrix} 1&-\pi z_0 &-m_1\\ 0&1+\pi z_0 &m_1\\0& m_2&m_3\end{pmatrix}$ is an element of $\underline{M}_j^{\ast}(R)$,
where $\underline{M}_j^{\ast}$ is the group scheme in Section 3.2 associated to $M_0^{\prime}\oplus C(L^j)$
so that $\underline{G}_j'$ is defined as the closed subgroup scheme of $\underline{M}_j^{\ast}$ stabilizing the hermitian form
on $M_0^{\prime}\oplus C(L^j)$.


In conclusion, $\begin{pmatrix} 1&-\pi z_0 &-m_1\\ 0&1+\pi z_0 &m_1\\0& m_2&m_3\end{pmatrix}\in \underline{M}_j^{\ast}(R)$
preserves the hermitian form on $M_0^{\prime}\oplus C(L^j)$.
Therefore, it is an element of $\underline{G}_j'(R)$.

To summarize, if $R$ is a non-flat $A$-algebra, then
we can write an element of $\underline{G}_j(R)$ formally as $m= \begin{pmatrix} 1+\pi z_0 &m_1\\m_2&m_3 \end{pmatrix}$.
Then the image of $m$ in $\underline{G}_j'(R)$ is $\begin{pmatrix} 1&-\pi z_0 &-m_1\\ 0&1+\pi z_0 &m_1\\0& m_2&m_3\end{pmatrix}$
with respect to a basis as explained above.
\end{Rmk}
\textit{ }

3. Combining all cases, we have the morphism $$\psi=\prod_j \psi_j : \tilde{G} \longrightarrow (\mathbb{Z}/2\mathbb{Z})^{\beta},$$
where $\beta$ is the number of even integers $j$ such that $L_j$ is \textit{of type I} and  $L_{j+2}$ is \textit{of type II} (possibly zero, by our convention).

We now have the following result.
\begin{Thm}
 The morphism $$\psi=\prod_j \psi_j : \tilde{G} \longrightarrow (\mathbb{Z}/2\mathbb{Z})^{\beta}$$ is surjective.

Moreover, the morphism $$\varphi \times \psi : \tilde{G} \longrightarrow \prod_{i:even} \mathrm{Sp}(B_i/Y_i, h_i)\times \prod_{i:odd} \mathrm{O}(A_i/Z_i, \bar{q}_i)_{\mathrm{red}} \times (\mathbb{Z}/2\mathbb{Z})^{\beta}$$
is also surjective.

\end{Thm}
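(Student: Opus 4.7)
The plan is to reduce the theorem to the single claim that $\psi|_{\mathrm{Ker}\,\varphi}$ is surjective onto $(\mathbb{Z}/2\mathbb{Z})^\beta$, and then verify this by exhibiting generators. Each $\psi_j$ is a homomorphism to a finite discrete group, so its restriction to $\mathrm{Ker}\,\varphi$ factors through the component group $\pi_0(\mathrm{Ker}\,\varphi)$, which has order $2^\beta$ by Lemma 4.2. Given the reduction, for any $(y,z) \in \mathrm{Im}\,\varphi \times (\mathbb{Z}/2\mathbb{Z})^\beta$, Theorem 4.1 provides $g \in \tilde{G}$ with $\varphi(g)=y$; then picking $k \in \mathrm{Ker}\,\varphi$ with $\psi(k) = z - \psi(g)$ gives $(\varphi \times \psi)(gk) = (y,z)$. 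This delivers joint surjectivity of $\varphi \times \psi$ and in particular surjectivity of $\psi$.

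To show $\psi|_{\mathrm{Ker}\,\varphi}$ is surjective, by the cardinality match $|\pi_0(\mathrm{Ker}\,\varphi)| = 2^\beta = |(\mathbb{Z}/2\mathbb{Z})^\beta|$ it suffices to exhibit, for each of the $\beta$ admissible indices $j$, an element $g_j \in \mathrm{Ker}\,\varphi(\kappa)$ such that $\psi_j(g_j)=1$ while $\psi_{j'}(g_j)=0$ for all $j' \neq j$. Unwinding the definition of $\psi_j$: one takes the Jordan splitting $\bigoplus_{i\geq 0} M_i'$ of $C(L^j)$; by Theorem 2.8 the free $\pi^1$-modular summand $M_1'$ contains a hyperbolic plane $H(1)$, and $\psi_j$ is the Dickson invariant of the induced orthogonal group on $A_1'/Z_1'$. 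I take $g_j$ to be the involution swapping the two basis vectors of one such $H(1)$ and acting as the identity on its orthogonal complement inside $L$. Such an involution has nontrivial Dickson invariant, so $\psi_j(g_j)=1$, and since its support lies in a single summand uninvolved in the construction of $\psi_{j'}$ for $j'\neq j$, the other components vanish.

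The remaining content is to verify three compatibilities for this $g_j$: (i) that it lies in $\underline{G}(\kappa)=\tilde{G}(\kappa)$, i.e., it stabilizes every $A_i, B_i, W_i, X_i$ and acts trivially on the prescribed quotients $A_i/B_i$ and $W_i/X_i$; (ii) that it lies in $\mathrm{Ker}\,\varphi$, i.e., acts trivially on every $B_i/Y_i$ (for $i$ even) and every $A_i/Z_i$ (for $i$ odd) of $L$ itself; and (iii) that $\psi_{j'}(g_j)=0$ for $j'\neq j$. Items (i) and (iii) are a direct check from the block matrix description of $\tilde{G}$ given in Section 3.1 together with the fact that $g_j-\mathrm{id}$ is supported on a single hyperbolic plane embedded in $L$. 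The main obstacle is (ii): one must translate between the $\pi^i$-modular pieces of $L$ and those of $C(L^j)$, using the explicit descriptions of $L^j$ and $C(L^j)$ from Section 4.2 and Definition 4.3, to confirm that the chosen hyperbolic reflection in $M_1'$ induces the identity on every $B_i/Y_i$ and $A_i/Z_i$ attached to $L$. This bookkeeping between Jordan splittings of $L$, $L^j$, and $C(L^j)$ is the main technical burden and runs in parallel with the kernel analysis of Lemma 4.2, which is postponed to the appendix.
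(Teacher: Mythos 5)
Your overall reduction is sound and matches the structure of the paper's argument: once one knows $\psi|_{\mathrm{Ker}\,\varphi}$ is surjective, correcting an arbitrary lift of $y \in \mathrm{Im}\,\varphi$ by an element of $\mathrm{Ker}\,\varphi$ gives joint surjectivity of $\varphi\times\psi$. The paper follows this outline as well; the substantive content is the choice of witnesses in $\mathrm{Ker}\,\varphi$, and this is where the proposal goes wrong.

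The concrete element you pick does not work. You take $g_j$ to be a swap of the two basis vectors of a hyperbolic plane $H(1)$ sitting inside the $\pi^1$-modular piece $M_1'$ of $C(L^j)$, extended by the identity on an orthogonal complement. There are two ways such an $H(1)$ can sit inside $L$, and neither gives an element of $\mathrm{Ker}\,\varphi(\kappa)$. If the $H(1)$ comes from the odd-indexed Jordan block $L_{j+1}$ (or more generally from the summand $M_1$ of $L^j$), then the swap is indeed an element of $\tilde{G}$, but it acts nontrivially on $A_{j+1}/Z_{j+1}$, so it is detected by $\varphi_{j+1}: \tilde{G} \to \mathrm{O}(A_{j+1}/Z_{j+1}, \bar q_{j+1})^{\mathrm{red}}$; hence $g_j \notin \mathrm{Ker}\,\varphi$. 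Alternatively, if the $H(1)$ involves the vectors $e$ and $\pi a$ contributed by $M_0 = L_j$ (which is the situation when $M_1 = 0$), then the swap $\pi a \leftrightarrow e$ cannot be extended to an automorphism of $L$: it would force $a \mapsto \pi^{-1}e \notin L$, so $g_j$ is not even in $\tilde{G}$. (There is also a third issue: Theorem 2.8 gives $M_1' \cong H(1)^{k}\oplus A(2,2b,\pi)$, and one can have $k=0$, so $M_1'$ need not contain any hyperbolic plane at all.)

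The correct witnesses, which the paper extracts from the proof of Lemma 9.7, are of a different shape: they live in the closed subgroup scheme $F_j$ of $\tilde{G}$ supported entirely on the diagonal $(j,j)$-block, with $s_j = \mathrm{id}$, all of $y_j, v_j$ (and $r_j, t_j, u_j, w_j$ for type $\textit{I}^e$) set to zero, and only $z_j$ (and $x_j$ for $\textit{I}^e$) left free. Such an element reduces to the identity on $B_j/Y_j$ and on every other $\varphi$-target, so it lies in $\mathrm{Ker}\,\varphi$. Its action on $C(L^j)$ is a shear $e \mapsto e + z_j(\pi a)$, $\pi a \mapsto \pi a$, which induces an orthogonal transvection on $A_1'/Z_1'$; transvections have Dickson invariant $1$, whence $\psi_j|_{F_j}$ is surjective onto $\mathbb{Z}/2\mathbb{Z}$. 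The key point your proposal misses is that the generator of the nontrivial component must be a transvection supported on the even block $L_j$, not a hyperbolic swap sitting in or near the odd block $L_{j+1}$.
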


\begin{proof}
We first show that $\psi_j$ is surjective.
Recall that for such an even integer $j$, $L_j$ is \textit{of type I} and $L_{j+2}$ is \textit{of type II} (possibly zero by our convention).
We define the closed subgroup scheme $F_j$ of $\tilde{G}$ defined by the following equations: 
\begin{itemize}
\item $m_{i,k}=0$ \textit{if $i\neq k$};
\item $m_{i,i}=\mathrm{id}$ \textit{if $i\neq j$};
\item and for $m_{j,j}$,
\[\left \{
  \begin{array}{l l}
  s_j=\mathrm{id~}, y_j=0, v_j=0 & \quad  \textit{if $L_i$ is \textit{of type} $\textit{I}^o$};\\
  s_j=\mathrm{id~}, r_j=t_j=y_j=v_j=u_j=w_j=0 & \quad \textit{if $L_i$ is \textit{of type} $\textit{I}^e$}.\\
    \end{array} \right.\]
\end{itemize}
A formal matrix form of an element of $F_j(R)$ for a $\kappa$-algebra $R$ is then
\[\begin{pmatrix} id&0& & \ldots& & &0\\ 0&\ddots&& & & &\\ & &id& & & & \\  \vdots & & &m_{j,j} & & &\vdots
\\ & & & & id & & \\  & & & & &\ddots &0 \\ 0& & &\ldots & &0 &id \end{pmatrix}\]
such that
\[m_{j,j}=\left\{
\begin{array}{l l}
\begin{pmatrix}id&0\\0&1+\pi z_j \end{pmatrix} & \quad \textit{if $L_j$ is of type $I^o$};\\
\begin{pmatrix}id&0&0\\0&1+\pi x_j&\pi z_j\\0&0&1 \end{pmatrix} & \quad \textit{if $L_j$ is of type $I^e$}.
\end{array}\right.\]
In Lemma A.9, we will show  that   $F_j$ is isomorphic to $ \textbf{A}^{1} \times \mathbb{Z}/2\mathbb{Z}$ as a $\kappa$-variety
so that it has exactly two connected components,
by enumerating equations defining $F_j$ as a closed subvariety of an affine space of dimension $2$ (resp. $4$)
if $L_j$ is \textit{of type $\textit{I}^o$} (resp. \textit{of type $\textit{I}^e$}).
Here,  $\textbf{A}^{1}$ is an affine space of dimension $1$.
These equations are necessary in this theorem and thus we state them in Equation (4.2) below.
We  refer to Lemma A.9 for the proof.
Let $\alpha$ be the unit in $B$ such that  $\epsilon=1+\alpha\pi$ as explained in Section 2.1,
and $\bar{\alpha}$ be the image of $\alpha$ in $\kappa$.
We write $x_j=x_j^1+\pi x_j^2$ and $z_j=z_j^1+\pi z_j^2$,
where $x_j^1, x_j^2, z_j^1, z_j^2 \in R \subset R\otimes_AB$ and $\pi$ stands for $1\otimes \pi\in R\otimes_AB$.
Then the equations defining $F_j$ as a closed subvariety of an affine space of dimension $2$ (resp. $4$) are
\begin{equation}
\left\{
\begin{array}{l l}
(z_j^1/\bar{\alpha})+ (z_j^1/\bar{\alpha})^2=0 & \quad \textit{if $L_j$ is of type $I^o$};\\
x_j^1=z_j^1, (z_j^1/\bar{\alpha})+ (z_j^1/\bar{\alpha})^2=0, z_{j}^2+x_{j}^2+x_{j}^1z_{j}^1=0 & \quad \textit{if $L_j$ is of type $I^e$}.
\end{array}\right.
\end{equation}

The proof of the surjectivity of $\psi_j$ is given below.
The main idea is to show that $\psi_j|_{F_j}$ is surjective.
There are 4 cases according to the types of $M_0$ and $L_j$.
Recall that $\bigoplus_{i \geq 0} M_i$ is a Jordan splitting of  a rescaled hermitian lattice $(L^{j}, \frac{1}{(\pi\cdot\sigma(\pi))^{j/2}}h)$
and that $M_0=\pi^{j/2}L_0\oplus\pi^{j/2-1}L_2\oplus \cdots \oplus \pi L_{j-2}\oplus L_{j}$.

\begin{enumerate}
\item
Assume that both $M_0$ and $L_j$ are \textit{of type $I^e$}.
In this case and the next case, we will describe $\psi_j|_{F_j} : F_j \rightarrow \mathbb{Z}/2\mathbb{Z}$ explicitly in terms of a formal matrix.
To do that, we will first describe a morphism from $F_j$ to the special fiber of the smooth integral model associated to $L^j$
and  then to $G_j$. Recall that $G_j$ is  the special fiber of the smooth integral model associated to $C(L^j)=\bigoplus_{i \geq 1} M_i^{\prime}$.
Then we will describe a morphism from $F_j$ to the even orthogonal group associated to $M_1'$ and compute the Dickson invariant of the image of an element of $F_j$ in this orthogonal group.


We write $M_0=N_0\oplus L_j$, where $N_0$ is unimodular with even rank.
Thus $N_0$ is either \textit{of type II} or \textit{of type $I^e$}.
Assume that $N_0$ is \textit{of type $I^e$}.
Then we can write $N_0=(\oplus_{\lambda'}H_{\lambda'})\oplus A(1, 2b, 1)$ and $L_j=(\oplus_{\lambda''}H_{\lambda''})\oplus A(1, 2b', 1)$ by Theorem 2.10,
where  $H_{\lambda'}=H(0)=H_{\lambda''}$ and $b, b'\in A$.
Thus we write  $M_0=(\oplus_{\lambda}H_{\lambda})\oplus A(1, 2b, 1)\oplus A(1, 2b', 1)$, where $H_{\lambda}=H(0)$.
For this choice of a basis of $L^j=\bigoplus_{i \geq 0} M_i$,
the image of a fixed element of $F_j$ in the special fiber of the smooth integral model associated to $L^j$ is
$$\begin{pmatrix} id&0 &0\\ 0 &\begin{pmatrix} 1+\pi x_j & \pi z_j\\ 0 & 1 \end{pmatrix}  &0 \\ 0& 0 &id \end{pmatrix}.$$
Here, $id$ in the $(1,1)$-block corresponds to the direct summand $(\oplus_{\lambda}H_{\lambda})\oplus A(1, 2b, 1)$ of $M_0$ and
the diagonal block $\begin{pmatrix} 1+\pi x_j & \pi z_j\\ 0 & 1 \end{pmatrix} $ corresponds to the direct summand $A(1, 2b', 1)$ of $M_0$.

Let $(e_1, e_2, e_3, e_4)$ be a basis for the direct summand $A(1, 2b, 1)\oplus A(1, 2b', 1)$ of $M_0$.
Since this is  \textit{unimodular of type $I^e$}, we can choose another basis based on Theorem 2.10.
Namely, if we choose $(-2be_1+e_2, (2b'-1)e_1+e_3-e_4, e_3, e_2+e_4)$ as another basis, then
$A(1, 2b, 1)\oplus A(1, 2b', 1)$ becomes $A(2b(2b-1), 2b'(2b'-1), -(2b-1)(2b'-1))\oplus A(1, 2(b+b'), 1)$.
Since $A(2b(2b-1), 2b'(2b'-1), -(2b-1)(2b'-1))$ is \textit{unimodular of type II}, it is isomorphic to $H(0)$ by Theorem 2.10.
Thus we can write  $M_0=(\oplus_{\lambda}H_{\lambda})\oplus H(0)\oplus A(1, 2(b+b'), 1)$.
For this basis, the image of a fixed element of $F_j$ in the special fiber of the smooth integral model associated to $L^j$ is
$$\begin{pmatrix} \ast&\ast' &0\\ \ast'' &\begin{pmatrix} 1+\pi x_j & \pi z_j\\ 0 & 1 \end{pmatrix}  &0 \\ 0& 0 &id \end{pmatrix}.$$
Here, the diagonal block $\begin{pmatrix} 1+\pi x_j & \pi z_j\\ 0 & 1 \end{pmatrix} $ corresponds to $A(1, 2(b+b'), 1)$ with a basis $(e_3, e_2+e_4)$
and the diagonal block $\ast$ corresponds to the direct summand $(\oplus_{\lambda}H_{\lambda})\oplus H(0)$ of $M_0$.

Then the direct summand $M_1'$ of $C(L^j)=\oplus_{i\geq 1}M_i'$ is $(\pi)e_3\oplus B(e_2+e_4)\oplus M_1$.
The image of a fixed element of $F_j$ in the special fiber of the smooth integral model associated to $C(L^j)$ is then
$$\begin{pmatrix} \begin{pmatrix} 1+\pi x_j &  z_j\\ 0 & 1 \end{pmatrix}&0 &\ast'\\ 0 & id &\ast'' \\ \ast'''& \ast'''' &\ast \end{pmatrix}.$$
Here, the diagonal block $\begin{pmatrix} 1+\pi x_j &  z_j\\ 0 & 1 \end{pmatrix} $ corresponds to $(\pi)e_3\oplus B(e_2+e_4)$
and the diagonal block $id$ corresponds to the direct summand $M_1$ of $M_1'$.

Now, the image of a fixed element of $F_j$ in the orthogonal group associated to $M_1'/\pi M_1'$ is
$$\begin{pmatrix} \begin{pmatrix} 1 &  z_j^1\\ 0 & 1 \end{pmatrix}&0 \\ 0 & id  \end{pmatrix}.$$
Note that   $z_j^1$ is in $R$ such that  $z_j=z_j^1+\pi z_j^2$ as explained in the paragraph before Equation (4.2).
 The Dickson invariant of $\begin{pmatrix} \begin{pmatrix} 1 &  z_j^1\\ 0 & 1 \end{pmatrix}&0 \\ 0 & id  \end{pmatrix}$
is the same as that of  $\begin{pmatrix} 1 &  z_j^1\\ 0 & 1 \end{pmatrix}$.
Here we consider $\begin{pmatrix} 1 &  z_j^1\\ 0 & 1 \end{pmatrix}$ as an element of the orthogonal group associated to
$((\pi)e_3\oplus B(e_2+e_4))/\pi((\pi)e_3\oplus B(e_2+e_4))$.
In order to compute the Dickson invariant, we use the scheme-theoretic description of the Dickson invariant explained in Remark 4.4 of \cite{C1}.
The Dickson invariant of an orthogonal group of the quadratic space with dimension 2 is explicitly given at the end of the proof of Lemma 4.5 in \cite{C1}.
Based on this, the Dickson invariant of $\begin{pmatrix} 1 &  z_j^1\\ 0 & 1 \end{pmatrix}$ is $z_j^1/\bar{\alpha}$.
Note that $z_j^1/\bar{\alpha}$ is indeed an element of $\mathbb{Z}/2\mathbb{Z}$ by Equation (4.2).

In conclusion, $z_j^1/\bar{\alpha}$ is the image of a fixed element of $F_j$ under the map $\psi_j$.
Since $z_j^1/\bar{\alpha}$ can be either $0$ or $1$,
 $\psi_j|_{F_j}$ is surjective onto $\mathbb{Z}/2\mathbb{Z}$ and thus $\psi_j$ is surjective.\\


If $N_0$ is \textit{of type II}, then the proof of the surjectivity of $\psi_j$ is similar to that of the above case and so we skip it.\\

\item Assume that $M_0$  is \textit{of type $I^e$} and   $L_j$ is  \textit{of type $I^o$}.
We write $M_0=N_0\oplus L_j$, where $N_0$ is unimodular with odd rank so that it is  \textit{of type $I^o$}.
Then we can write $N_0=(\oplus_{\lambda'}H_{\lambda'})\oplus (a)$ and $L_j=(\oplus_{\lambda''}H_{\lambda''})\oplus (a')$ by Theorem 2.10,
where $H_{\lambda'}=H(0)=H_{\lambda''}$ and $a, a'\in A$ such that $a, a' \equiv 1$ mod 2.
Thus we write  $M_0=(\oplus_{\lambda}H_{\lambda})\oplus (a)\oplus (a')$, where $H_{\lambda}=H(0)$.
For this choice of a basis of $L^j=\bigoplus_{i \geq 0} M_i$,
the image of a fixed element of $F_j$ in the special fiber of the smooth integral model associated to $L^j$ is
$$\begin{pmatrix} id&0 &0\\ 0 &\begin{pmatrix} 1+ \pi z_j  \end{pmatrix}  &0 \\ 0& 0 &id \end{pmatrix}.$$
Here, $id$ in the $(1,1)$-block corresponds to the direct summand $(\oplus_{\lambda}H_{\lambda})\oplus (a)$ of $M_0$ and
the diagonal block $\begin{pmatrix} 1+ \pi z_j \end{pmatrix} $ corresponds to the direct summand $(a')$ of $M_0$.

Let $(e_1, e_2)$ be a basis for the direct summand $(a)\oplus (a')$ of $M_0$.
Since this is  \textit{unimodular of type $I^e$},  we can choose another basis $(e_1, e_1+e_2)$
such that the associated Gram matrix is $A(a, a+a', a)$, where $a+a'\in (2)$.
For this basis, the image of a fixed element of $F_j$ in the special fiber of the smooth integral model associated to $L^j$ is
$$\begin{pmatrix} id&0 &0\\ 0 &\begin{pmatrix} 1 & -\pi z_j\\ 0 & 1+\pi z_j \end{pmatrix}  &0 \\ 0& 0 &id \end{pmatrix}.$$
Here, the diagonal block $\begin{pmatrix}  1 & -\pi z_j\\ 0 & 1+\pi z_j \end{pmatrix} $ corresponds to $A(a, a+a', a)$ with a basis $(e_1, e_1+e_2)$
and $id$ in the $(1,1)$-block corresponds to the direct summand $(\oplus_{\lambda}H_{\lambda})\oplus (a)$ of $M_0$.

Then the direct summand $M_1'$ of $C(L^j)=\oplus_{i\geq 1}M_i'$ is $(\pi)e_1\oplus B(e_1+e_2)\oplus M_1$.
The image of a fixed element of $F_j$ in the special fiber of the smooth integral model associated to $C(L^j)$ is then
$$\begin{pmatrix} \begin{pmatrix} 1 & -z_j\\ 0 & 1+\pi z_j \end{pmatrix}&0 &0\\ 0 & id &0 \\ 0& 0 & id \end{pmatrix}.$$
Here, the diagonal block $\begin{pmatrix} 1 & -z_j\\ 0 & 1+\pi z_j \end{pmatrix} $ corresponds to $(\pi)e_1\oplus B(e_1+e_2)$
and $id$ in the $(2 \times 2)$-block corresponds to the direct summand $M_1$ of $M_1'$.

Now, the image of a fixed element of $F_j$ in the orthogonal group associated to $M_1'/\pi M_1'$ is
$$\begin{pmatrix} \begin{pmatrix} 1 &  z_j^1\\ 0 & 1 \end{pmatrix}&0 \\ 0 & id  \end{pmatrix}.$$
Note that   $z_j^1$ is in $R$ such that  $z_j=z_j^1+\pi z_j^2$ as explained in the paragraph before Equation (4.2).
 The Dickson invariant of $\begin{pmatrix} \begin{pmatrix} 1 &  z_j^1\\ 0 & 1 \end{pmatrix}&0 \\ 0 & id  \end{pmatrix}$
is the same as that of  $\begin{pmatrix} 1 &  z_j^1\\ 0 & 1 \end{pmatrix}$.
Here, we consider $\begin{pmatrix} 1 &  z_j^1\\ 0 & 1 \end{pmatrix}$ as an element of the orthogonal group associated to
$((\pi)e_1\oplus B(e_1+e_2))/\pi((\pi)e_1\oplus B(e_1+e_2))$.
Then as explained in the above case (1),
the Dickson invariant of $\begin{pmatrix} 1 &  z_j^1\\ 0 & 1 \end{pmatrix}$ is $z_j^1/\bar{\alpha}$.
Note that $z_j^1/\bar{\alpha}$ is indeed an element of $\mathbb{Z}/2\mathbb{Z}$ by Equation (4.2).

In conclusion, $z_j^1/\bar{\alpha}$ is the image of a fixed element of $F_j$ under the map $\psi_j$.
Since $z_j^1/\bar{\alpha}$ can be either $0$ or $1$,
 $\psi_j|_{F_j}$ is surjective onto $\mathbb{Z}/2\mathbb{Z}$ and thus $\psi_j$ is surjective.\\


\item Assume that both $M_0$ and $L_j$ are \textit{of type $I^o$}.
In this case, we will describe $\psi_j|_{F_j} : F_j \rightarrow \mathbb{Z}/2\mathbb{Z}$ explicitly in terms of a formal matrix.
To do that, we will first describe a morphism from $F_j$ to the special fiber of the smooth integral model associated to $L^j$
and  then to $G_j$. Recall that $G_j$ is  the special fiber of the smooth integral model associated to $C(L^j)=\bigoplus_{i \geq 0} M_i^{\prime}$.
Then we will describe a morphism from $F_j$ to the special fiber of the smooth integral model associated to $M_0'\oplus C(L^j)$
and to the special fiber of the smooth integral model associated to $C(M_0'\oplus C(L^j))$.
Finally,  we will describe a morphism from $F_j$ to a certain even orthogonal group associated to $C(M_0'\oplus C(L^j))$
 and compute the Dickson invariant of the image of an element of $F_j$ in this orthogonal group.

We write $M_0=N_0\oplus L_j$, where $N_0$ is unimodular with even rank.
Thus $N_0$ is either \textit{of type II} or \textit{of type $I^e$}.
Assume that $N_0$ is \textit{of type $I^e$}.
Then we can write $N_0=(\oplus_{\lambda'}H_{\lambda'})\oplus A(1, 2b, 1)$ and $L_j=(\oplus_{\lambda''}H_{\lambda''})\oplus (a)$ by Theorem 2.10,
where  $H_{\lambda'}=H(0)=H_{\lambda''}$, $b\in A$, and  $a (\in A) \equiv 1$ mod 2.
Thus we write  $M_0=(\oplus_{\lambda}H_{\lambda})\oplus A(1, 2b, 1)\oplus (a)$, where $H_{\lambda}=H(0)$.
For this choice of a basis of $L^j=\bigoplus_{i \geq 0} M_i$,
the image of a fixed element of $F_j$ in the special fiber of the smooth integral model associated to $L^j$ is
$$\begin{pmatrix} id&0 &0\\ 0 &\begin{pmatrix} 1+\pi z_j  \end{pmatrix}  &0 \\ 0& 0 &id \end{pmatrix}.$$
Here, $id$ in the $(1,1)$-block corresponds to the direct summand $(\oplus_{\lambda}H_{\lambda})\oplus A(1, 2b, 1)$ of $M_0$ and
the diagonal block $\begin{pmatrix} 1+\pi z_j\end{pmatrix} $ corresponds to the direct summand $(a)$ of $M_0$.

Let $(e_1, e_2, e_3)$ be a basis for the direct summand $A(1, 2b, 1)\oplus (a)$ of $M_0$.
Since this is  \textit{unimodular of type $I^o$}, we can choose another basis based on Theorem 2.10.
Namely, if we choose $(-2be_1+e_2, -ae_1+e_3,  e_2+e_3)$ as another basis, then
$A(1, 2b, 1)\oplus (a)$ becomes $A(2b(2b-1), a(a+1), a(2b-1))\oplus (a+2b)$.
Since $A(2b(2b-1), a(a+1), a(2b-1))$ is \textit{unimodular of type II}, it is isomorphic to $H(0)$ by Theorem 2.10.
Thus we can write  $M_0=(\oplus_{\lambda}H_{\lambda})\oplus H(0)\oplus (a+2b)$.
For this basis, the image of a fixed element of $F_j$ in the special fiber of the smooth integral model associated to $L^j$ is
$$\begin{pmatrix} \ast&\ast' &0\\ \ast'' &\begin{pmatrix} 1+\frac{a}{a+2b}\pi z_j \end{pmatrix}  &0 \\ 0& 0 &id \end{pmatrix}.$$
Here, the diagonal block $\begin{pmatrix} 1+\frac{a}{a+2b}\pi z_j \end{pmatrix} $ corresponds to $(a+2b)$ with a basis $e_2+e_3$
and the diagonal block $\ast$ corresponds to the direct summand $(\oplus_{\lambda}H_{\lambda})\oplus H(0)$ of $M_0$.

Then the direct summand $M_0'$ of $C(L^j)=\oplus_{i\geq 0}M_i'$ is $B(e_2+e_3)$ of rank 1.       
The image of a fixed element of  $F_j$ in the special fiber of the smooth integral model associated to $C(L^j)$ is then
$$\begin{pmatrix} \begin{pmatrix} 1+\frac{a}{a+2b}\pi z_j \end{pmatrix}&0 &\ast'\\ 0 & id &\ast'' \\ \ast'''& \ast'''' &\ast \end{pmatrix}.$$
Here, the diagonal block $\begin{pmatrix} 1+\frac{a}{a+2b}\pi z_j \end{pmatrix} $ corresponds to $M_0'=B(e_2+e_3)$ with a Gram matrix $(a+2b)$
and the diagonal block $id$ corresponds to $M_1'=M_1$.

We now describe the image of the above in  the special fiber of the smooth integral model associated to
$M_0'\oplus C(L^j)=(M_0^{\prime}\oplus M_0^{\prime})\oplus (\bigoplus_{i \geq 1} M_i^{\prime})$. 
If $(e_1', e_2')$ is a basis for $(M_0^{\prime}\oplus M_0^{\prime})$, then
we choose another basis $(e_1', e_1'+e_2')$  for $(M_0^{\prime}\oplus M_0^{\prime})$.
For this basis, based on the description of the morphism from   the smooth integral model associated to
$C(L^j)$ to  the smooth integral model associated to
$M_0'\oplus C(L^j)$ explained in Remark 4.10,
the image of a fixed element of $F_j$ in the special fiber of the smooth integral model associated to $M_0'\oplus C(L^j)$ is
$$\begin{pmatrix}  1& -\frac{a}{a+2b}\pi z_j& 0 &\ast' \\
0& 1+\frac{a}{a+2b}\pi z_j &0 &\ast'\\0& 0 & id &\ast'' \\0& \ast'''& \ast'''' &\ast \end{pmatrix}.$$
Here, the diagonal block $\begin{pmatrix} 1& -\frac{a}{a+2b}\pi z_j\\0& 1+\frac{a}{a+2b}\pi z_j \end{pmatrix} $ corresponds to $(M_0^{\prime}\oplus M_0^{\prime})$ with a basis $(e_1', e_1'+e_2')$
and the diagonal block $id$ corresponds to $M_1'=M_1$.

We now follow Step (1) with $M_0'\oplus C(L^j)=(M_0^{\prime}\oplus M_0^{\prime})\oplus (\bigoplus_{i \geq 1} M_i^{\prime})$.
Namely, $C(M_0'\oplus C(L^j))=(\pi)e_1'\oplus B(e_1'+e_2')\oplus (\bigoplus_{i \geq 1} M_i^{\prime})=((\pi) e_1'\oplus B(e_1'+e_2')\oplus M_1')\oplus (\bigoplus_{i \geq 2} M_i^{\prime})$.
Here, $((\pi) e_1'\oplus B(e_1'+e_2')\oplus M_1')$ is $\pi^1$-modular and $M_i^{\prime}$ is $\pi^i$-modular with $i\geq 2$.
Then the image of a fixed element of $F_j$ in the special fiber of the smooth integral model associated to $C(M_0'\oplus C(L^j))$ is
$$\begin{pmatrix}  1& -\frac{a}{a+2b} z_j& 0 &\ast' \\
0& 1+\frac{a}{a+2b}\pi z_j &0 &\ast'\\0& 0 & id &\ast'' \\0& \ast'''& \ast'''' &\ast \end{pmatrix}.$$
Here, $\begin{pmatrix}  1& -\frac{a}{a+2b} z_j& 0  \\
0& 1+\frac{a}{a+2b}\pi z_j &0\\0& 0 & id  \end{pmatrix}$ corresponds to $(\pi e_1'\oplus B(e_1'+e_2')\oplus M_1')$.

Now, the image of a fixed element of $F_j$ in the orthogonal group associated to $(\pi e_1\oplus B(e_1+e_2)\oplus M_1')/\pi (\pi e_1\oplus B(e_1+e_2)\oplus M_1')$ is
$$\begin{pmatrix} \begin{pmatrix} 1 &  z_j^1\\ 0 & 1 \end{pmatrix}&0 \\ 0 & id  \end{pmatrix}$$
since mod 2 reduction of $\frac{a}{a+2b}$ is 1.
Note that   $z_j^1$ is in $R$ such that  $z_j=z_j^1+\pi z_j^2$ as explained in the paragraph before Equation (4.2).
Then, as explained in Step (1), the Dickson invariant of this is $z_j^1/\bar{\alpha}$.
Note that $z_j^1/\bar{\alpha}$ is indeed an element of $\mathbb{Z}/2\mathbb{Z}$ by Equation (4.2).

In conclusion, $z_j^1/\bar{\alpha}$ is the image of a fixed element of $F_j$ under the map $\psi_j$.
Since $z_j^1/\bar{\alpha}$ can be either $0$ or $1$,
 $\psi_j|_{F_j}$ is surjective onto $\mathbb{Z}/2\mathbb{Z}$ and thus $\psi_j$ is surjective.\\


If $N_0$ is \textit{of type II}, then the proof of the surjectivity of $\psi_j$ is similar to that of the above case and so we skip it.\\

\item Assume that  $M_0$ is \textit{of type $I^o$} and $L_j$ is \textit{of type $I^e$}.
We write $M_0=N_0\oplus L_j$, where $N_0$ is unimodular with odd rank so that it is  \textit{of type $I^o$}.
Then we can write $N_0=(\oplus_{\lambda'}H_{\lambda'})\oplus (a)$ and $L_j=(\oplus_{\lambda''}H_{\lambda''})\oplus A(1, 2b, 1)$ by Theorem 2.10,
where  $H_{\lambda'}=H(0)=H_{\lambda''}$ and $a, b \in A$ such that $a \equiv 1$ mod 2.
We  write  $M_0=(\oplus_{\lambda}H_{\lambda})\oplus (a)\oplus A(1, 2b, 1)$, where $H_{\lambda}=H(0)$.
For this choice of a basis of $L^j=\bigoplus_{i \geq 0} M_i$,
the image of a fixed element of $F_j$ in the special fiber of the smooth integral model associated to $L^j$ is
$$\begin{pmatrix} id&0 &0\\ 0 &\begin{pmatrix} 1+\pi x_j & \pi z_j\\ 0 & 1 \end{pmatrix}  &0 \\ 0& 0 &id \end{pmatrix}.$$
Here, $id$ in the $(1,1)$-block corresponds to the direct summand $(\oplus_{\lambda}H_{\lambda})\oplus (a)$ of $M_0$ and
the diagonal block $\begin{pmatrix} 1+\pi x_j & \pi z_j\\ 0 & 1 \end{pmatrix} $ corresponds to the direct summand $A(1, 2b, 1)$ of $M_0$.

Let $(e_1, e_2, e_3)$ be a basis for the direct summand $(a)\oplus A(1, 2b, 1)$ of $M_0$.
Since this is  \textit{unimodular of type $I^o$}, we can choose another basis based on Theorem 2.10.
Namely, if we choose $(-2be_2+e_3, e_1-ae_2,  e_1+e_3)$ as another basis, then
$(a)\oplus A(1, 2b, 1)$ becomes $A(2b(2b-1), a(a+1), a(2b-1))\oplus (a+2b)$.
Since $A(2b(2b-1), a(a+1), a(2b-1))$ is \textit{unimodular of type II}, it is isomorphic to $H(0)$ by Theorem 2.10.
Thus we can write  $M_0=(\oplus_{\lambda}H_{\lambda})\oplus H(0)\oplus (a+2b)$.
For this basis, the image of a fixed element of $F_j$ in the special fiber of the smooth integral model associated to $L^j$ is
$$\begin{pmatrix} \ast&\ast' &0\\ \ast'' &\begin{pmatrix} 1+\frac{1}{a+2b}\pi z_j \end{pmatrix}  &0 \\ 0& 0 &id \end{pmatrix}.$$
Here, the diagonal block $\begin{pmatrix} 1+\frac{1}{a+2b}\pi z_j \end{pmatrix} $ corresponds to $(a+2b)$ with a basis $(e_1+e_3)$
and the diagonal block $\ast$ corresponds to the direct summand $(\oplus_{\lambda}H_{\lambda})\oplus H(0)$ of $M_0$.

Note that the reduction of $\frac{1}{a+2b}$ mod 2 is 1.
The rest of the proof is similar to that of Step (3) and so we skip it.\\

\end{enumerate}

So far, we have proved that $\psi_j$ is surjective.
 We now show that $\psi=\prod_{j}\psi_j$ is surjective.
 The proof is similar to the proof showing
that $\prod_{i\in\mathcal{H}} H_i \rightarrow \tilde{G}$ is a closed immersion in the last paragraph of the proof of Theorem 4.5.

We consider the morphism
\[F=\prod_{j} F_j \longrightarrow \tilde{G},\]
$(f_j)\mapsto\prod_{j}f_j$.
By observing a matrix form of an element of $F_j(R)$ for a $\kappa$-algebra $R$ as given at the beginning of the proof,
it is easy to see the following two facts:
Firstly,   $F_j$ and $F_{j'}$ commute with each other in the sense that $f_j\cdot f_{j'}=f_{j'}\cdot f_j$ for all even integers $j \neq j'$,
where $f_j\in F_j(R)$ and $ f_{j'}\in F_{j'}(R)$ for a $\kappa$-algebra $R$.
Note that $L_j$ and $L_{j'}$ (resp. $L_{j+2}$ and $L_{j'+2}$) are \textit{of type I} (resp. \textit{of type II}).
Based on this, the above morphism becomes a group homomorphism.
Secondly,
$F_j\cap F_{j'}=0$ for all $j\neq j'$.
This fact implies that the morphism $F_j\times F_{j'} \longrightarrow \tilde{G},
(f_j, f_{j'})\mapsto f_j\cdot f_{j'}$ is injective and so $F_j\times F_{j'}$ is a closed subgroup scheme of $\tilde{G}$.
A matrix form of an element of $F_j(R)$ also implies that  $(F_j\times F_{j'})\cap F_{j''}=0$ for all pairwise different three integers $j, j', j''$
and so the morphism $(F_j\times F_{j'})\times F_{j''} \longrightarrow \tilde{G},
(f_j, f_{j'}, f_{j''})\mapsto f_j\cdot f_{j'}\cdot f_{j''}$ is injective. Thus $F_j\times F_{j'}\times F_{j''}$ is a closed subgroup scheme of $\tilde{G}$.
Therefore, by repeating this argument,
 the product $F=\prod_{j} F_j$ is embedded into $\tilde{G}$ as a closed subgroup scheme.


In addition, we claim that $\psi_j|_{F_{j'}}$ is trivial for all $j<j'$.
The proof of our claim relies on the matrix interpretation of $\psi_j$.
We first notice that $j'-j\geq 4$ since $L_j$ is \textit{of type I} and $L_{j+2}$ is \textit{of type II}.
To obtain the morphism $\psi_j$, we observe the lattice $C(L^j)=\bigoplus_{i \geq 1} M_i^{\prime}$ (resp. $C(L^j)=\bigoplus_{i \geq 0} M_i^{\prime}$)
if $M_0$ is \textit{of type $I^e$} (resp. \textit{of type $I^o$}).
In either case, $L_{j'}$ is a direct summand of $M_{j'-j}$
and the morphism $\psi_j$ is attached to the Dickson invariant of the orthogonal group associated to $M_1'$.
We should mention that if $M_0$ is \textit{of type $I^o$} then we need a new hermitian lattice $M_0'\oplus C(L^j)$.
In this case, the morphism $\psi_j$ is also attached to the Dickson invariant of the orthogonal group associated to $M_1'$
as a direct summand of $M_0'\oplus C(L^j)$.
On the other hand, recall that $G_j$ is the special fiber of the smooth integral model associated to $C(L^j)$. 
Then as a formal matrix, $F_{j'}$ maps to the block of $G_j$ associated to $M_{j'-j}$.
Therefore, since $j'-j$ is at least $4$, the image of $F_{j'}$ under $\psi_j$ is zero
by observing the description of the orthogonal group associated to $M_1'$ based on Section 4.1.

We finally claim that  the morphism $\psi$ induces a surjective morphism from $F$ to $(\mathbb{Z}/2\mathbb{Z})^{\beta}$ defined over $\kappa$.
To show this, we express $F$ as $F=F_{j_1}\times \cdots \times F_{j_{\beta}}$ and $(\mathbb{Z}/2\mathbb{Z})^{\beta}$ as
$(\mathbb{Z}/2\mathbb{Z})^{\beta}=(\mathbb{Z}/2\mathbb{Z})_{j_1}\times \cdots \times (\mathbb{Z}/2\mathbb{Z})_{j_{\beta}}$,
where $j_i<j_{i'}$ if $i<i'$.
Choose an arbitrary element $(z_{j_1}, \cdots, z_{j_{\beta}})$ of $(\mathbb{Z}/2\mathbb{Z})_{j_1}\times \cdots \times (\mathbb{Z}/2\mathbb{Z})_{j_{\beta}}$
where each $z_{j_i}$ is an element of $(\mathbb{Z}/2\mathbb{Z})_{j_i}$.
We first choose $f_{j_1}\in F_{j_1}$ such that $\psi_{j_1}(f_{j_1})=z_{j_1}$.
Then choose $f_{j_2}\in F_{j_2}$ such that $\psi_{j_2}(f_{j_1}\cdot f_{j_2})=z_{j_2}$.
In this way, we choose $f_{j_t}\in F_{j_t}$ such that $\psi_{j_t}(f_{j_1}\cdot \cdots\cdot f_{j_t})=z_{j_t}$.
 Note that $\psi_{j_t}(f_{j_{t'}})=0$ for all $t<t'$.
Therefore, $\psi(f_{j_1} \cdots f_{j_{\beta}})=\prod_t\psi_{j_t}(f_{j_1} \cdots f_{j_{\beta}})=(z_{j_1}, \cdots, z_{j_{\beta}})$ and
this shows the surjectivity  of the morphism $\psi$.\\



For  the surjectivity of $\varphi \times \psi$, we recall the following criterion by Proposition 22.3 in \cite{KMRT}:
 the surjectivity of $\varphi \times \psi$ as algebraic groups
 is equivalent to the surjectivity of $\varphi \times \psi$ at the level of $\bar{\kappa}$-points since $\prod_{i:even} \mathrm{Sp}(B_i/Y_i, h_i)\times \prod_{i:odd} \mathrm{O}(A_i/Z_i, \bar{q}_i)_{\mathrm{red}} \times (\mathbb{Z}/2\mathbb{Z})^{\beta}$ is smooth.

Choose an element $(x, y)$ in the group of $\bar{\kappa}$-points of
$\prod_{i:even} \mathrm{Sp}(B_i/Y_i, h_i)\times \prod_{i:odd} \mathrm{O}(A_i/Z_i, \bar{q}_i)_{\mathrm{red}} \times (\mathbb{Z}/2\mathbb{Z})^{\beta}$
such that $x\in (\prod_{i:even} \mathrm{Sp}(B_i/Y_i, h_i)\times \prod_{i:odd} \mathrm{O}(A_i/Z_i, \bar{q}_i)_{\mathrm{red}})(\bar{\kappa})$
and $y\in (\mathbb{Z}/2\mathbb{Z})^{\beta}(\bar{\kappa})$.
Then there is an element $a\in \tilde{G}(\bar{\kappa})$ such that $\varphi(a)=x$ since $\varphi$ is surjective by Theorem 4.5.
We choose an element  $b\in F(\bar{\kappa})$ such that $\psi(ab)=y$.
On the other hand, $\varphi$ vanishes on  $F$ since the morphism $\varphi_i$ vanishes on  $F_j$ for all $i, j$.
Thus  $\varphi(b)=0$
and $(\varphi \times \psi)(ab)=(x, y)$.
This completes the proof.
\end{proof}

     \subsection{The maximal reductive quotient}



     We finally have the structure theorem for the algebraic group $\tilde{G}$.

     \begin{Thm}
     The morphism $$\varphi  \times \psi : \tilde{G} \longrightarrow   \prod_{i:even} \mathrm{Sp}(B_i/Y_i, h_i)\times \prod_{i:odd} \mathrm{O}(A_i/Z_i, \bar{q}_i)_{\mathrm{red}} \times (\mathbb{Z}/2\mathbb{Z})^{\beta}$$
      is surjective and the kernel is unipotent and connected.
     Consequently, $$\prod_{i:even} \mathrm{Sp}(B_i/Y_i, h_i)\times \prod_{i:odd} \mathrm{O}(A_i/Z_i, \bar{q}_i)_{\mathrm{red}} \times (\mathbb{Z}/2\mathbb{Z})^{\beta}$$ is the maximal reductive quotient.
  Here, $\mathrm{Sp}(B_i/Y_i, h_i)$ and $\mathrm{O}(A_i/Z_i, \bar{q}_i)_{\mathrm{red}}$ are explained in Section 4.1 (especially Remark 4.7)
 and $\beta$ is defined in Lemma 4.6.
     \end{Thm}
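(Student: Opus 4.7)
The plan is to exploit the two preparatory results already set up in Sections 4.1 and 4.2: Theorem 4.5 gives surjectivity of $\varphi \times \psi$, and Lemma 4.2 controls the dimension and number of components of $\mathrm{Ker}\,\varphi$. So only the structural claim ``kernel is unipotent and connected, and the target is the maximal reductive quotient'' remains. The target
\[
H := \prod_{i \text{ even}} \mathrm{Sp}(B_i/Y_i,h_i) \times \prod_{i \text{ odd}} \mathrm{O}(A_i/Z_i,\bar q_i)^{\mathrm{red}} \times (\mathbb{Z}/2\mathbb{Z})^{\beta}
\]
is reductive (each factor is reductive, since symplectic and orthogonal groups are reductive and a finite constant group scheme has trivial unipotent radical). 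So the entire theorem reduces to showing $\mathrm{Ker}(\varphi \times \psi)$ is smooth, connected, and unipotent of the correct dimension $l$.

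First I would observe that $\mathrm{Ker}(\varphi \times \psi) = \mathrm{Ker}\,\varphi \cap \mathrm{Ker}\,\psi$ is a closed subgroup scheme of $\mathrm{Ker}\,\varphi$, which by Lemma 4.2 is smooth and unipotent of dimension $l$ with exactly $2^\beta$ connected components. Any closed subgroup scheme of a unipotent group scheme is unipotent, so unipotence is automatic. The key point is then connectedness: I need to show that $\psi|_{\mathrm{Ker}\,\varphi} : \mathrm{Ker}\,\varphi \to (\mathbb{Z}/2\mathbb{Z})^\beta$ is surjective. If it is, then this map factors through $\pi_0(\mathrm{Ker}\,\varphi)$, and since both source and target have exactly $2^\beta$ elements, the induced map on component groups is a bijection; consequently the preimage of the identity is precisely the identity component of $\mathrm{Ker}\,\varphi$, which is connected and still of dimension $l$.

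To establish the surjectivity of $\psi|_{\mathrm{Ker}\,\varphi}$, I would return to the closed subgroup schemes $F_j \subset \tilde G$ constructed in the proof of Lemma 9.7 of Appendix A (referenced in the proof of Theorem 4.5), which are the same building blocks used there to produce the $2^\beta$ components of $\mathrm{Ker}\,\varphi$. I would verify, by direct matrix inspection, that each $F_j$ lies in $\mathrm{Ker}\,\varphi$ (each $F_j$ should act trivially on the quotients $B_i/Y_i$ and $A_i/Z_i$, because the type-switching element used to construct $F_j$ is built out of off-diagonal perturbations supported outside the $(i,i)$-blocks that see $\varphi_i$), and that $\psi_j$ remains nontrivial on $F_j$ (this is exactly what the reference to Remark 4.4 of \cite{C1} provides via the Dickson invariant). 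Using that the $F_j$'s commute pairwise and $\psi_k|_{F_j}$ is trivial for $k\neq j$, the product map $\prod_j F_j \to (\mathbb{Z}/2\mathbb{Z})^\beta$ is surjective, hence $\psi|_{\mathrm{Ker}\,\varphi}$ is surjective.

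Finally, having a surjection $\tilde G \twoheadrightarrow H$ onto a reductive $H$ with connected unipotent kernel $U$, I would conclude by the standard universal characterization: $R_u(\tilde G)$ maps to a connected normal unipotent subgroup of $H$, hence to $1$, so $R_u(\tilde G) \subseteq U$; conversely $U$ is connected, normal and unipotent in $\tilde G$, so $U \subseteq R_u(\tilde G)$. Therefore $U = R_u(\tilde G)$ and $H$ is the maximal reductive quotient. The main obstacle in this argument is the bookkeeping in the previous paragraph, namely confirming that the $F_j$'s of Appendix A do lie in $\mathrm{Ker}\,\varphi$ and that $\psi_j|_{F_j}$ is the Dickson invariant on the relevant orthogonal factor; once this is checked the remaining assembly is formal.
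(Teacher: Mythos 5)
Your proof is correct and takes essentially the same route as the paper: unipotence comes from $\mathrm{Ker}\,\varphi$ sitting inside the unipotent group $\tilde M^+$, connectedness comes from counting the $2^{\beta}$ components of $\mathrm{Ker}\,\varphi$ against the surjectivity of $\psi|_{\mathrm{Ker}\,\varphi}$ (supplied by the subgroups $F_j \subset \mathrm{Ker}\,\varphi$ of Lemma 9.7), and the identification with the maximal reductive quotient is then formal. You spell out a few steps the paper leaves implicit, but the core argument is the same.
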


     \begin{proof}
     We only need to prove that the kernel is unipotent and connected.
     The kernel of $\varphi$ is a closed subgroup scheme of the unipotent group $\tilde{M}^+$ which is defined in Lemma A.2 of Appendix A and so
     it suffices to show that the kernel of $\varphi \times \psi$ is connected.
     Equivalently, it suffices to show that the kernel of the restricted morphism $\psi|_{\mathrm{Ker~}\varphi}$ is connected.
     From Lemma 4.6,  the number of connected components of $\mathrm{Ker~}\varphi $ is $2^\beta$.
 Since $\varphi|_{F}=0$ so that $F=\prod_{j} F_j\subset \mathrm{Ker~}\varphi$, the restricted morphism $\psi|_{\mathrm{Ker~}\varphi}$ is surjective onto $(\mathbb{Z}/2\mathbb{Z})^{\beta}$.
 We complete the proof by counting the number of connected components.
 \end{proof}

\section{Comparison of volume forms and final formulas}

This section is based on Section 7 of \cite{GY} and Section 5 of \cite{C1}.
Let $H$ be the $F$-vector space of hermitian forms on $V=L\otimes_AF$.
Let $M'=\mathrm{End}_{B}(L)$  
and let $H'=\{\textit{f : f is a hermitian form on $L$}\}$. 
Regarding  $\mathrm{End}_EV$ and $H$ as varieties over $F$,
let $\omega_M$ and $\omega_H$ be nonzero,  translation-invariant forms on   $\mathrm{End}_EV$ and $H$, respectively, 
with normalization
$$\int_{M'}|\omega_M|=1 \mathrm{~and~}  \int_{H'}|\omega_H|=1.$$
Let $M^{\ast}=\mathrm{Res}_{E/F}\mathrm{GL}_E(V)$.
Define a map $\rho : M^{\ast} \rightarrow H$ by $\rho(m)=h\circ m$.
Here $h\circ m$ is the hermitian form $(v, w)\mapsto h(mv, mw)$.
Then the inverse of $h$ under $\rho$ is $G$, which is the unitary group associated to the hermitian space $(V, h)$.
It is also the generic fiber of $\underline{G}'$.
Put $\omega^{\mathrm{ld}}=\omega_M/\rho^{\ast}\omega_H$.
For a detailed explanation of what $\omega_M/\rho^{\ast}\omega_H$ means, we refer to Section 3.2 of \cite{GY}.


We choose two forms $\omega^{\prime}_M$ and $\omega^{\prime}_H$ as generators for the spaces of the top degree forms on $\underline{M}'$,
 which is identified with the Lie algebra of $\underline{M}^{\ast}$,
and $\underline{H}'$,  which is identified with the tangent space to $\underline{H}$ at $h$, respectively.
Here $\underline{M}'$ is defined in Remark 3.1 and $\underline{H}'$ is defined
in the paragraph following the matrix description of an element of $\underline{H}(R)$ for a flat $A$-algebra $R$ in Section 3.3.
They are nonzero  translation-invariant forms  on  $\mathrm{End}_EV$ and $H$, respectively,
with normalization
$$\int_{\underline{M}(A)}|\omega^{\prime}_M|=1 \mathrm{~and~}  \int_{\underline{H}(A)}|\omega^{\prime}_H|=1. $$

By Theorem 3.6, we have an exact sequence of locally free sheaves on $\underline{M}^{\ast}$:
\[ 0\longrightarrow \rho^{\ast}\Omega_{\underline{H}/A} \longrightarrow \Omega_{\underline{M}^{\ast}/A}
\longrightarrow \Omega_{\underline{M}^{\ast}/\underline{H}} \longrightarrow 0. \]
Put $\omega^{\mathrm{can}}=\omega^{\prime}_M/\rho^{\ast}\omega^{\prime}_H$.
For a detailed explanation of what $\omega^{\prime}_M/\rho^{\ast}\omega^{\prime}_H$ means, we refer to Section 3.2 of \cite{GY}.
It follows that 
$\omega^{\mathrm{can}}$ is a differential of top degree on $\underline{G}$, which is  invariant under the generic fiber of $\underline{G}$,
and which has nonzero reduction on the special fiber.

\begin{Lem}
  $$|\omega_M|=|2|^{N_M}|\omega_M^{\prime}|, \ \ \ \  N_M=\sum_{\textit{$i$:even and $L_i$:type I}}(2n_i-1)+\sum_{i<j}(j-i)\cdot n_i\cdot n_j,$$
  $$|\omega_H|=|2|^{N_H}|\omega_H^{\prime}|,  \ \ \ \ N_H=\sum_{\textit{$i$:even and $L_i$:type I}}(n_i-1)+\sum_{i<j}j\cdot n_i\cdot n_j+\sum_{\textit{i:even}} \frac{i+2}{2}\cdot n_i
  +\sum_{\textit{i:odd}} \frac{i+1}{2}\cdot n_i+\sum_i d_i,$$
  $$|\omega^{\mathrm{ld}}|=|2|^{N_M-N_H}|\omega^{\mathrm{can}}|.$$
  Here, $d_i=i\cdot n_i\cdot (n_i-1)/2$.
\end{Lem}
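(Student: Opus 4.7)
The plan is to do an explicit change-of-coordinates computation, mirroring Lemma 7.1 of \cite{GY}. The forms $\omega_M$ and $\omega_H$ are built from the natural matrix-entry coordinates on the generic fibers of $\underline{M}$ and $\underline{H}$, while $\omega'_M,\omega'_H$ are normalized to have integral $1$ on the integral models. The ratio of the two is read off from the block matrix descriptions of Sections 3.1 and 3.3 by counting the powers of $\pi$ that appear in the integral parametrization. The key numerical input is that in \textit{Case 1} the element $\pi\sigma(\pi)$ lies in $(2)\setminus(4)$, so multiplication by $\pi^k$ on $B$ viewed as a free rank-two $A$-module has determinant of $A$-valuation $k$; consequently each factor $\pi^k$ on a $B$-valued coordinate contributes $2^k$ to the comparison of top-degree $A$-forms.

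For the first formula, I would simply read off the scalings in the matrix description $g=(\pi^{\max\{0,j-i\}}g_{i,j})$. Off-diagonal blocks $(i,j)$ with $i<j$ have $n_i n_j$ $B$-entries each scaled by $\pi^{j-i}$, giving the contribution $\sum_{i<j}(j-i)n_i n_j$. Diagonal blocks at odd $i$, or at even $i$ with $L_i$ of type \textit{II}, carry no extra $\pi$ factors. For even $i$ with $L_i$ of type $I^o$, inspection of the block
\[
\begin{pmatrix} s_i & \pi y_i\\ \pi v_i & 1+\pi z_i \end{pmatrix}
\]
shows that exactly $(n_i-1)+(n_i-1)+1=2n_i-1$ entries carry an extra factor $\pi$, and a parallel inspection of the type $I^e$ block yields the same count $2n_i-1$. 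Summing produces the claimed $N_M$.

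For the second formula, I would apply the same idea to $\underline{H}$. The independent coordinates split into three kinds: (i) entries of an off-diagonal block $(i,j)$ with $i<j$, contributing $\sum_{i<j}j\cdot n_i n_j$ from the $\pi^j$ factor; (ii) strictly upper-triangular entries of a diagonal block, contributing $\sum_i d_i$ from the $\pi^i$ factor; (iii) the $n_i$ diagonal entries of the $(i,i)$-block, each of which lies in the rank-one $A$-submodule $\{f\in B:f=\epsilon^i\sigma(f)\}$ of $B$, and maps to the corresponding hermitian entry in $A$ by multiplication by a scalar whose $A$-valuation I compute directly from the descriptions of $\pi$ and $\epsilon$ in Section 2.1; the outcome is $(i+2)/2$ per entry when $i$ is even and $(i+1)/2$ per entry when $i$ is odd. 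Finally, the extra structural constraints prescribed for type-I diagonal blocks in items (2)--(3) of Section 3.3 (the ``$\equiv 0\bmod 2$'' condition on the diagonal of $a_i$, the coefficient $2$ in $1+2c_i$, the extra $\pi$ in $\pi b_i$, etc.) contribute, after subtracting what is already absorbed in (iii), a further net $n_i-1$ per type-I even-index block. Adding the four sources recovers the claimed $N_H$.

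The third identity is then immediate: from the definitions $\omega^{\mathrm{can}}=\omega'_M/\rho^*\omega'_H$ and $\omega^{\mathrm{ld}}=\omega_M/\rho^*\omega_H$, together with the first two formulas, one gets $\omega^{\mathrm{ld}}=2^{N_M-N_H}\omega^{\mathrm{can}}$. I expect the main obstacle to be the bookkeeping for (iii) and for the type-I correction in the computation of $N_H$: one must handle the twisted hermitian symmetry $f_{i,i}=\epsilon^i\sigma({}^tf_{i,i})$ carefully, identify the correct integral generator of the rank-one $A$-module that it cuts out of $B$, and verify that no $\pi$-contribution is double-counted between the ``generic diagonal'' tally in (iii) and the ``type-I correction'' tally.
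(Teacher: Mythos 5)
The paper gives no proof of this lemma (it is stated directly, citing Section 7 of \cite{GY} as the model), so there is no in-paper argument to compare against; your plan to count $\pi$-powers in the integral parametrizations of $\underline{M}$ and $\underline{H}$ and to use that $\pi\sigma(\pi)\in(2)\setminus(4)$ in \textit{Case 1} is exactly the computation the author is implicitly invoking, and your $N_M$ count and the off-diagonal and $d_i$ pieces of $N_H$ are carried out correctly. The two items you flag as uncertain in fact work out to the stated numbers: for a diagonal entry in the $(i,i)$-block, the integral constraint is $f_{jj}\equiv 0\bmod 2$ when $i$ is even (hence the hermitian matrix entry ranges over $\pi^i\cdot 2B\cap A=\pi^{i+2}B\cap A=2^{(i+2)/2}A$, giving $(i+2)/2$), and $\pi f_{jj}\equiv 0\bmod 2$ when $i$ is odd (hence the entry ranges over $\pi^{i+1}B\cap A=2^{(i+1)/2}A$, giving $(i+1)/2$); and in both the type $I^o$ and type $I^e$ shapes of $f_{i,i}$ exactly $n_i-1$ of the strictly-above-diagonal $B$-valued coordinates carry one extra factor of $\pi$ (namely the $b_i$ column, or the $e_i$ column together with the $c_i$ entry), which together with the $\pi^i$-scaled remainder reproduces $d_i+(n_i-1)$; there is no double-counting against the diagonal tally because the diagonal contribution is identical for type I and type II blocks.
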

\begin{proof}
Note that both $\omega_M$ and $\omega^{\prime}_M$ are volume forms on $\mathrm{End}_EV$ with different normalizations,
so that they differ by a scalar.
 The `difference' between the Haar measures associated to these volume forms can be detected at the level of $F$-points of $\mathrm{End}_EV$,  since $\mathrm{End}_EV$ is an affine space.

Since  $\underline{M}(A)=1+\underline{M}'(A)$, where $\underline{M}'$ is defined in Remark 3.1,
 we  have the identity $\int_{\underline{M}'(A)}|\omega^{\prime}_M|=1$.
Note that $\underline{M}'(A)$ is a finitely generated free $A$-submodule of $M'$ whose rank is the same as that of $M'$.
Thus $N_M$ is the `difference' between these two modules $M'$ and $\underline{M}'(A)$.
More precisely, $N_M$ is the length of the finitely generated torsion $A$-module $M'/\underline{M}'(A)$.
Note that $2$ is a uniformizer of $A$.

Similarly, $N_H$ is the length of the finitely generated torsion $A$-module $H'/\underline{H}'(A)$.
Here, $\underline{H}'$ is defined
in the paragraph following the matrix description of an element of $\underline{H}(R)$ for a flat $A$-algebra $R$ in Section 3.3.

Then the above formula for $N_M$ (resp. $N_H$) can be read off from the matrix interpretation for $\underline{M}(A)$ (resp. $\underline{H}(A)$)
given in Sections 3.1 and 3.2 (resp. Section 3.3).
\end{proof}

Let $f$ be the cardinality of $\kappa$.
The local density is defined as
\[\beta_L= \frac{1}{[G:G^{\circ}]}\cdot \lim_{N\rightarrow \infty} f^{-N~dim G}\#\underline{G}'(A/\pi^N A). \]
Here, $\underline{G}'$ is the naive integral model described at the beginning of Section 3 and $G$ is the generic fiber of $\underline{G}'$
and $G^{\circ}$ is the identity component of $G$.
In our case, $G$ is the unitary group $\mathrm{U}(V, h)$, where $V=L\otimes_AF$.
Since $\mathrm{U}(V, h)$ is connected, $G^{\circ}$ is the same as $G$ so that $[G:G^{\circ}]=1$.

Then based on Lemma 3.4 and Section 3.9 of \cite{GY}, we finally have the following local density formula.

\begin{Thm}
Let $f$ be the cardinality of $\kappa$.
The local density of ($L,h$) is
$$\beta_L=f^N \cdot f^{-\mathrm{dim~} \mathrm{U}(V, h)} \#\tilde{G}(\kappa),$$

where $$N=N_H-N_M=\sum_{i<j}i\cdot n_i\cdot n_j+\sum_{\textit{i:even}} \frac{i+2}{2}\cdot n_i
  +\sum_{\textit{i:odd}} \frac{i+1}{2}\cdot n_i+\sum_i d_i-\sum_{\textit{$i$:even and $L_i$:type I}}n_i.$$
Here, $\#\tilde{G}(\kappa)$ can be computed explicitly based on Remark 5.3.(1) below and Theorem 4.12. \qed
\end{Thm}

So as not to inconvenience the reader we  repeat the following remark from Remark 5.3 in \cite{C1}.

\begin{Rmk}(Remark 5.3 in \cite{C1})

\begin{enumerate}
\item In the above local density formula, $\#\tilde{G}(\kappa)$ is computed as follows.
We denote by $R_u\tilde{G}$  the unipotent radical of $\tilde{G}$ so that the  maximal reductive quotient of $\tilde{G}$ is $\tilde{G}/R_u\tilde{G}$.
That is, there is the following exact sequence of group schemes over $\kappa$:
\[1 \longrightarrow R_u\tilde{G} \longrightarrow \tilde{G} \longrightarrow \tilde{G}/R_u\tilde{G} \longrightarrow 1. \]
Furthermore, the following sequence of groups
\[1 \longrightarrow R_u\tilde{G}(\kappa) \longrightarrow \tilde{G}(\kappa) \longrightarrow (\tilde{G}/R_u\tilde{G})(\kappa) \longrightarrow 1 \]
is also exact by Lemma A.1.
Using Lemma A.1,  one can see  that $\# R_u\tilde{G}(\kappa)=f^m$, where $m$ is the dimension of $R_u\tilde{G}$.
Notice that the dimension of $R_u\tilde{G}$ can be computed explicitly based on Theorem 4.12,
since the dimension of $\tilde{G}$ is $n^2$ with $n=\mathrm{rank}_{B}L$.
In addition, the orders of  orthogonal and symplectic groups defined over a finite field are well known. Thus, one can compute $\# (\tilde{G}/R_u\tilde{G})(\kappa)$ explicitly based on Theorem 4.12.
Finally, the order of the group $\tilde{G}(\kappa)$ is identified as follows:
\[\#\tilde{G}(\kappa)=\# R_u\tilde{G}(\kappa)\cdot \# (\tilde{G}/R_u\tilde{G})(\kappa).\]

\item As  in Remark 7.4 of \cite{GY}, although we have assumed that $n_i=0$ for $i<0$,
it is easy to check that the formula in the preceding theorem remains true without this assumption.
\end{enumerate}
\end{Rmk}

\appendix
\section{The proof of  Lemma 4.6} \label{App:AppendixA}


The proof of Lemma 4.6 is based on  Proposition 6.3.1 in \cite{GY}. 
We first state a theorem of Lazard which is repeatedly used in this paper.
Let $U$ be a group scheme of finite type over $\kappa$ which is isomorphic to an affine space as an algebraic variety.
Then $U$ is connected smooth unipotent group
(cf.  IV, $\S$ 4, Theorem 4.1 and IV, $\S$ 2, Corollary 3.9 in \cite{DG}).

For preparation, we  state several lemmas.

\begin{Lem} (Lemma 6.3.3. in \cite{GY})

Let $1 \rightarrow X\rightarrow Y\rightarrow Z\rightarrow 1$ be an exact sequence of group schemes that are locally of finite type over $\kappa$,
where  $\kappa$ is a perfect field. Suppose that $X$ is smooth, connected, and unipotent.
Then $1 \rightarrow X(R)\rightarrow Y(R)\rightarrow Z(R)\rightarrow 1$ is exact for any $\kappa$-algebra $R$. \qed
\end{Lem}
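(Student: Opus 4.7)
The plan is to verify exactness term-by-term, with the only real work being surjectivity at $Z(R)$. Exactness on the left is formal: since $X = \ker(Y \to Z)$ as a group scheme, the map $X \to Y$ is a closed immersion, so $X(R) \to Y(R)$ is injective, and an element $y \in Y(R)$ maps to the identity in $Z(R)$ if and only if $y$ factors through the closed subscheme $X \subset Y$.

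For surjectivity, given a point $z \in Z(R)$, i.e.\ a morphism $\mathrm{Spec}(R) \to Z$, I would form the pullback
\[P := Y \times_Z \mathrm{Spec}(R).\]
Because $Y \to Z$ is an fppf quotient realized by $1 \to X \to Y \to Z \to 1$, the map $Y \to Z$ is faithfully flat, and $P$ becomes a right $X$-torsor over $\mathrm{Spec}(R)$ for the fppf topology. Lifting $z$ to an element of $Y(R)$ is precisely the same data as a section of $P \to \mathrm{Spec}(R)$, so the claim reduces to showing that every fppf $X$-torsor over an affine $\kappa$-scheme is trivial, i.e.\ $H^1_{\mathrm{fppf}}(\mathrm{Spec}(R), X) = \ast$.

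The heart of the matter is this vanishing, and I would establish it by d\'evissage. Since $X$ is smooth, connected, and unipotent over the perfect field $\kappa$, a classical theorem (see \textsc{sga} 3, Exp.\ XVII, or Borel's \emph{Linear Algebraic Groups}, \S15) provides a normal series
\[X = X_0 \supset X_1 \supset \cdots \supset X_r = 1\]
of closed $\kappa$-subgroup schemes with each successive quotient $X_i/X_{i+1} \cong \mathbb{G}_a$. For the additive group one has $H^1_{\mathrm{fppf}}(\mathrm{Spec}(R), \mathbb{G}_a) = H^1(R, \mathcal{O}_R) = 0$ because $\mathrm{Spec}(R)$ is affine; every $\mathbb{G}_a$-torsor over an affine base is trivial. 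Inducting down from $X_r = 1$ along the short exact sequences $1 \to X_{i+1} \to X_i \to \mathbb{G}_a \to 1$ using the associated long exact sequences of non-abelian fppf cohomology yields $H^1_{\mathrm{fppf}}(\mathrm{Spec}(R), X_i) = \ast$ for every $i$, and in particular for $i = 0$.

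The technical obstacle lies in this last d\'evissage: the non-abelian long exact sequence a priori only identifies the \emph{fibers} of $H^1(-, X_i) \to H^1(-, \mathbb{G}_a)$ over the distinguished point, so to conclude that every $X_i$-torsor is trivial one must twist. Concretely, given an $X_i$-torsor $P$, its image in $H^1(-, \mathbb{G}_a)$ is trivial, so $P$ lifts to a torsor under an inner twist ${}^PX_{i+1}$ of $X_{i+1}$; one then checks that this twisted form is again smooth, connected, unipotent, and split over $\kappa$ (stability of these properties under inner twisting uses perfectness of $\kappa$), so the inductive hypothesis applies to ${}^PX_{i+1}$ and forces $P$ to be trivial. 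Once this is in place, the exactness of $1 \to X(R) \to Y(R) \to Z(R) \to 1$ follows for every $\kappa$-algebra $R$.
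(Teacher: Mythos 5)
The paper itself gives no proof of this statement: Lemma~9.1 is quoted verbatim from \cite{GY}, Lemma~6.3.3, and the author does not reproduce the argument. So the comparison is really against the standard proof of the Gan--Yu lemma, and your proposal recovers it correctly. The reduction is exactly right: left-exactness is formal since $X=\ker(Y\to Z)$, and surjectivity of $Y(R)\to Z(R)$ is equivalent to the triviality of the fppf $X$-torsor $P=Y\times_Z\mathrm{Spec}(R)$, which you then attack by d\'evissage along a normal series of $X$ with $\mathbb{G}_a$-quotients (this exists because a smooth connected unipotent group over a perfect field is split). The vanishing $H^1_{\mathrm{fppf}}(\mathrm{Spec}(R),\mathbb{G}_a)=0$ for $R$ affine then closes the induction. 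This is the same route Gan and Yu take.

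One small point where your write-up overcomplicates, and in doing so introduces a spurious worry. In the d\'evissage step you say that the image of $P$ in $H^1(-,\mathbb{G}_a)$ is trivial, so ``$P$ lifts to a torsor under an inner twist ${}^PX_{i+1}$,'' and you then spend effort checking that the twist is again smooth, connected, unipotent and split. This is not needed. Since $X_{i+1}$ is \emph{normal} in $X_i$, the non-abelian pointed-set sequence $H^1(R,X_{i+1})\to H^1(R,X_i)\to H^1(R,\mathbb{G}_a)$ is exact, which means precisely that the preimage of the base point under the second map equals the \emph{image} of $H^1(R,X_{i+1})\to H^1(R,X_i)$ with no twisting whatsoever. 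As $H^1(R,\mathbb{G}_a)$ is a singleton, every class in $H^1(R,X_i)$ lies in that preimage, hence comes from an $X_{i+1}$-torsor on the nose; the inductive hypothesis $H^1(R,X_{i+1})=\ast$ then forces $H^1(R,X_i)=\ast$. Twisting only becomes necessary when one wants to describe fibers of $H^1(X_i)\to H^1(\mathbb{G}_a)$ over non-basepoint classes, and there are none here. Dropping that paragraph actually strengthens your argument, because the stability of ``split unipotent'' under twisting by a torsor over an arbitrary affine $\kappa$-algebra $R$ (as opposed to over $\kappa$ itself) is not something you should want to rely on.

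Two further remarks for completeness. First, for $P$ to be an $X$-torsor one needs $Y\to Z$ to be an fppf cover; this holds because $X$ is flat and locally of finite type, so the quotient $Z=Y/X$ exists and $Y\to Z$ is faithfully flat, and since $X$ is moreover smooth the map is even smooth, so one could equally work with \'etale torsors. Second, in the reduction you should specify that the normal series can be taken so that each $X_{i+1}$ is normal in $X_i$ (or better, one can take a \emph{central} series, using the derived or lower central series, whose graded pieces are vector groups), since the pointed-set exactness you need at $H^1(X_i)$ uses normality of $X_{i+1}\subset X_i$.
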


Let $\tilde{M}$ be the special fiber of $\underline{M}^{\ast}$ and let $R$ be a $\kappa$-algebra.
Recall that we have described an element   and
the multiplication of elements of $\underline{M}(R)$  in Section 3.2.
Based on these,
an element of $\tilde{M}(R)$ is
$$m= \begin{pmatrix} \pi^{max\{0,j-i\}}m_{i,j} \end{pmatrix}.$$
Here, if $i$ is even and $L_i$ is \textit{of type} $\textit{I}^o$ (resp. \textit{of type} $\textit{I}^e$), then
 $$m_{i,i}=\begin{pmatrix} s_i&\pi y_i\\ \pi v_i&1+\pi z_i \end{pmatrix} \textit{(resp.
$\begin{pmatrix} s_i&r_i&\pi t_i\\ \pi y_i&1+\pi x_i&\pi z_i\\ v_i&u_i&1+\pi w_i \end{pmatrix}$)},$$
where
 $s_i\in M_{(n_i-1)\times (n_i-1)}(B\otimes_AR)$ (resp.  $s_i\in M_{(n_i-2)\times (n_i-2)}(B\otimes_AR)$), etc.,
 and $s_i$ mod $\pi\otimes 1$ is invertible.
For the remaining $m_{i,j}$'s except for the cases explained above, $m_{i,j}\in M_{n_i\times n_j}(B\otimes_AR)$
and $m_{i,i}$  mod $\pi\otimes 1$ is invertible.

Let
  \[
\tilde{M}_i=  \left \{
  \begin{array}{l l}
  \mathrm{GL}_{\kappa}(B_i/Y_i) & \quad  \textit{if $i$ is even};\\
  \mathrm{GL}_{\kappa}(A_i/X_i) & \quad \textit{if $i$ is odd}.
    \end{array} \right.
\]
Let $s_i=m_{i,i}$ if $L_i$ is \textit{of type II} in the above description of $\tilde{M}(R)$.
Then $s_i$ mod $\pi\otimes 1$ is an element of $\tilde{M}_i(R)$.
 Therefore, we have a surjective morphism of algebraic groups
 $$r : \tilde{M} \longrightarrow \prod\tilde{M}_i,$$
 defined over $\kappa$. We now have the following easy lemma:

 \begin{Lem}
The kernel of $r$ is the unipotent radical $\tilde{M}^+$ of $\tilde{M}$, and $\prod\tilde{M}_i$ is the maximal reductive quotient of $\tilde{M}$.\\
\end{Lem}
\begin{proof}
Since $\prod\tilde{M}_i$ is a reductive group, we only have to show that the kernel of $r$ is a connected smooth unipotent group.
Let $R$ be a $\kappa$-algebra.
By the description of the morphism $r$ in terms of matrices explained above, an element of the kernel of $r$ is
$$m= \begin{pmatrix} \pi^{max\{0,j-i\}}m_{i,j} \end{pmatrix}$$
satisfying the following:\\
If $i$ is even and $L_i$ is \textit{of type} $\textit{I}^o$ (resp. \textit{of type} $\textit{I}^e$), then
 $$m_{i,i}=\begin{pmatrix} id+\pi s_i'&\pi y_i\\ \pi v_i&1+\pi z_i \end{pmatrix} \textit{(resp.
$\begin{pmatrix} id+\pi s_i'&r_i&\pi t_i\\ \pi y_i&1+\pi x_i&\pi z_i\\ v_i&u_i&1+\pi w_i \end{pmatrix}$)},$$
where
 $id+\pi\otimes 1 \cdot s_i'\in M_{(n_i-1)\times (n_i-1)}(B\otimes_AR)$ (resp.  $id+\pi\otimes 1\cdot  s_i'\in M_{(n_i-2)\times (n_i-2)}(B\otimes_AR)$), etc.,
 such that  $s_i'$ has entries in $R\subset B\otimes_AR$.
For the remaining $m_{i,j}$'s except for the cases explained above, $m_{i,j}\in M_{n_i\times n_j}(B\otimes_AR)$ and
$m_{i,i}=id+\pi\otimes 1\cdot  m_{i,i}'$ such that $m_{i,i}'$  has entries in $R\subset B\otimes_AR$.
Note that there are no equations among variables given above.
Thus the kernel of $r$ is isomorphic to an affine space  as an algebraic variety over $\kappa$.
Therefore, it is a connected smooth unipotent group by a theorem of Lazard which is stated at the beginning of Appendix A.
\end{proof}
Recall that we have defined the morphism $\varphi$ in Section 4.1.
 The  morphism $\varphi$ extends to     an obvious morphism
 $$\tilde{\varphi} : \tilde{M} \longrightarrow  \prod_{i:even}\mathrm{GL}_{\kappa}(B_i/Y_i)\times \prod_{i:odd}\mathrm{GL}_{\kappa}(A_i/Z_i) $$
 such that $\tilde{\varphi}|_{\tilde{G}}=\varphi $.
 Note that $Y_i\otimes_AR$ and $Z_i\otimes_AR$ are preserved by an element of $\underline{M}(R)$ for a flat $A$-algebra $R$ (cf. Lemma 4.2).
 By using this,
the construction of $\tilde{\varphi}$ is similar to Theorems 4.3 and 4.4  and thus we skip it.
Let $R$ be a $\kappa$-algebra.
Based on the description of the morphism $\varphi_i$ explained in Section 4.1,
$\mathrm{Ker~}\tilde{\varphi}(R)$ is the subgroup of  $\tilde{M}(R)$ defined by the following conditions:

\begin{itemize}
\item[a)]  If $i$ is even and $L_i$ is \textit{of type I},   $s_i=\mathrm{id}$ mod $\pi \otimes 1$.
\item[b)]  If $i$ is even and $L_i$ is \textit{of type II},  $m_{i,i}=\mathrm{id}$ mod $\pi \otimes 1$.
\item[c)]  If $i$ is odd,  $m_{i,i}=\mathrm{id}$  mod $\pi \otimes 1$ and $\delta_{i-1}e_{i-1}\cdot m_{i-1, i}+\delta_{i+1}e_{i+1}\cdot m_{i+1, i}=0$  mod $\pi \otimes 1$.
Here, $
\delta_{j} = \left\{
  \begin{array}{l l}
  1    & \quad  \textit{if $L_j$ is \textit{of type I}};\\
  0    &   \quad  \textit{if $L_j$ is \textit{of type II}},
    \end{array} \right.
$ \\
and $e_{j}=(0,\cdots, 0, 1)$ (resp. $e_j=(0,\cdots, 0, 1, 0)$) of size $1\times n_{j}$
if $L_{j}$ is \textit{of type} $\textit{I}^o$ (resp. \textit{of type} $\textit{I}^e$).\\
\end{itemize}

 It is obvious that $\mathrm{Ker~}\tilde{\varphi}$ is a closed subgroup scheme of $\tilde{M}^+$ and  is smooth and unipotent
 since it is isomorphic to an affine space  as an algebraic variety over $\kappa$.\\

Recall from Remark 3.1 that we  defined the functor $\underline{M}^{\prime}$ such that $(1+\underline{M}^{\prime})(R)=\underline{M}(R)$
inside $\mathrm{End}_{B\otimes_AR}(L \otimes_A R)$ for a flat $A$-algebra $R$.
Thus there is an isomorphism of set valued functors
$$1+ :  \underline{M}^{\prime} \longrightarrow \underline{M}, ~~~ m\mapsto 1+m,$$
where $m\in \underline{M}^{\prime}(R)$ for a flat $A$-algebra $R$.
We define a new operation $\star$ on $\underline{M}^{\prime}(R)$ such that $x\star y=x+y+xy$ for a flat $A$-algebra $R$.
Since $\underline{M}^{\prime}(R)$ is closed under addition and multiplication, it is also closed under the new operation $\star$.
Moreover, it has  $0$ as an identity element  with respect to $\star$.
Thus $\underline{M}^{\prime}$ may and shall be considered as a scheme of monoids with $\star$.
 We claim that the above morphism $1+$ is an isomorphism of monoid schemes.
 Namely, we claim  the following commutative diagram of schemes:
 \[\xymatrixcolsep{5pc}\xymatrix{
\underline{M}^{\prime}\times \underline{M}^{\prime} \ar[d]^{\star} \ar[r]^{(1+)\times (1+)} &\underline{M}\times
\underline{M}\ar[d]^{multiplication}\\
\underline{M}^{\prime} \ar[r]^{1+} &\underline{M}}\]
Since all schemes are irreducible and smooth, it suffices to check the commutativity of the diagram at the level of flat $A$-points
as explained in the third paragraph from below in Remark 3.2, and it is obvious.

Since $\underline{M}^{\ast}$ is an open subscheme of $\underline{M}$,  $(1+)^{-1}(\underline{M}^{\ast})$ is an open subscheme of $\underline{M}^{\prime}$.
The composite of the following three morphisms
 \[\xymatrixcolsep{5pc}\xymatrix{
(1+)^{-1}(\underline{M}^{\ast})  \ar[r]^{(1+)} &\underline{M}^{\ast} \ar[r]^{inverse} &\underline{M}^{\ast}\ar[r]^{(1+)^{-1}}&
(1+)^{-1}(\underline{M}^{\ast})}\]
defines the inverse morphism on the scheme of monoids $(1+)^{-1}(\underline{M}^{\ast})$ with respect to the operation $\star$.
Thus we can see that $(1+)^{-1}(\underline{M}^{\ast})$ is a  group scheme with respect to $\star$ and the morphism $1+$ is
an isomorphism of group schemes between
$(1+)^{-1}(\underline{M}^{\ast})$ and $\underline{M}^{\ast}$.

Let $R$ be a $\kappa$-algebra.
Since the morphism $1+$ is an isomorphism of monoid schemes between $\underline{M}^{\prime}$ and $\underline{M}$,
we can write each element of $\underline{M}(R)$ as $1+x$ with $x \in \underline{M}^{\prime}(R)$. 
Here,  $1+x$ means the image of $x$ under the morphism $1+$ at the level of $R$-points.
Note that  $\underline{M}^{\prime}(R)$ is a $B\otimes_AR$-algebra for any $A$-algebra $R$ with respect to the original multiplication on it,
 not  the operation $\star$.
In particular,  $\underline{M}^{\prime}(R)$ is a $(B/2B)\otimes_AR$-algebra for any $\kappa$-algebra $R$.
Therefore, we consider the subfunctor $\underline{\pi M^{\prime}} : R \mapsto (\pi\otimes 1) \underline{M}^{\prime}(R)$ of $\underline{M}^{\prime}\otimes\kappa$
and the subfunctor $\tilde{M}^1:R\mapsto 1+\underline{\pi M^{\prime}}(R)$ of $\mathrm{Ker~}\tilde{\varphi}$.
Here, by $1+\underline{\pi M^{\prime}}(R)$, we mean the image of $\underline{\pi M^{\prime}}(R)$
inside $\underline{M}(R) (=\tilde{M}(R))$ under the morphism $1+$ at the level of $R$-points.
That   $1+\underline{\pi M^{\prime}}(R)$ is contained in $\mathrm{Ker~}\tilde{\varphi}(R)$ can easily be checked
 by observing the construction of $\tilde{\varphi}$.
The multiplication on $\tilde{M}^1$  is as follows:
For given two elements $1+\pi x$ and $1+\pi y$ in $\tilde{M}^1(R)$,
based on the above commutative diagram, the product of $1+\pi x$ and $1+\pi y$  is
$$(1+\pi x)\cdot(1+\pi y)=1+\pi x\star \pi y=1+(\pi (x+y)+\pi^2(xy))=1+\pi (x+y).$$
Here, $\pi$ stands for $\pi\otimes 1 \in B\otimes_AR$.
Then we have the following  lemma.
\begin{Lem}
(i) The functor $\tilde{M}^1$ is representable by a smooth, connected, unipotent group scheme over $\kappa$.
Moreover, $\tilde{M}^1$ is a closed normal subgroup of $\mathrm{Ker~}\tilde{\varphi} $.

(ii) The quotient group scheme $\mathrm{Ker~}\tilde{\varphi}/\tilde{M}^1$ represents the functor
$$R\mapsto \mathrm{Ker~}\tilde{\varphi}(R)/\tilde{M}^1(R)$$
by Lemma A.1 and is smooth, connected, and unipotent.\\  
\end{Lem}
\begin{proof}
Let $R$ be a $\kappa$-algebra. In the proof, $\pi$ stands for $\pi\otimes 1 \in B\otimes_AR$.
To show that  $\tilde{M}^1(R)$ is a  subgroup of $\mathrm{Ker~}\tilde{\varphi}(R)$,
it suffices to show that  the inverse $1+x'$ of $1+\pi x$ in $\mathrm{Ker~}\tilde{\varphi}(R)$
is contained in $\tilde{M}^1(R)$.
From the identity
$$(1+x')(1+\pi x)=1+x'\star\pi x=1+(x'+\pi x+\pi x'x)=1+0,$$
we see that $x'$ is an element of $\underline{\pi M}'(R)$ so that $1+x'$ is an element of $\tilde{M}^1(R)$,
since $\underline{M}^{\prime}(R)$ is closed under multiplication and addition which implies $ x+ x'x \in \underline{M}'(R)$.

Then the first sentence of (i) follows by a theorem of Lazard which is stated at the beginning of Appendix A
since $\tilde{M}^1$ is isomorphic to an affine space of dimension $n^2$  as an algebraic variety over $\kappa$.

To show that $\tilde{M}^1(R)$ is a  normal subgroup of $\mathrm{Ker~}\tilde{\varphi}(R)$,
we choose an element $1+\pi x\in \tilde{M}^1(R)$ and $1+m\in \mathrm{Ker~}\tilde{\varphi}(R)$ with $m\in \underline{M}'(R)$.
Let $1+m'$ be the inverse of $1+m$ so that $(1+m')(1+m)=1$. Then we have the following identity:
\[(1+m')(1+\pi x)(1+m)=1+m'\star \pi x\star m=1+\pi(x+m'x+xm+m'xm).\]
Since $\underline{M}^{\prime}(R)$ is closed under multiplication and addition,
$x+m'x+xm+m'xm\in \underline{M}'(R)$ so that  $(1+m')(1+\pi x)(1+m)\in \tilde{M}^1(R)$.

For (ii), smoothness and connectedness are stable under quotienting by algebraic groups (Proposition 22.4 in \cite{KMRT})
and a quotient of a unipotent group is also a unipotent group by part (a) of the first corollary in Section 8.3 in \cite{W}.
\end{proof}

 This paragraph is a reproduction of 6.3.6 in \cite{GY}.
Recall that there is a closed immersion $\tilde{G}\rightarrow \tilde{M}$.
Notice that $\mathrm{Ker~}\varphi$ is the kernel of the composition $\tilde{G}\rightarrow \tilde{M} \rightarrow \tilde{M}/ \mathrm{Ker~}\tilde{\varphi}$.
We define $\tilde{G}^1$ as the kernel of the composition
\[ \tilde{G}\rightarrow \tilde{M} \rightarrow \tilde{M}/ \tilde{M}^1.\]
 Then $\tilde{G}^1$ is the kernel of the morphism $\mathrm{Ker~}\varphi\rightarrow \mathrm{Ker~}\tilde{\varphi}/\tilde{M}^1$
 and, hence, is a closed normal subgroup of $\mathrm{Ker~}\varphi$.
 The induced morphism $\mathrm{Ker~}\varphi/\tilde{G}^1\rightarrow \mathrm{Ker~}\tilde{\varphi}/\tilde{M}^1$ is a monomorphism, and thus
 $\mathrm{Ker~}\varphi/\tilde{G}^1$ is a closed subgroup scheme of $ \mathrm{Ker~}\tilde{\varphi}/\tilde{M}^1$
 by (Exp. $\mathrm{VI_B},$ Corollary 1.4.2 in \cite{SGA3}).

 \begin{Thm}
 $\tilde{G}^1$ is connected, smooth, and unipotent.
 Furthermore,   the underlying algebraic variety of $\tilde{G}^1$ over $\kappa$ is  an affine space of dimension
$$\sum_{i<j}n_in_j+\sum_{i:\mathrm{odd}}\frac{n_i^2+n_i}{2}+\sum_{i:\mathrm{even}}\frac{n_i^2-n_i}{2}+\#\{i:i\mathrm{~is~even~and~}L_i\mathrm{~is~\textit{of type I}}\}.$$
 \end{Thm}

 \begin{proof}
We prove this theorem by writing out a set of equations completely defining $\tilde{G}^1$
(after all there are so many different sets of equations defining $\tilde{G}^1$).
Let $R$ be a $\kappa$-algebra.
As explained in Remark 3.3.(2), we consider the given hermitian form $h$ as an element of $\underline{H}(R)$
and write it as a formal matrix $h=\begin{pmatrix} \pi^{i}\cdot h_i\end{pmatrix}$ with $(\pi^{i}\cdot h_i)$
for the $(i,i)$-block and $0$ for the remaining blocks. 
We also write $h$ as $(f_{i, j}, a_i\cdots f_i)$.
Recall that the notation $(f_{i, j}, a_i\cdots f_i)$ is defined and explained in Section 3.3
and explicit values of $(f_{i, j}, a_i\cdots f_i)$ for the  $h$ are given in Remark 3.3.(2).

We choose an element $m=(m_{i,j}, s_i\cdots w_i)\in (\mathrm{Ker~}\tilde{\varphi})(R)$ with a formal matrix interpretation
$m= \begin{pmatrix} \pi^{max\{0,j-i\}}m_{i,j} \end{pmatrix}$,
where the notation  $(m_{i,j}, s_i\cdots w_i)$ is explained in Section 3.2.
Then $h\circ m$ is an element of $\underline{H}(R)$ and
$(\mathrm{Ker~}\varphi)(R)$ is the set of $m$ such that  $h\circ m=(f_{i, j}, a_i\cdots f_i)$.
The action $h\circ m$ is explicitly described in Remark 3.5. Based on this,
we need to write the matrix product $h\circ m=\sigma({}^tm)\cdot h\cdot m$ formally.
To do that, we write each block of $\sigma({}^tm)\cdot h\cdot m$ as follows:



The diagonal $(i,i)$-block of  the formal matrix product $\sigma({}^tm)\cdot h\cdot m$ is the following:
\begin{multline}
\pi^i\left(\sigma({}^tm_{i,i})h_im_{i,i}+\sigma(\pi)\cdot\sigma({}^tm_{i-1, i})h_{i-1}m_{i-1, i}+\pi\cdot\sigma({}^tm_{i+1, i})h_{i+1}m_{i+1, i}\right)+\\
\pi^i\left((\sigma\pi)^2\cdot\sigma({}^tm_{i-2, i})h_{i-2}m_{i-2, i}+
 \pi^2\cdot\sigma({}^tm_{i+2, i})h_{i+2}m_{i+2, i}\right),
\end{multline}
where $0\leq i < N$. 
\\
The $(i,j)$-block of the formal matrix product  $\sigma({}^tm)\cdot h\cdot m$, where $i<j$, is the following:
\begin{equation}
\pi^j\left(\sum_{i\leq k \leq j} \sigma({}^tm_{k,i})h_km_{k,j}+\sigma(\pi)\cdot\sigma({}^tm_{i-1,i})h_{i-1}m_{i-1,j}+\pi\cdot\sigma({}^tm_{j+1,i})h_{j+1}m_{j+1,j}\right),
\end{equation}
where $0\leq i, j < N$. 


Before studying $\tilde{G}^1$,
we describe the conditions for an element $m\in \tilde{M}(R)$ as above to belong to the subgroup $\tilde{M}^1(R)$.
\begin{enumerate}
\item $m_{i,j}=\pi m_{i,j}^{\prime} \mathrm{~if~} i\neq j$;
\item $m_{i,i}=\mathrm{id}+\pi m_{i,i}^{\prime}$ if  $L_i$ is \textit{of type II};
\item $m_{i,i}= \begin{pmatrix} s_i&\pi y_i\\ \pi v_i&1+\pi z_i \end{pmatrix}=\begin{pmatrix} \mathrm{id}+\pi s_i^{\prime}&\pi^2 y_i^{\prime}\\ \pi^2 v_i^{\prime}&1+\pi^2 z_i^{\prime} \end{pmatrix}$ if $i$ is even and $L_i$ is \textit{of type} $\textit{I}^o$;
\item $ m_{i,i}=\begin{pmatrix} s_i&r_i&\pi t_i\\ \pi y_i&1+\pi x_i&\pi z_i\\ v_i&u_i&1+\pi w_i \end{pmatrix}=
 \begin{pmatrix} \mathrm{id}+\pi s_i^{\prime}&\pi r_i^{\prime}&\pi^2t_i^{\prime}\\ \pi^2y_i^{\prime}&1+\pi^2x_i^{\prime}&\pi^2z_i^{\prime}\\ \pi v_i^{\prime}&\pi u_i^{\prime}&1+\pi^2w_i^{\prime} \end{pmatrix}$
 if $i$ is even and $L_i$ is \textit{of type} $\textit{I}^e$.
\end{enumerate}
Here, all matrices having ${}^{\prime}$ in the superscription are considered as matrices with entries in $R$.
When $i$ is even and $L_i$ is \textit{of type} $\textit{I}$, we formally write $m_{i,i}=\mathrm{id}+\pi m_{i,i}^{\prime}$. 
Then $\tilde{G}^1(R)$ is the set of $m\in \tilde{M}^1(R)$ such that  $h\circ m=h=(f_{i, j}, a_i\cdots f_i)$.
Since $h\circ m$ is an element of $\underline{H}(R)$,  we can write $h\circ m$ as $(f_{i, j}', a_i'\cdots f_i')$.
In what follows,
we will write $(f_{i, j}', a_i'\cdots f_i')$ in terms of $h=(f_{i, j}, a_i\cdots f_i)$ and $m$, and will compare $(f_{i, j}', a_i'\cdots f_i')$ with
$(f_{i, j}, a_i\cdots f_i)$, in order to obtain a set of equations defining $\tilde{G}^1$.\\

If we put all these (1)-(4) into  (A.2), then we obtain
$$\pi^j\left(\sigma(1+\pi\cdot {}^tm_{i,i}')h_i\pi m_{i,j}'+\sigma(\pi\cdot {}^tm_{j,i}')h_j(1+\pi m_{j,j}')\right).$$
Therefore, $$f_{i,j}'=\left(\sigma(1+\pi\cdot {}^tm_{i,i}')h_i\pi m_{i,j}'+\sigma(\pi\cdot {}^tm_{j,i}')h_j(1+\pi m_{j,j}')\right),$$
where this equation is considered in $B\otimes_AR$ and $\pi$ stands for $\pi\otimes 1 \in B\otimes_AR$.
Thus each term having $\pi^2$ as a factor is $0$ and we have
\begin{equation}
f_{i,j}'=h_i\pi m_{i,j}'+\sigma(\pi\cdot {}^tm_{j,i}')h_j, \textit{where $i<j$}.
\end{equation}
This equation is of the form $f_{i,j}'=X+\pi Y$ since it is an equation in $B\otimes_AR$.
By letting  $f_{i,j}'=f_{i,j}=0$,
 we obtain \begin{equation}\bar{h}_i m_{i,j}'+{}^tm_{j,i}'\bar{h}_j=0, \textit{where $i<j$},\end{equation}
 where $\bar{h}_i$ (resp. $\bar{h}_j$) is obtained by letting each term in $h_i$ (resp. $h_j$) having $\pi$ as a factor zero
 so that  this equation is considered in $R$.
Note that $\bar{h}_i$ and $\bar{h}_j$ are invertible as matrices with entries in $R$ by Remark 3.3.
Thus $m_{i,j}'=\bar{h}_i^{-1}\cdot {}^tm_{j,i}'\cdot \bar{h}_j$. 
This induces that each entry of $m_{i,j}'$ is expressed as a linear combination of the entries of $m_{j,i}'$.
Thus there are exactly $n_in_j$ independent linear equations  among the entries of $m_{i,j}', m_{j,i}'$.\\
\textit{   }\\

Next, we put (1)-(4) into  (A.1).
Then we obtain
\begin{equation}\pi^i\left(\sigma(1+\pi\cdot {}^tm_{i,i}')h_i(1+\pi m_{i,i}')\right).\end{equation}
We interpret this so as to obtain equations defining $\tilde{G}^1$.
There are 4 cases, indexed by  (i), (ii), (iii), (iv), according to types of $L_i$.


\begin{enumerate}
\item[(i)] Assume that $i$ is odd. Then $\pi^ih_i=(\pi\cdot\sigma(\pi))^{(i-1)/2}\pi a_i$ as explained in Section 3.3 and thus we have
\begin{equation*}
a_i'=\sigma(1+\pi\cdot {}^tm_{i,i}')a_i(1+\pi m_{i,i}').
\end{equation*}
Here,  the non-diagonal entries of this equation are considered in $B\otimes_AR$ and each diagonal entry of $a_i'$ is of the form $\epsilon\pi x_i$ with $x_i\in R$.

Thus, we can cancel terms having $\pi^2$ as a factor and the above equation equals
\begin{equation*}   a_i'=a_i+\sigma(\pi)\cdot {}^tm_{i,i}'a_i+ \pi\cdot a_i m_{i,i}'.
\end{equation*}
By letting $a_i'=a_i$,  we have the following equation
\begin{equation*}   \sigma(\pi)\cdot {}^tm_{i,i}'a_i+ \pi\cdot a_i m_{i,i}'=0.
\end{equation*}

Since this is an equation in $B\otimes_AR$, it is of the form $X+\pi Y=0$.
Note that the reduction of $\epsilon$   mod $\pi$ is $1$.
We denote by $\bar{a}_i$ the reduction of $a_i$   mod $\pi$.
Thus we have
\begin{equation*}  {}^tm_{i,i}'\bar{a}_i+  \bar{a}_i m_{i,i}'=0.
\end{equation*}
This is a matrix equation over $R$, in a usual sense, and $\bar{a}_i$ is symmetric and the diagonal entries of $\bar{a}_i$ are $0$.
More precisely, $$\bar{a}_i=\begin{pmatrix} \begin{pmatrix} 0&1\\1&0\end{pmatrix}& &  \\ &\ddots &  \\ & & \begin{pmatrix} 0&1\\1&0\end{pmatrix}\end{pmatrix}.$$
Then we can see that there is no contribution coming from the diagonal entries of  $  {}^tm_{i,i}'\bar{a}_i+  \bar{a}_i m_{i,i}'=0$
and that there are exactly $(n_i^2-n_i)/2$ independent linear equations.
 Thus  $(n_i^2+n_i)/2$ entries of $m_{i,i}'$ determine all entries of $m_{i,i}'$.
Note that the conditions on $m_{i,i}'$ , viewed as a matrix with entries in $\kappa$,  are tantamount to this matrix
belonging to the Lie algebra of a symplectic group associated to an obvious alternating form given by $\bar{a}_i$.
Then $(n_i^2+n_i)/2$ is the dimension of this symplectic group.

For example, let $m_{i,i}'=\begin{pmatrix} x&y\\z&w\end{pmatrix}$ and $\bar{a}_i=\begin{pmatrix} 0&1\\1&0\end{pmatrix}$.
Then $${}^tm_{i,i}'\bar{a}_i+  \bar{a}_i m_{i,i}'=\begin{pmatrix} 2z&x+w\\x+w&2y\end{pmatrix}=\begin{pmatrix} 0&x+w\\x+w&0\end{pmatrix}.$$
Thus there is one linear equation $x+w=0$ and $x, y, z$ determine all entries of $m_{i,i}'$.\\

\item[(ii)] Assume that $i$ is even and $L_i$ is \textit{of type II}. This case is parallel to the above case.
Then $\pi^ih_i=(\pi\cdot\sigma(\pi))^{i/2} a_i$ as explained in Section 3.3 and we have
\begin{equation*}
a_i'=\sigma(1+\pi\cdot {}^tm_{i,i}')a_i(1+\pi m_{i,i}').
\end{equation*}
Here,  the non-diagonal entries of this equation are considered in $B\otimes_AR$ and each diagonal entry of $a_i'$ is of the form $2 x_i$ with $x_i\in R$.
Now,  the non-diagonal entries of $\sigma(\pi\cdot {}^tm_{i,i}')a_i(\pi m_{i,i}')$ are all $0$ since they contain $\pi^2$ as a factor.
In addition, the diagonal entries of $\sigma(\pi\cdot {}^tm_{i,i}')a_i(\pi m_{i,i}')$ are also $0$ since  they contain $\pi^4$ as  a factor.
Thus, the above equation equals
\begin{equation*}   a_i'=a_i+\sigma(\pi)\cdot {}^tm_{i,i}'a_i+ \pi\cdot a_i m_{i,i}'.
\end{equation*}
By letting $a_i'=a_i$,  we have the following equation
\begin{equation*}   \sigma(\pi)\cdot {}^tm_{i,i}'a_i+ \pi\cdot a_i m_{i,i}'=0.
\end{equation*}

Based on (2) of the description of $\underline{H}(R)$ for a $\kappa$-algebra $R$, which is explained in Section 3.3,
in order to investigate this equation,
we need to consider the non-diagonal entries of $\sigma(\pi)\cdot {}^tm_{i,i}'a_i+ \pi\cdot a_i m_{i,i}'$ as elements of $B\otimes_AR$
and the diagonal entries of $\sigma(\pi)\cdot {}^tm_{i,i}'a_i+ \pi\cdot a_i m_{i,i}'$ as of the form $2 x_i$ with $x_i\in R$.
Recall from Remark 3.3 that $$a_i=\begin{pmatrix} \begin{pmatrix} 0&1\\1&0\end{pmatrix}& &  \\ &\ddots &  \\ & & \begin{pmatrix} 0&1\\1&0\end{pmatrix}\end{pmatrix}.$$
Then we can see that each diagonal entry  as well as
each non-diagonal (upper triangular) entry of $\sigma(\pi)\cdot {}^tm_{i,i}'a_i+ \pi\cdot a_i m_{i,i}'$ produces a linear equation.
Thus there are exactly $(n_i^2+n_i)/2$ independent linear equations and $(n_i^2-n_i)/2$ entries of $m_{i,i}'$ determine all entries of $m_{i,i}'$.

For example, let $m_{i,i}'=\begin{pmatrix} x&y\\z&w\end{pmatrix}$ and $\bar{a}_i=\begin{pmatrix} 0&1\\1&0\end{pmatrix}$.
Then $$\sigma(\pi)\cdot {}^tm_{i,i}'a_i+ \pi\cdot a_i m_{i,i}'=\sigma(\pi)\begin{pmatrix} z&x\\w&y\end{pmatrix}+\pi\begin{pmatrix} z&w\\x&y\end{pmatrix}=\begin{pmatrix} (\sigma(\pi)+\pi)z&\sigma(\pi)x+\pi w\\\sigma(\pi)w+\pi x&(\sigma(\pi)+\pi)y \end{pmatrix}.$$
Recall that  $\sigma(\pi)=\epsilon\pi$ with $\epsilon\equiv 1$ mod $\pi$ and $\sigma(\pi)+\pi=2$, as explained at the beginning of Section 2.1.
Thus there are three linear equations $z=0, x+w=0, y=0 $ and $x$ determines every other entry of $m_{i,i}'$.\\

\item[(iii)] Assume that $i$ is even and $L_i$ is \textit{of type $I^o$}.
Then $\pi^ih_i=(\pi\cdot\sigma(\pi))^{i/2} \begin{pmatrix} a_i&\pi b_i\\ \sigma(\pi\cdot {}^t b_i) &1 +2c_i \end{pmatrix}$ as explained in Section 3.3 and we have
\begin{equation}
\begin{pmatrix} a_i'&\pi b_i'\\ \sigma(\pi\cdot {}^t b_i') &1 +2c_i' \end{pmatrix}=
\sigma(1+\pi\cdot {}^tm_{i,i}')\cdot\begin{pmatrix} a_i&\pi b_i\\ \sigma(\pi\cdot {}^t b_i) &1 +2c_i \end{pmatrix}\cdot(1+\pi m_{i,i}').
\end{equation}
Here,  the non-diagonal entries of $a_i'$ as well as the entries of $b_i'$ are considered in $B\otimes_AR$,
 each diagonal entry of $a_i'$ is of the form $2 x_i$ with $x_i\in R$, and $c_i'$ is in $R$.
In addition, $b_i=0, c_i=\bar{\gamma}_i$ as explained in Remark 3.3.(2) and
$a_i=\begin{pmatrix} \begin{pmatrix} 0&1\\1&0\end{pmatrix}& &  \\ &\ddots &  \\ & & \begin{pmatrix} 0&1\\1&0\end{pmatrix}\end{pmatrix}.$

Notice that in this case, $m_{i,i}'=\begin{pmatrix} s_i'&\pi y_i'\\ \pi v_i' &\pi z_i' \end{pmatrix}$.
Compute $\sigma(\pi\cdot {}^tm_{i,i}')\cdot\begin{pmatrix} a_i&0\\ 0 &1 +2c_i \end{pmatrix}\cdot (\pi m_{i,i}')$ formally and
this equals $\sigma(\pi)\pi\begin{pmatrix} {}^ts_i'a_is_i'+\pi^2X_i &\pi Y_i\\ \sigma(\pi\cdot {}^tY_i) &\pi^2Z_i \end{pmatrix}$
for certain matrices $X_i, Y_i, Z_i$ with suitable sizes.
Thus we can ignore the contribution from $\sigma(\pi\cdot {}^tm_{i,i}')\begin{pmatrix} a_i&0\\ 0 &1 +2c_i \end{pmatrix}(\pi m_{i,i}')$
 in Equation (A.6) and so Equation (A.6) equals
\begin{multline*}
\begin{pmatrix} a_i'&\pi b_i'\\ \sigma(\pi\cdot {}^t b_i') &1 +2c_i' \end{pmatrix}=
\begin{pmatrix} a_i&0\\ 0 &1 +2c_i \end{pmatrix}+
\sigma(\pi) \begin{pmatrix} {}^ts_i'&\sigma(\pi)\cdot {}^tv_i'\\ \sigma(\pi)\cdot {}^ty_i' &\sigma(\pi) z_i' \end{pmatrix}\begin{pmatrix} a_i&0\\ 0 &1 +2c_i \end{pmatrix} \\ + \pi \begin{pmatrix} a_i&0\\ 0 &1 +2c_i \end{pmatrix} \begin{pmatrix} s_i'&\pi y_i'\\ \pi v_i' &\pi z_i' \end{pmatrix}.
\end{multline*}

We interpret each block of the above equation below:


\begin{enumerate}
\item Firstly, we observe the $(1,1)$-block.
The computation associated to this block is similar to that for the above case (ii).
Hence  there are exactly $((n_i-1)^2+(n_i-1))/2$ independent linear equations and $((n_i-1)^2-(n_i-1))/2$ entries of $s_i'$ determine all entries of $s_i'$.

\item Secondly, we observe the $(1,2)$-block.
We can ignore the contribution from ${}^tv_i'c_i$  since it contains $\pi^3$ as a factor.
Then the $(1,2)$-block is
\begin{equation}\pi b_i'=\sigma(\pi)\pi\cdot (\epsilon\cdot {}^tv_i'+1/\epsilon\cdot a_iy_i').
\end{equation}
By letting $b_i'=b_i=0$, we have
\begin{equation*}\sigma(\pi)\cdot (\epsilon\cdot {}^tv_i'+1/\epsilon\cdot a_iy_i')=0
\end{equation*}
as an equation in $B\otimes_AR$.
Thus there are exactly $(n_i-1)$ independent linear equations among the entries of $v_i'$ and $y_i'$ and the entries of $v_i'$ determine all entries of $y_i'$.

\item Finally, we observe the $(2,2)$-block.
Then it is
\begin{equation} 1+2c_i'=1+2c_i+(\pi^2+(\sigma(\pi))^2)z_i'+2(\pi^2+(\sigma(\pi))^2)c_iz_i'.
\end{equation}
Since $\pi^2+(\sigma(\pi))^2=(\pi+\sigma(\pi))^2-2\sigma(\pi)\pi$,
we see that $\pi^2+(\sigma(\pi))^2$ contains $4$ as a factor. Thus by letting  $c_i'=c_i$,
 this equation is trivial.
\end{enumerate}

By combining three cases (a)-(c), there are exactly $((n_i-1)^2+(n_i-1))/2+(n_i-1)=(n_i^2+n_i)/2-1$
independent linear equations and $(n_i^2-n_i)/2+1$ entries of
$m_{i,i}'$ determine all entries of $m_{i,i}'$.\\

\item[(iv)] Assume that $i$ is even and $L_i$ is \textit{of type $I^e$}.
Then $\pi^ih_i=(\pi\cdot\sigma(\pi))^{i/2} \begin{pmatrix} a_i&b_i&\pi e_i\\ \sigma({}^tb_i) &1+2f_i&1+\pi d_i \\ \sigma(\pi \cdot {}^te_i) &\sigma(1+\pi d_i) &2c_i \end{pmatrix}$ as explained in Section 3.3 and we have
\begin{multline}
\begin{pmatrix} a_i'&b_i'&\pi e_i'\\ \sigma({}^tb_i') &1+2f_i'&1+\pi d_i' \\ \sigma(\pi \cdot {}^te_i') &\sigma(1+\pi d_i') &2c_i' \end{pmatrix}=\\
\sigma(1+\pi\cdot {}^tm_{i,i}')\cdot
\begin{pmatrix} a_i&b_i&\pi e_i\\ \sigma({}^tb_i) &1+2f_i&1+\pi d_i \\ \sigma(\pi \cdot {}^te_i) &\sigma(1+\pi d_i) &2c_i \end{pmatrix}
\cdot(1+\pi m_{i,i}').
\end{multline}
Here,  the non-diagonal entries of $a_i'$ as well as the entries of $b_i', e_i', d_i'$ are considered in $B\otimes_AR$,
 each diagonal entry of $a_i'$ is of the form $2 x_i$ with $x_i\in R$, and $c_i', f_i'$ are in $R$.
In addition, $b_i=0, d_i=0, e_i=0, f_i=0, c_i=\bar{\gamma}_i$ as explained in Remark 3.3.(2) and
$a_i=\begin{pmatrix} \begin{pmatrix} 0&1\\1&0\end{pmatrix}& &  \\ &\ddots &  \\ & & \begin{pmatrix} 0&1\\1&0\end{pmatrix}\end{pmatrix}.$


Notice that in this case, $m_{i,i}'=\begin{pmatrix}  s_i^{\prime}& r_i^{\prime}&\pi t_i^{\prime}\\ \pi y_i^{\prime}&\pi x_i^{\prime}&\pi z_i^{\prime}\\  v_i^{\prime}& u_i^{\prime}&\pi w_i^{\prime} \end{pmatrix}$.
We compute $$\sigma(\pi\cdot {}^tm_{i,i}')\cdot\begin{pmatrix} a_i&0&0\\ 0&1&1 \\ 0&1 &2c_i \end{pmatrix}\cdot(\pi m_{i,i}')$$ formally and
this equals $\sigma(\pi)\pi\begin{pmatrix} {}^ts_i'a_is_i'+\pi^2X_i & Y_i & \pi Z_i
\\ \sigma( {}^tY_i) &{}^tr_i'a_ir_i'+\pi^2X_i'&\pi Y_i' \\ \sigma(\pi\cdot {}^tZ_i)&\sigma(\pi\cdot  {}^tY_i') &\pi^2 Z_i'  \end{pmatrix}$
for certain matrices $X_i, Y_i, Z_i, X_i', Y_i', Z_i'$ with suitable sizes.
Thus we can ignore the contribution from this part in
Equation (A.9) and so Equation (A.9) equals
\begin{multline*}
\begin{pmatrix} a_i'&b_i'&\pi e_i'\\ \sigma({}^tb_i') &1+2f_i'&1+\pi d_i' \\ \sigma(\pi \cdot {}^te_i') &\sigma(1+\pi d_i') &2c_i' \end{pmatrix}=
\begin{pmatrix} a_i&0&0\\ 0 &1&1 \\ 0 &1 &2c_i \end{pmatrix}+\\
\sigma(\pi) \begin{pmatrix}  {}^ts_i'& \sigma(\pi)\cdot {}^ty_i'& {}^tv_i' \\{}^tr_i'&\sigma(\pi)x_i'&u_i'
\\  \sigma(\pi)\cdot {}^tt_i'& \sigma(\pi)z_i'&\sigma(\pi) w_i^{\prime} \end{pmatrix}\begin{pmatrix} a_i&0&0\\ 0 &1&1 \\ 0 &1 &2c_i \end{pmatrix}+ \pi \begin{pmatrix} a_i&0&0\\ 0 &1&1 \\ 0 &1 &2c_i \end{pmatrix} \begin{pmatrix}  s_i^{\prime}& r_i^{\prime}&\pi t_i^{\prime}\\ \pi y_i^{\prime}&\pi x_i^{\prime}&\pi z_i^{\prime}\\  v_i^{\prime}& u_i^{\prime}&\pi w_i^{\prime} \end{pmatrix}.
\end{multline*}

We interpret each block of the above equation below:

\begin{enumerate}
\item Let us observe the $(1,1)$-block.
The computation associated to this block is similar to that for the above case (ii).
Hence  there are exactly $((n_i-2)^2+(n_i-2))/2$ independent linear equations and $((n_i-2)^2-(n_i-2))/2$ entries of $s_i'$ determine all entries of $s_i'$.

\item We observe the $(1,2)$-block.
This gives
\begin{equation}b_i'=\pi(\epsilon^2\pi\cdot {}^ty_i'+\epsilon\cdot {}^tv_i'+a_ir_i').
\end{equation}
This is an equation in $B\otimes_AR$. By letting $b_i'=b_i=0$,
 there are exactly $(n_i-2)$ independent linear equations  among the entries of $v_i', r_i'$.

\item The $(1,3)$-block is
\begin{equation*} \pi e_i'=\pi^2(\epsilon^2\cdot {}^ty_i'+(2/\pi)\cdot\epsilon\cdot {}^tv_i'c_i+a_it_i').\end{equation*}
By letting $e_i'=e_i=0$,
we have
\begin{equation}  e_i'=\pi(\epsilon^2\cdot {}^ty_i'+(2/\pi)\cdot\epsilon\cdot {}^tv_i'c_i+a_it_i')=\pi(\epsilon^2\cdot {}^ty_i'+a_it_i')=
\pi({}^ty_i'+a_it_i')=0.\end{equation}
This is an equation in $B\otimes_AR$.
Thus there are exactly $(n_i-2)$ independent linear equations  among the entries of $y_i', t_i'$.

\item The $(2,3)$-block is
\begin{equation*}1+\pi d_i'=1+\sigma(\pi)(\sigma(\pi)x_i'+2u_i'c_i)+\pi^2(z_i'+w_i').\end{equation*} 
By letting $d_i'=d_i=0$, we have
\begin{equation}d_i'=\pi(\epsilon^2x_i'+ z_i'+w_i')=\pi(x_i'+ z_i'+w_i')=0.\end{equation}
This is an equation in $B\otimes_AR$.
Thus there is  exactly one independent linear equation  among the entries of $x_i', z_i', w_i'$.

\item The $(2,2)$-block is
\begin{equation*}1+2 f_i'=1+\sigma(\pi)(\sigma(\pi)x_i'+u_i')+\pi(\pi x_i'+u_i')=1+2u_i'+((\pi+\sigma(\pi))^2-2\pi\sigma(\pi))x_i'. 
\end{equation*}
By letting $f_i'=f_i=0$, we have
\begin{equation*}f_i'=u_i'+((\pi+\sigma(\pi))-\pi\sigma(\pi))x_i'=u_i'=0. 
\end{equation*}
This is an equation in $R$.
Thus there is  exactly one independent linear equation, which is $u_i'=0$.

\item The $(3,3)$-block is
\begin{multline}2c_i'=2c_i+\sigma(\pi)(\sigma(\pi)z_i'+2\sigma(\pi)w_i'c_i)+\pi(\pi z_i'
+2\pi w_i'c_i)=\\2c_i+((\pi+\sigma(\pi))^2-2\pi\sigma(\pi))(z_i'+2w_i'c_i).
\end{multline}
Since $((\pi+\sigma(\pi))^2-2\pi\sigma(\pi))$ contains $4$ as a factor,  by letting $c_i'=c_i$, this equation  is trivial.
\end{enumerate}
By combining  six cases (a)-(f),  there are exactly $((n_i-2)^2+(n_i-2))/2+2(n_i-2)+2=(n_i^2+n_i)/2-1$ independent linear equations and $(n_i^2-n_i)/2+1$ entries of
$m_{i,i}'$ determine all entries of $m_{i,i}'$.\\
\end{enumerate}

We now combine all the work done in this proof. Namely, we collect the above (i), (ii), (iii), (iv) which are  the interpretations of Equation (A.5), together with
Equation (A.4). 
Then there are exactly
$$\sum_{i<j}n_in_j+\sum_{i:\mathrm{odd}}\frac{n_i^2-n_i}{2}+\sum_{i:\mathrm{even}}\frac{n_i^2+n_i}{2}-\#\{i:i\mathrm{~is~even~and~}L_i\mathrm{~is~\textit{of type I}}\}$$ independent linear equations among the entries of $m$. Furthermore, all coefficients of these equations are in $\kappa$.
Therefore, we consider $\tilde{G}^1$ as a subvariety of $\tilde{M}^1$ determined by these linear equations.
Since  $\tilde{M}^1$ is an affine space of dimension $n^2$,
  the underlying algebraic variety of $\tilde{G}^1$ over $\kappa$ is  an affine space of dimension
$$\sum_{i<j}n_in_j+\sum_{i:\mathrm{odd}}\frac{n_i^2+n_i}{2}+\sum_{i:\mathrm{even}}\frac{n_i^2-n_i}{2}+\#\{i:i\mathrm{~is~even~and~}L_i\mathrm{~is~\textit{of type I}}\}.$$
This completes the proof by using a theorem of Lazard which is stated at the beginning of Appendix A. 
\end{proof}

Let $R$ be a $\kappa$-algebra.
We describe the functor of points of the scheme  $\mathrm{Ker~}\tilde{\varphi}/\tilde{M}^1$
by using points of the scheme $(\underline{M}'\otimes\kappa)/ \underline{\pi M}'$, based on Lemma A.3.
Recall from two paragraphs before Lemma A.3
 that $(1+)^{-1}(\underline{M}^{\ast})$, which is an open subscheme of  $\underline{M}'$, is a group scheme with the operation $\star$.
Let $\tilde{M}'$ be the special fiber of $(1+)^{-1}(\underline{M}^{\ast})$.
Since $\tilde{M}^{1}$ is a closed normal subgroup of $\tilde{M} (=\underline{M}^{\ast}\otimes\kappa)$ (cf. Lemma A.3.(i)),
$\underline{\pi M'}$, which is the inverse image of $\tilde{M}^{1}$ under the isomorphism $1+$, is a closed normal subgroup of $\tilde{M}'$.
Therefore, the morphism $1+$ induces the following isomorphism of group schemes, which is also denoted by $1+$,
$$1+  : \tilde{M}'/\underline{\pi M'}\longrightarrow \tilde{M}/\tilde{M}^{1}.$$
Note that $\tilde{M}'/\underline{\pi M'}(R)=\tilde{M}'(R)/\underline{\pi M'}(R)$ by Lemma A.1.
Thus each element of $(\mathrm{Ker~}\tilde{\varphi}/\tilde{M}^1)(R)$ is uniquely written as $1+\bar{x}$,
 where $\bar{x}\in \tilde{M}'(R)/  \underline{\pi M}'(R)$.
 Here, by $1+\bar{x}$, we mean the image of $\bar{x}$ under the morphism $1+$ at the level of $R$-points.

We still need a better description of an element of $(\mathrm{Ker~}\tilde{\varphi}/\tilde{M}^1)(R)$ by using a point of the scheme $(\underline{M}'\otimes\kappa)/ \underline{\pi M}'$.
Note that  $(\underline{M}'\otimes\kappa)/ \underline{\pi M}'$ is a quotient of group schemes with respect to the addition, whereas
$\tilde{M}'/  \underline{\pi M}'$ is a quotient of group schemes with respect to the operation $\star$.

We claim that the open immersion $\iota : \tilde{M}' \rightarrow \underline{M}'\otimes\kappa, x\mapsto x$ induces a monomorphism of schemes
$$\bar{\iota} : \tilde{M}'/  \underline{\pi M}' \rightarrow (\underline{M}'\otimes\kappa)/ \underline{\pi M}'.$$
Choose $x\in \tilde{M}'(R)$ and $\pi y\in \underline{\pi M}'(R)$ for a $\kappa$-algebra $R$.
Since $x\star \pi y=x+\pi(y+xy)$, both $x$ and $x\star \pi y$ give the same element in $((\underline{M}'\otimes\kappa)/ \underline{\pi M}')(R)$.
Thus the morphism $\bar{\iota}$ is well-defined.

In order to show that $\bar{\iota}$ is a monomorphism, choose $x, y \in \tilde{M}'(R)$ such that $x=y+\pi z$ with $\pi z\in \underline{\pi M}'(R)$.
Let $y' (\in \tilde{M}'(R))$ be the inverse of $y$ so that $y\star y'=y+y'+yy'=0$.
Then $\pi(z+y'z)$ is an element of $\underline{\pi M}'(R)$. We have the following identity:
\[x\star \pi(z+y'z)=(y+\pi z)\star \pi(z+y'z)=y+\pi(y+y'+yy')z=y.\]
Therefore, $x$ and $y$ give the same element in $(\tilde{M}'/  \underline{\pi M}')(R)$, which shows the injectivity of the above morphism.

Note that the operation $\star$ is closed in $\underline{M}'\otimes\kappa$ as mentioned in the third paragraph following Lemma A.2.
We can also easily check that the operation $\star$ is  well-defined on $(\underline{M}'\otimes\kappa)/ \underline{\pi M}'$,
which turns to be a scheme of monoids with respect to  $\star$,
and that the morphism $\bar{\iota}$ is a monomorphism of monoid schemes. 

To summarize, the morphism $1+  : \tilde{M}'/\underline{\pi M'}\longrightarrow \tilde{M}/\tilde{M}^{1}$ is an isomorphism of group schemes and
the morphism  $\bar{\iota} : \tilde{M}'/  \underline{\pi M}' \rightarrow (\underline{M}'\otimes\kappa)/ \underline{\pi M}'$ is a monomorphism
preserving the operation $\star$.
Therefore, each element of $(\mathrm{Ker~}\tilde{\varphi}/\tilde{M}^1)(R)$ is uniquely written as $1+\bar{x}$,
 where $\bar{x}\in (\underline{M}'\otimes\kappa)(R)/ \underline{\pi M}'(R)$.
Here, by $1+\bar{x}$, we mean $(1+)\circ \bar{\iota}^{-1}(\bar{x})$.
From now on to the end of this paper, we keep the notation $1+\bar{x}$ to express an element of $(\mathrm{Ker~}\tilde{\varphi}/\tilde{M}^1)(R)$
such that $\bar{x}$ is an element of $(\underline{M}'\otimes\kappa)(R)/ \underline{\pi M}'(R)$
which is a quotient of $R$-valued points of group schemes
with respect to addition.
Then the product of two elements $1+\bar{x}$ and $1+\bar{y}$ is the same as $1+\bar{x}\star \bar{y}$ $(=1+(\bar{x} + \bar{y}+\bar{x} \bar{y}))$.

\begin{Rmk}
By the above argument, we  write an element of $(\mathrm{Ker~}\tilde{\varphi}/\tilde{M}^1)(R)$ formally as
$m= \begin{pmatrix} \pi^{max\{0,j-i\}}m_{i,j} \end{pmatrix} $
with $s_i,\cdots, w_i$ as in Section 3.2 such that
 each entry of each of the matrices $(m_{i,j})_{i\neq j}, s_i, \cdots, w_i$ is in $(B\otimes_AR)/(\pi\otimes 1)(B\otimes_AR)\cong R$.
In particular, based on the description of $\mathrm{Ker~}\tilde{\varphi}(R)$ given at the paragraph following  Lemma A.2,
we have the following conditions on $m$:
\begin{enumerate}
\item  Assume that $i$ is even and $L_i$ is \textit{of type I}. Then $s_i=\mathrm{id}$.
\item  $m_{i,i}=\mathrm{id}$ if  $L_i$ is \textit{of type II}.
\item  Assume that $i$ is odd. Then $\delta_{i-1}e_{i-1}\cdot m_{i-1, i}+\delta_{i+1}e_{i+1}\cdot m_{i+1, i}=0$.
Here, $\delta_j, e_j$ are  as explained in the description of $\mathrm{Ker~}\tilde{\varphi}(R)$.
\end{enumerate}
\end{Rmk}

 \begin{Thm}
$\mathrm{Ker~}\varphi/\tilde{G}^1 $ is isomorphic to $ \textbf{A}^{l^{\prime}}\times (\mathbb{Z}/2\mathbb{Z})^{\beta}$ as a $\kappa$-variety,
 where $\textbf{A}^{l^{\prime}}$ is an affine space of dimension $l^{\prime}$.
 Here, 
  \begin{itemize}
  \item $l^{\prime}$ is such that \textit{$l^{\prime}$ + dim $\tilde{G}^1 = l$}.
  Notice that $l$ is defined in Lemma 4.6 and
  that the dimension of $\tilde{G}^1$ is given in Theorem A.4.
\item  $\beta$ is the number of even integers $j$ such that $L_j$ is of type I and $L_{j+2}$ is of type II.
   \end{itemize}
   \end{Thm}

   \begin{proof} Lemma A.1 and Theorem A.4 imply that $\mathrm{Ker~}\varphi/\tilde{G}^1 $ represents the functor
    $R\mapsto \mathrm{Ker~}\varphi(R)/\tilde{G}^1(R)$.
    Recall that $\mathrm{Ker~}\varphi/\tilde{G}^1$ is a closed subgroup scheme of $\mathrm{Ker~}\tilde{\varphi}/\tilde{M}^1$
    as explained at the paragraph just before Theorem A.4.
    Let $m=\begin{pmatrix} \pi^{max\{0,j-i\}}m_{i,j} \end{pmatrix}$     be an element of  $(\mathrm{Ker~}\tilde{\varphi}/\tilde{M}^1)(R)$
     such that $m$ belongs to $(\mathrm{Ker~}\varphi/\tilde{G}^1)(R)$. 
We want to find equations which $m$ satisfies.
Note that the entries of $m$ involve $(B\otimes_AR)/(\pi\otimes 1)(B\otimes_AR)$ as explained in Remark A.5.

 Recall that $h$ is the fixed hermitian form and we consider it as an element in $\underline{H}(R)$ as explained in Remark 3.3.(2).
 We write it as a formal matrix $h=\begin{pmatrix} \pi^{i}\cdot h_i\end{pmatrix}$ with $(\pi^{i}\cdot h_i)$
for the $(i,i)$-block and $0$ for the remaining blocks. 
    We choose a representative $1+x\in \mathrm{Ker~}\varphi(R)$ of $m$ so that $h\circ (1+x)=h$.
    Any other representative of $m$ in  $\mathrm{Ker~}\tilde{\varphi}(R)$ is
    of the form $(1+x)(1+\pi y)$ with $y \in \underline{M}'(R)$
    and we have  $h\circ (1+x)(1+\pi y)=h\circ (1+\pi y)$. 
    Notice that $h\circ (1+\pi y)$ is an element of $\underline{H}(R)$ so we express it as $(f_{i,j}', a_i' \cdots f_i')$.
    We also let  $h=(f_{i,j}, a_i \cdots f_i)$.
    Here, we follow notation from Section 3.3, the paragraph just before Remark 3.3.
    Recall that $h=(f_{i,j}, a_i \cdots f_i)$ is described explicitly in Remark 3.3.(2).
Now, $1+\pi y$ is an element of $\tilde{M}^1(R)$ and so we can use our result (Equations (A.3), (A.7), (A.8), (A.10), (A.11), (A.12), (A.13))
  stated in the proof of Theorem A.4 in order to compute $h\circ (1+\pi y)$.
Based on this, we enumerate equations which $m$ satisfies as follows:

\begin{enumerate}
\item Assume that $i<j$. By Equation (A.3) which involves an element of $\tilde{M}^1(R)$,
each entry of $f_{i,j}'$ has $\pi$ as a factor so that $f_{i,j}'\equiv f_{i,j} (=0)$
mod $(\pi\otimes 1)(B\otimes_AR)$.
In other words, the $(i,j)$-block of $h\circ (1+x)(1+\pi y)$ divided by $\pi^{max\{i, j\}}$ is $f_{i,j} (=0)$
modulo $(\pi\otimes 1)(B\otimes_AR)$, which is independent of the choice of $1+\pi y$.
Let $\tilde{m}\in \mathrm{Ker~}\tilde{\varphi}(R)$ be a lift of $m$.
Therefore, if we write the $(i, j)$-block of $\sigma({}^t\tilde{m})\cdot h\cdot \tilde{m}$ as $\pi^{max\{i, j\}}\mathcal{X}_{i,j}(\tilde{m})$,
where $\mathcal{X}_{i,j}(\tilde{m}) \in M_{n_i\times n_j}(B\otimes_AR)$,
then the image of $\mathcal{X}_{i,j}(\tilde{m})$ in $M_{n_i\times n_j}(B\otimes_AR)/(\pi\otimes 1)M_{n_i\times n_j}(B\otimes_AR)\cong M_{n_i\times n_j}(R)$
is independent of the choice of the lift $\tilde{m}$ of $m$.
Therefore, we may denote this image by $\mathcal{X}_{i,j}(m)$.
On the other hand, by Equation (A.2), we have the following identity:
\begin{equation}\mathcal{X}_{i,j}(m)=\sum_{i\leq k \leq j} \sigma({}^tm_{k,i})\bar{h}_km_{k,j} \mathrm{~if~}i<j.\end{equation}
We explain how to interpret the above equation.
We know that $\mathcal{X}_{i,j}(m)$ and $m_{k,k'}$ (with $k\neq k'$) are matrices with entries in $(B\otimes_AR)/(\pi\otimes 1)(B\otimes_AR)$,
whereas $m_{i,i}$ and $m_{j,j}$ are formal matrices as explained in Remark A.5.
Thus we consider $\bar{h}_k$, $m_{i,i}$, and $m_{j,j}$ as matrices with entries in $(B\otimes_AR)/(\pi\otimes 1)(B\otimes_AR)$
by letting $\pi$ be zero in each entry of  formal matrices $h_k$, $m_{i,i}$, and $m_{j,j}$.
Here  we keep using $m_{i,i}$ and $m_{j,j}$ for matrices with entries in $(B\otimes_AR)/(\pi\otimes 1)(B\otimes_AR)$
  in the above equation in order to simplify notation.
 Later in Equation (A.23), they are denoted by $\bar{m}_{i,i}$ and $\bar{m}_{j,j}$.
Then the right hand side is computed as a sum of products of matrices (involving the usual matrix addition and multiplication)
with entries in $(B\otimes_AR)/(\pi\otimes 1)(B\otimes_AR)$.
Thus, the assignment $m\mapsto \mathcal{X}_{i,j}(m)$ is polynomial in $m$.
Furthermore, since $m$ actually belongs to $\mathrm{Ker~}\varphi(R)/\tilde{G}^1(R)$,
 we have the following equation by the argument made at the beginning of this paragraph:
  $$\mathcal{X}_{i,j}(m)=f_{i,j}\textit{ mod $(\pi\otimes 1)(B\otimes_AR)$}=0.$$
Thus we get an $n_i\times n_j$ matrix $\mathcal{X}_{i,j}$ of polynomials on $\mathrm{Ker~}\tilde{\varphi}/\tilde{M}^1$
defined by Equation (A.14),
vanishing on the subscheme $\mathrm{Ker~}\varphi/\tilde{G}^1$.

Before moving to  the following steps, we fix  notation.
Let $m$ be an element in $(\mathrm{Ker~}\tilde{\varphi}/\tilde{M}^1)(R)$ and $\tilde{m}\in \mathrm{Ker~}\tilde{\varphi}(R)$ be its lift.
For any block $x_i$  of $m$, $\tilde{x}_i$ is denoted by the corresponding block of $\tilde{m}$ whose reduction is $x_i$.
Since $x_i$ is a block of an element of $(\mathrm{Ker~}\tilde{\varphi}/\tilde{M}^1)(R)$,
it involves $(B\otimes_AR)/(\pi\otimes 1)(B\otimes_AR)$ as explained in Remark A.5, whereas
$\tilde{x}_i$ involves $B\otimes_AR$.
In addition,
for a block $a_i$  of $h$, $\bar{a}_i$ is denoted by  the image  of $a_i$ in  $(B\otimes_AR)/(\pi\otimes 1)(B\otimes_AR)$.\\

\item Assume that $i$ is even and $L_i$ is \textit{of type} $\textit{I}^o$.
By Equation (A.7) which involves an element of $\tilde{M}^1(R)$,
each entry of $b_i'$ has $\pi$ as a factor so that $b_i'\equiv b_i=0$ mod $(\pi\otimes 1)(B\otimes_AR)$.
Let $\tilde{m}\in \mathrm{Ker~}\tilde{\varphi}(R)$ be a lift of $m$.
By using an argument similar to the paragraph   just before Equation (A.14) of Step (1),
 if we write the $(1, 2)$-block of the $(i, i)$-block of the formal matrix product
$\sigma({}^t\tilde{m})\cdot h\cdot \tilde{m}$ as $(\pi\cdot\sigma(\pi))^{i/2}\cdot \pi\mathcal{X}_{i,1,2}(\tilde{m})$,
where $\mathcal{X}_{i,1,2}(\tilde{m}) \in M_{(n_i-1)\times 1}(B\otimes_AR)$,
then the image of $\mathcal{X}_{i,1,2}(\tilde{m})$ in $M_{(n_i-1)\times 1}(B\otimes_AR)/(\pi\otimes 1)M_{(n_i-1)\times 1}(B\otimes_AR)$
is independent of the choice of the lift $\tilde{m}$ of $m$.
Therefore, we may denote this image by $\mathcal{X}_{i,1,2}(m)$.
As Equation (A.14) of Step (1), we need to express $\mathcal{X}_{i,1,2}(m)$ as matrices.
Recall that $\pi^ih_i=(\pi\cdot\sigma(\pi))^{i/2} \begin{pmatrix} a_i&0\\ 0 &1 +2c_i \end{pmatrix}
=\pi^i\cdot\epsilon^{i/2}\begin{pmatrix} a_i&0\\ 0 &1 +2c_i \end{pmatrix}$ and  $\epsilon\equiv 1$ mod $\pi\otimes 1$.
We  write $m_{i,i}$ as $\begin{pmatrix} id&\pi y_i\\ \pi v_i&1+\pi z_i \end{pmatrix}$
and  $\tilde{m}_{i,i}$ as $\begin{pmatrix} \tilde{s}_i&\pi \tilde{y}_i\\ \pi \tilde{v}_i&1+\pi \tilde{z}_i \end{pmatrix}$
such that $\tilde{s}_i=\mathrm{id}$ mod $\pi \otimes 1$.
Then
\begin{equation}
\sigma({}^t\tilde{m}_{i,i})h_i\tilde{m}_{i,i}=\epsilon^{i/2}\begin{pmatrix}\sigma({}^t\tilde{s}_i)&\sigma(\pi\cdot {}^t \tilde{v}_i)\\ \sigma(\pi\cdot {}^t \tilde{y}_i)&1+\sigma(\pi \tilde{z}_i) \end{pmatrix}
 \begin{pmatrix} a_i&0\\ 0 &1 +2c_i \end{pmatrix} \begin{pmatrix} \tilde{s}_i&\pi \tilde{y}_i\\ \pi \tilde{v}_i&1+\pi \tilde{z}_i \end{pmatrix}.
\end{equation}
Then the $(1,2)$-block of $\sigma({}^t\tilde{m}_{i,i})h_i\tilde{m}_{i,i}$ is $\epsilon^{i/2}\pi(a_i\tilde{y}_i+\epsilon\sigma({}^t\tilde{v}_i))+\pi^2(\ast)$ for a certain polynomial $(\ast)$.
Therefore, by observing the $(1, 2)$-block  of Equation (A.1), we have
\[
\mathcal{X}_{i,1,2}(m)=\bar{a}_iy_i+{}^tv_i+\mathcal{P}^i_{1, 2}.
\]
 Here, $\mathcal{P}^i_{1, 2}$ is a polynomial with variables in the entries  of $m_{i-1, i}, m_{i+1, i}$.
 Note that this is an equation in $(B\otimes_AR)/(\pi\otimes 1)(B\otimes_AR)$.
 Thus $\epsilon$, which is appeared in the $(1,2)$-block of $\sigma({}^t\tilde{m}_{i,i})h_i\tilde{m}_{i,i}$,
 has been ignored since  $\epsilon\equiv 1$ mod $\pi\otimes 1$.
 Furthermore, since $m$ actually belongs to $\mathrm{Ker~}\varphi(R)/\tilde{G}^1(R)$,
 we have the following equation by the argument made at the beginning of this paragraph:
  \begin{equation}
  \mathcal{X}_{i,1,2}(m)=\bar{a}_iy_i+{}^tv_i+\mathcal{P}^i_{1, 2}=\bar{b}_i=0.
  \end{equation}
Thus we get  polynomials $\mathcal{X}_{i,1,2}$ on $\mathrm{Ker~}\tilde{\varphi}/\tilde{M}^1$,
vanishing on the subscheme $\mathrm{Ker~}\varphi/\tilde{G}^1$.\\



\item  Assume that $i$ is even and $L_i$ is \textit{of type} $\textit{I}^e$.
The argument used in this step is similar to that of the above Step (2).
By Equations (A.10), (A.11), and (A.12) which involve an element of $\tilde{M}^1(R)$,
each entry of $b_i', e_i', d_i'$ has $\pi$ as a factor so that $b_i'\equiv b_i=0, e_i'\equiv e_i=0, d_i'\equiv d_i=0$ mod
$(\pi\otimes 1)(B\otimes_AR)$.
Let $\tilde{m}\in \mathrm{Ker~}\tilde{\varphi}(R)$ be a lift of $m$.
By using an argument similar to the paragraph   just before Equation (A.14) of Step (1),
if we write the $(1, 2), (1,3), (2,3)$-blocks of the $(i, i)$-block of the formal matrix product
$\sigma({}^t\tilde{m})\cdot h\cdot \tilde{m}$ as $(\pi\cdot\sigma(\pi))^{i/2}\cdot \pi\mathcal{X}_{i,1,2}(\tilde{m})$,
$(\pi\cdot\sigma(\pi))^{i/2}\cdot \pi\mathcal{X}_{i,1,3}(\tilde{m})$, $(\pi\cdot\sigma(\pi))^{i/2}\cdot \pi\mathcal{X}_{i,2,3}(\tilde{m})$,
respectively, where 
$\mathcal{X}_{i,1,2}(\tilde{m}) \textit{ and }\mathcal{X}_{i,1,3}(\tilde{m}) \in M_{(n_i-2)\times 1}(B\otimes_AR)$ and $\mathcal{X}_{i,2,3}(\tilde{m}) \in B\otimes_AR$,
then the images of $\mathcal{X}_{i,1,2}(\tilde{m})$ and $\mathcal{X}_{i,1,3}(\tilde{m})$ in $M_{(n_i-2)\times 1}(B\otimes_AR)/(\pi\otimes 1)M_{(n_i-2)\times 1}(B\otimes_AR)$ and the image of $\mathcal{X}_{i,2,3}(\tilde{m})$ in $(B\otimes_AR)/(\pi\otimes 1)(B\otimes_AR)$
are independent of the choice of the lift $\tilde{m}$ of $m$.
Therefore, we may denote these images by $\mathcal{X}_{i,1,2}(m)$, $\mathcal{X}_{i,1,3}(m)$, and $\mathcal{X}_{i,2,3}(m)$, respectively.
As Equation (A.14) of Step (1), we need to express $\mathcal{X}_{i,1,2}(m)$, $\mathcal{X}_{i,1,3}(m)$, and $\mathcal{X}_{i,2,3}(m)$ as matrices.
Recall that $\pi^ih_i=(\pi\cdot\sigma(\pi))^{i/2} \begin{pmatrix} a_i&0&0\\ 0 &1&1 \\ 0 &1 &2c_i \end{pmatrix}
=\pi^i\cdot\epsilon^{i/2}\begin{pmatrix} a_i&0&0\\ 0 &1&1 \\ 0 &1 &2c_i \end{pmatrix}$  and $\epsilon\equiv 1$ mod $\pi\otimes 1$.
We  write $m_{i,i}$ as $\begin{pmatrix} id&r_i&\pi t_i\\ \pi y_i&1+\pi x_i&\pi z_i\\ v_i&u_i&1+\pi w_i \end{pmatrix}$
and $\tilde{m}_{i,i}$ as $\begin{pmatrix} \tilde{s}_i&\tilde{r}_i&\pi \tilde{t}_i\\ \pi \tilde{y}_i&1+\pi \tilde{x}_i&\pi \tilde{z}_i\\ \tilde{v}_i&\tilde{u}_i&1+\pi \tilde{w}_i \end{pmatrix}$ such that $\tilde{s}_i=\mathrm{id}$ mod $\pi \otimes 1$.
Then
\begin{equation}
\sigma({}^t\tilde{m}_{i,i})h_i\tilde{m}_{i,i}=\epsilon^{i/2}\begin{pmatrix}\sigma({}^t\tilde{s}_i)&\sigma(\pi\cdot {}^t \tilde{y}_i)&\sigma( {}^t \tilde{v}_i)\\
\sigma({}^t \tilde{r}_i)&1+\sigma(\pi \tilde{x}_i)&\sigma(\tilde{u}_i)\\\sigma(\pi\cdot {}^t \tilde{t}_i)&\sigma(\pi\cdot {}^t \tilde{z}_i)&1+\sigma(\pi \tilde{w}_i) \end{pmatrix}
 \begin{pmatrix} a_i&0&0\\ 0 &1&1 \\ 0 &1 &2c_i \end{pmatrix}
\begin{pmatrix} \tilde{s}_i&\tilde{r}_i&\pi \tilde{t}_i\\ \pi \tilde{y}_i&1+\pi \tilde{x}_i&\pi \tilde{z}_i\\ \tilde{v}_i&\tilde{u}_i&1+\pi \tilde{w}_i \end{pmatrix}.
\end{equation}
Then the $(1,2)$-block of $\sigma({}^t\tilde{m}_{i,i})h_i\tilde{m}_{i,i}$ is $\epsilon^{i/2}(a_i\tilde{r}_i+\sigma({}^t\tilde{v}_i))+\pi(\ast)$,
 the $(1, 3)$-block is $\epsilon^{i/2}\pi(a_i\tilde{t}_i+\epsilon\sigma({}^t\tilde{y}_i)+\sigma({}^t\tilde{v}_i)\tilde{z}_i)
 +\pi^2(\ast\ast)$,
 and the $(2, 3)$-block is $\epsilon^{i/2}(1+\pi(\sigma({}^t\tilde{r}_i)a_i\tilde{t}_i+\epsilon\sigma(\tilde{x}_i)+\tilde{z}_i+\tilde{w}_i+\sigma(\tilde{u}_i)\tilde{z}_i)
 +\pi^2(\ast\ast\ast))$
 for  certain polynomials $(\ast), (\ast\ast), (\ast\ast\ast)$.
Therefore, by observing the $(1, 2), (1,3), (2,3)$-blocks of  Equation (A.1) again, we have
\[\left \{
  \begin{array}{l}
  \mathcal{X}_{i,1,2}(m)=\bar{a}_ir_i+{}^tv_i;\\
  \mathcal{X}_{i,1,3}(m)=\bar{a}_i t_i+{}^ty_i+{}^tv_iz_i+\mathcal{P}^i_{1, 3};\\
  \mathcal{X}_{i,2,3}(m)={}^tr_i\bar{a}_it_i+x_i+z_i+ w_i+u_iz_i+\mathcal{P}^i_{2, 3}.\\
    \end{array} \right.\]
Here, $\mathcal{P}^i_{1, 3}, \mathcal{P}^i_{2, 3}$ are suitable polynomials with variables in the entries  of $m_{i-1, i}, m_{i+1, i}$.
These equations are considered in $(B\otimes_AR)/(\pi\otimes 1)(B\otimes_AR)$.
Since $m$ actually belongs to $\mathrm{Ker~}\varphi(R)/\tilde{G}^1(R)$,
 we have the following equation by the argument made at the beginning of this paragraph:
  \begin{equation}
\left \{
  \begin{array}{l}
 \mathcal{X}_{i,1,2}(m)=\bar{a}_ir_i+{}^tv_i=\bar{b}_i=0;\\
\mathcal{X}_{i,1,3}(m)=\bar{a}_i t_i+{}^ty_i+{}^tv_iz_i+\mathcal{P}^i_{1, 3}=\bar{e}_i=0; \\
\mathcal{X}_{i,2,3}(m)={}^tr_i\bar{a}_it_i+x_i+z_i+ w_i+u_iz_i+\mathcal{P}^i_{2, 3}=\bar{d}_i=0.
    \end{array} \right.
  \end{equation}
Thus we get  polynomials $\mathcal{X}_{i,1,2}, \mathcal{X}_{i,1,3}, \mathcal{X}_{i,2,3}$ on $\mathrm{Ker~}\tilde{\varphi}/\tilde{M}^1$,
vanishing on the subscheme $\mathrm{Ker~}\varphi/\tilde{G}^1$.\\

\item Assume that $i$ is even and $L_i$ is \textit{of type I}.
By Equations (A.8) and (A.13) which involve an element of $\tilde{M}^1(R)$, $c_i'\equiv c_i=0$  mod $(\pi\otimes 1)(B\otimes_AR)$.
Let $\tilde{m}\in \mathrm{Ker~}\tilde{\varphi}(R)$ be a lift of $m$.
 By using an argument similar to the paragraph   just before Equation (A.14) of Step (1),
 if we write the $(2, 2)$-block (when $L_i$ is \textit{of type $I^o$}) or the $(3,3)$-block (when $L_i$ is \textit{of type $I^e$})
 of the $(i, i)$-block of $h\circ \tilde{m}=\sigma({}^t\tilde{m})\cdot h\cdot \tilde{m}$
  as $(\pi\cdot\sigma(\pi))^{i/2}\cdot (1+2\mathcal{X}_{i,i}(\tilde{m}))$ or
  $(\pi\cdot\sigma(\pi))^{i/2}\cdot (2\mathcal{X}_{i,i}(\tilde{m}))$ respectively,
where $\mathcal{X}_{i,i}(\tilde{m}) \in B\otimes_AR$,
then the image of $\mathcal{X}_{i,i}(\tilde{m})$ in $(B\otimes_AR)/(\pi\otimes 1)(B\otimes_AR)$
is independent of the choice of the lift $\tilde{m}$ of $m$.
Therefore, we may denote this image by $\mathcal{X}_{i,i}(m)$.
As Equation (A.14) of Step (1), we need to express $\mathcal{X}_{i,i}(m)$ as matrices.
By observing Equations (A.15) and (A.17), the  $(2, 2)$-block (when $L_i$ is \textit{of type $I^o$}) or the $(3,3)$-block (when $L_i$ is \textit{of type $I^e$})  of the formal matrix product $\sigma({}^t\tilde{m}_{i,i})h_i\tilde{m}_{i,i}$ is
$(\epsilon^{i/2} \textit{ if $L_i$ is \textit{of type $I^o$}}) + \epsilon^{i/2}(2c_i+(\pi+\sigma(\pi))\tilde{z}_i+\pi\sigma(\pi)\tilde{z}_i^2)+4(\ast)$ for a certain polynomial $(\ast)$.
Therefore, by observing the  $(2, 2)$-block (when $L_i$ is \textit{of type $I^o$}) or the $(3,3)$-block (when $L_i$ is \textit{of type $I^e$})
of Equation (A.1) again, we have
\begin{multline*}
 \mathcal{X}_{i,i}(\tilde{m})= \frac{1}{\pi^2}((\pi+\sigma(\pi))\tilde{z}_i+\pi\sigma(\pi)\tilde{z}_i^2+\sigma({}^t\tilde{m}_{i-1,i}^{\prime})\cdot\sigma(\pi) h_{i-1}\cdot \tilde{m}_{i-1,i}^{\prime}\\+\sigma({}^t\tilde{m}_{i+1,i}^{\prime})\cdot\pi h_{i+1}\cdot \tilde{m}_{i+1,i}^{\prime}+
  \sigma({}^t\tilde{m}_{i-2,i}^{\prime})\cdot\sigma(\pi)^2 h_{i-2}\cdot \tilde{m}_{i-2,i}^{\prime}+\sigma({}^t\tilde{m}_{i+2,i}^{\prime})\cdot \pi^2 h_{i+2}\cdot \tilde{m}_{i+2,i}^{\prime}).
\end{multline*}
Here, $\tilde{m}_{j,i}^{\prime}$ is the last column vector of the matrix $\tilde{m}_{j,i}$.
Note that the right hand side is a formal polynomial with entries in $\tilde{m}$. 
This equation should be interpreted as follows.
We formally compute the right hand side and then it is of the form $1/\pi^2(\pi^2X)$.
 $\mathcal{X}_{i,i}(\tilde{m})$ is defined as the modified $X$ by letting  each term having $\pi^2$ as a factor in $X$ be zero.
It is a polynomial with entries in $B\otimes_AR$.
 $\mathcal{X}_{i,i}(m)$ is the image of $\mathcal{X}_{i,i}(\tilde{m})$ in $(B\otimes_AR)/(\pi\otimes 1)(B\otimes_AR)$.
Let $\alpha$ be the unit in $B$ such that  $\epsilon=1+\alpha\pi$, as explained in Section 2.1.
Then $(\pi+\sigma(\pi))z_i+\pi\sigma(\pi)z_i^2=(2+\alpha\pi)\pi z_i+(1+\alpha\pi)\pi^2z_i^2$ and so $\mathcal{X}_{i,i}(\tilde{m})$ is written as follows:
\begin{multline*}
\mathcal{X}_{i,i}(\tilde{m})= \frac{1}{\pi^2}(\alpha\pi^2 \tilde{z}_i+\pi^2\tilde{z}_i^2+\sigma({}^t\tilde{m}_{i-1,i}^{\prime})\cdot\sigma(\pi) h_{i-1}\cdot \tilde{m}_{i-1,i}^{\prime}+\sigma({}^t\tilde{m}_{i+1,i}^{\prime})\cdot\pi h_{i+1}\cdot \tilde{m}_{i+1,i}^{\prime}\\+
  \sigma({}^t\tilde{m}_{i-2,i}^{\prime})\cdot\sigma(\pi)^2 h_{i-2}\cdot \tilde{m}_{i-2,i}^{\prime}+\sigma({}^t\tilde{m}_{i+2,i}^{\prime})\cdot \pi^2 h_{i+2}\cdot \tilde{m}_{i+2,i}^{\prime}).
  \end{multline*}
  We can then write $\mathcal{X}_{i,i}(m)$ by using $m$ and $\tilde{m}$ as follows:
  \begin{multline}
\mathcal{X}_{i,i}(m)=(\bar{\alpha}z_i+z_i^2+  {}^tm_{i-2,i}^{\prime}\cdot \bar{h}_{i-2}\cdot m_{i-2,i}^{\prime}+m_{i+2,i}^{\prime}\cdot \bar{h}_{i+2}\cdot m_{i+2,i}^{\prime})\\
+\frac{1}{\pi^2}(\sigma({}^t\tilde{m}_{i-1,i}^{\prime})\cdot\sigma(\pi) h_{i-1}\cdot \tilde{m}_{i-1,i}^{\prime}+\sigma({}^t\tilde{m}_{i+1,i}^{\prime})\cdot\pi h_{i+1}\cdot \tilde{m}_{i+1,i}^{\prime}).
 \end{multline}
 Here, $\bar{\alpha}$ is the image of $\alpha$ in $\kappa$ and
$m_{j,i}^{\prime}$ is the last column vector of the matrix $m_{j,i}$.
Note that the $\sigma$-action on $(B\otimes_AR)/(\pi\otimes 1)(B\otimes_AR)$ is trivial
and so we remove $\sigma$ in the first line of the above equation.
 Here, why we do not express $\mathcal{X}_{i,i}(m)$ based only on the entries  in $m$ as in Steps (1)-(3) is that
  two terms involving $h_{i-1}$ and $h_{i+1}$ have only $\pi$ as a factor which makes the expression with $m$ complicated
   in a sense of notation.
Thus, in the above expression of $\mathcal{X}_{i,i}(m)$, the first line is just a polynomial in $(B\otimes_AR)/(\pi\otimes 1)(B\otimes_AR)$
and the second line is interpreted as explained above as a formal expression.
Note that the second line is independent of the choice of lifts $\tilde{m}_{i-1,i}^{\prime}$ and $\tilde{m}_{i+1,i}^{\prime}$
of $m_{i-1,i}^{\prime}$ and $m_{i+1,i}^{\prime}$ respectively as explained in the first paragraph of Step (4).
For example, let $\pi h_{i+1}=\begin{pmatrix} 2&\pi\\   \sigma(\pi)&2b \end{pmatrix}$ with $b\in A$ and
let $\tilde{m}_{i+1,i}^{\prime}=\begin{pmatrix} x_1+\pi x_2\\   y_1+\pi y_2 \end{pmatrix}$
 such that $m_{i+1,i}^{\prime}=\begin{pmatrix} x_1\\   y_1\end{pmatrix}$.
 By Section 2.1, we may assume that  $\pi+\sigma(\pi)=2$ and $\pi\cdot\sigma(\pi)=\epsilon\pi^2=2u$ with $\epsilon\equiv 1$ mod $\pi$ and a unit $u\in A$.
Then as a part of $\mathcal{X}_{i,i}(m)$, we can see that
$$\frac{1}{\pi^2}\sigma({}^t\tilde{m}_{i+1,i}^{\prime})\cdot\pi h_{i+1}\cdot \tilde{m}_{i+1,i}^{\prime}=
\frac{1}{u}(x_1^2+x_1y_1+by_1^2).$$

Since $m$ actually belongs to $\mathrm{Ker~}\varphi(R)/\tilde{G}^1(R)$,
 we have the following equation by the argument made at the beginning of this paragraph:
 \begin{multline}
\mathcal{F}_i : \mathcal{X}_{i,i}(m)= (\bar{\alpha}z_i+z_i^2+  {}^tm_{i-2,i}^{\prime}\cdot \bar{h}_{i-2}\cdot m_{i-2,i}^{\prime}+m_{i+2,i}^{\prime}\cdot \bar{h}_{i+2}\cdot m_{i+2,i}^{\prime})\\
+\frac{1}{\pi^2}(\sigma({}^t\tilde{m}_{i-1,i}^{\prime})\cdot\sigma(\pi) h_{i-1}\cdot \tilde{m}_{i-1,i}^{\prime}+\sigma({}^t\tilde{m}_{i+1,i}^{\prime})\cdot\pi h_{i+1}\cdot \tilde{m}_{i+1,i}^{\prime})=\bar{c}_i =0.\\
  \end{multline}
Thus we get  polynomials $\mathcal{X}_{i,i}$ on $\mathrm{Ker~}\tilde{\varphi}/\tilde{M}^1$,
vanishing on the subscheme $\mathrm{Ker~}\varphi/\tilde{G}^1$.\\

\item We now choose an even integer $j$ such that $L_{j}$ is \textit{of type I} and $L_{j+2}$ is \textit{of type II} (possibly zero, by our convention).
For each such  $j$, there is a non-negative integer $m_j$ such that $L_{j-2l}$ is \textit{of type I} for every $l$ with $0\leq l\leq m_j$ and
$L_{j-2(m_j+1)}$ is \textit{of type II}.
Then we claim that the sum of equations $$\sum_{l=0}^{m_j} \frac{1}{\bar{\alpha}^2}\mathcal{F}_{j-2l}$$ is the same as
\begin{equation}
\sum_{l=0}^{m_j} (\frac{z_{j-2l}}{\bar{\alpha}}+(\frac{z_{j-2l}}{\bar{\alpha}})^2)=
\left(\sum_{l=0}^{m_j}\frac{z_{j-2l}}{\bar{\alpha}}\right)\left(\sum_{l=0}^{m_j}(\frac{z_{j-2l}}{\bar{\alpha}})+1\right)=0.
\end{equation}
Here, $\bar{\alpha}$ is the image of $\alpha$ in $\kappa$ and
 we consider this equation in $(B\otimes_AR)/(\pi\otimes 1)(B\otimes_AR)$.
We postpone the proof of this claim to Lemma A.7. \\

\end{enumerate}

Let $G^{\ddag}$ be the subfunctor of $ \mathrm{Ker~}\tilde{\varphi}/\tilde{M}^1$ consisting of those $m$ 
satisfying Equations (A.14), (A.16), (A.18) and (A.20).
Note that such  $m$ also satisfies Equation (A.21).
In Lemma A.8 below, we will prove that      
$G^{\ddag}$ is represented by a smooth  closed subscheme of $ \mathrm{Ker~}\tilde{\varphi}/\tilde{M}^1$
and is isomorphic to $ \textbf{A}^{l^{\prime}}\times (\mathbb{Z}/2\mathbb{Z})^{\beta}$ as a $\kappa$-variety,
 where $\textbf{A}^{l^{\prime}}$ is an affine space of dimension $l^{\prime}$.
 Here,
\[
 l^{\prime}=\sum_{i<j}n_in_j-\sum_{i:\mathrm{odd~and~} L_i:\textit{bound}}n_i+\\
 \sum_{i:\mathrm{even~and~}L_i:\textit{of type $I^o$}}(n_i-1)
 +\sum_{i:\mathrm{even~and~}L_i:\textit{of type $I^e$}}(2n_i-2).
\]
 For ease of notation, let $G^{\dag}=\mathrm{Ker~}\varphi/\tilde{G}^1$.
 Since $G^{\dag}$ and $G^{\ddag}$ are both closed subschemes of $ \mathrm{Ker~}\tilde{\varphi}/\tilde{M}^1$
 and $G^{\dag}(\bar{\kappa})\subset G^{\ddag}(\bar{\kappa})$, $(G^{\dag})_{\mathrm{red}}$ is a closed subscheme of $(G^{\ddag})_{\mathrm{red}}=G^{\ddag}$.
 It is easy to check that $\mathrm{dim~} G^{\dag} = \mathrm{dim~}G^{\ddag}$ 
 since  $\mathrm{dim~} G^{\dag} =\mathrm{dim~}\mathrm{Ker~}\varphi - \mathrm{dim~}\tilde{G}^1=l-\mathrm{dim~}\tilde{G}^1$
 and $\mathrm{dim~}G^{\ddag}=l'=l-\mathrm{dim~}\tilde{G}^1$.
 Here, $\mathrm{dim~}\mathrm{Ker~}\varphi = l$ is given in Lemma 4.6
 and dim $\tilde{G}^1$ is given in Theorem A.4.\\

 We claim that $(G^{\dag})_{\mathrm{red}}$ contains  at least one (closed) point of each connected component of $G^{\ddag}$.
Choose an even integer $j$ such that $L_j$ is \textit{of type I} and $L_{j+2}$ is \textit{of type II} (possibly zero, by our convention).
Consider the closed subgroup scheme $F_j$ of $\tilde{G}$ defined by the following equations:
\begin{itemize}
\item $m_{i,k}=0$ \textit{if $i\neq k$};
\item $m_{i,i}=\mathrm{id}$ \textit{if $i\neq j$};
\item and for $m_{j,j}$,
\[\left \{
  \begin{array}{l l}
  s_j=\mathrm{id~}, y_j=0, v_j=0 & \quad  \textit{if $L_i$ is \textit{of type} $\textit{I}^o$};\\
  s_j=\mathrm{id~}, r_j=t_j=y_j=v_j=u_j=w_j=0 & \quad \textit{if $L_i$ is \textit{of type} $\textit{I}^e$}.\\
    \end{array} \right.\]
\end{itemize}

We will prove in Lemma A.9 below
that each element of $F_j(R)$ for a $\kappa$-algebra $R$ satisfies $(z_j^1/\bar{\alpha})+(z_j^1/\bar{\alpha})^2=0$,
where $z_j=z_j^1+\pi z_j^2$, and
that $F_j$ is isomorphic to $ \textbf{A}^{1} \times \mathbb{Z}/2\mathbb{Z}$ as a $\kappa$-variety,
where $\textbf{A}^{1}$ is an affine space of dimension $1$. 


Notice that $F_j$ and $F_{j^{\prime}}$ commute with each other for all even integers $j\neq j^{\prime}$,
in the sense that $f_j\cdot f_{j^{\prime}}=f_{j^{\prime}}\cdot f_j$, where $f_j\in F_j $ and $ f_{j^{\prime}}\in F_{j^{\prime}}$.
Let $F=\prod_{j}F_j$.
Then $F$ is  smooth and is a closed subgroup scheme of $\mathrm{Ker~}\varphi$ as mentioned in the proof of Theorem 4.11. 
If $F^{\dag}$  is the image of $F$ in $G^{\dag}$, then it is smooth and thus  a closed subscheme of $(G^{\dag})_{\mathrm{red}}$.
By observing Equation (A.21) and $(z_j^1/\bar{\alpha})+(z_j^1/\bar{\alpha})^2=0$ above, 
we can easily see that $F^{\dag}$ contains at least one (closed) point of each connected component of $G^{\ddag}$ and this proves our claim.\\

Combining this fact with dim $G^{\dag}$ = dim $G^{\ddag}$, we conclude that $(G^{\dag})_{\mathrm{red}}\simeq G^{\ddag}$,
and hence, $G^{\dag}=G^{\ddag}$ because $G^{\dag}$ is a subfunctor of $G^{\ddag}$.
This completes the proof.
\end{proof}

\begin{Lem}
Choose an even integer $j$ such that $L_{j}$ is \textit{of type I} and $L_{j+2}$ is \textit{of type II} (possibly zero, by our convention).
For such  $j$, there is a non-negative integer $m_j$ such that $L_{j-2l}$ is \textit{of type I} for every $l$ with $0\leq l\leq m_j$ and
$L_{j-2(m_j+1)}$ is \textit{of type II}.
Then   the sum of the equations $$\sum_{l=0}^{m_j} \frac{1}{\bar{\alpha}^2}\mathcal{F}_{j-2l}$$ equals
\begin{equation*}
\sum_{l=0}^{m_j} (\frac{z_{j-2l}}{\bar{\alpha}}+(\frac{z_{j-2l}}{\bar{\alpha}})^2)=
\left(\sum_{l=0}^{m_j}\frac{z_{j-2l}}{\bar{\alpha}}\right)\left(\sum_{l=0}^{m_j}(\frac{z_{j-2l}}{\bar{\alpha}})+1\right)=0.
\end{equation*}
\end{Lem}

\begin{proof}
Our strategy to prove this lemma is the following.
We will first prove that
for each odd integer $i$, the terms containing an $h_i$ add to zero in the sum $\sum_{l=0}^{m_j} \frac{1}{\alpha^2}\mathcal{F}_{j-2l}$.
Then we will show that for each even integer $i$, the terms containing an $\bar{h}_i$ add to zero
in the sum $\sum_{l=0}^{m_j} \frac{1}{\alpha^2}\mathcal{F}_{j-2l}$,
so that only the terms containing the $z_i$ remain.


We recall the notations introduced in the above theorem.
Let $m$ be an element in $(\mathrm{Ker~}\tilde{\varphi}/\tilde{M}^1)(R)$ and $\tilde{m}\in \mathrm{Ker~}\tilde{\varphi}(R)$ be its lift.
For any block $x_i$  of $m$, $\tilde{x}_i$ denotes the corresponding block of $\tilde{m}$ whose reduction is $x_i$.
Since $x_i$ is a block of an element of $(\mathrm{Ker~}\tilde{\varphi}/\tilde{M}^1)(R)$,
its entries are elements of $(B\otimes_AR)/(\pi\otimes 1)(B\otimes_AR) \cong R$ as explained in Remark A.5, whereas
entries of $\tilde{x}_i$ are elements of $B\otimes_AR$.
In addition,
for a block $a_i$  of $h$, where we consider $h$ as an element of $\underline{H}(R)$ as explained in Remark 3.3.(2),
  $\bar{a}_i$ is denoted by  the image  of $a_i$ in  $(B\otimes_AR)/(\pi\otimes 1)(B\otimes_AR)$,
 which is mentioned in Step (i) of the proof of Theorem A.4.
If we write $h$ as a formal matrix $h=\begin{pmatrix} \pi^{i}\cdot h_i\end{pmatrix}$ with $(\pi^{i}\cdot h_i)$
for the $(i,i)$-block and $0$ for the remaining blocks, 
then recall from the paragraph following Equation (A.14)
that  $\bar{h}_k$ is the matrix  with entries in $(B\otimes_AR)/(\pi\otimes 1)(B\otimes_AR) \cong R$
by letting $\pi$ be zero in each entry of the formal matrix $h_k$.\\

To help our computation, we write $\bar{h}_i$. Note that $\epsilon (\in B) \equiv 1$ mod $\pi$.
\begin{equation}
\bar{h}_i=\left\{
  \begin{array}{l l}
 \begin{pmatrix} \begin{pmatrix} 0&1\\1&0\end{pmatrix}& & & \\ &\ddots & & \\ & &\begin{pmatrix} 0&1\\1&0\end{pmatrix}& \\ & & & 1 \end{pmatrix}
     & \quad  \textit{if $i$ is even and $L_i$ is \textit{of type $I^o$}};\\
\begin{pmatrix} \begin{pmatrix} 0&1\\1&0\end{pmatrix}& & & \\ &\ddots & & \\ & &\begin{pmatrix} 0&1\\1&0\end{pmatrix}& \\ & & & \begin{pmatrix} 1&1\\1&0\end{pmatrix} \end{pmatrix}
 & \quad  \textit{if $i$ is even and $L_i$ is \textit{of type $I^e$}};\\
 \begin{pmatrix} \begin{pmatrix} 0&1\\1&0\end{pmatrix}& &  \\ &\ddots &  \\ & &\begin{pmatrix} 0&1\\1&0\end{pmatrix} \end{pmatrix}
    & \quad  \textit{if $i$ is odd or if $i$ is even and  $L_i$ is \textit{of type II}}.
      \end{array} \right.
\end{equation}

Recall that $m_{i,i}$ is a formal matrix as described in Remark A.5, not a matrix
in $M_{n_i\times n_i}((B\otimes_AR)/(\pi\otimes 1)(B\otimes_AR))$,
whereas $m_{i,j}$ for $i\neq j$ is a matrix with entries in $(B\otimes_AR)/(\pi\otimes 1)(B\otimes_AR)$.
Thus we need to modify $m_{i,i}$ into a matrix with entries in $(B\otimes_AR)/(\pi\otimes 1)(B\otimes_AR)$
in order to use Equation (A.14) as explained in the paragraph following Equation (A.14).
We define $\bar{m}_{i,i} (\in M_{n_i\times n_i}((B\otimes_AR)/(\pi\otimes 1)(B\otimes_AR)))$
to be obtained from $m_{i,i}$ by letting $\pi$ be zero in each entry of  the formal matrix $m_{i,i}$.
 $\bar{m}_{i,i}$ is described as follows.
\begin{equation}
 \bar{m}_{i,i}=\left\{
  \begin{array}{l l}
 \begin{pmatrix} id & 0\\ 0& 1 \end{pmatrix}
     & \quad  \textit{if $i$ is even and $L_i$ is \textit{of type $I^o$}};\\
 \begin{pmatrix} id &r_i&0 \\ 0&1& 0 \\ v_i &u_i&1 \end{pmatrix}
 & \quad  \textit{if $i$ is even and $L_i$ is \textit{of type $I^e$}};\\
 id   & \quad  \textit{if $i$ is even and  $L_i$ is \textit{of type II}};\\
 \textit{id}    & \quad  \textit{if $i$ is odd}.
      \end{array} \right.
\end{equation}
In addition, if $i$ is odd, then we have
\begin{equation}
\delta_{i-1}e_{i-1}\cdot m_{i-1, i}+\delta_{i+1}e_{i+1}\cdot m_{i+1, i}=0.
\end{equation}
Here, $\delta_j, e_j$ are  as explained in the description of $\mathrm{Ker~}\tilde{\varphi}(R)$, the paragraph following Lemma A.2.

We choose an even integer $k$ (assuming $m_j>0$)  such that $j-2(m_j-1) \leq k\leq j$  so that both $L_k$ and $L_{k-2}$ are \textit{of type I}.
We observe $\sigma({}^t\tilde{m}_{k-1,k}^{\prime})\cdot\sigma(\pi) h_{k-1}\cdot \tilde{m}_{k-1,k}^{\prime}$ in $\mathcal{F}_k$
and $\sigma({}^t\tilde{m}_{k-1,k-2}^{\prime})\cdot\sigma(\pi) h_{k-1}\cdot \tilde{m}_{k-1,k-2}^{\prime}$ in $\mathcal{F}_{k-2}$ (cf. Equation (A.20)).
We claim that
\begin{equation}
\frac{1}{\pi^2}\left(\sigma({}^t\tilde{m}_{k-1,k}^{\prime})\cdot\sigma(\pi) h_{k-1}\cdot \tilde{m}_{k-1,k}^{\prime}+\sigma({}^t\tilde{m}_{k-1,k-2}^{\prime})\cdot\sigma(\pi) h_{k-1}\cdot \tilde{m}_{k-1,k-2}^{\prime}\right)=0.
\end{equation}
Note that this equation is interpreted as explained in the paragraph following Equation (A.19). 

We use Equation (A.14) for $i=k-1$ and $j=k$ so that we have
\begin{equation}
{}^t\bar{m}_{k-1, k-1}\bar{h}_{k-1}m_{k-1,k}={}^tm_{k, k-1}\bar{h}_{k}\bar{m}_{k,k}.
\end{equation}
Note that this equation is over $(B\otimes_AR)/(\pi\otimes 1)(B\otimes_AR)$.
Indeed, there is a $\sigma$-action in Equation (A.14) but it  is trivial over $(B\otimes_AR)/(\pi\otimes 1)(B\otimes_AR)$.
Recall that $m_{k-1,k}'$ is the last column vector of $m_{k-1,k}$.
Let $e_{k-1}=(0,\cdots, 0, 1)$ be of size $1\times n_{k}$.
Then $m_{k-1,k}'=m_{k-1,k}\cdot {}^te_{k-1}$.
We multiply both sides of the above equation by  ${}^te_{k-1}$ on the right.
Then the left hand side is ${}^t\bar{m}_{k-1, k-1}\bar{h}_{k-1}m_{k-1,k}\cdot {}^te_{k-1}=\bar{h}_{k-1}m_{k-1,k}'$
since ${}^t\bar{m}_{k-1, k-1}=id$.
The right hand side is ${}^tm_{k, k-1}\bar{h}_{k}\bar{m}_{k,k}\cdot {}^te_{k-1}$.
Since  $\bar{m}_{k,k}\cdot {}^te_{k-1}$ is the last column vector of $\bar{m}_{k,k}$,
 $\bar{m}_{k,k}\cdot {}^te_{k-1}={}^te_{k-1}$ by Equation (A.23) so that
${}^tm_{k, k-1}\bar{h}_{k}\bar{m}_{k,k}\cdot {}^te_{k-1}={}^tm_{k, k-1}\bar{h}_{k}\cdot {}^te_{k-1}$.
Furthermore,
$\bar{h}_k$ is symmetric over $(B\otimes_AR)/(\pi\otimes 1)(B\otimes_AR)$
 and so ${}^tm_{k, k-1}\bar{h}_{k}\cdot {}^te_{k-1}={}^t(e_{k-1}\cdot \bar{h}_km_{k, k-1})$.
Then  based on the matrix form of $\bar{h}_k$  in Equation (A.22),
we have that $e_{k-1}\cdot \bar{h}_k$ is the same as $e_k$, where $e_k$ is defined in the paragraph following Lemma A.2.
(There, $e_j$ is defined when $j$ is even and $L_j$ is \textit{of type I})
In conclusion, Equation (A.26) induces the equation  
\begin{equation}
\bar{h}_{k-1}m_{k-1,k}'={}^t(e_k\cdot m_{k, k-1})
\end{equation}
over $(B\otimes_AR)/(\pi\otimes 1)(B\otimes_AR)$.

We again use Equation (A.14) for $i=k-2$ and $j=k-1$ so that we have
\begin{equation}
{}^t\bar{m}_{k-2, k-2}\bar{h}_{k-2}m_{k-2,k-1}={}^tm_{k-1, k-2}\bar{h}_{k-1}\bar{m}_{k-1,k-1}.
\end{equation}
Note that this equation is over $(B\otimes_AR)/(\pi\otimes 1)(B\otimes_AR)$.
Since $k-1$ is odd,  $\bar{m}_{k-1, k-1}=id$ by Equation (A.23).
Recall that $m_{k-1,k-2}'$ is the last column vector of $m_{k-1,k-2}$.
Let $e_{k-1}'=(0,\cdots, 0, 1)$ of size $1\times n_{k-2}$. Then $m_{k-1,k-2}'=m_{k-1,k-2}\cdot {}^te_{k-1}'$.
We multiply both sides of the above equation by $e_{k-1}'$ on the left.
Then the right hand side is $e_{k-1}'\cdot{}^t m_{k-1, k-2}\bar{h}_{k-1}\bar{m}_{k-1,k-1}={}^tm_{k-1,k-2}'\bar{h}_{k-1}$.
Note that $\bar{m}_{k-2, k-2}\cdot {}^te_{k-1}'={}^te_{k-1}'$ by Equation (A.23)
since this is the last column vector of $\bar{m}_{k-2, k-2}$. 
Thus in the left hand side,  $e_{k-1}'\cdot{}^t\bar{m}_{k-2, k-2}\bar{h}_{k-2}m_{k-2,k-1}=e_{k-1}'\cdot \bar{h}_{k-2}m_{k-2,k-1}$.
Based on the matrix form of $\bar{h}_k$  with even integer $k$ in Equation (A.22), 
 $e_{k-1}'\cdot \bar{h}_{k-2}$ is the same as $e_{k-2}$, where $e_k$ is defined  in the paragraph following Lemma A.2.
In conclusion, Equation (A.28) induces the equation 
\begin{equation}
{}^tm_{k-1,k-2}'\bar{h}_{k-1}=e_{k-2}\cdot m_{k-2,k-1}
\end{equation}
over $(B\otimes_AR)/(\pi\otimes 1)(B\otimes_AR)$.

Now we observe Equations (A.27) and (A.29).
Based on the matrix form of $\bar{h}_{k-1}$  with an odd integer $k-1$  in Equation (A.22), 
we have that $\bar{h}_{k-1}\cdot \bar{h}_{k-1}=id$  and $\bar{h}_{k-1}$  is symmetric.
 Thus, by multiplying $\bar{h}_{k-1}$ to Equations (A.27) and (A.29), we obtain $m_{k-1,k}'={}^t(e_k\cdot m_{k, k-1}\cdot \bar{h}_{k-1})$ and
 ${}^tm_{k-1,k-2}'=e_{k-2}\cdot m_{k-2,k-1}\cdot \bar{h}_{k-1}$, respectively, as equations over $(B\otimes_AR)/(\pi\otimes 1)(B\otimes_AR)$.

On the other hand, we observe that $k-1$ is odd and both $L_{k-2}$ and $L_k$ are \textit{of type I}.
Thus $e_{k-2}\cdot m_{k-2,k-1}=e_k\cdot m_{k, k-1}$ by Equation (A.24). 
We multiply $\bar{h}_{k-1}$ to this equation  and so  obtain
$$e_{k-2}\cdot m_{k-2,k-1}\cdot \bar{h}_{k-1}=e_k\cdot m_{k, k-1}\cdot \bar{h}_{k-1}$$
 as  an equation over $(B\otimes_AR)/(\pi\otimes 1)(B\otimes_AR)$.
Therefore, we have the equation
$$m_{k-1,k}'=m_{k-1,k-2}'.$$
As mentioned   at the paragraph following Equation (A.19),
Equation (A.25) is independent of the choice of a lift of $m_{k-1,k}'$ and $m_{k-1,k-2}'$.
Therefore, two terms in Equation (9.25) are same and this verifies our claim.

In the case of $\mathcal{F}_j$,
following the proof of Equation (A.29),
we have ${}^tm_{j+1, j}'\cdot \bar{h}_{j+1}=e_{j}\cdot m_{j, j+1}$. 
Since $L_{j+2}$ is \textit{of type II} (possibly zero, by our convention), $e_{j}\cdot m_{j, j+1}=0$  by Equation (A.24).
Thus,  the term involving  $h_{j+1}$ in $\mathcal{F}_{j}$ is zero.
In the case of $j-2m_j$, where $m_j\geq 0$,  the term involving  $h_{j-2m_j-1}$ in $\mathcal{F}_{j-2m_j}$ is zero
in a manner similar to that of the  above case of $\mathcal{F}_{j}$.

To summarize, for each odd integer $i$, the terms containing an $h_i$ add to zero
in $\sum_{l=0}^{m_j} \frac{1}{\alpha^2}\mathcal{F}_{j-2l}$.\\

We now prove that for each even integer $i$, the terms containing an $\bar{h}_i$ add to zero
in $\sum_{l=0}^{m_j} \frac{1}{\alpha^2}\mathcal{F}_{j-2l}$.
We again choose an even integer  $k$ (assuming $m_j>0$)  such that $j-2(m_j-1) \leq k\leq j$
  so that both $L_k$ and $L_{k-2}$ are \textit{of type I}.
We observe ${}^tm_{k-2,k}^{\prime}\cdot\bar{h}_{k-2}\cdot m_{k-2,k}^{\prime}$ in $\mathcal{F}_k$
and ${}^tm_{k,k-2}^{\prime}\cdot \bar{h}_{k}\cdot m_{k,k-2}^{\prime}$ in $\mathcal{F}_{k-2}$,
and we claim that
\begin{equation}
{}^tm_{k-2,k}^{\prime}\cdot\bar{h}_{k-2}\cdot m_{k-2,k}^{\prime}+ {}^tm_{k,k-2}^{\prime}\cdot \bar{h}_{k}\cdot m_{k,k-2}^{\prime}=0,
\end{equation}
as an equation over $(B\otimes_AR)/(\pi\otimes 1)(B\otimes_AR)$.
Let $\widetilde{m_{k-2,k}'}$ be the $(n_{k-2} \times n_k)$-th entry (resp. $((n_{k-2}-1) \times n_k)$-th entry) of $m_{k-2,k}$
when $L_{k-2}$ is \textit{of type $I^o$} (resp. $I^e$).
We can also define $\widetilde{m_{k,k-2}'}$ as the $(n_k \times n_{k-2})$-th entry (resp. $((n_{k}-1) \times n_{k-2})$-th entry) of $m_{k,k-2}$
when $L_{k}$ is \textit{of type $I^o$} (resp. $I^e$).
Then the above equation (A.30) is the same as
\begin{equation}
\widetilde{m_{k-2,k}'}^2+\widetilde{m_{k,k-2}'}^2=0.
\end{equation}

We use Equation (A.14) for $i=k-2$ and $j=k$ so that we have
\begin{equation}
{}^t\bar{m}_{k-2, k-2}\bar{h}_{k-2}m_{k-2,k}+{}^tm_{k-1, k-2}\bar{h}_{k-1}m_{k-1,k}+{}^tm_{k, k-2}\bar{h}_{k}\bar{m}_{k,k}=0.
\end{equation}
Let $\widetilde{e_k}=(0,\cdots, 0, 1)$ of size $1\times n_{k}$ and $\widetilde{e_{k-2}}=(0,\cdots, 0, 1)$ of size $1\times n_{k-2}$.
Then we have
\begin{equation}
\widetilde{e_{k-2}}\cdot {}^t\bar{m}_{k-2, k-2}\bar{h}_{k-2}m_{k-2,k}\cdot {}^t\widetilde{e_k}=\widetilde{m_{k-2,k}'}
\end{equation}
since $m_{k-2,k}\cdot {}^t\widetilde{e_k}=m_{k-2,k}'$ and
$ \bar{m}_{k-2, k-2}\cdot {}^t\widetilde{e_{k-2}}={}^t\widetilde{e_{k-2}}$.
We also have
\begin{equation}
\widetilde{e_{k-2}}\cdot {}^tm_{k, k-2}\bar{h}_{k}\bar{m}_{k,k}\cdot {}^t\widetilde{e_k}=\widetilde{m_{k,k-2}'}
\end{equation}
since  $\bar{m}_{k,k}\cdot {}^t\widetilde{e_k}={}^t\widetilde{e_k}$ and  $m_{k, k-2}\cdot {}^t\widetilde{e_{k-2}}=m_{k, k-2}'$.
Note that we use Equations (A.22) and (A.23) for our matrix computation.
On the other hand, due to the fact that $\bar{h}_{k-1}\cdot \bar{h}_{k-1}=id$ and $\bar{h}_{k-1}$ is symmetric, we have
\begin{equation}
\widetilde{e_{k-2}}\cdot {}^tm_{k-1, k-2}\bar{h}_{k-1}m_{k-1,k}\cdot {}^t\widetilde{e_k}
=(\widetilde{e_{k-2}}\cdot {}^tm_{k-1, k-2}\bar{h}_{k-1})\cdot  \bar{h}_{k-1}\cdot (\bar{h}_{k-1}m_{k-1,k}\cdot {}^t\widetilde{e_k}).
\end{equation}
Now, $\widetilde{e_{k-2}}\cdot {}^tm_{k-1, k-2}\bar{h}_{k-1}={}^tm_{k-1, k-2}'\bar{h}_{k-1}=e_{k-2}\cdot m_{k-2, k-1}$ by Equation (A.29)
and $\bar{h}_{k-1}m_{k-1,k}\cdot {}^t\widetilde{e_k}=\bar{h}_{k-1}m_{k-1,k}'={}^t(e_k\cdot m_{k, k-1})$ by Equation (A.27).
Since $e_{k-2}\cdot m_{k-2, k-1}=e_k\cdot m_{k, k-1}$ by Equation (A.24), the above equation (A.35) equals
\begin{equation}
 (\widetilde{e_{k-2}}\cdot {}^tm_{k-1, k-2}\bar{h}_{k-1})\cdot  \bar{h}_{k-1}\cdot (\bar{h}_{k-1}m_{k-1,k}\cdot \widetilde{e_k})=
(e_k\cdot m_{k, k-1})\cdot \bar{h}_{k-1}\cdot {}^t(e_k\cdot m_{k, k-1})=0.
\end{equation}

We now combine Equations (A.33), (A.34),   and (A.36).
Namely, if we multiply $\widetilde{e_{k-2}}$ to the left of each side in Equation (A.32)
and we multiply ${}^t\widetilde{e_k}$ to the right of each side in Equation (A.32), then we have
\begin{equation}
\widetilde{m_{k-2,k}'}+0+\widetilde{m_{k,k-2}'}=0
\end{equation}
and so Equations (A.31) and (A.30) are proved.

In the case of $\mathcal{F}_j$,
the term ${}^tm_{j+2,j}^{\prime}\cdot \bar{h}_{j+2}\cdot m_{j+2,j}^{\prime}=0$ since $L_{j+2}$ is \textit{of type II} (possibly zero, by our convention).
Similarly, the term ${}^tm_{j-2m_j-2,j-2m_j}^{\prime}\cdot \bar{h}_{j-2m_j-2}\cdot m_{j-2m_j-2,j-2m_j}^{\prime}$ of $\mathcal{F}_{j-2m_j}$,
 where $m_j \geq 0$,  is $0$ since $L_{j-2m_j-2}$ is \textit{of type II}.
Here, we  use Equation (A.22) for our matrix multiplication.

To summarize, for each even integer $i$, the terms containing an $h_i$ add to zero
in $\sum_{l=0}^{m_j} \frac{1}{\alpha^2}\mathcal{F}_{j-2l}$.\\

Therefore, the sum of equations $\sum_{l=0}^{m_j} \frac{1}{\bar{\alpha}^2}\mathcal{F}_{j-2l}$ equals
$$\sum_{l=0}^{m_j}\frac{1}{\bar{\alpha}^2}(\bar{\alpha}\bar{z}_{j-2l}+\bar{z}_{j-2l}^2)=0.$$
This is the same as
\begin{equation}
\sum_{l=0}^{m_j} (\frac{\bar{z}_{j-2l}}{\bar{\alpha}}+(\frac{\bar{z}_{j-2l}}{\bar{\alpha}})^2)=
\left(\sum_{l=0}^{m_j}\frac{\bar{z}_{j-2l}}{\bar{\alpha}}\right)\left(\sum_{l=0}^{m_j}(\frac{\bar{z}_{j-2l}}{\bar{\alpha}})+1\right)=0.
\end{equation}
 This completes the proof of the lemma.
\end{proof}

\begin{Lem}
Let $G^{\ddag}$ be the subfunctor of $ \mathrm{Ker~}\tilde{\varphi}/\tilde{M}^1$ consisting of those $m$   
satisfying Equations (A.14), (A.16), (A.18), and (A.20). Note that such  $m$ then satisfies Equation (A.21) as well.
Then  $G^{\ddag}$ is represented by a smooth  closed subscheme of $ \mathrm{Ker~}\tilde{\varphi}/\tilde{M}^1$
and is isomorphic to $ \textbf{A}^{l^{\prime}}\times (\mathbb{Z}/2\mathbb{Z})^{\beta}$ as a $\kappa$-variety,
 where $\textbf{A}^{l^{\prime}}$ is an affine space of dimension $l^{\prime}$.
 Here,
\[
 l^{\prime}=\sum_{i<j}n_in_j-\sum_{\textit{i:odd and $L_i$:bound}}n_i+\\
 \sum_{\textit{i:even and $L_i$:of type $I^o$}}(n_i-1)
 +\sum_{\textit{i:even and $L_i$:of type $I^e$}}(2n_i-2).
\]
\end{Lem}

\begin{proof}
Let $\mathcal{J}$ be the set of even integers $j$ such that $L_j$ is \textit{of type I} and $L_{j+2}$ is \textit{of type II} (possibly empty, by our convention).
Note that Equation (A.20) implies Equation (A.21) by Lemma A.7. 
 Equation (A.21) implies that $G^{\ddag}$ is disconnected with at least $2^\beta$ connected components (Exercise 2.19 of \cite{H}).
Here $\beta=\# \mathcal{J}$.
Let $\mathcal{J}_1$ and $\mathcal{J}_2$ be a pair of two (possibly empty) subsets  of $\mathcal{J}$
such that $\mathcal{J}$ is the disjoint union of $\mathcal{J}_1$ and $\mathcal{J}_2$.
Let  $\widetilde{G^{\ddag}_{\mathcal{J}_1, \mathcal{J}_2}}$ be the subfunctor of $ \mathrm{Ker~}\tilde{\varphi}/\tilde{M}^1$ consisting of those $m$   
satisfying Equations (A.14), (A.16), (A.18), and (A.20),
 the equation $\sum_{l=0}^{m_j}\frac{z_{j-2l}}{\bar{\alpha}}=0$ for any $j\in \mathcal{J}_1$,
 and  the equation $\sum_{l=0}^{m_j}\frac{z_{j-2l}}{\bar{\alpha}}=1$ for any $j\in \mathcal{J}_2$.
Here $m_j$ is the integer associated to $j$ defined in Lemma A.7.
We claim that
 $\widetilde{G^{\ddag}_{\mathcal{J}_1, \mathcal{J}_2}}$ is represented by a smooth  closed subscheme of $ \mathrm{Ker~}\tilde{\varphi}/\tilde{M}^1$
and is isomorphic to $ \textbf{A}^{l^{\prime}}$.
Since the scheme $G^{\ddag}$ is a direct product of $\widetilde{G^{\ddag}_{\mathcal{J}_1,
\mathcal{J}_2}}$'s for any such pair of $\mathcal{J}_1, \mathcal{J}_2$ by Exercise 2.19 of \cite{H},
the lemma follows from this claim. 

It is obvious that $\widetilde{G^{\ddag}_{\mathcal{J}_1, \mathcal{J}_2}}$ is represented by a closed subscheme of $ \mathrm{Ker~}\tilde{\varphi}/\tilde{M}^1$
since equations defining $\widetilde{G^{\ddag}_{\mathcal{J}_1, \mathcal{J}_2}}$ as a subfunctor of $ \mathrm{Ker~}\tilde{\varphi}/\tilde{M}^1$ are all polynomials.
Thus it suffices to show that $\widetilde{G^{\ddag}_{\mathcal{J}_1, \mathcal{J}_2}}$ is isomorphic to an affine space $ \textbf{A}^{l^{\prime}}$.
Our strategy to show this is that 
 the coordinate ring of $\widetilde{G^{\ddag}_{\mathcal{J}_1, \mathcal{J}_2}}$ is isomorphic to a polynomial ring.
To do that, we use the following trick over and over.
We consider the polynomial ring $\kappa[x_1, \cdots, x_n]$ and it quotient ring $\kappa[x_1, \cdots, x_n]/(x_1+P(x_2, \cdots, x_n))$.
Then the quotient ring $\kappa[x_1, \cdots, x_n]/(x_1+P(x_2, \cdots, x_n))$
 is isomorphic to $\kappa[x_2, \cdots, x_n]$ and in this case we say that \textit{$x_1$ can be eliminated by $x_2, \cdots, x_n$}.

By the description of an element of $(\mathrm{Ker~}\tilde{\varphi}/\tilde{M}^1)(R)$ in Remark A.5,
we see that $\mathrm{Ker~}\tilde{\varphi}/\tilde{M}^1$ is isomorphic to an affine space of dimension
$$2\sum_{i<j}n_in_j-\sum_{\textit{i:odd and $L_i$:bound}}n_i+ \sum_{\textit{i:even and $L_i$:of type $I^o$}}(2n_i-1)
 +\sum_{\textit{i:even and $L_i$:of type $I^e$}}(4n_i-4)$$
 with variables $(m_{i,j})_{i\neq j}, (y_i, v_i, z_i)_{\textit{i:even and $L_i$:of type $I^o$}},
 (r_i, t_i, y_i, v_i, x_i, z_i, u_i, w_i)_{\textit{i:even and $L_i$:of type $I^e$}}$ such that $\delta_{i-1}e_{i-1}\cdot m_{i-1, i}+\delta_{i+1}e_{i+1}\cdot m_{i+1, i}=0$ with $i$ odd.
Here, $\delta_j, e_j$ are  as explained in the description of $\mathrm{Ker~}\tilde{\varphi}(R)$, the paragraph right after Lemma A.2.

From now on, we eliminate suitable variables based on Equations (A.14), (A.16), (A.18), and (A.20),
 the equations $\sum_{l=0}^{m_j}\frac{z_{j-2l}}{\alpha}=0$ for all $j\in \mathcal{J}_1$,
and the equations $\sum_{l=0}^{m_j}\frac{z_{j-2l}}{\alpha}=1$ for all $j\in \mathcal{J}_2$ .

\begin{enumerate}
\item We first observe Equation (A.14). For two integers $i, j$ with $i<j$, we have
$${}^tm_{j,i}\bar{h}_j\bar{m}_{j,j}=\sum_{i\leq k \leq j-1} {}^tm_{k,i}\bar{h}_km_{k,j}  \textit{ over } (B\otimes_AR)/(\pi\otimes 1)(B\otimes_AR).$$
By Equation (A.23),   $\bar{m}_{j,j}= id$ if $L_j$ is not \textit{of type $I^e$}.
Thus the above equation equals
  ${}^tm_{j,i}\bar{h}_j=\sum_{i\leq k \leq j-1} {}^tm_{k,i}\bar{h}_km_{k,j}$ over $(B\otimes_AR)/(\pi\otimes 1)(B\otimes_AR)$
if $L_j$ is not \textit{of type $I^e$}.
Since $\bar{h}_j$  is a nonsingular matrix by Equation (A.22),  $m_{j,i}$ can be eliminated by the right hand side.
If $L_j$ is  \textit{of type $I^e$}, then
$\bar{m}_{j,j}$  is $\begin{pmatrix} id &r_j&0 \\ 0&1& 0 \\ v_j &u_j&1 \end{pmatrix}$ by Equation (A.23).
We write $\bar{h}_j$   as $\begin{pmatrix} a_j&0&0\\0&1&1\\0&1&0 \end{pmatrix}$ (cf. Equation (A.22)).
Then $\bar{h}_j\bar{m}_{j,j}=\begin{pmatrix} a_j &a_jr_j&0 \\ v_j&1+u_j& 1 \\ 0 &1&0 \end{pmatrix}$.
To compute ${}^tm_{j,i}\bar{h}_j\bar{m}_{j,j}$, we write ${}^tm_{j,i}=\begin{pmatrix} A_j &B_j&C_j  \end{pmatrix}$
so that $${}^tm_{j,i}\bar{h}_j\bar{m}_{j,j}=\begin{pmatrix} A_ja_j+B_jv_j &A_ja_jr_j+B_j(1+u_j)+C_j&B_j  \end{pmatrix}.$$
By first observing the $(1,3)$-block of ${}^tm_{j,i}\bar{h}_j\bar{m}_{j,j}$, $B_j$ can be eliminated by $\sum_{i\leq k \leq j-1} {}^tm_{k,i}\bar{h}_km_{k,j}$.
Then we observe the $(1,1)$-block of ${}^tm_{j,i}\bar{h}_j\bar{m}_{j,j}$.
Since $a_j$ is a nonsingular matrix, we see that $A_j$ can be eliminated by $\sum_{i\leq k \leq j-1} {}^tm_{k,i}\bar{h}_km_{k,j}$
with $B_jv_j$.
By observing the $(1,2)$-block of ${}^tm_{j,i}\bar{h}_j\bar{m}_{j,j}$,
 $C_j$ can be eliminated by $\sum_{i\leq k \leq j-1} {}^tm_{k,i}\bar{h}_km_{k,j}$ with $A_ja_jr_j+B_j(1+u_j)$.
 Therefore, all lower triangular blocks $m_{j,i}$ (with $j>i$) can be eliminated by upper triangular blocks $m_{i,j}$ together with
 $r_j, v_j, u_j$ (resp. $r_i, v_i, u_i$) if $L_j$ (resp. $L_i$) is \textit{of type $I^e$}.
 Here $r_i, v_i, u_i$ as nontrivial blocks of $\bar{m}_{i,i}$, if $L_i$ is \textit{of type $I^e$},
 which is appeared in the right hand side of the above equation.

 On the other hand, the equation $\delta_{i-1}e_{i-1}\cdot m_{i-1, i}+\delta_{i+1}e_{i+1}\cdot m_{i+1, i}=0$ for an odd integer $i$,
 which is one equation defining $\mathrm{Ker~}\tilde{\varphi}/\tilde{M}^1$ (cf. Remark A.5.(3)),
    should be rewritten in terms of upper triangular blocks.
 To do that, we use Equation (A.27) with $i=k-1$.
 Note that the only assumption needed in Equation (A.27) is that $L_k$ is \textit{of type $I$}.
Thus the above equation is the same as
$$\delta_{i-1}e_{i-1}\cdot m_{i-1, i}+\delta_{i+1}{}^t(\bar{h}_im_{i,i+1}')=0.$$

\item We secondly observe Equation (A.16).
If  $L_i$ is \textit{of type $I^o$}, then $v_i$ can be eliminated by $y_i$ and $m_{i-1,i}, m_{i,i+1}$.\\

\item Next, we observe Equation (A.18).
By $\mathcal{X}_{i,1,2}$, $v_i$ can be eliminated by $r_i$.
By $\mathcal{X}_{i,1,3}$, $y_i$ can be eliminated by $t_i, v_i, z_i$ and entries from $m_{i-1,i}, m_{i,i+1}$.
 By  $\mathcal{X}_{i,2,3}$, $x_i$ can be eliminated by $r_i, t_i, z_i, w_i, u_i$  and entries from $m_{i-1,i}, m_{i,i+1}$.\\

\item Finally, we observe $\frac{1}{\bar{\alpha}^2}\mathcal{F}_i$, instead of $\mathcal{F}_i$ (Equation (A.20)), 
together with   
 equations $\sum_{l=0}^{m_j}\frac{z_{j-2l}}{\alpha}=0$ with $j\in \mathcal{J}_1$
and  equations $\sum_{l=0}^{m_j}\frac{z_{j-2l}}{\alpha}=1$ with $j\in \mathcal{J}_2$ .
    Note that  $\frac{1}{\bar{\alpha}^2}\mathcal{F}_i$ is equivalent to $\mathcal{F}_i$ since $\alpha$ is a unit in $B$.
    For each   $j\in \mathcal{J}$, there is a non-negative integer $m_j$ such that $L_{j-2l}$ is \textit{of type I}
    for every $l$ with $0\leq l\leq m_j$ and $L_{j-2(m_j+1)}$ is \textit{of type II} (cf. Lemma A.7).


To analyze these equations, we investigate $\frac{1}{\bar{\alpha}^2}\mathcal{F}_{j-2l}$ for a fixed $j\in \mathcal{J}$.
Firstly assume that $m_j\geq 1$.
Since we have eliminated all lower triangular blocks in Step (1),
we need to replace lower triangular blocks appeared in $\frac{1}{\bar{\alpha}^2}\mathcal{F}_{j-2l}$
  by suitable upper triangular blocks.
If $m_j\geq 2$, then we choose an integer $l$ such that $0<l<m_j$.
By definition, $\frac{1}{\bar{\alpha}^2}\mathcal{F}_{j-2l}$ is
\begin{multline*}
\frac{1}{\bar{\alpha}^2\cdot\pi^2}(\sigma({}^t\tilde{m}_{j-2l-1,j-2l}')\cdot\sigma(\pi) h_{j-2l-1}\cdot
\tilde{m}_{j-2l-1,j-2l}'+\sigma({}^t\tilde{m}_{j-2l+1,j-2l}')\cdot \pi h_{j-2l+1}\cdot
\tilde{m}_{j-2l+1,j-2l}')\\+
  \frac{z_{j-2l}}{\bar{\alpha}}+(\frac{z_{j-2l}}{\bar{\alpha}})^2+
  \frac{{}^tm_{j-2l-2, j-2l}'\cdot\bar{h}_{j-2l-2}\cdot m_{j-2l-2, j-2l}'}{\bar{\alpha}^2}+
  \frac{{}^tm_{j-2l+2, j-2l}'\cdot\bar{h}_{j-2l+2}\cdot m_{j-2l+2, j-2l}'}{\bar{\alpha}^2}=0.
 \end{multline*}
 Here, the first line is interpreted as explained in the paragraph following Equation (A.19)
 and the second line is a polynomial in $(B\otimes_AR)/(\pi\otimes 1)(B\otimes_AR)$.
 We claim that the equation $\frac{1}{\bar{\alpha}^2}\mathcal{F}_{j-2l}$ is the same as the following:
\begin{multline}
\frac{1}{\bar{\alpha}^2\cdot\pi^2}(\sigma({}^t\tilde{m}_{j-2l-1,j-2l}')\cdot\sigma(\pi) h_{j-2l-1}\cdot
\tilde{m}_{j-2l-1,j-2l}'+(e_{j-2l}\cdot \sigma(\tilde{m}_{j-2l, j-2l+1}))\cdot\pi h_{j-2l+1}^3\cdot {}^t(e_{j-2l}\cdot \tilde{m}_{j-2l, j-2l+1}))\\+
  \frac{z_{j-2l}}{\bar{\alpha}}+(\frac{z_{j-2l}}{\bar{\alpha}})^2+(\frac{\widetilde{m_{j-2l-2, j-2l}'}}{\bar{\alpha}})^2+(\frac{\widetilde{m_{j-2l, j-2l+2}'}}{\bar{\alpha}})^2=0.
 \end{multline}
The second line easily follows from the definition of $\widetilde{m_{k-2,k}'}$ and $\widetilde{m_{k,k-2}'}$
(given in the paragraph following Equation (A.30)) combined with Equation (A.37).
For the first line, we observe  Equation (A.29) with $k-2=j-2l$ which gives the identity $m_{j-2l+1, j-2l}'=\bar{h}_{j-2l+1}\cdot {}^t(e_{j-2l}\cdot m_{j-2l, j-2l+1})$ over $(B\otimes_AR)/(\pi\otimes 1)(B\otimes_AR)$.
 Note that the only assumption needed in Equation (A.29) is that $L_{k-2}$ is \textit{of type $I$}.
 Then $h_{j-2l+1}\cdot {}^t(e_{j-2l}\cdot \tilde{m}_{j-2l, j-2l+1})$ is a lift of $\bar{h}_{j-2l+1}\cdot {}^t(e_{j-2l}\cdot m_{j-2l, j-2l+1})$.
The first line is independent of the choice of a lift $\tilde{m}_{j-2l+1,j-2l}'$ of  $m_{j-2l+1,j-2l}'$
  as explained at the paragraph following Equation (A.19).
  This fact completes our claim.
The above equation is equivalent to
\begin{multline}
\frac{1}{\bar{\alpha}^2\cdot\pi^2}(\sigma({}^t\tilde{m}_{j-2l-1,j-2l}')\cdot\sigma(\pi) h_{j-2l-1}\cdot
\tilde{m}_{j-2l-1,j-2l}'+(e_{j-2l}\cdot \sigma(\tilde{m}_{j-2l, j-2l+1}))\cdot\pi h_{j-2l+1}^3\cdot {}^t(e_{j-2l}\cdot \tilde{m}_{j-2l, j-2l+1}))\\+
(\frac{z_{j-2l}}{\bar{\alpha}}+\frac{\widetilde{m_{j-2l-2, j-2l}'}}{\bar{\alpha}}+\frac{\widetilde{m_{j-2l, j-2l+2}'}}{\bar{\alpha}})
+(\frac{z_{j-2l}}{\bar{\alpha}}+\frac{\widetilde{m_{j-2l-2, j-2l}'}}{\bar{\alpha}}+\frac{\widetilde{m_{j-2l, j-2l+2}'}}{\bar{\alpha}})^2=\\
(\frac{\widetilde{m_{j-2l-2, j-2l}'}}{\bar{\alpha}}+\frac{\widetilde{m_{j-2l, j-2l+2}'}}{\bar{\alpha}})
 \end{multline}
by adding $(\frac{\widetilde{m_{j-2l-2, j-2l}'}}{\bar{\alpha}}+\frac{\widetilde{m_{j-2l, j-2l+2}'}}{\bar{\alpha}})$ in both side.

For $\frac{1}{\bar{\alpha}^2}\mathcal{F}_{j-2m_j}$, we observe that  $L_{j-2m_j-2}$ is \textit{of type II}.
By Equation (A.27) with $k=j-2m_j$, we have
${}^tm_{j-2m_j-1,j-2m_j}^{\prime}=e_{j-2m_j}\cdot m_{j-2m_j, j-2m_j-1}\bar{h}_{j-2m_j-1}$.
 Here we use the fact that $\bar{h}_{j-2m_j-1}^2=id$ (cf. Equation (A.22)).   
 Note that the only assumption needed in Equation (A.27) is that $L_k$ is \textit{of type $I$}.
 On the other hand, the equation in Remark A.5.(3), when $i=j-2m_j-1$, is  $e_{j-2m_j}\cdot m_{j-2m_j, j-2m_j-1}=0$
since $L_{j-2m_j-2}$ is \textit{of type II}.
Thus ${}^tm_{j-2m_j-1,j-2m_j}^{\prime}=0$.
Therefore,
$\frac{1}{\bar{\alpha}^2}\mathcal{F}_{j-2m_j}$ is
\begin{multline}
\frac{1}{\bar{\alpha}^2\cdot\pi^2}((e_{j-2m_j}\cdot \sigma(\tilde{m}_{j-2m_j, j-2m_j+1}))\cdot\pi h_{j-2m_j+1}^3\cdot {}^t(e_{j-2m_j}\cdot \tilde{m}_{j-2m_j, j-2m_j+1}))\\+
  \frac{z_{j-2m_j}}{\bar{\alpha}}+(\frac{z_{j-2m_j}}{\bar{\alpha}})^2+(\frac{\widetilde{m_{j-2m_j, j-2m_j+2}'}}{\bar{\alpha}})^2=0.
 \end{multline}
This equation is equivalent to
\begin{multline}
\frac{1}{\bar{\alpha}^2\cdot\pi^2}((e_{j-2m_j}\cdot \sigma(\tilde{m}_{j-2m_j, j-2m_j+1}))\cdot\pi h_{j-2m_j+1}^3\cdot {}^t(e_{j-2m_j}\cdot \tilde{m}_{j-2m_j, j-2m_j+1}))\\+
  (\frac{z_{j-2m_j}}{\bar{\alpha}}+\frac{\widetilde{m_{j-2m_j, j-2m_j+2}'}}{\bar{\alpha}})+(\frac{z_{j-2m_j}}{\bar{\alpha}}+\frac{\widetilde{m_{j-2m_j, j-2m_j+2}'}}{\bar{\alpha}})^2
  =\frac{\widetilde{m_{j-2m_j, j-2m_j+2}'}}{\bar{\alpha}}
 \end{multline}
by adding $\frac{\widetilde{m_{j-2m_j, j-2m_j+2}'}}{\bar{\alpha}}$ in both side.

We emphasize that it is unnecessary to investigate $\mathcal{F}_{j}$ since the equation $\sum_{l=0}^{m_j}\frac{z_{j-2l}}{\bar{\alpha}}=0$
 (resp. $\sum_{l=0}^{m_j}\frac{z_{j-2l}}{\bar{\alpha}}=1$) if $j\in \mathcal{J}_1$ (resp. if $j\in \mathcal{J}_2$) already implies
Equation (A.21)  so that $\sum_{l=0}^{m_j} \frac{1}{\bar{\alpha}^2}\mathcal{F}_{j-2l}=0$.

 We now observe Equations (A.40) and (A.42).
 We introduce a new variable $z_{j-2l}'$
 which is
 $\frac{z_{j-2l}}{\bar{\alpha}}+\frac{\widetilde{m_{j-2l-2, j-2l}'}}{\bar{\alpha}}+\frac{\widetilde{m_{j-2l, j-2l+2}'}}{\bar{\alpha}}$
 (resp. $\frac{z_{j-2m_j}}{\bar{\alpha}}+\frac{\widetilde{m_{j-2m_j, j-2m_j+2}'}}{\bar{\alpha}}$) if $0 < l < m_j$ (resp. $l=m_j$).
 Then $z_{j-2l}$ can be eliminated by $z_{j-2l}', \frac{\widetilde{m_{j-2l-2, j-2l}'}}{\bar{\alpha}}, \frac{\widetilde{m_{j-2l, j-2l+2}'}}{\bar{\alpha}}$.
 In addition, by observing Equations (A.40) and (A.42), $\frac{\widetilde{m_{j-2l-2, j-2l}'}}{\bar{\alpha}}+\frac{\widetilde{m_{j-2l, j-2l+2}'}}{\bar{\alpha}}$ can be eliminated by
 $z_{j-2l}'$ and $m_{j-2l-1,j-2l}', m_{j-2l, j-2l+1}$.
Furthermore, the equation $\sum_{l=0}^{m_j}\frac{z_{j-2l}}{\bar{\alpha}}=0$
 (resp. $\sum_{l=0}^{m_j}\frac{z_{j-2l}}{\bar{\alpha}}=1$) if $j\in \mathcal{J}_1$ (resp. if $j\in \mathcal{J}_2$) induces that
$z_j$ can be eliminated by $z_{j-2l}'$,  $m_{j-2l-1,j-2l}'$, $m_{j-2l, j-2l+1}$ with $0 < l \leq m_j$.\\

If $m_j=0$, then we can show that the equation $\frac{1}{\bar{\alpha}^2}\mathcal{F}_{j}$ is the same as
$$\frac{z_{j}}{\bar{\alpha}}+(\frac{z_{j}}{\bar{\alpha}})^2=0$$
by using an argument similar to that used in the proof of Equation (A.41).
Then the equation $\frac{z_{j}}{\bar{\alpha}}=0$
 (resp. $\frac{z_{j}}{\bar{\alpha}}=1$) if $j\in \mathcal{J}_1$ (resp. if $j\in \mathcal{J}_2$) induces that
$z_j$ can be eliminated.\\
\end{enumerate}

We now combine all cases (1)-(4) observed above.
\begin{enumerate}
\item[(a)] By (1), we eliminate $\sum_{i<j}n_in_j$ variables. 
\item[(b)] By (2), we eliminate $\sum_{\textit{i:even and $L_i$:of type $I^o$}}(n_i-1)$ variables.
\item[(c)] By (3), we eliminate $\sum_{\textit{i:even and $L_i$:of type $I^e$}}(2(n_i-2)+1)$ variables.
\item[(d)] By (4), we eliminate $\#\{i:i\mathrm{~is~even~and~}L_i\mathrm{~is~\textit{of type I}}\}$ variables.
\end{enumerate}

Recall from the third paragraph of the proof that $ \mathrm{Ker~}\tilde{\varphi}/\tilde{M}^1$ is isomorphic to an affine space of dimension
$$2\sum_{i<j}n_in_j-\sum_{\textit{i:odd and $L_i$:bound}}n_i+ \sum_{\textit{i:even and $L_i$:of type $I^o$}}(2n_i-1)
 +\sum_{\textit{i:even and $L_i$:of type $I^e$}}(4n_i-4).$$
  Thus, $\widetilde{G^{\ddag}_{\mathcal{J}_1, \mathcal{J}_2}}$ is isomorphic to an affine space of dimension
\begin{multline}\Big(2\sum_{i<j}n_in_j-\sum_{\textit{i:odd and $L_i$:bound}}n_i+ \sum_{\textit{i:even and $L_i$:of type $I^o$}}(2n_i-1)
 +\sum_{\textit{i:even and $L_i$:of type $I^e$}}(4n_i-4)\Big)-\\
 \Big(\sum_{i<j}n_in_j+\sum_{\textit{i:even and $L_i$:of type $I^o$}}(n_i-1)+\sum_{\textit{i:even and $L_i$:of type $I^e$}}(2(n_i-2)+1)+\\
 \#\{i:i\mathrm{~is~even~and~}L_i\mathrm{~is~\textit{of type I}}\Big).
 \end{multline}

Therefore, the dimension of $\widetilde{G^{\ddag}_{\mathcal{J}_1, \mathcal{J}_2}}$ is
\begin{equation}
\sum_{i<j}n_in_j-\sum_{\textit{i:odd and $L_i$:bound}}n_i+\sum_{\textit{i:even and $L_i$:of type $I^o$}}(n_i-1)+\sum_{\textit{i:even and $L_i$:of type $I^o$}}(2n_i-2).
\end{equation}
\end{proof}

\begin{Lem}
Let $F_j$ be  the closed subgroup scheme  of $\tilde{G}$ defined by the following equations:
\begin{itemize}
\item $m_{i,k}=0$ \textit{if $i\neq k$};
\item $m_{i,i}=\mathrm{id}$ \textit{if $i\neq j$};
\item and for $m_{j,j}$,
\[\left \{
  \begin{array}{l l}
  s_j=\mathrm{id~}, y_j=0, v_j=0 & \quad  \textit{if $L_i$ is \textit{of type} $\textit{I}^o$};\\
  s_j=\mathrm{id~}, r_j=t_j=y_j=v_j=u_j=w_j=0 & \quad \textit{if $L_i$ is \textit{of type} $\textit{I}^e$}.\\
    \end{array} \right.\]
\end{itemize}
Then  $F_j$ is isomorphic to $ \textbf{A}^{1} \times \mathbb{Z}/2\mathbb{Z}$ as a $\kappa$-variety,
where $\textbf{A}^{1}$ is an affine space of dimension $1$, and has exactly two connected components.
\end{Lem}
\begin{proof}
A matrix form of an element $m$ of $F_j(R)$ for a $\kappa$-algebra $R$ is
\[\begin{pmatrix} id&0& & \ldots& & &0\\ 0&\ddots&& & & &\\ & &id& & & & \\  \vdots & & &m_{j,j} & & &\vdots
\\ & & & & id & & \\  & & & & &\ddots &0 \\ 0& & &\ldots & &0 &id \end{pmatrix}\]
such that \[m_{j,j}=\left\{
\begin{array}{l l}
\begin{pmatrix}id&0\\0&1+\pi z_j \end{pmatrix} & \quad \textit{if $L_j$ is of type $I^o$};\\
\begin{pmatrix}id&0&0\\0&1+\pi x_j&\pi z_j\\0&0&1 \end{pmatrix} & \quad \textit{if $L_j$ is of type $I^e$}.
\end{array}\right.\]
To prove the lemma, we observe the matrix equation $\sigma({}^tm)\cdot h\cdot m=h$.
Recall that $h$, as an element of $\underline{H}(R)$, is as explained in Remark 3.3.(2).
Based on Equations (A.1) and (A.2),
the diagonal $(i,i)$-blocks of $\sigma({}^tm)\cdot h\cdot m=h$ with $i\neq j$ are trivial
and the non-diagonal blocks of $\sigma({}^tm)\cdot h\cdot m=h$ are also trivial.
The $(j, j)$-block of $\sigma({}^tm)\cdot h\cdot m$ is
\[\left\{
\begin{array}{l l}
\pi^j\cdot\begin{pmatrix}a_j&0\\0&(1+\sigma(\pi z_j))\cdot (1+2\bar{\gamma}_j)\cdot (1+\pi z_j) \end{pmatrix} & \quad \textit{if $L_j$ is of type $I^o$};\\
\pi^j\cdot\begin{pmatrix}a_j&0&0\\0&(1+\sigma(\pi x_j))(1+\pi x_j)&(1+\sigma(\pi x_j))(1+\pi z_j)\\
0&(1+\sigma(\pi z_j))(1+\pi x_j)&(1+\pi z_j)\sigma(\pi z_j)+\pi z_j+2\bar{\gamma}_j \end{pmatrix} & \quad \textit{if $L_j$ is of type $I^e$}.
\end{array}\right.\]
We write $x_j=x_j^1+\pi x_j^2$ and $z_j=z_j^1+\pi z_j^2$,
where $x_j^1, x_j^2, z_j^1, z_j^2 \in R \subset R\otimes_AB$ and $\pi$ stands for $1\otimes \pi\in R\otimes_AB$.
When $L_j$ is \textit{of type $I^o$}, by observing the $(2, 2)$-block of the above, we obtain the equation
\[\bar{\alpha}(z_j^1)+ (z_j^1)^2=0.\]
Recall that  $\alpha$ is the unit in $B$ such that $\epsilon=1+\alpha\pi$ as explained in Section 2.1,
and $\bar{\alpha}$ is the image of $\alpha$ in $\kappa$.

Then this equation is equivalent to
\[(z_j^1/\bar{\alpha})+ (z_j^1/\bar{\alpha})^2=0\]
by dividing by $\bar{\alpha}^2$ in both sides.
Therefore, in this case, $F_j$ is isomorphic to $ \textbf{A}^{1} \times \mathbb{Z}/2\mathbb{Z}$ as a $\kappa$-variety.

When $L_j$ is \textit{of type $I^e$}, by observing the $(2, 2)$-block of the above, we obtain the equation
\[\bar{\alpha}(x_j^1)+ (x_j^1)^2=0.\]
We also  observe the $(2, 3)$-block of the above, and we obtain two equations
\[x_j^1+z_j^1=0, ~~~ \bar{\alpha}x_j^1+x_j^2+z_j^2+\bar{\alpha} x_j^1z_j^1=0.   \]
By observing the $(3, 3)$-block of the above, we obtain the equation
\[\bar{\alpha}(z_j^1)+ (z_j^1)^2=0.\]
By combining all these, we see that $F_j$ is isomorphic to $ \textbf{A}^{1} \times \mathbb{Z}/2\mathbb{Z}$ as a $\kappa$-variety.
\end{proof}

We introduce the final lemma in order to prove Lemma 4.6 below.
This lemma is about the number of connected components in a short exact sequence of algebraic groups.
\begin{Lem}
Assume that there is a short exact sequence  of linear algebraic groups over $\kappa$
\[1\longrightarrow A \longrightarrow B\longrightarrow C\longrightarrow 1.\]
Let $\pi_0(B)$ be the component group of $B$ which is defined as the Spectrum of the largest separable subalgebra $\pi_0(\kappa[B])$
of $\kappa[B]$, where
 $\kappa[B]$  is the coordinate ring of $B$.
 Let $\#(\pi_0(B))$ be  the order of $\pi_0(B)$, which is defined as
 the dimension of $\pi_0(\kappa[B])$ as a $\kappa$-vector space.
 Note that  $B$ is connected if and only if $\pi_0(B)$ is trivial if and only if $\#(\pi_0(B))=1$.
Thus $\#(\pi_0(B))$ is the number of connected components of $B\otimes_{\kappa}\bar{\kappa}$.
Then $$\#(\pi_0(B))\leqq \#(\pi_0(A))\cdot\#(\pi_0(C)).$$
Moreover, the equality holds if $A$ is connected and in this case, $\pi_0(B)=\pi_0(C)$.
\end{Lem}

\begin{proof}
By definition of a component group, there exists a surjective morphism  $\pi : B \longrightarrow \pi_0(B)$ whose kernel is connected.
Let $A^{\prime} (\subseteq \pi_0(B))$ be the image of $A$ under the morphism $\pi$.
Notice that $A^{\prime}$ is a normal subgroup of $\pi_0(B)$ and that $\#(A^{\prime})\leq\#(\pi_0(A))$.
Then the morphism $\pi$ induces a surjective morphism from $C$ to $\pi_0(B)/A^{\prime}$ and so $\#(\pi_0(B)/A^{\prime})\leq\#(\pi_0(C))$.
Therefore, $\#(\pi_0(B))\leqq \#(\pi_0(A))\cdot\#(\pi_0(C)).$

It is clear that $\#(\pi_0(C))\leqq \#(\pi_0(B))$.
Thus, if $A$ is connected, then $\#(\pi_0(C))= \#(\pi_0(B))$.
In this case, since there exists a surjective morphism from $B$ to $\pi_0(C)$ (through $C$),
there exists a surjective morphism from $\pi_0(B)$ to $\pi_0(C)$.
Since  $\#(\pi_0(C))= \#(\pi_0(B))$,
we can conclude that $\pi_0(B)=\pi_0(C)$.
\end{proof}
\textit{  }

We finally prove Lemma 4.6.
\begin{proof}
 We start with the following short exact sequence
\[1\rightarrow \tilde{G}^1 \rightarrow \mathrm{Ker~}\varphi\rightarrow\mathrm{Ker~}\varphi/\tilde{G}^1\rightarrow 1.\]
It is obvious that $\mathrm{Ker~}\varphi$ is smooth  by Theorems A.4 and A.6.
$\mathrm{Ker~}\varphi$ is also unipotent since it is a subgroup of a unipotent group $\tilde{M}^+$.
Since $\tilde{G}^1$ is connected by Theorem A.4, the component group of $\mathrm{Ker~}\varphi$ is the same as that of $\mathrm{Ker~}\varphi/\tilde{G}^1$ by  Lemma A.10.
Moreover, the dimension of $\mathrm{Ker~}\varphi$ is the sum of the dimension of $\tilde{G}^1$
and the dimension of $\mathrm{Ker~}\varphi/\tilde{G}^1$. This completes the proof.
\end{proof}

\section{Examples} \label{App:AppendixB}
In this appendix, we provide an example with a unimodular lattice $(L, h)$ of rank 1.
Let $L$ be $B\textit{e}$ of rank 1 hermitian lattice with hermitian form $h(le, l'e)=\sigma(l)l'$. 
With this lattice, we construct the smooth integral model and its special fiber and compute the local density.

\subsection{Naive construction (without using our technique)}
We first construct the smooth integral model and its special fiber, without using any technique introduced in this paper.
If we write an element of $L$ as $x+\pi y$ where $x, y\in A$,
then it is easy to see that a naive integral model $\underline{G}'$ is $\mathrm{Spec~}A[x,y]/(x^2+(\pi +\sigma(\pi))xy+\pi\sigma(\pi)y^2-1)$.
As mentioned in Section 2.1, we may assume that $\pi +\sigma(\pi)=2$ and $\pi\sigma(\pi)=2u$ for a unit $u\in A$.
We remark that $\underline{G}'$ is smooth if $p\neq 2$, and in this case its special fiber is
$\mathrm{Spec~}\kappa[x,y]/(x^2-1)=\textbf{A}^1\times \mu_2$ as a $\kappa$-variety.
However, if $p=2$, then its special fiber is no longer smooth since $\kappa[x,y]/(x^2-1)=\kappa[x,y]/(x-1)^2$ is non-reduced.
Some of the difficulty in the case $p=2$ arises from this.
The associated smooth integral model is
obtained by a finite sequence of \textit{dilatations} (at least once) of $\underline{G}'$ (cf. \cite{BLR}).

On the other hand, the difficulty can also be explained in terms of quadratic forms.
Namely, the smoothness of any scheme over $A$ should be closely related to the smoothness of its special fiber.
If we define a function $q : L\longrightarrow A, l\mapsto h(l,l)$,
then $q$  mod 2   is a quadratic form over $\kappa$.
Therefore, the associated smooth integral model should contain information about this quadratic form,
which is  more subtle than quadratic forms over a field of characteristic not equal 2.

To construct the smooth integral model, we observe the characterization of $\underline{G}$ such that $\underline{G}(R)=\underline{G}'(R)$
for an \'etale $A$-algebra $R$.
Thus any element of \underline{G}(R) is of the form $x+\pi y$ such that $x^2+2xy+2uy^2=1$.
Therefore, $(x-1)^2$ is contained in the ideal $(2)$ of $R$ so that we can rewrite $x=1+2x'$ since $R$ is \'etale over $A$.
With this, any element of \underline{G}(R) is of the form $1+2x'+\pi y$ such that
$y+uy^2+2(x'+(x')^2+x'y)=0$.
We consider the affine scheme $\mathrm{Spec~}A[x,y]/(y+uy^2+2(x+x^2+xy))$.
 Its special fiber  is then reduced and smooth.
 Thus, this affine scheme is the desired smooth integral model $\underline{G}$.
 Furthermore, its special fiber $\mathrm{Spec~}\kappa[x,y]/(y+uy^2)$ is isomorphic to $\textbf{A}^1\times \mathbb{Z}/2\mathbb{Z}$
 as a $\kappa$-variety so that the number of rational points is $2f$, where $f$ is the cardinality of $\kappa$.

\subsection{Construction following our technique}
Let $q$ be the function defined over $L$ such that
$$q : L\longrightarrow A, l\mapsto h(l,l).$$
If we write $l=x+\pi y$ such that $x, y \in A$, then $q(l)=h(x+\pi y, x+\pi y)=x^2+(\pi +\sigma(\pi))xy+\pi\cdot \sigma(\pi)y^2$.
Thus $q$  mod 2  is an additive polynomial over $\kappa$.
Let $B(L)$ be the sublattice of $L$ such that $B(L)/\pi L$ is the kernel of the additive polynomial  $q$  mod 2 on  $L/\pi L$.
In this case, $B(L)=\pi L$.

For an    \'etale $A$-algebra $R$ with  $g\in \mathrm{Aut}_{B\otimes_AR}(L\otimes_AR, h\otimes_AR)$,
 it is easy to see that $g$ induces the identity on $L/B(L)=L/\pi L$.
Based on this, we construct the following functor from the category of commutative flat $A$-algebras to the category of monoids as follows. For any commutative flat $A$-algebra $R$, set
    $$\underline{M}(R) = \{m \in \mathrm{End}_{B\otimes_AR}(L \otimes_A R)\} ~|~ \textit{$m$ induces the identity on $L\otimes_A R/ B(L)\otimes_A R$}\}.$$
    This functor $\underline{M}$ is then representable by a polynomial ring and has the structure of a scheme of monoids.
    Let $\underline{M}^{\ast}(R)$ be the set of invertible elements in  $\underline{M}(R)$ for any commutative $A$-algebra $R$.
Then    $\underline{M}^{\ast}$ is representable by a group scheme which is an open subscheme of $\underline{M}$ (Section 3.2).
    Thus  $\underline{M}^{\ast}$ is smooth.
    As a matrix, each element of $\underline{M}^{\ast}(R)$ for a flat $A$-algebra $R$ can be written as $\begin{pmatrix} 1+\pi z \end{pmatrix}$.

    We define another functor from the category of commutative flat $A$-algebras to the category of sets as follows. For any commutative flat $A$-algebra $R$,
    let $\underline{H}(R)$ be the set of hermitian forms $f$ on $L\otimes_{A}R$ (with values in $B\otimes_AR$)
    such that
    $f(a,a)$ mod 2 = $h(a, a)$ mod 2, where $a \in L \otimes_{A}R$.
    As a matrix, each element of $\underline{M}^{\ast}(R)$ for a flat $A$-algebra $R$ is $\begin{pmatrix} 1+2 c \end{pmatrix}$.

    Then for any flat $A$-algebra $R$, the group $\underline{M}^{\ast}(R)$ acts on the right of $\underline{H}(R)$
    by $f\circ m = \sigma({}^tm)\cdot f\cdot m$ and
    this action is represented by an action morphism (Theorem 3.4)
     \[\underline{H} \times \underline{M}^{\ast} \longrightarrow \underline{H} .\]
        Let $\rho$ be the morphism $\underline{M}^{\ast} \rightarrow \underline{H}$ defined by $\rho(m)=h \circ m$,
  which is obtained from the above action morphism.
  As a matrix, for a flat $A$-algebra $R$,  $$\rho(m)=\rho(\begin{pmatrix} 1+\pi z \end{pmatrix})
     =\begin{pmatrix} 1+\pi z+\sigma(\pi z)+\pi\sigma(\pi)\cdot z\sigma(z) \end{pmatrix}.$$

  Then $\rho$ is smooth of relative dimension 1 (Theorem 3.6).
   Let $\underline{G}$ be the stabilizer of $h$ in $\underline{M}^{\ast}$.
 The group scheme $\underline{G}$ is smooth, and $\underline{G}(R)=\mathrm{Aut}_{B\otimes_AR}(L\otimes_A R,h\otimes_A R)$ for any \'{e}tale $A$-algebra $R$ (Theorem 3.8).\\

We now describe the structure of the special fiber $\tilde{G}$ of $\underline{G}$.
For a $\kappa$-algebra $R$,
each element of $\underline{M}(R)$ (resp. $\underline{H}(R)$)  can be written as a formal matrix $m=\begin{pmatrix} 1+\pi z \end{pmatrix}$
(resp. $f=\begin{pmatrix} 1+2 c \end{pmatrix}$).
Firstly, it is easy to see that $B_0=Y_0=\pi L$ so that the morphism $\varphi$ in Section 4.1 is trivial.

For the component groups,
as explained in Theorem 4.11, there is a surjective morphism from $\tilde{G}$ to  $\mathbb{Z}/2\mathbb{Z}$.
 Let us  describe this morphism explicitly below.
It is easy to see that  $L^0=M_0=L$ and $C(L^0)=M_0'=L$.
Here, we follow notation of Section 4.2.
Since $M_0=L$ is \textit{of type $I^o$}, there exists a morphism from the special fiber $\tilde{G}$ $(=G_0)$ to the special fiber of the smooth integral model associated to
$M_0'\oplus C(L^0)= L\oplus L$ \textit{of type $I^e$} as explained in the argument 2 just before Remark 4.10.
Remark 4.10 tells us how to describe this morphism as formal matrices. 
Let $(e_1, e_2)$ be a basis for $L\oplus L$ so that the associated Gram matrix of   the hermitian lattice $L\oplus L$
with respect to this basis is $\begin{pmatrix} 1& 0 \\ 0& 1  \end{pmatrix}$.
Then we consider the basis $(e_1, e_1+e_2)$, with respect to which the morphism described in Remark 4.10 is given as
\[\begin{pmatrix} 1+\pi z \end{pmatrix} \longrightarrow \begin{pmatrix} 1& -\pi z \\ 0& 1+\pi z \end{pmatrix}.\]
We now construct a morphism from the special fiber of the smooth integral model associated to
$M_0'\oplus C(L^0)= L\oplus L$ to $\mathbb{Z}/2\mathbb{Z}$
and describe the image of $\begin{pmatrix} 1& -\pi z \\ 0& 1+\pi z \end{pmatrix}$ in $\mathbb{Z}/2\mathbb{Z}$.

Let $R$ be a $\kappa$-algebra.
The Gram matrix for the hermitian lattice $L\oplus L$ with respect to the basis $(e_1, e_1+e_2)$  is $\begin{pmatrix} 1& 1 \\ 1& 2 \end{pmatrix}$.
Since $L\oplus L$  is \textit{unimodular of type $I^e$},
an $R$-point of the special fiber associated to $L\oplus L$ with respect to this basis is expressed as the  formal matrix
$\begin{pmatrix} 1+\pi x'& \pi z' \\ u'& 1+\pi w' \end{pmatrix}$,  as explained in Section 3.2. 
Based on the argument 1 following Definition 4.9,
 the morphism mapping to $\mathbb{Z}/2\mathbb{Z}$ factors through the special fiber associated to $C(L\oplus L)$,
 composed with the Dickson invariant associated to the corresponding orthogonal group.
$C(L\oplus L)$ is then generated by $(\pi e_1, e_1+e_2)$  and  is $\pi^1$-modular.
 Thus there is no congruence condition on an element of the smooth integral model associated to $C(L\oplus L)$ as explained in Section 3.2.
Write $x'=x'_1+\pi x'_2$, $y'=y'_1+\pi y'_2$, and $z'=z'_1+\pi z'_2$.
The image of $\begin{pmatrix} 1+\pi x'& \pi z' \\ u'& 1+\pi w' \end{pmatrix}$ in the special fiber associated to $C(L\oplus L)$
is  $\begin{pmatrix} 1+\pi x'_1&  z'_1+\pi z'_2 \\ \pi u_1'& 1+\pi w'_1 \end{pmatrix}$.
Since $C(L\oplus L)$ is $\pi^1$-modular with rank 2, there is a morphism from the special fiber associated to $C(L\oplus L)$ to the orthogonal group associated to $C(L\oplus L)/\pi C(L\oplus L)$, as described in Theorem 4.4 or Remark 4.7.
Then the image of $\begin{pmatrix} 1+\pi x'_1&  z'_1+\pi z'_2 \\ \pi u_1'& 1+\pi w'_1 \end{pmatrix}$ in this orthogonal group is
$\begin{pmatrix} 1&  z'_1 \\ 0& 1 \end{pmatrix}$.
The Dickson invariant of $\begin{pmatrix} 1&  z'_1 \\ 0& 1 \end{pmatrix}$ is $z'_1/\bar{\alpha}$
 as mentioned in Step (1) of the proof of Theorem 4.11.
Here, $\alpha$ is the unit in $B$ such that  $\epsilon=1+\alpha\pi$ as explained in Section 2.1,
 and $\bar{\alpha}$ is the image of $\alpha$ in $\kappa$.

In conclusion, the image of $\begin{pmatrix} 1+\pi z \end{pmatrix}$, which is an element of $\tilde{G}(R)$ for a $\kappa$-algebra $R$,
 in $\mathbb{Z}/2\mathbb{Z}$ is $z_1/\bar{\alpha}$, where we write $z=z_1+\pi z_2$.
On the other hand, the equation defining $\tilde{G}$ is $\bar{\alpha} z_1+z_1^2=0$ which is equivalent to $ \frac{z_1}{\bar{\alpha}}+(\frac{z_1}{\bar{\alpha}})^2=0$.
Thus, the morphism from $\tilde{G}$ to $\mathbb{Z}/2\mathbb{Z}$ is surjective.
Therefore the maximal reductive quotient of $\tilde{G}$  is  $\mathbb{Z}/2\mathbb{Z}$  and using Remark 5.3,
$$\#(\tilde{G}(\kappa))=\#(\mathbb{Z}/2\mathbb{Z})\cdot \#(\textbf{A}^1)=2f,$$
where $f$ is the cardinality of $\kappa$.
Based on Theorem 5.2, the local density is
$$\beta_L=f^{0}\cdot 2f=2f.$$

\end{document}